\newtheorem{theorem}{\textbf{Theorem}}[section]
\newtheorem{lemma}{\textbf{Lemma}}[section]
\newtheorem{proposition}{\textbf{Proposition}}[section]
\newtheorem{corollary}{\textbf{Corollary}}[section]
\newtheorem{remark}{\textbf{Remark}}[section]
\newtheorem{definition}{\textbf{Definition}}[section]
\def\be{\begin{equation}}
	\def\ee{\end{equation}}
\def\bea{\begin{eqnarray}}
	\def\eea{\end{eqnarray}}
\def\bt{\begin{theorem}}
	\def\et{\end{theorem}}
\def\bl{\begin{lemma}}
	\def\el{\end{lemma}}
\def\br{\begin{remark}}
	\def\er{\end{remark}}
\def\bp{\begin{proposition}}
	\def\ep{\end{proposition}}
\def\bc{\begin{corollary}}
	\def\ec{\end{corollary}}
\def\bd{\begin{definition}}
	\def\ed{\end{definition}}
\begin{document}
	
	\title{On the nonlocal Cahn--Hilliard equation with nonlocal dynamic boundary condition and singular potential: 
\\ well-posedness, regularity and asymptotic limits}
	\author{
		Maoyin Lv \thanks{%
			Corresponding author. School of Mathematical Sciences, Fudan University, Shanghai
			200433, P.R. China. Email: \texttt{mylv22@m.fudan.edu.cn} }, \ \
		Hao Wu \thanks{%
			School of Mathematical Sciences, Fudan University, Shanghai 200433, P.R. China. Email: \texttt{haowufd@fudan.edu.cn} } }
	\date{\today }
	\maketitle
	
	\begin{abstract}
		\noindent We consider a class of nonlocal Cahn--Hilliard equations in a bounded domain $\Omega\subset\mathbb{R}^{d}$ $(d\in\{2,3\})$,
subject to a nonlocal kinetic rate dependent dynamic boundary condition. This diffuse interface model describes phase separation processes with possible long-range interactions both within the bulk material and on its boundary.
		The kinetic rate $1/L$, with $L\in[0,+\infty]$, distinguishes different types of bulk-boundary interactions.
		For the initial boundary value problem endowed with general singular potentials, including the physically relevant logarithmic potential, we first establish the existence and uniqueness of global weak solutions when the bulk and boundary chemical potentials are coupled through a Robin-type boundary condition, i.e., $L\in(0,+\infty)$. The proof of existence is based on a Yosida approximation of singular potentials and a suitable Faedo--Galerkin scheme. Subsequently, we investigate asymptotic limits as the kinetic rate approaches zero or infinity, which yield weak well-posedness for the limiting cases $L=+\infty$ and $L=0$, respectively.
		Under appropriate assumptions on the interaction kernels, we derive explicit convergence rates for the Yosida approximation as $\varepsilon\to0$ and for the asymptotic limits as $L\to0$ or $L\to+\infty$.
		Finally, we demonstrate that every global weak solution exhibits a propagation of regularity over time and when $L\in(0,+\infty]$, we establish the instantaneous strict separation property by means of a suitable De Giorgi's iteration scheme.
		
		\medskip \noindent \textit{Keywords}:
		Nonlocal Cahn--Hilliard equation, nonlocal dynamic boundary condition, singular potential, kinetic rate, well-posedness, asymptotic limits, separation property.
		
		\noindent \textit{MSC 2020}: 35A01, 35A02, 35K61, 35B40, 35B65
	\end{abstract}
	
	
	\section{Introduction}
	The Cahn--Hilliard equation provides a continuous description of the phase segregation process in binary mixtures. It was first proposed in \cite{CH} to model spinodal decomposition of binary alloys and later, successfully extended to different contexts in various fields. To determine the evolution in a bounded domain, suitable boundary conditions must be taken into account. Classical choices are the homogeneous Neumann boundary conditions for the phase variable and chemical potential. The resulting initial boundary value problem has been extensively studied in the literature (see, e.g., \cite{AW,EZ,GGM,KNP,Mi19,MZ04,RH} and the references therein).
	In recent years, the study of boundary effects in the phase separation process of binary mixtures has attracted a lot of attention. Different types of dynamic boundary condition have been proposed to describe short-range interactions between the wall and components of the mixture (cf. \cite{KEMRSBD,FMD,LW,GMS,KLLM}). In terms
of mathematical analysis, the Cahn--Hilliard equation with dynamic boundary conditions has been investigated extensively, see, e.g.,    \cite{CF15,CFS,CFSJEE,GMS09,GK,LvWu,LvWu-2,LvWu-3,KS24-2,KS24,RZ03,CFW} and the references therein. For further information, we refer to the recent review paper \cite{W}.

	In the classical setting, nonlocal contributions are usually ignored in the mixture, that is, only short-range interactions between particles of the interacting materials are considered. To take into account long-range repulsive interactions between different species and short hard collisions between all particles, a nonlocal Cahn--Hilliard equation was rigorously derived in the seminal work \cite{GL96,GL97} through a stochastic argument. This equation serves as a macroscopic limit of microscopic phase segregation models with particle-conserving dynamics.
	Later on, new models consisting of a nonlocal Cahn--Hilliard equation in the bulk subject to a nonlocal dynamic boundary condition have been  introduced to describe both short and long-range interactions of materials in the bulk and on the boundary (see, e.g., \cite{KS,G}).
	In this paper, we focus on the following initial boundary value problem of the nonlocal Cahn--Hilliard equation subject to a class of nonlocal dynamic boundary conditions proposed in \cite{KS}:
	\begin{align}
	&\partial_{t}\varphi=\Delta\mu,&&\text{in }\Omega\times(0,T),\label{ch1}\\
	&\mu=a_{\Omega}\varphi-J\ast\varphi+F'(\varphi),&&\text{in }\Omega\times(0,T),\label{ch2}\\
	&\partial_{t}\psi=\Delta_{\Gamma}\theta-\partial_{\mathbf{n}}\mu,&&\text{on }\Gamma\times(0,T),\label{ch3}\\
	&\theta=a_{\Gamma}\psi-K\circledast\psi+G'(\psi),&&\text{on }\Gamma\times(0,T),\label{ch4}\\
	&\begin{cases}
		L\partial_{\mathbf{n}}\mu=\theta-\mu,&\text{if }L\in[0,+\infty),\\
		\partial_{\mathbf{n}}\mu=0,&\text{if }L=+\infty,
	\end{cases}&&\text{on }\Gamma\times(0,T),\label{ch5}\\
	&\varphi|_{t=0}=\varphi_{0},&&\text{in }\Omega,\label{ch6}\\
	&\psi|_{t=0}=\psi_{0},&&\text{on }\Gamma.\label{ch7}
	\end{align}
	Here, $T\in(0,+\infty)$ is an arbitrary but given final time and $\Omega\subset\mathbb{R}^{d}$ $(d\in\{2,3\})$ is a smooth bounded domain with boundary $\Gamma:=\partial\Omega$. The symbol $\Delta$ denotes the Laplace operator in $\Omega$ and $\Delta_{\Gamma}$ denotes the Laplace--Beltrami operator on $\Gamma$. The bold symbol $\mathbf{n}$ denotes the outward normal vector on the boundary and $\partial_{\mathbf{n}}$ means the outward normal derivative on $\Gamma$. The functions $\varphi:\Omega\times(0,T)\rightarrow\mathbb{R}$ and $\psi:\Gamma\times(0,T)\rightarrow\mathbb{R}$ stand for phase variables describing the difference of two local relative concentrations of materials in the bulk and on the boundary, respectively.  The total free energy functional associated with the system \eqref{ch1}--\eqref{ch5} is defined as
	\begin{align}
	E(\boldsymbol{\varphi}):= E_{\mathrm{bulk}}(\varphi)+E_{\mathrm{surf}}(\psi),\quad \boldsymbol{\varphi}:=(\varphi, \psi),
\label{totalenergy}
	\end{align}
	where the bulk free energy $E_{\text{bulk}}$ and surface free energy $E_{\text{surf}}$ are given by
	\begin{align}
	&E_{\mathrm{bulk}}(\varphi) :=\frac{1}{4}\int_{\Omega}\int_{\Omega}J(x-y)|\varphi(x)-\varphi(y)|^{2} \,\mathrm{d}y\,\mathrm{d}x+\int_{\Omega}F(\varphi(x))\,\mathrm{d}x,\notag\\
	&E_{\mathrm{surf}}(\psi) :=\frac{1}{4}\int_{\Gamma}\int_{\Gamma}K(x-y)|\psi(x)-\psi(y)|^{2} \,\mathrm{d}S_{y}\,\mathrm{d}S_{x}+\int_{\Gamma}G(\psi(x))\,\mathrm{d}S_{x}.\notag
	\end{align}
Then $\mu:\Omega\times(0,T)\rightarrow\mathbb{R}$ and $\theta:\Gamma\times(0,T)\rightarrow\mathbb{R}$ stand for the bulk and boundary chemical potentials, respectively, which can be expressed as Fr\'echet derivatives of the bulk and surface free energies (cf. \eqref{totalenergy}).
The mutual short and long-range interactions between particles are described through convolution integrals weighted by suitable interaction kernels  $J,K:\mathbb{R}^{d}\rightarrow\mathbb{R}$, which are assumed to be even functions, i.e., $J(x)=J(-x)$ and $K(x)=K(-x)$ for all $x\in\mathbb{R}^{d}$. Then the symbols $"\ast"$ in \eqref{ch2} and $"\circledast"$ in \eqref{ch4} denote the convolutions in the bulk and on the boundary, respectively, that is,
	\begin{align*}
	&(J\ast\varphi)(x,t) :=\int_{\Omega}J(x-y)\varphi(y,t)\,\mathrm{d}y,\quad\forall\,(x,t)\in \Omega\times(0,T),\\
	&(K\circledast\psi)(x,t) :=\int_{\Gamma}K(x-y)\psi(y,t)\,\mathrm{d}S_{y},\quad\forall\,(x,t)\in\Gamma\times(0,T).
	\end{align*}
	Moreover, the functions $a_{\Omega}$ and $a_{\Gamma}$ are defined by
	\begin{align*}
	a_{\Omega}(x):=(J\ast1)(x),\qquad a_{\Gamma}(y):=(K\circledast1)(y),
	\end{align*}
	for all $x\in\Omega$ and $y\in\Gamma$.
	The nonlinear potential functions $F$ and $G$ denote free energy densities in the bulk and on the boundary, respectively. In order to describe the phase separation process, $F$ and $G$ usually have a double-well structure and the physically relevant choices include the well-known logarithmic potential \cite{CH}:
	\begin{align}
	&\mathcal{W}_{\mathrm{log}}(s) :=\frac{\Theta}{2}[(1+s)\mathrm{ln}(1+s)+(1-s)\mathrm{ln}(1-s)]-\frac{\Theta_{0}}{2}s^{2},\quad s\in(-1,1).\label{log}
	\end{align}
	When the constants satisfy $\Theta_{0}>\Theta>0$, we find $\mathcal{W}_{\mathrm{log}}$ is nonconvex with two minima $\pm s_*\in (-1,1)$, where $s_*$ is the positive root of the equation $\mathcal{W}_{\mathrm{log}}'(s)=0$.
	 The nonlinearities $F'$ in \eqref{ch2} and $G'$ in \eqref{ch4} simply denote derivatives of the corresponding potentials $F$, $G$. Moreover, when non-smooth potentials are taken into account, $F'$ and $G'$ correspond to the subdifferential of the convex part (may be multi-valued graphs) plus the derivative of the smooth concave perturbations.
	
	In the current model, the bulk and boundary chemical potentials $\mu$, $\theta$ are coupled through the boundary condition \eqref{ch5}, which accounts for possible adsorption or desorption processes between the materials in the bulk and on the boundary, see \cite{KLLM}.
	For the case $L\in(0,+\infty)$, sufficiently regular solutions to problem \eqref{ch1}--\eqref{ch7} satisfy the properties of mass conservation and energy dissipation, that is,
	\begin{align}
	\int_{\Omega}\varphi(t)\,\mathrm{d}x+\int_{\Gamma}\psi(t)\,\mathrm{d}S=\int_{\Omega}\varphi_{0}\,\mathrm{d}x+\int_{\Gamma}\psi_{0}\,\mathrm{d}S,\quad\forall\,t\in[0,T]\label{totalmassconser}
	\end{align}
	as well as
	\begin{align}
	\frac{\mathrm{d}}{\mathrm{d}t}E(\boldsymbol{\varphi}(t)) +\int_{\Omega}|\nabla\mu(t)|^{2}\,\mathrm{d}x +\int_{\Gamma}|\nabla_{\Gamma}\theta(t)|^{2}\,\mathrm{d}S +\frac{1}{L}\int_{\Gamma}|\theta(t)-\mu(t)|^{2}\,\mathrm{d}S=0, \quad\forall\,t\in(0,T).\notag
	\end{align}
	The symbol $\nabla$ denotes the usual gradient operator and $\nabla_{\Gamma}$ denotes the tangential (surface) gradient operator.
	When $L=0$, the model describes the idealized scenario of instantaneous relaxation to the situation where the potentials $\mu$ and $\theta$ are in chemical equilibrium (cf. \cite{KLLM}). Sufficiently regular solutions satisfy the same mass conservation property as \eqref{totalmassconser} and the following energy dissipation law:
	\begin{align}
		\frac{\mathrm{d}}{\mathrm{d}t}E(\boldsymbol{\varphi}(t))+\int_{\Omega}|\nabla\mu(t)|^{2}\,\mathrm{d}x+\int_{\Gamma}|\nabla_{\Gamma}\theta(t)|^{2}\,\mathrm{d}S=0,\quad\forall\,t\in(0,T).\label{0dissipation}
	\end{align}
	When $L=+\infty$, we (formally) obtain the homogeneous Neumann boundary condition for the bulk chemical potential $\mu$. Moreover, the pair $(\varphi,\mu)$ satisfies the standard nonlocal Cahn--Hilliard equation with a homogeneous Neumann boundary condition
	\begin{align}
	&\partial_{t}\varphi=\Delta\mu,&&\text{in }\Omega\times(0,T),\label{infch1}\\
	&\mu=a_{\Omega}\varphi-J\ast\varphi+F'(\varphi),&&\text{in }\Omega\times(0,T),\label{infch2}\\
	&\partial_{\mathbf{n}}\mu=0,&&\text{on }\Gamma\times(0,T),\label{infch3}\\
	&\varphi|_{t=0}=\varphi_{0},&&\text{in }\Omega,\label{infch4}
	\end{align}
	while the pair $(\psi,\theta)$ satisfies the following nonlocal Cahn--Hilliard equation on surface
	\begin{align}
	&\partial_{t}\psi=\Delta_{\Gamma}\theta,&&\text{on }\Gamma\times(0,T),\label{surch1}\\
	&\theta=a_{\Gamma}\psi-K\circledast\psi+G'(\psi),&&\text{on }\Gamma\times(0,T),\label{surch2}\\
	&\psi|_{t=0}=\psi_{0},&&\text{on }\Gamma.\label{surch3}
	\end{align}
As a consequence, the system \eqref{infch1}--\eqref{infch4} and the system \eqref{surch1}--\eqref{surch3} are completely decoupled. Since the mass flux on the boundary $-\partial_{\mathbf{n}}\mu$ equals to zero, we obtain the separate mass conservation properties in the bulk and on the boundary, that is,
	\begin{align}
	\int_{\Omega}\varphi(t)\,\mathrm{d}x=\int_{\Omega}\varphi_{0}\,\mathrm{d}x,\quad\int_{\Gamma}\psi(t)\,\mathrm{d}S=\int_{\Gamma}\psi_{0}\,\mathrm{d}S,\quad\forall\,t\in[0,T].\notag
	\end{align}
	Furthermore, the energy dissipation law \eqref{0dissipation} holds for sufficiently regular solutions.

	The nonlocal Cahn--Hilliard equation \eqref{infch1}--\eqref{infch3} has been extensively studied from various viewpoints. We refer to \cite{ABG,BH05,DRST,GZ,G17,G18,GGG,GP} for results concerning well-posedness and regularity properties of solutions, to \cite{CFG22,CFG,DGG,FGGS,FG,GGGP} for the investigation on the nonlocal Cahn--Hilliard equation coupled to fluid equations, and also to \cite{ABG,FG12,GG,LP} for results on the long-time behavior.
	Convergence of the nonlocal Cahn--Hilliard equation to the local Cahn--Hilliard equation, under suitable assumptions on the convolution kernel $J$, has been investigated in \cite{DRST,DST,DST20,AH,HKP,AT}.
	To the best of our knowledge, there exist two contributions dealing with the nonlocal Cahn--Hilliard equation with dynamic boundary conditions in the literature \cite{G,KS}. In \cite{G}, the author considered the fractional Cahn--Hilliard equation subject to a fractional dynamic boundary condition:
	\begin{align}
		&\partial_{t}\varphi=\Delta\mu,&&\text{in }\Omega\times(0,T),\notag\\
		&\mu=(-\Delta)^{l_1}\varphi+F'(\varphi),&&\text{in }\Omega\times(0,T),\notag\\
		&\psi=\varphi,&&\text{on }\Gamma\times(0,T),\notag\\
		&\partial_{t}\psi=(-\Delta_{\Gamma})^{l_2}\psi+C_{l_1}N^{2-2l_1}\varphi+\varpi\psi+G'(\psi),&&\text{on }\Gamma\times(0,T),\notag\\
		&\varphi|_{t=0}=\varphi_{0},&&\text{in }\Omega,\notag\\
		&\psi|_{t=0}=\psi_0,&&\text{on }\Gamma,\notag
	\end{align}
	with $1/2<l_1<1$, $0<l_2<1$ and $\varpi>0$. Here, $(-\Delta)^{l_1}$ denotes the regional fractional Laplace operator, $(-\Delta_{\Gamma})^{l_2}$ denotes the fractional Laplace--Beltrami operator and $C_{l_1} N^{2-2l_1}$ stands for the fractional normal derivative.
    The author first proved the well-posedness and regularity of solutions, then he established the existence of finite-dimensional global attractors.
	In the second contribution \cite{KS}, the authors formulated the nonlocal problem \eqref{ch1}--\eqref{ch7} in a more general form with an additional boundary penalty term, which describes different regions of the boundary that attract the material associated with $\psi=\pm 1$ and repel the other material associated with $\psi= \mp 1$. They interpreted the bulk-surface Cahn--Hilliard system \eqref{ch1}--\eqref{ch5} as a gradient flow of type $H^{-1}$ (both in the bulk and on the boundary) with respect to suitable inner products.
	Then for regular potentials $F$, $G$ under suitable growth conditions, they established the well-posedness for the case $L\in(0,+\infty)$ using a gradient flow approach. Afterwards they investigated the asymptotic limits as $L\rightarrow0$ and $L\rightarrow+\infty$, respectively.
	

	In this paper, we aim to study the well-posedness, asymptotic limits, regularity and strict separation property of solutions to problem \eqref{ch1}--\eqref{ch7} with a wide class of non-smooth bulk/boundary potentials $F$, $G$ that have a double-well structure. For simplicity of presentation, here we neglect the penalty term in \cite{KS} and set constants that have no influence on the analysis to be one. Extensions of our results to the general case are straightforward.
	\begin{itemize}
	\item[(1)]
	\emph{Well-posedness.} We first prove well-posedness for the case $L\in(0,+\infty)$ (see Theorem \ref{exist}). For the existence of global weak solutions, we replace the singular potentials by their Yosida approximations and solve the resulting auxiliary problem by a suitable Faedo--Galerkin scheme.
	After deriving estimates for the approximating solutions that are uniform with respect to the parameter $\varepsilon$ of the Yosida regularization, we are able to pass to the limit as $\varepsilon\rightarrow0$ and construct a global weak solution by compactness argument.
	The continuous dependence estimate can be achieved by a standard energy method (see Theorem \ref{contidependence}).
	Then we study the well-posedness of the limiting cases with $L=0$ and $L=+\infty$ (see Theorem \ref{exist}). Applying the idea in \cite{LvWu}, we derive uniform estimates with respect to $L\in(0,1]$ and $L\gg 1$, respectively. Then, we establish the existence of weak solutions by performing the asymptotic limits as $L\to0$ and $L\to+\infty$. The corresponding continuous dependence estimates can be obtained similarly to the case $L\in(0,+\infty)$ by the energy method.
	\item[(2)] \emph{Convergence rates for limiting processes.} We establish convergence rates of the Yosida approximation as $\varepsilon\to 0$ and asymptotic limits as $L\to 0$, $L\to +\infty$, under the additional assumption $\mathbf{(A5)}$ for the interaction kernels,
   which requires that $J(x-\cdot)\in L^2(\Omega)$ and $K(y-\cdot)\in L^2(\Gamma)$ for all $x\in\Omega$ and $y\in\Gamma$. Then, the operator
   $$\mathbb{J}:\,\mathcal{L}^2 \to \mathcal{L}^{2},\quad(z,z_\Gamma)\mapsto(J\ast z,K\circledast z_\Gamma)\quad\forall\,(z,z_\Gamma)\in\mathcal{L}^2,$$
   becomes a Hilbert--Schmidt operator.
   Such operators are compact so that, up to a small error, their image is a finite-dimensional subspace of $\mathcal{L}^2$ on which the two norms $\mathcal{L}^2$ and $\mathcal{H}_{L,0}^{-1}$ are equivalent. This property enables us to establish an $O(\sqrt{\varepsilon})$-estimate for the convergence of approximating phase functions in $L^{\infty}(0,T;\mathcal{H}_{L,0}^{-1})\cap L^{2}(0,T;\mathcal{L}^{2})$ (see Theorem \ref{rate}).
   Moreover, we obtain an $O(\sqrt{L})$-estimate for the convergence of weak solutions $\boldsymbol{\varphi}^{L}$ to $\boldsymbol{\varphi}^0$ in $L^{\infty}(0,T;\mathcal{V}_{(0)}^{-1})\cap L^{2}(0,T;\mathcal{L}^{2})$ as $L\to0$ and an $O(1/L^{\frac{1}{4}})$-estimate for the convergence of $\varphi^{L}$ to $\varphi^{\infty}$ in $L^{\infty}(0,T;(H^1(\Omega))')\cap L^{2}(0,T;L^2(\Omega))$ and $\psi^{L}$ to $\psi^\infty$ in $L^{\infty}(0,T;(H^1(\Gamma))')\cap L^{2}(0,T;L^2(\Gamma))$ (see Theorem \ref{weak-convergence}).
	
	\item[(3)] \emph{Regularity propagation and strict separation property.} We first deal with the case with a finite kinetic rate $L\in(0,+\infty)$ and improve the regularity of time derivative $\partial_t \boldsymbol{\varphi}^L$ by taking difference quotient, which allows us to derive an energy equality.
	Then we establish the regularity of global weak solutions by a standard argument like in \cite{GGG}. For the case $L=0$, we obtain such a property via the asymptotic limit as $L\to0$, while for the case $L=+\infty$, we deal with the subsystems in the bulk and on the boundary separately, deriving similar estimates for suitable approximating systems and then passing to the limit as the approximating parameter tends to zero (see Theorem \ref{regularize}).
	Concerning the validity of the instantaneous strict separation property, we extend the methods in \cite{GP,Gior,Po} that worked well for the classical nonlocal Cahn--Hilliard equation with homogeneous Neumann boundary condition. We find that the strict separation property is valid for $L\in(0,+\infty]$ (see Theorem \ref{separation} and Remark \ref{L=infty}), while the case $L=0$ remains open, due to the nonlocal effects in the bulk and on the boundary as well as the bulk-surface interaction through the boundary condition \eqref{ch5} (see Remark \ref{except0}).
	\end{itemize}
	
Before ending the introduction, we recall some related results in the literature. There are several papers studying the convergence of approximating schemes for the nonlocal Cahn--Hilliard equation, see \cite{BMS,GLW,GST} and the references therein.
	In \cite{GST}, the authors studied the standard nonlocal Cahn--Hilliard equation \eqref{infch1}--\eqref{infch4} and established an explicit convergence rate of the Yosida approximation. In their proof, the $L^{2}$-regularity of the kernel $J$ is necessary so that the nonlocal operator $J\ast u$ is a Hilbert--Schmidt operator from $L^2 (\Omega)$ to $L^2 (\Omega)$. Since the usual method to construct weak solutions of the system \eqref{infch1}--\eqref{infch4} is based on the compactness argument and does not provide information on the rate of convergence, the authors filled the gap in this direction.
	When the nontrivial boundary effects are taken into account in the local Cahn--Hilliard equation with dynamic boundary conditions, we established an explicit convergence rate of the Yosida approximation in \cite{LvWu}.
On the other hand, for the local Cahn--Hilliard equation with dynamic boundary conditions, convergence rate of asymptotic limits as the kinetic rate tends to zero or infinity has been obtained in \cite{KLLM} for the case with regular potentials.
For our present problem \eqref{ch1}--\eqref{ch7}, the argument turns out to be more involved, since the nonlocal interactions and boundary effects have to be handled simultaneously.
	
	Concerning the instantaneous strict separation property of the nonlocal Cahn--Hilliard equation \eqref{infch1}--\eqref{infch4}, we refer to \cite{GGG,Po,Gior,GGG23,GP} for recently progress. In particular, such a property is valid for the physically relevant logarithmic potential \eqref{log} in three dimensions.
	As usual, we decompose the singular potential $F$ into a non-smooth convex part $\widehat{\beta}$ and a regular concave part $\widehat{\pi}$, that is, $F=\widehat{\beta}+\widehat{\pi}$.
	In the two-dimensional case, the instantaneous strict separation property was first obtained in \cite{GGG} by the Trudinger--Moser inequality combined with nonlinear estimates involving $e^{|\widehat{\beta}'(\varphi)|}$ of $\widehat{\beta}''(\varphi)$.
	The argument can be applied to more general singular potentials satisfying the growth condition (see \cite{GGG23})
$$\widehat{\beta}''(s)\leq C_{\ast}e^{C_{\ast}|\widehat{\beta}'(s)|^{\gamma}},\quad\text{for some }\gamma\in[1,2).$$
	In \cite{GP}, the authors extended the minimal assumptions on the singular potential, by requiring a mild growth condition of its first derivative near the singular points $\pm1$, without any pointwise additional assumption on its second derivative: there exists some $\kappa>1/2$, such that
	\begin{align*}
    \frac{1}{\widehat{\beta}'(1-2\delta)} =O\Big(\frac{1}{|\mathrm{ln}(\delta)|^{\kappa}}\Big), \quad\frac{1}{|\widehat{\beta}'(-1+2\delta)|} =O\Big(\frac{1}{|\mathrm{ln}(\delta)|^{\kappa}}\Big),\quad
    \text{as}\ \delta\rightarrow0.
	\end{align*}
	In the three-dimensional case, the instantaneous strict separation property has been established in recent works \cite{Po,Gior} by a suitable De Giorgi's iteration scheme.
	There, suitable growth conditions of $\widehat{\beta}'$ and $\widehat{\beta}''$ near the singular points $\pm1$ are required, that is, there exist $\delta_{\sharp}\in(0,1/2)$ and $C_{\sharp}>1$ such that
	\begin{align*}
		\frac{1}{\widehat{\beta}'(1-2\delta)}\leq\frac{C_{\sharp}}{|\text{ln}(\delta)|},\quad\frac{1}{|\widehat{\beta}'(-1+2\delta)|}\leq\frac{C_{\sharp}}{|\text{ln}(\delta)|},\quad\forall\,0<\delta\leq\delta_{\sharp}
	\end{align*}
	and
	\begin{align*}
		\frac{1}{\widehat{\beta}''(1-2\delta)}\leq C_{\sharp}\delta,\quad\frac{1}{\widehat{\beta}''(-1+2\delta)}\leq C_{\sharp}\delta,\quad\forall\,0<\delta\leq\delta_{\sharp}.
	\end{align*}
	Finally, we mention that the strict separation property for the fractional Cahn--Hilliard equation has been analyzed in \cite{GGG23,GP}.

	\textbf{Plan of the paper.} The remaining part of this paper is organized as follows. In Section 2, we first introduce the notation and basic assumptions, then we state our main results. In Section 3, we establish well-posedness of the system with $L\in(0,+\infty)$ and then derive a convergence rate of the Yosida approximation. Well-posedness of the system with $L=0$ or $L=+\infty$ are established in Section 4, together with the convergence rates of the asymptotic limits as $L\to0$ or $L\to+\infty$. In Section 5, we first show the regularity propagation of weak solutions and then validate the instantaneous strict separation property. In the Appendix, we report some technical lemmas that have been used in our analysis.

	\section{Main Results}
	\setcounter{equation}{0}
	\subsection{Preliminaries}
	For any real Banach space $X$, we denote its norm by $\|\cdot\|_X$, its dual space by $X'$
	and the duality pairing between $X'$ and $X$ by
	$\langle\cdot,\cdot\rangle_{X',X}$. If $X$ is a Hilbert space,
	its inner product will be denoted by $(\cdot,\cdot)_X$.
	The space $L^q(0,T;X)$ ($1\leq q\leq +\infty$)
	denotes the set of all strongly measurable $q$-integrable functions with
	values in $X$, or, if $q=+\infty$, essentially bounded functions. The space $L_{\text{uloc}}^p(0,+\infty;X)$ denotes the uniformly local variant of $L^p(0,+\infty;X)$ consisting of all strongly measurable $f:[0,+\infty)\to X$ such that
	$$\|f\|_{L_{\text{uloc}}^p(0,+\infty;X)}:=\sup_{t\geq0}\|f\|_{L^p(t,t+1;X)}<+\infty.$$
	If $T\in(0,+\infty)$, we find $L_{\text{uloc}}^p(0,T;X)=L^p(0,T;X)$.
	The space $C([0,T];X)$ denotes the Banach space of all bounded and
	continuous functions $u:[ 0,T] \rightarrow X$ equipped with the supremum
	norm, while $C_{w}([0,T];X)$ denotes the topological vector space of all
	bounded and weakly continuous functions.
	
	Let $\Omega$ be a bounded domain in $\mathbb{R}^d$ ($d\in \{2,3\}$) with sufficiently smooth boundary $\Gamma:=\partial \Omega$. The associated outward unit normal vector field on $\Gamma$ is denoted by $\mathbf{n}$. We use $|\Omega|$ and $|\Gamma|$
	to denote the Lebesgue measure of $\Omega$ and the Hausdorff measure of $\Gamma$, respectively.
	For any $1\leq q\leq +\infty$, $k\in \mathbb{N}$, the standard Lebesgue and Sobolev spaces on $\Omega$ are denoted by $L^{q}(\Omega )$
	and $W^{k,q}(\Omega)$. Here, we use $\mathbb{N}$ for the set of natural numbers including zero.
	For $s\geq 0$ and $q\in [1,+\infty )$, we denote by $H^{s,q}(\Omega )$ the Bessel-potential spaces and by $W^{s,q}(\Omega )$ the Slobodeckij spaces.
	If $q=2$, it holds $H^{s,2}(\Omega)=W^{s,2}(\Omega )$ for all $s$ and these spaces are Hilbert spaces.
	For simplicity, we use the notation $H^s(\Omega)=H^{s,2}(\Omega)=W^{s,2}(\Omega )$ and $H^0(\Omega)$ can be identified with $L^2(\Omega)$.
	The Lebesgue spaces, Sobolev spaces and Slobodeckij spaces on the boundary $\Gamma$ can be defined analogously,
	provided that $\Gamma$ is sufficiently regular.
	We then write $H^s(\Gamma)=H^{s,2}(\Gamma)=W^{s,2}(\Gamma)$ and identify $H^0(\Gamma)$ with $L^2(\Gamma)$.
	Hereafter, the following shortcuts will be applied:
	\begin{align*}
		&H:=L^{2}(\Omega),\quad H_{\Gamma}:=L^{2}(\Gamma),\quad V:=H^{1}(\Omega),\quad V_{\Gamma}:=H^{1}(\Gamma).
	\end{align*}
	For every $y\in (H^1(\Omega))'$, we denote by $%
	\langle y\rangle_\Omega=|\Omega|^{-1}\langle
	y,1\rangle_{(H^1(\Omega))',\,H^1(\Omega)}$ its generalized mean
	value over $\Omega$. If $y\in L^1(\Omega)$, then its spatial mean is simply
	given by $\langle y\rangle_\Omega=|\Omega|^{-1}\int_\Omega y \,\mathrm{d}x$.
	The spatial mean for a function $y_\Gamma$ on $\Gamma$, denoted by $\langle
	y_\Gamma\rangle_\Gamma$, can be defined in a similar manner.
	Then we introduce the spaces for functions with zero mean:
	\begin{align*}
		&V_{0}:=\big\{y\in V:\;\langle y\rangle_\Omega =0\big\},&& V_{0}^*:=\big\{y^*\in V':\;\langle y^*\rangle_\Omega =0\big\},
		\\[1mm]
		&V_{\Gamma,0}:=\big\{y_{\Gamma}\in V_{\Gamma}:\; \langle y_\Gamma\rangle_\Gamma =0\big\},&& V_{\Gamma,0}^*:=\big\{y_\Gamma^* \in V_{\Gamma}':\;\langle y_\Gamma^*\rangle_\Gamma =0\big\}.
	\end{align*}
	The following Poincar\'{e}--Wirtinger inequalities in $\Omega$ and on $\Gamma$ hold (see, e.g., \cite[Theorem 2.12]{DE13} for the case on $\Gamma$):
	\begin{align}
		&\|u-\langle u\rangle_\Omega\|_{H} \leq C_\Omega\|\nabla u\|_{H},\qquad\quad\quad \ \ \ \forall\,
		u\in V, \notag\\[1mm]
		&\|u_\Gamma-\langle u_\Gamma\rangle_\Gamma\|_{H_\Gamma} \leq C_\Gamma\|\nabla_\Gamma u_\Gamma\|_{H_\Gamma},\qquad \forall\,
		u_\Gamma\in V_\Gamma,
		\notag
	\end{align}
	where $C_\Omega$ (resp. $C_\Gamma$) is a positive constant depending only on $\Omega$ (resp. $\Gamma$).
	
	Consider the Neumann problem
	\begin{align}
		\begin{cases}
			-\Delta u=y, \,\quad \text{in} \ \Omega,\\
			\partial_{\mathbf{n}} u=0, \quad \ \ \, \text{on}\ \Gamma.
		\end{cases}
		\label{P-n}
	\end{align}
	Owing to the Lax--Milgram theorem, for every $y\in V_0^*$, problem \eqref{P-n} admits a unique weak solution $u\in V_{0}$ satisfying
	\begin{align}
		\int_{\Omega}\nabla u\cdot\nabla \zeta\,\mathrm{d}x
		= \langle y,\zeta \rangle_{V',V},\quad\forall\,\zeta\in V.
		\notag
	\end{align}
	Thus, we can define the solution operator $\mathcal{N}_{\Omega}: V_{0}^*\rightarrow V_{0}$ such that $u=\mathcal{N}_{\Omega}y$.
	Analogously, let us consider the surface Poisson equation
	\begin{align}
		-\Delta_\Gamma u_\Gamma = y_\Gamma,\quad \text{on}\ \ \Gamma,
		\label{P-s}
	\end{align}
	where $\Delta_\Gamma=\mathrm{div}_\Gamma\nabla_\Gamma$ denotes the Laplace--Beltrami operator on $\Gamma$.
	For every $y_\Gamma\in V_{\Gamma,0}^*$, problem \eqref{P-s} admits a unique weak solution $u_\Gamma\in V_{\Gamma,0}$ satisfying
	\begin{align}
		\int_{\Gamma}\nabla_{\Gamma} u_{\Gamma}\cdot\nabla_{\Gamma} \zeta_{\Gamma}\,\mathrm{d}S
		= \langle y_{\Gamma},\zeta_{\Gamma} \rangle_{V_{\Gamma}',V_{\Gamma}},\quad\forall\,\zeta_{\Gamma}\in V_{\Gamma}.
		\notag
	\end{align}
	Thus, we can define the solution operator $\mathcal{N}_{\Gamma}:V_{\Gamma,0}^*\rightarrow V_{\Gamma,0}$ such that $u_{\Gamma}=\mathcal{N}_{\Gamma}y_{\Gamma}$.
	By virtue of these definitions, we introduce the following equivalent norms
	\begin{align*}
		&\Vert y\Vert_{V_{0}^*}:=\Big(\int_{\Omega}|\nabla\mathcal{N}_{\Omega}y|^{2}\,\mathrm{d}x\Big)^{1/2},
		&& \forall\, y\in V_{0}^*,\\
		&\Vert y\Vert_{V'}:=\Big(\Vert y - \langle y\rangle_\Omega \Vert_{V_{0}^*}^2+ |\langle y\rangle_\Omega|^2\Big)^{1/2},
		&& \forall\, y\in V',\\
		&\Vert y_{\Gamma}\Vert_{V_{\Gamma,0}^*}:=\Big(\int_{\Gamma}|\nabla_{\Gamma}\mathcal{N}_{\Gamma}y_{\Gamma}|^{2}\,\mathrm{d}S\Big)^{1/2},
		&& \forall\,y_{\Gamma}\in V_{\Gamma,0}^*,\\
		&\Vert y_{\Gamma}\Vert_{V_{\Gamma}'}:=\Big(\Vert y_{\Gamma}- \langle y_{\Gamma}\rangle_\Gamma \Vert_{V_{\Gamma,0}^*}^2 + |\langle y_{\Gamma}\rangle_\Gamma|^2\Big)^{1/2},
		&& \forall\,y_{\Gamma}\in V_{\Gamma}'.
	\end{align*}

	Next, let us define the product spaces
	$$\mathcal{L}^{q}:=L^{q}(\Omega)\times L^{q}(\Gamma)\quad\mathrm{and}\quad\mathcal{H}^{k}:=H^{k}(\Omega)\times H^{k}(\Gamma),$$
	for $q\in [1,+\infty]$ and $k\in \mathbb{N}$.
	Like before, we can identify  $\mathcal{H}^{0}$ with $\mathcal{L}^{2}$.
	For any $k\in \mathbb{N}$, $\mathcal{H}^{k}$ is a Hilbert space endowed with the standard inner product
	$$
	((y,y_{\Gamma}),(z,z_{\Gamma}))_{\mathcal{H}^{k}}:=(y,z)_{H^{k}(\Omega)}+(y_{\Gamma},z_{\Gamma})_{H^{k}(\Gamma)},\quad\forall\, (y,y_{\Gamma}), (z,z_{\Gamma})\in\mathcal{H}^{k}
	$$
	and the induced norm $\Vert\cdot\Vert_{\mathcal{H}^{k}}:=(\cdot,\cdot)_{\mathcal{H}^{k}}^{1/2}$.
	We introduce the duality pairing
	\begin{align*}
		\langle (y,y_\Gamma),(\zeta, \zeta_\Gamma)\rangle_{(\mathcal{H}^1)',\mathcal{H}^1}
		= (y,\zeta)_{L^2(\Omega)}+ (y_\Gamma, \zeta_\Gamma)_{L^2(\Gamma)},
		\quad \forall\, (y,y_\Gamma)\in \mathcal{L}^2,\ (\zeta, \zeta_\Gamma)\in \mathcal{H}^1.
	\end{align*}
	By the Riesz representation theorem, this product
	can be extended to a duality pairing on $(\mathcal{H}^1)'\times \mathcal{H}^1$.
	
	For any $k\in\mathbb{Z}^+$, we consider the Hilbert space
	$$
	\mathcal{V}^{k}:=\big\{(y,y_{\Gamma})\in\mathcal{H}^{k}\;:\;y|_{\Gamma}=y_{\Gamma}\ \ \text{a.e. on }\Gamma\big\},
	$$
	endowed with the inner product $(\cdot,\cdot)_{\mathcal{V}^{k}}:=(\cdot,\cdot)_{\mathcal{H}^{k}}$ and the associated norm $\Vert\cdot\Vert_{\mathcal{V}^{k}}:=\Vert\cdot\Vert_{\mathcal{H}^{k}}$.
	Here, $y|_{\Gamma}$ stands for the trace of $y\in H^k(\Omega)$ on the boundary $\Gamma$, which makes sense for $k\in \mathbb{Z}^+$.
	The duality pairing on $(\mathcal{V}^1)'\times \mathcal{V}^1$ can be defined in a similar manner.
	
	For any given $m\in\mathbb{R}$, we set
	$$
	\mathcal{L}^{2}_{(m)}:=\big\{(y,y_{\Gamma})\in\mathcal{L}^{2}\;:\;\overline{m}(y,y_{\Gamma})=m\big\},
	$$
	where the generalized mean is defined by
	\begin{align}
		\overline{m}(y,y_{\Gamma}):=\frac{|\Omega|\langle y\rangle_{\Omega}+|\Gamma|\langle y_{\Gamma}\rangle_{\Gamma}}{|\Omega|+|\Gamma|}.\label{gmean}
	\end{align}
    Define the projection operator $\mathbf{P}:\mathcal{L}^{2}\rightarrow\mathcal{L}^{2}_{(0)}$ by
	\begin{align*}
		\mathbf{P}(y,y_{\Gamma})=(y-\overline{m}(y,y_{\Gamma}),y_{\Gamma}-\overline{m}(y,y_{\Gamma})),\quad\forall\,(y,y_\Gamma)\in \mathcal{L}^2.
	\end{align*}
	The closed linear subspaces
	$$
	\mathcal{H}_{(0)}^k=\mathcal{H}^{k}\cap\mathcal{L}_{(0)}^{2},
	\qquad \mathcal{V}_{(0)}^k=\mathcal{V}^{k}\cap\mathcal{L}_{(0)}^{2},
	\qquad k\in \mathbb{Z}^+,
	$$
	are Hilbert spaces endowed with the inner products $(\cdot,\cdot)_{\mathcal{H}^{k}}$
	and the associated norms $\Vert\cdot\Vert_{\mathcal{H}^{k}}$, respectively.
	For $L\in [0,+\infty)$ and $k\in \mathbb{Z}^+$,  we introduce the notation
	$$
	\mathcal{H}^{k}_{L}:=
	\begin{cases}
		\mathcal{H}^k,\quad \text{if}\ L\in (0,+\infty),\\[1mm]
		\mathcal{V}^{k},\quad \ \text{if}\ L=0,
	\end{cases}\qquad
	\mathcal{H}^{k}_{L,0}:=
	\begin{cases}
		\mathcal{H}_{(0)}^k,\quad \text{if}\ L\in (0,+\infty),\\[1mm]
		\mathcal{V}^{k}_{(0)},\quad \ \text{if}\ L=0.
	\end{cases}
	$$
	Consider the bilinear form
	\begin{align}
		a_{L}((y,y_{\Gamma}),(z,z_{\Gamma})) :=\int_{\Omega}\nabla y\cdot\nabla z \,\mathrm{d}x +\int_{\Gamma}\nabla_{\Gamma}y_{\Gamma}\cdot\nabla_{\Gamma}z_{\Gamma}\,\mathrm{d}S
		+\chi(L)\int_{\Gamma}(y-y_{\Gamma})(z-z_{\Gamma})\,\mathrm{d}S,
		\notag
	\end{align}
	for all $(y,y_{\Gamma}), (z,z_{\Gamma})\in \mathcal{H}^{1}$, where
	$$
	\chi(L)=\begin{cases}
		1/L,\quad \text{if}\ L\in (0,+\infty),\\[1mm]
		0,\qquad \ \text{if}\ L=0, +\infty.
	\end{cases}
	$$
	For any $(y,y_{\Gamma})\in \mathcal{H}^{1}_{L,0}$, $L\in [0,+\infty)$, we define
	\begin{align}
		\Vert(y,y_{\Gamma})\Vert_{\mathcal{H}^{1}_{L,0}}:=((y,y_{\Gamma}),(y,y_{\Gamma}))_{\mathcal{H}^{1}_{L,0}}^{1/2}= [ a_{L}((y,y_{\Gamma}),(y,y_{\Gamma}))]^{1/2}.
		\label{norm-hL}
	\end{align}
For $(y,y_{\Gamma})\in \mathcal{V}_{(0)}^1\subseteq\mathcal{H}^{1}_{L,0}$, $\Vert(y,y_{\Gamma})\Vert_{\mathcal{H}^{1}_{L,0}}$ does not depend on $L$, since the third term in $a_L$ simply vanishes.
	The following Poincar\'{e} type inequality has been proved in \cite[Lemma A.1]{KL}:
	\begin{lemma}
		There exists a constant $c_P>0$ depending only on $L\in [0,+\infty)$ and $\Omega$ such that
		\begin{align}
			\|(y,y_{\Gamma})\|_{\mathcal{L}^2}\leq c_P \Vert(y,y_{\Gamma})\Vert_{\mathcal{H}^{1}_{L,0}},\quad \forall\, (y,y_{\Gamma})\in \mathcal{H}^{1}_{L,0}.
			\label{Po3}
		\end{align}
	\end{lemma}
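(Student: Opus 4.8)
The plan is to establish \eqref{Po3} by a contradiction-and-compactness argument, treating the cases $L\in(0,+\infty)$ and $L=0$ together. Fix $L\in[0,+\infty)$ and suppose no such $c_P$ exists. Then there is a sequence $\{(y_n,y_{\Gamma,n})\}\subset\mathcal{H}^1_{L,0}$ with
\begin{align*}
\|(y_n,y_{\Gamma,n})\|_{\mathcal{L}^2}=1\ \ \text{for all }n,\qquad \|(y_n,y_{\Gamma,n})\|_{\mathcal{H}^1_{L,0}}\longrightarrow 0.
\end{align*}
In view of \eqref{norm-hL} and the definition of the bilinear form $a_L$, and since $\chi(L)\ge 0$, this forces $\|\nabla y_n\|_{H}\to 0$ and $\|\nabla_\Gamma y_{\Gamma,n}\|_{H_\Gamma}\to 0$; moreover $\|y_n|_\Gamma-y_{\Gamma,n}\|_{H_\Gamma}\to 0$ when $L\in(0,+\infty)$, while for $L=0$ one has the exact identity $y_n|_\Gamma=y_{\Gamma,n}$ since $(y_n,y_{\Gamma,n})\in\mathcal{V}^1_{(0)}$.

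From $\|(y_n,y_{\Gamma,n})\|_{\mathcal{L}^2}=1$ we get $\|y_n\|_H\le 1$ and $\|y_{\Gamma,n}\|_{H_\Gamma}\le 1$, hence $\{y_n\}$ is bounded in $V$ and $\{y_{\Gamma,n}\}$ is bounded in $V_\Gamma$. By the compact embeddings $V\hookrightarrow\hookrightarrow H$, $V_\Gamma\hookrightarrow\hookrightarrow H_\Gamma$ together with the compactness of the trace operator $V\to H_\Gamma$, we may pass to a (non-relabelled) subsequence so that $y_n\to y$ strongly in $H$ and weakly in $V$, $y_n|_\Gamma\to y|_\Gamma$ strongly in $H_\Gamma$, and $y_{\Gamma,n}\to y_\Gamma$ strongly in $H_\Gamma$ and weakly in $V_\Gamma$. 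Weak lower semicontinuity of $\|\nabla\cdot\|_H$ together with $\|\nabla y_n\|_H\to0$ yields $\nabla y=0$ a.e.\ in $\Omega$; since $\Omega$ is a (connected) domain, $y\equiv c$ for some constant $c\in\mathbb{R}$, and thus $y|_\Gamma\equiv c$.

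It then remains to identify $y_\Gamma$ and exploit the zero-mean constraint. Passing to the limit in $\|y_n|_\Gamma-y_{\Gamma,n}\|_{H_\Gamma}\to 0$ (for $L\in(0,+\infty)$), or directly in $y_n|_\Gamma=y_{\Gamma,n}$ (for $L=0$), using the strong $H_\Gamma$-convergences of both $y_n|_\Gamma$ and $y_{\Gamma,n}$, gives $y_\Gamma=c$ a.e.\ on $\Gamma$. Since the generalized mean $\overline{m}$ from \eqref{gmean} is a bounded linear functional on $\mathcal{L}^2$ and $\overline{m}(y_n,y_{\Gamma,n})=0$ for all $n$, we obtain $\overline{m}(y,y_\Gamma)=c=0$, whence $(y,y_\Gamma)=(0,0)$. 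But the strong convergence $(y_n,y_{\Gamma,n})\to(y,y_\Gamma)$ in $\mathcal{L}^2$ forces $\|(y,y_\Gamma)\|_{\mathcal{L}^2}=\lim_n\|(y_n,y_{\Gamma,n})\|_{\mathcal{L}^2}=1$, a contradiction; this proves \eqref{Po3}.

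The one delicate point---hence the main obstacle---is the boundary coupling term: for $L\in(0,+\infty)$ it enters only as the penalization $\chi(L)\|y|_\Gamma-y_\Gamma\|_{H_\Gamma}^2$ rather than as an exact constraint, so that the identification $y_\Gamma=y|_\Gamma$ in the limit must be recovered through the compactness of the trace map; the remainder of the argument is routine. An alternative, quantitative route avoids the contradiction altogether by combining the Poincar\'e--Wirtinger inequalities in $\Omega$ and on $\Gamma$ with a trace estimate, using the constraint $\overline{m}(y,y_\Gamma)=0$ to bound $|\langle y\rangle_\Omega|$ and $|\langle y_\Gamma\rangle_\Gamma|$ in terms of $\|\nabla y\|_H$, $\|\nabla_\Gamma y_\Gamma\|_{H_\Gamma}$ and $\chi(L)^{1/2}\|y|_\Gamma-y_\Gamma\|_{H_\Gamma}$; this produces an admissible $c_P$ of order $1+\sqrt{L}$, consistent with the necessary exclusion of $L=+\infty$.
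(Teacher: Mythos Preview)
Your contradiction-and-compactness argument is correct and entirely standard for Poincar\'e-type inequalities of this kind. Note, however, that the paper does not actually supply a proof of this lemma: it simply quotes the result from \cite[Lemma A.1]{KL}, so there is no ``paper's own proof'' to compare against. Your approach is almost certainly the same as (or very close to) the one in \cite{KL}, since compactness plus identification of the limit as a constant via the zero-mean constraint is the canonical route here. The quantitative alternative you sketch at the end, yielding $c_P$ of order $1+\sqrt{L}$, is a genuinely useful observation that goes slightly beyond what the paper records, and is consistent with the refined estimate \eqref{Lv0905} used later for $L\in(0,1]$.
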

	\noindent
	Hence, for every $L\in [0,+\infty)$, $\mathcal{H}^{1}_{L,0}$ is a Hilbert space with the inner product $(\cdot,\cdot)_{\mathcal{H}^{1}_{L,0}}^{1/2}$. The induced norm $\Vert\cdot\Vert_{\mathcal{H}^{1}_{L,0}}$ prescribed in \eqref{norm-hL} is equivalent to the standard one $\Vert\cdot\Vert_{\mathcal{H}^{1}}$ on $\mathcal{H}^{1}_{L,0}$.

	For $L\in[0,+\infty)$, let us consider the following elliptic boundary value problem
	\begin{align}
		\begin{cases}
			-\Delta u = y,&\quad \text{in }\Omega,\\
			-\Delta_{\Gamma}u_\Gamma +\partial_{\mathbf{n}}u= y_{\Gamma},&\quad \text{on }\Gamma,\\
			L\partial_{\mathbf{n}}u =u_\Gamma-u|_{\Gamma}, &\quad \mathrm{on\;}\Gamma.
	\end{cases}
		\label{ell2.2}
	\end{align}
	Define the space
	$$
	\mathcal{H}_{L,0}^{-1}=
	\begin{cases}
		\mathcal{H}_{(0)}^{-1} :=\big\{(y,y_{\Gamma})\in(\mathcal{H}^{1})'\;:\;\overline{m}(y,y_{\Gamma})=0\big\},\quad \text{if}\ L\in(0,+\infty),\\[1mm]
		\mathcal{V}_{(0)}^{-1} :=\big\{(y,y_{\Gamma})\in(\mathcal{V}^{1})'\;:\;\overline{m}(y,y_{\Gamma})=0\big\},\quad \ \,\text{if}\ L=0,
	\end{cases}
	$$
	where $\overline{m}$ is given by \eqref{gmean} if $L\in (0,+\infty)$, and for $L=0$, we take
	$$
	\overline{m}(y,y_{\Gamma})=\frac{\langle (y,y_{\Gamma}),(1,1)\rangle_{(\mathcal{V}^{1})', \mathcal{V}^{1}}}{|\Omega|+|\Gamma|}.
	$$
Then the chain of inclusions holds
$$
\mathcal{H}^{1}_{L,0} \subset \mathcal{L}_{(0)}^2\subset \mathcal{H}_{L,0}^{-1} \subset (\mathcal{H}_{L}^{1})'\subset (\mathcal{H}^{1}_{L,0})'.
$$
	It has been shown in \cite[Theorem 3.3]{KL} that for every $(y,y_{\Gamma})\in\mathcal{H}_{L,0}^{-1}$, problem \eqref{ell2.2} admits a unique weak solution $(u,u_\Gamma)\in\mathcal{H}_{L,0}^{1}$ satisfying the weak formulation
	\begin{align}
		a_L((u,u_{\Gamma}),(\zeta,\zeta_{\Gamma})) = \langle(y,y_{\Gamma}),(\zeta,\zeta_{\Gamma})\rangle_{(\mathcal{H}_L^{1})',\mathcal{H}_L^{1}},
		\quad \forall\, (\zeta,\zeta_{\Gamma})\in\mathcal{H}_L^{1},
		\notag
	\end{align}
	and the $\mathcal{H}^{1}$-estimate
	\begin{align}
		\|(u,u_{\Gamma})\|_{\mathcal{H}^1}\leq C\|(y,y_{\Gamma})\|_{(\mathcal{H}^{1}_L)'},\notag
	\end{align}
	for some constant $C>0$ depending only on $L$ and $\Omega$. Furthermore, if the domain $\Omega$ is of class $C^{k+2}$ and $(y,y_{\Gamma})\in \mathcal{H}_{L,0}^{k}$, $k\in \mathbb{N}$, the following regularity estimate holds
	\begin{align}
		\|(u,u_{\Gamma})\|_{\mathcal{H}^{k+2}}\leq C\|(y,y_{\Gamma})\|_{\mathcal{H}^{k}}.
		\notag
	\end{align}
	The above facts enable us to define the solution operator $$\mathfrak{S}^{L}:\mathcal{H}_{L,0}^{-1}\rightarrow\mathcal{H}_{L,0}^{1},\quad(y,y_{\Gamma})\mapsto (u,u_\Gamma)=\mathfrak{S}^{L}(y,y_{\Gamma})=(\mathfrak{S}^{L}_{\Omega}(y,y_{\Gamma}),\mathfrak{S}^{L}_{\Gamma}(y,y_{\Gamma})).
	$$
	We mention that similar results for the special case $L=0$ have been presented in \cite{CF15}. A direct calculation yields that
	$$
	((u,u_{\Gamma}), (z,z_\Gamma))_{\mathcal{L}^2}
	=((u,u_{\Gamma}), \mathfrak{S}^{L}(z,z_\Gamma))_{\mathcal{H}^1_{L,0}},\quad \forall\, (u,u_{\Gamma})\in \mathcal{H}_{L,0}^1,\ (z,z_\Gamma)\in \mathcal{L}^2_{(0)}.
	$$
	Thanks to \cite[Corollary 3.5]{KL}, we can introduce the inner product on $\mathcal{H}_{L,0}^{-1}$  as
	\begin{align}
		((y,y_{\Gamma}),(z,z_{\Gamma}))_{L,0,*}&:=(\mathfrak{S}^{L}(y,y_{\Gamma}),\mathfrak{S}^{L}(z,z_{\Gamma}))_{\mathcal{H}^{1}_{L,0}},
		\quad \forall\, (y,y_{\Gamma}), (z,z_{\Gamma})\in \mathcal{H}_{L,0}^{-1}.\notag
	\end{align}
	The associated norm $\Vert(y,y_{\Gamma})\Vert_{L,0,*} :=((y,y_{\Gamma}),(y,y_{\Gamma}))_{L,0,*}^{1/2}$
	is equivalent to the standard dual norm $\|\cdot\|_{(\mathcal{H}_L^1)'}$ on $\mathcal{H}_{L,0}^{-1}$.
	Then it follows that
	\begin{align}
		\|(y,y_{\Gamma})\|_{L,\ast}&:=(\Vert(y,y_{\Gamma})-\overline{m}(y,y_{\Gamma}) \mathbf{1}\Vert_{L,0,*}^2+ |\overline{m}(y,y_{\Gamma})|^2)^{1/2},
		\quad \forall\, (y,y_{\Gamma})\in (\mathcal{H}_L^{1})',\notag
	\end{align}
	is equivalent to the usual dual norm $\|\cdot\|_{(\mathcal{H}_L^1)'}$ on $(\mathcal{H}_L^{1})'$.

	\subsection{Problem setting}
	For any arbitrary but given final time $T\in(0,+\infty)$, we set $$Q_{T}:=\Omega\times(0,T),\quad \Sigma_{T}:=\Gamma\times(0,T).$$ If $T=+\infty$, we simply set $Q:=\Omega\times(0,+\infty)$ and $\Sigma:=\Gamma\times(0,+\infty)$.
	In view of the decomposition for the bulk and surface potentials
	$$F=\widehat{\beta}+\widehat{\pi},\quad G=\widehat{\beta}_{\Gamma}+\widehat{\pi}_{\Gamma},$$
	we reformulate our target problem \eqref{ch1}--\eqref{ch7} as follows:
	\begin{align}
		&\partial_{t}\varphi=\Delta\mu,&&\text{in }Q_{T},\label{model1}\\
		&\mu=a_{\Omega}\varphi-J\ast\varphi+\beta(\varphi)+\pi(\varphi),&&\text{in }Q_{T},\label{model2}\\
		&\partial_{t}\psi=\Delta_{\Gamma}\theta-\partial_{\mathbf{n}}\mu,&&\text{on }\Sigma_{T},\label{model3}\\
		&\theta=a_{\Gamma}\psi-K\circledast\psi+\beta_{\Gamma}(\psi)+\pi_{\Gamma}(\psi),&&\text{on }\Sigma_{T},\label{model4}\\
		&\begin{cases}
			L\partial_{\mathbf{n}}\mu=\theta-\mu,&\text{if }L\in[0,+\infty),\\
			\partial_{\mathbf{n}}\mu=0,&\text{if }L=+\infty,
		\end{cases}
		&&\text{on }\Sigma_{T},\label{model5}\\
		&\varphi|_{t=0}=\varphi_{0},&&\text{in }\Omega,\label{phiini}\\
		&\psi|_{t=0}=\psi_{0},&&\text{on }\Gamma.\label{psiini}
	\end{align}
 In this setting, the total free energy of system \eqref{model1}--\eqref{model5} can be expressed equivalently as
	\begin{align}
		E(\boldsymbol{\varphi})&=\frac{1}{2}\int_{\Omega}a_{\Omega}\varphi^{2}\,\mathrm{d}x-\frac{1}{2}\int_{\Omega}(J\ast\varphi)\varphi\,\mathrm{d}x+\int_{\Omega}(\widehat{\beta}(\varphi)+\widehat{\pi}(\varphi))\,\mathrm{d}x\notag\\
		&\quad+\frac{1}{2}\int_{\Gamma}a_{\Gamma}\psi^{2}\,\mathrm{d}S-\frac{1}{2}\int_{\Gamma}(K\circledast\psi)\psi\,\mathrm{d}S+\int_{\Gamma}(\widehat{\beta}_{\Gamma}(\psi)+\widehat{\pi}_{\Gamma}(\psi))\,\mathrm{d}S.\notag
	\end{align}
	Throughout this paper, we make the following assumptions.
	\begin{description}
		\item[(A1)] The convolution kernels $J,K:\mathbb{R}^{d}\rightarrow\mathbb{R}$ are even (i.e., $J(x)=J(-x)$ and $K(x)=K(-x)$ for almost all $x\in\mathbb{R}^{d}$), nonnegative almost everywhere and satisfy $J\in W^{1,1}(\mathbb{R}^{d})$ and $K\in W^{2,r}(\mathbb{R}^{d})$ with $r>1$. Note that the regularity assumption on $K$ is higher than that on $J$ since the traces $K(x-\cdot)|_\Gamma$ and $\nabla_{\Gamma}K(x-\cdot)|_\Gamma$ must exist and belong to $L^r(\Gamma)$ for all $x\in\Gamma$ (see \cite{KS}). In addition, we suppose that
		\begin{align}
		&a_{\ast}:=\inf_{x\in\Omega}\int_{\Omega}J(x-y)\,\mathrm{d}y>0, &&a_{\circledast}:=\inf_{x\in\Gamma}\int_{\Gamma}K(x-y)\,\mathrm{d}S_{y}>0, \label{2.1}\\
	&a^{\ast}:=\sup_{x\in\Omega}\int_{\Omega}J(x-y)\,\mathrm{d}y<+\infty, &&a^{\circledast}:=\sup_{x\in\Gamma}\int_{\Gamma}K(x-y)\,\mathrm{d}S_{y}<+\infty, \label{2.2}\\
			&b^{\ast}:=\sup_{x\in\Omega}\int_{\Omega}|\nabla J(x-y)|\,\mathrm{d}y<+\infty,&&b^{\circledast} :=\sup_{x\in\Gamma}\int_{\Gamma}|\nabla_{\Gamma}K(x-y)|\,\mathrm{d}S_{y}<+\infty. \label{2.3}
		\end{align}
		\item[(A2)] The nonlinear convex functions $\widehat{\beta}$, $\widehat{\beta}_{\Gamma}$ belong to $C([-1,1])\cap C^{2}(-1,1)$. Their derivatives are denoted by $\beta=\widehat{\beta}'$, $\beta_{\Gamma}=\widehat{\beta}_{\Gamma}'$ such that $\beta$, $\beta_{\Gamma}\in C^{1}(-1,1)$ are monotone increasing functions satisfying
		\begin{align*}
			&\lim_{s\rightarrow-1}\beta(s)=-\infty,\quad\lim_{s\rightarrow1}\beta(s)=+\infty,\\
				&\lim_{s\rightarrow-1}\beta_{\Gamma}(s)=-\infty, \quad\lim_{s\rightarrow1}\beta_{\Gamma}(s)=+\infty.
		\end{align*}
		Besides, their derivatives $\beta'$, $\beta_{\Gamma}'$ fulfill
		$$\beta'(s)\geq\alpha,\quad\beta_{\Gamma}'(s)\geq\alpha,\quad\forall\,s\in(-1,1)$$
		for some constant $\alpha>0$.
		We extend $\widehat{\beta}(s)=\widehat{\beta}_\Gamma(s)=+\infty$ for $s\notin[-1,1]$.
		Without loss of generality, we also assume $\widehat{\beta}(0)=\widehat{\beta}_\Gamma(0)=\beta(0)=\beta_\Gamma(0)=0$.
		This implies that $\widehat{\beta}(s)$, $\widehat{\beta}_\Gamma(s)\geq0$ for $s\in[-1,1]$.
		\item[(A3)] $\widehat{\pi}$, $\widehat{\pi}_{\Gamma}\in C^{1}(\mathbb{R})$ and $\pi:=\widehat{\pi}'$, $\pi_{\Gamma}:=\widehat{\pi}_{\Gamma}'$ are differentiable and Lipschitz continuous on $\mathbb{R}$ with Lipschitz constants $\gamma_{1}$ and $\gamma_{2}$, respectively.
		Furthermore, $\gamma_{1}$ and $\gamma_{2}$ satisfy
		\begin{align*}
		0<\gamma_{1}<a_{\ast}+\frac{\alpha}{1+\alpha},\quad0<\gamma_{2}<a_{\circledast}+\frac{\alpha}{1+\alpha}.
		\end{align*}
		\item[(A4)] The initial datum $\boldsymbol{\varphi}_{0}:=(\varphi_{0},\psi_{0})\in\mathcal{L}^{2}$ satisfies $\widehat{\beta}(\varphi_{0})\in L^{1}(\Omega)$, $\widehat{\beta}_{\Gamma}(\psi_{0})\in L^{1}(\Gamma)$ and
		\begin{align*}
		\begin{cases}
		\overline{m}_{0}=\overline{m}(\boldsymbol{\varphi}_{0})\in(-1,1),&\text{if }L\in[0,+\infty),\\[1mm]
		m_{\Omega,0}:=\langle\varphi_{0}\rangle_{\Omega}\in(-1,1),\quad	m_{\Gamma,0}:=\langle\psi_{0}\rangle_{\Gamma}\in(-1,1),&\text{if }L=+\infty.
		\end{cases}
		\end{align*}
	\end{description}
	
	\begin{remark}\rm
	\label{kernel}
	The assumptions on the kernel $J$ in $\mathbf{(A1)}$ are typical in the literature (cf. \cite{DST20,KS}) and sometimes a weaker assumption $J\in W^{1,1}_{\text{loc}}(\mathbb{R}^d)$ is used (see, e.g., \cite{GGG}). As explained in \cite[Remark 2.1]{KS}, demanding $J\in  W^{1,1}(\mathbb{R}^d)$ instead of $J\in W^{1,1}_{\text{loc}}(\mathbb{R}^d)$ is not a real restriction. Analogous arguments can be applied for the assumptions on $K$.
	\end{remark}
	
Below we report some basic properties of the kernels $J$, $K$ and the  corresponding convolutions (see \cite{KS}). As a direct consequence of \eqref{2.2} and \eqref{2.3}, we find for all functions $u\in L^{1}(\Omega)$ and $v\in L^{1}(\Gamma)$, it holds that $J\ast u\in W^{1,1}(\Omega)$ and $K\circledast v\in W^{1,1}(\Gamma)$ with
   \begin{align}
   	&\Vert J\ast u\Vert_{L^{1}(\Omega)}\leq a^{\ast}\Vert u\Vert_{L^{1}(\Omega)},\notag\\
   	&\Vert K\circledast v\Vert_{L^{1}(\Gamma)}\leq a^{\circledast}\Vert v\Vert_{L^{1}(\Gamma)},\notag\\
   	&\Vert\nabla(J\ast u)\Vert_{L^{1}(\Omega)}=\Vert(\nabla J)\ast u\Vert_{L^{1}(\Omega)}\leq b^{\ast}\Vert u\Vert_{L^{1}(\Omega)},\notag\\
   	&\Vert\nabla_{\Gamma}(K\circledast v)\Vert_{L^{1}(\Gamma)}=\Vert(\nabla_{\Gamma}K)\circledast v\Vert_{L^{1}(\Gamma)}\leq b^{\circledast}\Vert v\Vert_{L^{1}(\Gamma)}.\notag
   \end{align}

	\subsection{Statement of results}
	We now state our main results in this paper. The first part concerns the well-posedness of problem \eqref{model1}--\eqref{psiini}.
	\begin{definition}
	\label{weakdefn}
	Let $T\in(0,+\infty)$ be an arbitrary but given final time and $L\in[0,+\infty]$. Suppose that the assumptions $\mathbf{(A1)}$--$\mathbf{(A4)}$ are satisfied. The pair $(\boldsymbol{\varphi},\boldsymbol{\mu})$ with $
\boldsymbol{\varphi}:=(\varphi, \psi)$, $\boldsymbol{\mu}:=(\mu, \theta)$,
is called a weak solution to problem \eqref{model1}--\eqref{psiini} on $[0,T]$,
	if the following conditions are fulfilled:
	\begin{description}
	\item[(i)] The functions $(\boldsymbol{\varphi},\boldsymbol{\mu})$ satisfy the regularity properties
	\begin{align}
	&\begin{cases}
		\varphi\in H^{1}(0,T;V')\cap L^{\infty}(0,T;H)\cap L^{2}(0,T;V),\\[1mm]
		\psi\in H^{1}(0,T;V_{\Gamma}')\cap L^{\infty}(0,T;H_{\Gamma})\cap L^{2}(0,T;V_{\Gamma}),
     \end{cases}\quad\text{if }L\in(0,+\infty],\notag\\
	&\boldsymbol{\varphi}\in H^{1}(0,T;(\mathcal{V}^{1})')\cap L^{\infty}(0,T;\mathcal{L}^{2})\cap L^{2}(0,T;\mathcal{H}^{1}),\quad\quad\text{if }L=0,\notag\\
	&\mu, \,\beta(\varphi)\in L^{2}(0,T;V),\quad\theta,\,\beta_{\Gamma}(\psi)\in L^{2}(0,T;V_{\Gamma}),\notag
	\end{align}
and
	\begin{align}
		&\varphi\in L^{\infty}(Q_T),\text{ with }|\varphi(x,t)|<1\text{ a.e. in\ }  Q_{T},\label{phiinfty}\\[1mm]
		&\psi\in L^{\infty}(\Sigma_T),\text{ with }|\psi(x,t)|<1\text{ a.e. on }  \Sigma_{T}.\label{psiinfty}
	\end{align}
	\item[(ii)] If $L\in(0,+\infty]$, the following variational formulations
	\begin{align}
	&\langle\partial_{t}\varphi,z\rangle_{V',V} =-\int_{\Omega}\nabla\mu\cdot\nabla z\,\mathrm{d}x+\chi(L)\int_{\Gamma}(\theta-\mu)z\,\mathrm{d}S, \quad\forall\,z\in V,\label{weak1}\\
	&\langle\partial_{t}\psi,z_{\Gamma}\rangle_{V_{\Gamma}',V_{\Gamma}} =-\int_{\Gamma}\nabla_{\Gamma}\theta\cdot\nabla_{\Gamma} z_{\Gamma}\,\mathrm{d}S-\chi(L) \int_{\Gamma}(\theta-\mu)z_{\Gamma}\,\mathrm{d}S, \quad\forall\,z_\Gamma \in V_\Gamma,\label{weak2}
	\end{align}
hold for almost all $t\in(0,T)$,
while for $L=0$, the following variational formulation
	\begin{align}
	\langle\partial_{t}\boldsymbol{\varphi}, \boldsymbol{z}\rangle_{(\mathcal{V}^{1})',\mathcal{V}^{1}} =-\int_{\Omega}\nabla\mu\cdot\nabla z\,\mathrm{d}x-\int_{\Gamma}\nabla_{\Gamma}\theta\cdot\nabla_{\Gamma} z_{\Gamma}\,\mathrm{d}S,\quad\forall\,\boldsymbol{z}\in\mathcal{V}^1, \label{0weak1}
	\end{align}
	holds for almost all $t\in(0,T)$.
	The bulk and boundary chemical potentials $\mu$, $\theta$ satisfy
	\begin{align}
	&\mu=a_{\Omega}\varphi-J\ast\varphi+\beta(\varphi)+\pi(\varphi),\quad\text{a.e. in }Q_{T},\label{weak3}\\[1mm]
	&\theta=a_{\Gamma}\psi-K\circledast\psi+\beta_{\Gamma}(\psi)+\pi_{\Gamma}(\psi),\quad\text{a.e. on }\Sigma_{T}.\label{weak4}
	\end{align}
	Furthermore, the initial conditions $\varphi|_{t=0}=\varphi_0$ and $\psi|_{t=0}=\psi_0$ are satisfied almost everywhere in $\Omega$ and on $\Gamma$, respectively.
	\item[(iii)] The energy equality
	\begin{align}
		E(\boldsymbol{\varphi}(t))+\int_{0}^{t}\Big(\Vert\nabla\mu(s)\Vert_{H}^{2}+\Vert\nabla_{\Gamma}\theta(s)\Vert_{H_{\Gamma}}^{2}+\chi(L)\Vert\theta(s)-\mu(s)\Vert_{H_{\Gamma}}^{2}\Big)\,\mathrm{d}s= E(\boldsymbol{\varphi}_{0})\label{energyeq}
	\end{align}
	holds for all $t\in[0,T]$.
	\end{description}
	\end{definition}
Then we have
	\begin{theorem}[Existence of global weak solutions]
	\label{exist}
	Let $\Omega\subset\mathbb{R}^{d}$ $(d\in\{2,3\})$ be a bounded domain with smooth boundary $\Gamma=\partial\Omega$ and $T\in(0,+\infty)$ be an arbitrary but given final time.
	Suppose that the assumptions $\mathbf{(A1)}$--$\mathbf{(A4)}$ are satisfied. Then problem \eqref{model1}--\eqref{psiini} admits a global weak solution $(\boldsymbol{\varphi},\boldsymbol{\mu})$ on $[0,T]$ in the sense of Definition \ref{weakdefn}.
	\end{theorem}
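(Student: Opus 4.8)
The plan is to construct a weak solution by a three-level approximation: a Moreau--Yosida regularization of the singular parts $\beta$, $\beta_{\Gamma}$; a Faedo--Galerkin discretization of the regularized problem for a fixed $L\in(0,+\infty)$; and, finally, the asymptotic passages $L\to0$ and $L\to+\infty$ to reach the endpoint cases. \textbf{Step 1 (Yosida regularization).} For $\varepsilon\in(0,1]$ replace $\beta$, $\beta_{\Gamma}$ by their Yosida approximations $\beta_{\varepsilon}$, $\beta_{\Gamma,\varepsilon}$, which are globally Lipschitz, monotone increasing, vanish at the origin, and satisfy $\beta_{\varepsilon}'(s)\ge\alpha/(1+\alpha\varepsilon)\ge\alpha/(1+\alpha)$ together with the standard envelope bounds $|\beta_{\varepsilon}(s)|\le|\beta(s)|$, $0\le\widehat{\beta}_{\varepsilon}(s)\le\widehat{\beta}(s)$ and $\widehat{\beta}_{\varepsilon}(s)\ge\frac{\alpha}{2(1+\alpha)}s^{2}$ (likewise for $\beta_{\Gamma,\varepsilon}$). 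This produces the regularized problem obtained from \eqref{model1}--\eqref{psiini} by substituting $\beta_{\varepsilon}$, $\beta_{\Gamma,\varepsilon}$; the strict inequalities $\gamma_{1}<a_{\ast}+\alpha/(1+\alpha)$ and $\gamma_{2}<a_{\circledast}+\alpha/(1+\alpha)$ in $\mathbf{(A3)}$ are precisely what guarantees that the relevant coercivity constants survive the regularization uniformly in $\varepsilon$ (in particular $a_{\Omega}+\beta_{\varepsilon}'(\cdot)+\pi'(\cdot)\ge a_{\ast}+\alpha/(1+\alpha)-\gamma_{1}>0$).

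\textbf{Step 2 (Galerkin scheme and uniform estimates).} For fixed $L\in(0,+\infty)$, take as Galerkin basis the eigenfunctions $\{(w_{j},w_{\Gamma,j})\}_{j\ge1}\subset\mathcal{H}^{2}$ of the operator associated with the $\mathcal{H}^{1}$-coercive form $a_{L}(\cdot,\cdot)+(\cdot,\cdot)_{\mathcal{L}^{2}}$ (it has compact resolvent since $\mathcal{H}^{1}\hookrightarrow\hookrightarrow\mathcal{L}^{2}$, and $(1,1)$ is an eigenfunction, so total mass is exactly conserved at the discrete level), project \eqref{weak1}--\eqref{weak2} onto the span of the first $n$ of them, and solve the resulting ODE system; local solvability follows from Cauchy--Lipschitz since $\beta_{\varepsilon}$, $\pi$, $\beta_{\Gamma,\varepsilon}$, $\pi_{\Gamma}$ are now Lipschitz. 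The a priori bounds — uniform in $n$ and in $\varepsilon\in(0,1]$ — are obtained as follows: testing the two discrete equations with $\mu_{n}$, $\theta_{n}$ and summing yields the energy identity, which by $\mathbf{(A1)}$--$\mathbf{(A3)}$ controls $(\varphi_{n},\psi_{n})$ in $L^{\infty}(0,T;\mathcal{L}^{2})$, $\widehat{\beta}_{\varepsilon}(\varphi_{n})$, $\widehat{\beta}_{\Gamma,\varepsilon}(\psi_{n})$ in $L^{\infty}(0,T;L^{1})$, and $\nabla\mu_{n}$, $\nabla_{\Gamma}\theta_{n}$, $L^{-1/2}(\theta_{n}-\mu_{n})$ in $L^{2}(0,T;L^{2})$; since $\overline{m}(\boldsymbol{\varphi}_{n}(t))=\overline{m}_{0}\in(-1,1)$, a Gilardi--Miranville--Schimperna-type $L^{1}$-estimate run simultaneously in the bulk and on the boundary gives $\beta_{\varepsilon}(\varphi_{n})$ in $L^{\infty}(0,T;L^{1}(\Omega))$, $\beta_{\Gamma,\varepsilon}(\psi_{n})$ in $L^{\infty}(0,T;L^{1}(\Gamma))$, and since the zero-mean part $\mathbf{P}(\mu_{n},\theta_{n})$ is already controlled in $\mathcal{H}^{1}$ through the $\mathcal{H}^{1}_{L,0}$-coercivity of $a_{L}$ (see \eqref{Po3}), one concludes $(\mu_{n},\theta_{n})$ in $L^{2}(0,T;\mathcal{H}^{1})$; testing the gradient of the $\mu$-identity against $\nabla\varphi_{n}$ and exploiting $a_{\Omega}+\beta_{\varepsilon}'(\varphi_{n})+\pi'(\varphi_{n})\ge a_{\ast}+\alpha/(1+\alpha)-\gamma_{1}>0$ together with $\mathbf{(A1)}$ yields $\varphi_{n}$ in $L^{2}(0,T;V)$ (similarly $\psi_{n}$ in $L^{2}(0,T;V_{\Gamma})$), whence also $\beta_{\varepsilon}(\varphi_{n})$ in $L^{2}(0,T;V)$ and $\beta_{\Gamma,\varepsilon}(\psi_{n})$ in $L^{2}(0,T;V_{\Gamma})$; finally a comparison in \eqref{weak1}--\eqref{weak2} bounds $\partial_{t}\varphi_{n}$ in $L^{2}(0,T;V')$ and $\partial_{t}\psi_{n}$ in $L^{2}(0,T;V_{\Gamma}')$.

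\textbf{Step 3 (passages to the limit).} By the above bounds and the Aubin--Lions--Simon lemma, along a subsequence $\varphi_{n}\to\varphi_{\varepsilon}$, $\psi_{n}\to\psi_{\varepsilon}$ strongly in $L^{2}(0,T;H)$, $L^{2}(0,T;H_{\Gamma})$ and a.e., while $\mu_{n}$, $\theta_{n}$, $\beta_{\varepsilon}(\varphi_{n})$, $\beta_{\Gamma,\varepsilon}(\psi_{n})$, $\pi(\varphi_{n})$, $\pi_{\Gamma}(\psi_{n})$ converge weakly/weakly-$*$ in the respective spaces; continuity of the Lipschitz nonlinearities identifies the limits and one passes to the limit in the variational formulation, obtaining a weak solution $(\boldsymbol{\varphi}_{\varepsilon},\boldsymbol{\mu}_{\varepsilon})$ of the $\varepsilon$-problem which, having the stated regularity, satisfies the energy \emph{equality} via the chain rule for $\langle\partial_{t}\boldsymbol{\varphi}_{\varepsilon},\boldsymbol{\mu}_{\varepsilon}\rangle$. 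The same estimates being uniform in $\varepsilon$, one repeats the argument with $\varepsilon$ in place of $n$: Fatou applied to the $L^{\infty}(0,T;L^{1})$-bounds of $\widehat{\beta}_{\varepsilon}(\varphi_{\varepsilon})$, $\widehat{\beta}_{\Gamma,\varepsilon}(\psi_{\varepsilon})$ forces $\widehat{\beta}(\varphi),\widehat{\beta}_{\Gamma}(\psi)\in L^{\infty}(0,T;L^{1})$, hence $|\varphi|\le1$, $|\psi|\le1$ a.e., and the uniform $L^{1}$-bounds on $\beta_{\varepsilon}(\varphi_{\varepsilon})$, $\beta_{\Gamma,\varepsilon}(\psi_{\varepsilon})$ together with $\beta(s),\beta_{\Gamma}(s)\to\pm\infty$ as $s\to\pm1$ upgrade this to the strict bounds \eqref{phiinfty}--\eqref{psiinfty}; the maximal monotonicity of $\beta$, $\beta_{\Gamma}$ (single-valued on $(-1,1)$) identifies the weak limits of $\beta_{\varepsilon}(\varphi_{\varepsilon})$, $\beta_{\Gamma,\varepsilon}(\psi_{\varepsilon})$ with $\beta(\varphi)$, $\beta_{\Gamma}(\psi)$, so that \eqref{weak3}--\eqref{weak4} hold; passing to the limit in the equations and recovering \eqref{energyeq} (weak lower semicontinuity plus the reverse inequality from the available regularity) completes the case $L\in(0,+\infty)$. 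For $L=+\infty$ the system decouples into \eqref{infch1}--\eqref{infch4} and \eqref{surch1}--\eqref{surch3}, each treated by the same Yosida--Galerkin construction. For $L=0$ one starts from the solutions $\boldsymbol{\varphi}^{L}$ built above for $L\in(0,1]$: all Step~2 estimates are uniform in $L\in(0,1]$ and, in addition, $\|\theta^{L}-\mu^{L}\|_{L^{2}(\Sigma_{T})}\le C\sqrt{L}\to0$, which forces $\mu=\theta$ on $\Sigma_{T}$ (whence the $\mathcal{V}^{1}$-framework), so passing to the limit $L\to0$ as in \cite{LvWu} gives a weak solution satisfying \eqref{0weak1}.

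\textbf{Main obstacle.} The delicate point is the $n$- and $\varepsilon$-uniform control of the chemical potentials in $L^{2}(0,T;\mathcal{H}^{1})$ — equivalently, the uniform $L^{1}$-bounds on $\beta_{\varepsilon}(\varphi_{n})$ and $\beta_{\Gamma,\varepsilon}(\psi_{n})$. For $L\in(0,+\infty)$ only the \emph{total} mass $\overline{m}$ is conserved, not the separate bulk and boundary means, so the classical single-domain argument cannot be applied verbatim on $\Omega$ and on $\Gamma$ independently; one must run it simultaneously, using the coupling dissipation $L^{-1}\|\theta_{n}-\mu_{n}\|_{L^{2}(\Gamma)}^{2}$ and the coercivity of $a_{L}$ on $\mathcal{H}^{1}_{L,0}$ to transfer mean information between bulk and boundary while absorbing the nonlocal cross-terms $\int_{\Omega}(J\ast\varphi_{n})\varphi_{n}$ and $\int_{\Gamma}(K\circledast\psi_{n})\psi_{n}$. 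A secondary technicality is making the $L^{2}(0,T;V)\times L^{2}(0,T;V_{\Gamma})$-estimate for $(\varphi_{n},\psi_{n})$ rigorous at the Galerkin level, which is why the basis is chosen adapted to the form $a_{L}$ rather than, say, to the plain Neumann Laplacian, so that projections of the chemical potentials remain consistent with the discrete equations.
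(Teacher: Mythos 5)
Your three-level scheme (Yosida regularization $\to$ Faedo--Galerkin $\to$ limits) is the same architecture the paper uses, and you correctly identify the crux: the joint Gilardi--Miranville--Schimperna-type $L^1$-estimate for $\beta_\varepsilon(\varphi)$, $\beta_{\Gamma,\varepsilon}(\psi)$, which must exploit conservation of the \emph{total} mean $\overline{m}$ (replacing $\boldsymbol{\mu}_\varepsilon$ by $\boldsymbol{\mu}_\varepsilon-\overline{m}(\boldsymbol{\mu}_\varepsilon)\boldsymbol{1}$ inside the pairing with $\boldsymbol{\varphi}_\varepsilon-\overline{m}_0\boldsymbol{1}$ and then invoking the Poincar\'e inequality \eqref{Po3}). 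Two points where you diverge from the paper are worth flagging.

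First, you discretize with the eigenbasis of the coupled bulk--surface form $a_L(\cdot,\cdot)+(\cdot,\cdot)_{\mathcal{L}^2}$ on $\mathcal{H}^1$, expanding $(\varphi_n,\psi_n)$ and $(\mu_n,\theta_n)$ with shared coefficients. The paper instead uses two \emph{independent} bases, the Neumann Laplacian eigenfunctions in $\Omega$ and the Laplace--Beltrami eigenfunctions on $\Gamma$, so $\varphi_m\in\mathcal{A}_m$ and $\psi_m\in\mathcal{B}_m$ with separate coefficient vectors. Both choices render all Galerkin quantities smooth enough that no normal derivative of $\mu$ or $\varphi$ ever needs to be interpreted; and both conserve total mass exactly at the discrete level, since the constants belong to the span. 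Your choice has the slight aesthetic advantage of encoding the coupled structure directly, the paper's has the advantage that testing \eqref{FGweak3} by $-\Delta\varphi_m\in\mathcal{A}_m$ and \eqref{FGweak4} by $-\Delta_\Gamma\psi_m\in\mathcal{B}_m$ is immediately licit, which is how the $L^2(0,T;\mathcal{H}^1)$-bound on $\boldsymbol{\varphi}_m$ is obtained. Either works.

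Second, and more substantively, your justification of the energy \emph{equality} \eqref{energyeq} for the limit problem is not actually argued: you write ``weak lower semicontinuity plus the reverse inequality from the available regularity,'' but the basic regularity $\varphi\in H^1(0,T;V')\cap L^2(0,T;V)$, $\mu,\beta(\varphi)\in L^2(0,T;V)$ does not, by itself, produce the reverse inequality --- it only makes the chain-rule pairing $\langle\partial_t\varphi,\beta(\varphi)\rangle_{V',V}$ well defined. To close the argument one must either invoke the abstract chain-rule lemma for convex functionals (Colli--Visintin/Br\'ezis type, applied to $\widehat{\beta}$ and $\widehat{\beta}_\Gamma$, together with the symmetry of $J$, $K$ for the quadratic nonlocal pieces), which you do not name, or do what the paper does: prove by a difference-quotient argument that $\partial_t\boldsymbol{\varphi}\in L^\infty(\tau,+\infty;\mathcal{H}^{-1}_{L,0})\cap L^2_{\mathrm{uloc}}(\tau,+\infty;\mathcal{L}^2)$ for every $\tau>0$ (estimate \eqref{Tuni3}), which upgrades the integrand to $\mathcal{L}^2$ and allows testing \eqref{weak3}--\eqref{weak4} pointwise in time by $\partial_t\varphi$, $\partial_t\psi$ to obtain \eqref{Lv-1} on $[\tau,T]$ and then let $\tau\to0$. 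Your route can be made rigorous but, as stated, the decisive lemma is missing; note also that the paper needs the difference-quotient argument anyway for Theorem \ref{regularize}, so the two pieces of work are amortized.
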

	
	\begin{remark}\rm
	\label{L2-conti}
	By virtue of the regularity of $\boldsymbol{\varphi}$ and $\partial_{t}\boldsymbol{\varphi}$,
	we have $\boldsymbol{\varphi}\in C([0,T];\mathcal{L}^{2})$.
	This fact together with Lemma \ref{lem-subsequent} implies that $J\ast\varphi\in C([0,T];V)$
	and $K\circledast\psi\in C([0,T];V_{\Gamma})$.
	\end{remark}
	
	\begin{theorem}[Continuous dependence estimates and uniqueness]
		\label{contidependence}
	Suppose that the assumptions of Theorem \ref{exist} are satisfied.
	Let $(\boldsymbol{\varphi}_{i},\boldsymbol{\mu}_{i})$
	be two weak solutions to problem \eqref{model1}--\eqref{psiini}
	corresponding to the initial data $\boldsymbol{\varphi}_{0,i}$ $(i\in\{1,2\})$ with
	\begin{align*}
		\begin{cases}
	 \overline{m}(\boldsymbol{\varphi}_{0,1}) =\overline{m}(\boldsymbol{\varphi}_{0,2})=\overline{m}_{0},&\text{if }L\in[0,+\infty),\\[1mm]
	 \langle\varphi_{0,1}\rangle_{\Omega}=\langle\varphi_{0,2}\rangle_{\Omega} =m_{\Omega,0},\quad \langle\psi_{0,1}\rangle_{\Gamma}=\langle\psi_{0,2}\rangle_{\Gamma} =m_{\Gamma,0},&\text{if }L=+\infty.
	 \end{cases}
	\end{align*}
	 Then for almost all $t\in[0,T]$, we have
	\begin{align}
		&\Vert\boldsymbol{\varphi}_{1}(t)-\boldsymbol{\varphi}_{2}(t)\Vert_{L,0,\ast}^{2} +\int_{0}^{t}\Vert\boldsymbol{\varphi}_{1}(s)-\boldsymbol{\varphi}_{2}(s)\Vert_{\mathcal{L}^{2}}^{2}\,\mathrm{d}s\leq C\Vert\boldsymbol{\varphi}_{0,1}-\boldsymbol{\varphi}_{0,2}\Vert_{L,0,\ast}^{2}, \quad\text{if }L\in[0,+\infty),\label{contiesti}
	\end{align}
	and
	\begin{align}
		&\Vert\varphi_{1}(t)-\varphi_{2}(t)\Vert_{V^{\ast}_{0}}^{2} +\Vert\psi_{1}(t)-\psi_{2}(t)\Vert_{V^{\ast}_{\Gamma,0}}^{2}+\int_{0}^{t}\Vert\boldsymbol{\varphi}_{1}(s)-\boldsymbol{\varphi}_{2}(s)\Vert_{\mathcal{L}^{2}}^{2}\,\mathrm{d}s\notag\\
		&\qquad\leq C\big(\Vert\varphi_{0,1}-\varphi_{0,2}\Vert_{V^{\ast}_{0}}^{2} +\Vert\psi_{0,1}-\psi_{0,2}\Vert_{V^{\ast}_{\Gamma,0}}^{2}\big),\quad\text{if }L=+\infty,\label{0contiesti}
	\end{align}
	where the positive constant $C>0$ depends on the coefficients in the  assumptions, $\Omega$, $\Gamma$ and $T$. As a consequence, the weak solution $(\bm{\varphi},\bm{\mu})$ obtained in  Theorem \ref{exist} is unique.
	\end{theorem}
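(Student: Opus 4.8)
The plan is to establish the continuous dependence estimates \eqref{contiesti}--\eqref{0contiesti} by a standard energy method, testing the difference of the two weak formulations against a suitable test function and exploiting the dissipative structure of the system together with the strong monotonicity of $\beta$, $\beta_\Gamma$ inherited from $\mathbf{(A2)}$. Write $\boldsymbol{\varphi}:=\boldsymbol{\varphi}_1-\boldsymbol{\varphi}_2=(\varphi,\psi)$, $\boldsymbol{\mu}:=\boldsymbol{\mu}_1-\boldsymbol{\mu}_2=(\mu,\theta)$. The key point is that the mass constraint forces $\boldsymbol{\varphi}(t)$ (or, in the case $L=+\infty$, each of $\varphi(t)$ and $\psi(t)$ separately) to have zero generalized mean, so that the solution operator $\mathfrak{S}^{L}$ (resp. $\mathcal{N}_\Omega$, $\mathcal{N}_\Gamma$) can be applied. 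For $L\in[0,+\infty)$ I would test \eqref{weak1}--\eqref{weak2} (resp. \eqref{0weak1}) with $\boldsymbol{z}=\mathfrak{S}^{L}\boldsymbol{\varphi}$, obtaining
\begin{align*}
\frac12\frac{\mathrm{d}}{\mathrm{d}t}\Vert\boldsymbol{\varphi}(t)\Vert_{L,0,*}^{2}
= -\big(\boldsymbol{\mu}, \boldsymbol{\varphi}\big)_{\mathcal{L}^2},
\end{align*}
where on the right-hand side I insert \eqref{weak3}--\eqref{weak4}, writing $\boldsymbol{\mu}=(a_\Omega\varphi_1-a_\Omega\varphi_2,\,a_\Gamma\psi_1-a_\Gamma\psi_2)-\mathbb{J}\boldsymbol{\varphi}+(\beta(\varphi_1)-\beta(\varphi_2),\beta_\Gamma(\psi_1)-\beta_\Gamma(\psi_2))+(\pi(\varphi_1)-\pi(\varphi_2),\pi_\Gamma(\psi_1)-\pi_\Gamma(\psi_2))$.

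The monotone terms give the good sign $\big((\beta(\varphi_1)-\beta(\varphi_2),\beta_\Gamma(\psi_1)-\beta_\Gamma(\psi_2)),\boldsymbol{\varphi}\big)_{\mathcal{L}^2}\geq \alpha\Vert\boldsymbol{\varphi}\Vert_{\mathcal{L}^2}^{2}$ by $\mathbf{(A2)}$; the term $(a_\Omega\varphi,\varphi)_H+(a_\Gamma\psi,\psi)_{H_\Gamma}\geq a_*\Vert\varphi\Vert_H^2+a_\circledast\Vert\psi\Vert_{H_\Gamma}^2$ by \eqref{2.1}; the $\mathbb{J}$-term is controlled by $|(\mathbb{J}\boldsymbol{\varphi},\boldsymbol{\varphi})_{\mathcal{L}^2}|\leq (a^*+a^\circledast)\Vert\boldsymbol{\varphi}\Vert_{\mathcal{L}^2}^{2}$ — actually by a Young/interpolation argument one only needs it absorbed into a small multiple of $\Vert\boldsymbol{\varphi}\Vert_{\mathcal{L}^2}^2$ plus a multiple of $\Vert\boldsymbol{\varphi}\Vert_{L,0,*}^2$ via the estimate $\Vert\mathbb{J}\boldsymbol{\varphi}\Vert_{\mathcal{L}^2}\leq C\Vert\boldsymbol{\varphi}\Vert_{L,0,*}$; and the Lipschitz perturbation contributes $\leq \max\{\gamma_1,\gamma_2\}\Vert\boldsymbol{\varphi}\Vert_{\mathcal{L}^2}^2$. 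The structural condition $\mathbf{(A3)}$, namely $\gamma_1<a_*+\alpha/(1+\alpha)$ and $\gamma_2<a_\circledast+\alpha/(1+\alpha)$, is precisely what guarantees that after collecting the coercive contributions $a_*+\alpha$ (resp. $a_\circledast+\alpha$) dominate the bad ones, leaving a positive net coefficient $c_0>0$ in front of $\Vert\boldsymbol{\varphi}(t)\Vert_{\mathcal{L}^2}^2$. Thus I arrive at
\begin{align*}
\frac12\frac{\mathrm{d}}{\mathrm{d}t}\Vert\boldsymbol{\varphi}(t)\Vert_{L,0,*}^{2} + c_0\Vert\boldsymbol{\varphi}(t)\Vert_{\mathcal{L}^2}^{2}
\leq C\Vert\boldsymbol{\varphi}(t)\Vert_{L,0,*}^{2},
\end{align*}
and Gronwall's lemma yields \eqref{contiesti}; uniqueness follows by taking $\boldsymbol{\varphi}_{0,1}=\boldsymbol{\varphi}_{0,2}$.

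For the case $L=+\infty$ the subsystems decouple, so I would test the bulk equation with $\mathcal{N}_\Omega\varphi$ and the boundary equation with $\mathcal{N}_\Gamma\psi$ separately, producing $\frac12\frac{\mathrm{d}}{\mathrm{d}t}(\Vert\varphi\Vert_{V_0^*}^2+\Vert\psi\Vert_{V_{\Gamma,0}^*}^2)$ on the left, and repeating the same monotonicity/coercivity bookkeeping on each of $\Omega$ and $\Gamma$ independently, again using $\mathbf{(A3)}$ to close; this gives \eqref{0contiesti}. The main obstacle — and the point requiring care rather than difficulty — is the treatment of the nonlocal term: one must verify that $\Vert\mathbb{J}\boldsymbol{\varphi}\Vert_{\mathcal{L}^2}\le C\Vert\boldsymbol{\varphi}\Vert_{(\mathcal{H}^1_L)'}$ (so that the convolution, tested against $\mathfrak{S}^L\boldsymbol{\varphi}$, is controlled by the dual norm and a small fraction of the $\mathcal{L}^2$ norm), which uses only the $W^{1,1}$/$W^{2,r}$ regularity of $J$, $K$ in $\mathbf{(A1)}$ via the smoothing estimates recorded after Remark \ref{kernel}, together with duality. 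A secondary technical point is justifying that the test functions $\mathfrak{S}^L\boldsymbol{\varphi}(t)$ etc. are admissible and that the chain rule $\langle\partial_t\boldsymbol{\varphi},\mathfrak{S}^L\boldsymbol{\varphi}\rangle=\frac12\frac{\mathrm{d}}{\mathrm{d}t}\Vert\boldsymbol{\varphi}\Vert_{L,0,*}^2$ holds in the sense of distributions on $(0,T)$, which is standard given $\boldsymbol{\varphi}\in H^1(0,T;(\mathcal{V}^1)')\cap L^2(0,T;\mathcal{H}^1)$ and the self-adjointness of $\mathfrak{S}^L$.
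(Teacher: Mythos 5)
Your proposal is correct and follows essentially the same route as the paper: test the difference system with $\mathfrak{S}^{L}\boldsymbol{\varphi}$ (resp.\ $\mathcal{N}_\Omega\varphi$ and $\mathcal{N}_\Gamma\psi$ for $L=+\infty$), extract coercivity from $\beta'\geq\alpha$ and $a_\Omega\geq a_\ast$, $a_\Gamma\geq a_\circledast$, handle the Lipschitz perturbation via $\mathbf{(A3)}$, bound the convolution term by pairing it through the $\mathcal{H}^1$--$(\mathcal{H}^1)'$ duality with Lemma \ref{lem-subsequent} to get $\langle\boldsymbol{\varphi},\mathbb{J}\boldsymbol{\varphi}\rangle\leq C\Vert\boldsymbol{\varphi}\Vert_{L,0,*}\Vert\boldsymbol{\varphi}\Vert_{\mathcal{L}^2}$, and close with Young plus Gronwall. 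Your bound $\Vert\mathbb{J}\boldsymbol{\varphi}\Vert_{\mathcal{L}^2}\le C\Vert\boldsymbol{\varphi}\Vert_{(\mathcal{H}^1_L)'}$ is the adjoint restatement of the same inequality and is equivalent.
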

	
	
	The second part is related to the convergence rates. In the construction of weak solutions, two different limiting processes may take place:
	(i) in the Yosida approximation: passing to the limit as $\varepsilon\to0$;
	(ii) in the asymptotic limits: passing to the limits as $L\to0$ or $L\to+\infty$.
	In both cases, the existence results are achieved by the compactness argument, which does not provide any information on the convergence rate.
	We fill this gap under the following additional assumptions on the interaction kernel $J$ and Lipschitz constants for $\pi$, $\pi_\Gamma$:
		\begin{description}
		\item[(A5)] $J\in W^{1,1}(\mathbb{R}^{d})\cap L^{2}(\mathbb{R}^{d})$.
		
		\item[(A6)] The constants $\gamma_{1}$ and $\gamma_{2}$ in $\mathbf{(A3)}$ satisfy
		\begin{align*}
			0<\gamma_{1}<a_{\ast},\quad0<\gamma_{2}<a_{\circledast}.
		\end{align*}
	\end{description}

	\begin{theorem}[Convergence rate in the Yosida approximation]
		\label{rate}
		Suppose that the assumptions of Theorem \ref{exist} are satisfied and $L\in(0,+\infty)$.
		Assume in addition, $\mathbf{(A5)}$ and $\mathbf{(A6)}$ hold.
Let $\varepsilon^\ast$ be a given constant satisfying
\begin{align}
0<\varepsilon^\ast\leq\min\left\{\frac{1}{2\Vert J\Vert_{L^{1}(\Omega)}+2\gamma_{1}+1},\,\frac{1}{2\Vert K\Vert_{L^{1}(\Gamma)}+2\gamma_{2}+1}\right\}<1.
\label{epsi-star}
\end{align}
For any $\varepsilon\in(0,\varepsilon^\ast)$, we denote by $\boldsymbol{\varphi}_{\varepsilon}$ the weak solution to the approximating problem \eqref{appro1}--\eqref{psiappro} corresponding to the approximating parameter $\varepsilon$. Besides, let $\boldsymbol{\varphi}$ be the unique weak solution to the original problem \eqref{model1}--\eqref{psiini}.
		Then we have
		\begin{align}
			\Vert\boldsymbol{\varphi}_{\varepsilon} -\boldsymbol{\varphi}\Vert_{L^{\infty}(0,T;\mathcal{H}_{L,0}^{-1})} +\Vert\boldsymbol{\varphi}_{\varepsilon} -\boldsymbol{\varphi}\Vert_{L^{2}(0,T;\mathcal{L}^{2})}\leq C\sqrt{\varepsilon},\quad\text{as }\varepsilon\to0,\notag
		\end{align}
		where the positive constant $C$ depends on the coefficients in the assumptions, $\mathcal{L}^{2}$-norm of the initial datum $\boldsymbol{\varphi}_{0}$ and $\varepsilon^\ast$, but not on $\varepsilon$.
	\end{theorem}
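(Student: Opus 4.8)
The plan is to test the difference equation against the solution operator $\mathfrak{S}^L$ of the elliptic problem \eqref{ell2.2}, thereby gaining control of the $\mathcal{H}_{L,0}^{-1}$-norm of $\boldsymbol{\varphi}_\varepsilon-\boldsymbol{\varphi}$, and to use assumption $\mathbf{(A5)}$ to upgrade the resulting $\mathcal{H}_{L,0}^{-1}$-control of the convolution terms into genuine $\mathcal{L}^2$-control. Write $\boldsymbol{\varphi}_\varepsilon=(\varphi_\varepsilon,\psi_\varepsilon)$, $\boldsymbol{\varphi}=(\varphi,\psi)$, and $\boldsymbol{w}:=\boldsymbol{\varphi}_\varepsilon-\boldsymbol{\varphi}$; note that by construction both solutions share the same generalized mean $\overline{m}_0$, so $\boldsymbol{w}(t)\in\mathcal{H}_{L,0}^{-1}$ for a.e.\ $t$. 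Subtracting the two weak formulations \eqref{weak1}--\eqref{weak2} (written in the aggregated form via $a_L$), testing with $\mathfrak{S}^L\boldsymbol{w}$, and using the identity $((u,u_\Gamma),(z,z_\Gamma))_{\mathcal{L}^2}=((u,u_\Gamma),\mathfrak{S}^L(z,z_\Gamma))_{\mathcal{H}^1_{L,0}}$ together with $\tfrac{d}{dt}\tfrac12\|\boldsymbol{w}\|_{L,0,*}^2=\langle\partial_t\boldsymbol{w},\mathfrak{S}^L\boldsymbol{w}\rangle$, I obtain an identity of the form
\begin{align}
\frac{1}{2}\frac{d}{dt}\|\boldsymbol{w}(t)\|_{L,0,*}^2 + \big(\boldsymbol{\mu}_\varepsilon-\boldsymbol{\mu},\,\boldsymbol{w}\big)_{\mathcal{L}^2}=0,\notag
\end{align}
where $\boldsymbol{\mu}_\varepsilon-\boldsymbol{\mu}=(a_\Omega w^{(1)}-J\ast w^{(1)},\,a_\Gamma w^{(2)}-K\circledast w^{(2)})+(\beta_\varepsilon(\varphi_\varepsilon)-\beta(\varphi),\,\beta_{\Gamma,\varepsilon}(\psi_\varepsilon)-\beta_\Gamma(\psi))+(\pi(\varphi_\varepsilon)-\pi(\varphi),\,\pi_\Gamma(\psi_\varepsilon)-\pi_\Gamma(\psi))$, with $w^{(1)}=\varphi_\varepsilon-\varphi$, $w^{(2)}=\psi_\varepsilon-\psi$.

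The monotone principal term produces the good contribution: by monotonicity of $\beta_\varepsilon$ and a standard Yosida-approximation estimate, $(\beta_\varepsilon(\varphi_\varepsilon)-\beta(\varphi),w^{(1)})_H\geq (\beta_\varepsilon(\varphi_\varepsilon)-\beta_\varepsilon(\varphi),w^{(1)})_H-C\varepsilon$; here one uses $|\beta_\varepsilon(r)-\beta(r)|\le\varepsilon|\beta^\circ(r)|^{?}$ type bounds plus the already-established $L^2(0,T;V)$-bound on $\beta(\varphi)$ from Theorem \ref{exist}, which controls $\|\beta(\varphi)\|_{L^2(Q_T)}$ and hence the error of order $\varepsilon$ after integrating in time (this is exactly where the explicit $O(\sqrt\varepsilon)$ comes from). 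The Lipschitz perturbations give $|(\pi(\varphi_\varepsilon)-\pi(\varphi),w^{(1)})_H|\le\gamma_1\|w^{(1)}\|_H^2$ and similarly on $\Gamma$. The delicate term is the nonlocal one: I control $|(a_\Omega w^{(1)}-J\ast w^{(1)},w^{(1)})_H|$ by splitting as $\ge a_*\|w^{(1)}\|_H^2 - |(J\ast w^{(1)},w^{(1)})_H|$, and for the last piece I exploit $\mathbf{(A5)}$: since $\mathbb{J}$ is Hilbert--Schmidt, hence compact, $J\ast w^{(1)}$ lives, up to arbitrarily small error $\eta$, in a finite-dimensional subspace of $\mathcal{L}^2$ on which the $\mathcal{L}^2$- and $\mathcal{H}_{L,0}^{-1}$-norms are equivalent with some constant $C_\eta$; this yields $|(J\ast w^{(1)},w^{(1)})_H|\le (\eta+C_\eta\|\boldsymbol{w}\|_{L,0,*})\|w^{(1)}\|_{H}$, and then Young's inequality absorbs $\|w^{(1)}\|_H^2$ into the coercive constant $a_*>\gamma_1$ (this is where $\mathbf{(A6)}$ is used) while leaving a term $C\|\boldsymbol{w}\|_{L,0,*}^2$.

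Combining everything, I reach a differential inequality
\begin{align}
\frac{d}{dt}\|\boldsymbol{w}(t)\|_{L,0,*}^2 + c_0\|\boldsymbol{w}(t)\|_{\mathcal{L}^2}^2 \le C\|\boldsymbol{w}(t)\|_{L,0,*}^2 + C\varepsilon,\notag
\end{align}
with $c_0>0$. Since $\boldsymbol{\varphi}_\varepsilon(0)=\boldsymbol{\varphi}(0)=\boldsymbol{\varphi}_0$, Gr\"onwall's lemma gives $\|\boldsymbol{w}(t)\|_{L,0,*}^2\le C\varepsilon e^{CT}$ for all $t\in[0,T]$, and integrating the inequality in time also yields $\int_0^T\|\boldsymbol{w}\|_{\mathcal{L}^2}^2\,ds\le C\varepsilon$; taking square roots gives the claimed $O(\sqrt\varepsilon)$ bound. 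The main obstacle is the nonlocal term: without $\mathbf{(A5)}$ one only controls $J\ast w^{(1)}$ in $\mathcal{L}^2$ by $\|w^{(1)}\|_H$, which cannot be absorbed, and the whole argument collapses to a mere qualitative compactness statement; the Hilbert--Schmidt/finite-rank approximation is precisely the device that converts the weak ($\mathcal{H}_{L,0}^{-1}$) control of $\boldsymbol{w}$ into a usable bound on the convolution, and making the $\eta$-versus-$C_\eta$ trade-off rigorous (uniformly in $L\in(0,+\infty)$, using the norm equivalences recorded after \eqref{ell2.2}) is the technical heart of the proof. A secondary point requiring care is the $\varepsilon$-uniform bound on $\|\beta_\varepsilon(\varphi_\varepsilon)\|_{L^2(Q_T)}$ needed to close the Yosida error at order $\varepsilon$; this must be extracted from the uniform estimates already used to prove Theorem \ref{exist}.
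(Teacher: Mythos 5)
Your overall strategy---testing the difference equation with $\mathfrak{S}^L(\boldsymbol{w})$, using the Hilbert--Schmidt/finite-rank decomposition of $\mathbb{J}$ from $\mathbf{(A5)}$ to absorb the convolution term, and closing with Gr\"onwall---is sound and is the same machinery the paper deploys. The one substantive difference is that you compare the approximating solution $\boldsymbol{\varphi}_\varepsilon$ \emph{directly} to the limit solution $\boldsymbol{\varphi}$, whereas the paper compares two approximating solutions $\boldsymbol{\varphi}_{\varepsilon_1}$, $\boldsymbol{\varphi}_{\varepsilon_2}$, proves an $O(\sqrt{\varepsilon_1+\varepsilon_2})$ estimate, and then sends $\varepsilon_2\to0$ using the known qualitative convergence and weak lower semicontinuity. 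Your route is in fact slightly more economical, since it avoids the final limiting step, but it puts more pressure on the treatment of the singular nonlinearity---and that is where your argument as written has a gap.

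The problematic step is the claimed estimate
\[
(\beta_\varepsilon(\varphi_\varepsilon)-\beta(\varphi),w^{(1)})_H\geq (\beta_\varepsilon(\varphi_\varepsilon)-\beta_\varepsilon(\varphi),w^{(1)})_H-C\varepsilon,
\]
justified via a pointwise bound of the schematic form $|\beta_\varepsilon(r)-\beta(r)|\le\varepsilon|\beta^\circ(r)|^{?}$. No such uniform pointwise bound holds for a singular potential: while $|J_\varepsilon(r)-r|=\varepsilon|\beta_\varepsilon(r)|\le\varepsilon|\beta(r)|$, the difference $\beta(r)-\beta_\varepsilon(r)=\beta(r)-\beta(J_\varepsilon(r))$ can be of order one (or larger) as $r\to\pm1$, because $\beta'$ blows up there. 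Consequently $\|\beta_\varepsilon(\varphi)-\beta(\varphi)\|_{L^2(Q_T)}$ is not $O(\varepsilon)$ in general, and the split you propose cannot be closed at the claimed rate. The correct device is the monotone-graph inequality (the $\varepsilon_2=0$ specialization of the two-parameter inequality the paper uses via \cite{CG00,GST,LvWu}): for $a\in\mathbb{R}$ and $b\in D(\beta)=(-1,1)$, writing $a=J_\varepsilon(a)+\varepsilon\beta_\varepsilon(a)$ and using $\beta_\varepsilon(a)=\beta(J_\varepsilon(a))$ together with the monotonicity of $\beta$,
\begin{align*}
(\beta_\varepsilon(a)-\beta(b))(a-b)
&=(\beta(J_\varepsilon(a))-\beta(b))(J_\varepsilon(a)-b)
+\varepsilon(\beta_\varepsilon(a)-\beta(b))\beta_\varepsilon(a)\\
&\geq \varepsilon|\beta_\varepsilon(a)|^2-\varepsilon\beta_\varepsilon(a)\beta(b)
\geq -\tfrac{\varepsilon}{2}\bigl(|\beta_\varepsilon(a)|^2+|\beta(b)|^2\bigr).
\end{align*}
Plugging $a=\varphi_\varepsilon$, $b=\varphi$ and integrating over $Q_T$, the error is then controlled by $\varepsilon\bigl(\|\beta_\varepsilon(\varphi_\varepsilon)\|^2_{L^2(Q_T)}+\|\beta(\varphi)\|^2_{L^2(Q_T)}\bigr)$, both of which are bounded uniformly in $\varepsilon$ (the first by the estimates underlying Theorem \ref{exist}, the second since $\beta(\varphi)\in L^2(0,T;V)$ by Definition \ref{weakdefn}); the boundary term is handled identically. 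With this substitution your proof closes exactly as you sketch. Two smaller remarks: the parenthetical claim that the $\eta$-versus-$C_\eta$ trade-off must be uniform in $L$ is not needed (Theorem \ref{rate} is for a fixed $L\in(0,+\infty)$), and when you keep only $(\beta_\varepsilon(\varphi_\varepsilon)-\beta_\varepsilon(\varphi),w^{(1)})_H\ge0$ you should note that $\mathbf{(A6)}$ (i.e.\ $a_\ast>\gamma_1$ and $a_\circledast>\gamma_2$) is then precisely what makes the remaining coercivity strictly positive after absorbing the perturbation and convolution terms.
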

	\begin{theorem}[Convergence rate in the asymptotic limits]
		\label{weak-convergence}
	Suppose that the assumptions of Theorem \ref{exist} are satisfied,
	$(\boldsymbol{\varphi}^{L},\boldsymbol{\mu}^{L})$ is the unique weak solution to problem \eqref{model1}--\eqref{psiini} corresponding to $L\in[0,+\infty]$ and $T\in (0,+\infty)$.
	Assume in addition, $\mathbf{(A5)}$ holds. Then, we have
	\begin{align}
	\|\boldsymbol{\varphi}^{L}- \boldsymbol{\varphi}^{0}\|_{L^{\infty}(0,T;\mathcal{V}_{(0)}^{-1})} +\|\boldsymbol{\varphi}^{L}-\boldsymbol{\varphi}^{0}\|_{L^{2}(0,T;\mathcal{L}^{2})}\leq C_{0}\sqrt{L},\quad \text{as }L\to0,\label{to0rate}
	\end{align}
	and
	\begin{align}
	&\|\varphi^{L}-\varphi^{\infty}\|_{L^{\infty}(0,T;V')} +\|\psi^{L}-\psi^{\infty}\|_{L^{\infty}(0,T;V_{\Gamma}')}\notag\\
	&\qquad+\|\boldsymbol{\varphi}^{L}- \boldsymbol{\varphi}^{\infty}\|_{L^{2}(0,T;\mathcal{L}^{2})}\leq \frac{C_{\infty}}{L^{1/4}},\quad \text{as }L\to+\infty,\label{toinftyrate}
	\end{align}
	where the positive constant $C_{0}$ is independent of $L\in(0,1]$ and the positive constant $C_{\infty}$ may depend on some large constant $L_0\gg 1$, but is independent of $L\in[L_0,+\infty)$.
	\end{theorem}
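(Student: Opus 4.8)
The plan is to estimate the difference of two weak solutions corresponding to different values of the kinetic rate by testing the weak formulations against the solution of the elliptic problem \eqref{ell2.2} (i.e.\ applying the operator $\mathfrak{S}^{L}$ or its $L=0$ analogue), thereby converting the $H^{-1}$-type estimates into coercive energy estimates. For the limit $L\to 0$, I would write $\boldsymbol{\varphi}^{L}-\boldsymbol{\varphi}^{0}=:\boldsymbol{\zeta}^{L}$, note that $\overline{m}(\boldsymbol{\zeta}^{L})=0$ so that $\boldsymbol{\zeta}^{L}\in\mathcal{V}_{(0)}^{-1}$ makes sense (both solutions share the initial mass, by assumption $\mathbf{(A4)}$ and the mass conservation \eqref{totalmassconser}), and test the difference of \eqref{weak1}--\eqref{weak2} for $\boldsymbol{\varphi}^{L}$ and \eqref{0weak1} for $\boldsymbol{\varphi}^{0}$ with $\boldsymbol{z}=\mathfrak{S}^{0}\boldsymbol{\zeta}^{L}\in\mathcal{V}_{(0)}^{1}$. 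The key point is that $\mathcal{V}_{(0)}^{1}\subseteq\mathcal{H}^{1}_{L,0}$ is an admissible test space for \eqref{weak1}--\eqref{weak2}, and on $\mathcal{V}_{(0)}^{1}$ the bilinear form $a_{L}$ loses its penalty term (the third term vanishes because $z=z_{\Gamma}$ on $\Gamma$). After using the relation $((u,u_{\Gamma}),(z,z_{\Gamma}))_{\mathcal{L}^2}=((u,u_{\Gamma}),\mathfrak{S}^{0}(z,z_{\Gamma}))_{\mathcal{H}^{1}_{0,0}}$ and the chemical potential expressions \eqref{weak3}--\eqref{weak4}, the left-hand side produces $\tfrac12\frac{d}{dt}\Vert\boldsymbol{\zeta}^{L}\Vert_{0,0,*}^{2}$ together with a coercive term $\gtrsim\Vert\boldsymbol{\zeta}^{L}\Vert_{\mathcal{L}^2}^{2}$ coming from $\beta$, $\beta_{\Gamma}$ and the convolution-times-identity lower bound using $a_{\ast}>\gamma_{1}$, $a_{\circledast}>\gamma_{2}$ (here $\mathbf{(A5)}$ is used so that $J\ast(\cdot)$ and $K\circledast(\cdot)$ map $\mathcal{L}^2$ to $\mathcal{L}^2$ boundedly). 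The $O(\sqrt{L})$ defect then arises solely from the boundary flux term: testing \eqref{weak1}--\eqref{weak2} for $\boldsymbol{\varphi}^{L}$ with a $\mathcal{V}_{(0)}^{1}$ element, the $\chi(L)$-terms cancel between the bulk and surface equations, but the residual mismatch of the two elliptic systems contributes a term controlled by $\tfrac1L\Vert\theta^{L}-\mu^{L}\Vert_{H_\Gamma}^{2}$, which by the energy equality \eqref{energyeq} is bounded in $L^{1}(0,T)$ uniformly; pairing this against $\Vert\boldsymbol{\zeta}^{L}\Vert$ and absorbing via Young's inequality gives a right-hand side of order $L$ after integration, hence the $\sqrt{L}$ rate. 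A Gronwall argument closes the estimate, using the Lipschitz bounds on $\pi$, $\pi_{\Gamma}$ and the boundedness $\Vert\varphi^{L}\Vert_{L^\infty},\Vert\psi^{L}\Vert_{L^\infty}\le 1$.

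For the limit $L\to+\infty$, the bulk and boundary subsystems decouple, and I would estimate $\varphi^{L}-\varphi^{\infty}$ in $V'$ and $\psi^{L}-\psi^{\infty}$ in $V_{\Gamma}'$ separately. Testing \eqref{weak1} for $\varphi^{L}$ with $z=\mathcal{N}_{\Omega}(\varphi^{L}-\varphi^{\infty})$ (valid since both have the same bulk mean $m_{\Omega,0}$ when $L=+\infty$ — but note $\varphi^{L}$ for finite $L$ only conserves the \emph{total} mass, so one must first split off the spatial-mean discrepancy $\langle\varphi^{L}\rangle_{\Omega}-m_{\Omega,0}$, which itself must be shown to be $O(1/L^{1/2})$ or smaller using \eqref{totalmassconser} and the fact that $\Vert\theta^{L}-\mu^{L}\Vert$ controls $\partial_{\mathbf{n}}\mu^{L}$). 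The extra boundary term $\chi(L)\int_{\Gamma}(\theta^{L}-\mu^{L})z\,\mathrm{d}S$ in \eqref{weak1} is again the source of the defect; after integration in time it is bounded by $\big(\tfrac1L\int_0^T\Vert\theta^{L}-\mu^{L}\Vert_{H_\Gamma}^2\big)^{1/2}\big(\int_0^T\Vert z\Vert_{H_\Gamma}^2\big)^{1/2}\cdot\tfrac{1}{\sqrt L}$, giving an $O(1/L)$ contribution under the square root, i.e.\ the $O(1/L^{1/2})$ rate in $\mathcal{L}^2$ and, after the weaker $V'$-pairing, the stated $O(1/L^{1/4})$. One must also handle the trace term $\int_\Gamma z\,\psi^L$ etc.\ carefully when $z=\mathcal{N}_\Omega(\cdots)$ is only in $V$; the same structural coercivity ($a_{\ast}>\gamma_1$, $a_{\circledast}>\gamma_2$, monotonicity of $\beta$, $\beta_\Gamma$) yields the $\mathcal{L}^2$-dissipation, and Gronwall finishes.

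The main obstacle I anticipate is bookkeeping the mass discrepancy and the boundary-flux residual in a way that is genuinely uniform in $L$ on the relevant ranges ($L\in(0,1]$ for the $L\to0$ case, $L\in[L_0,+\infty)$ for the $L\to+\infty$ case). In particular, for $L\to+\infty$ one cannot directly use $\mathcal{N}_\Omega$ because $\varphi^L$ is not mean-free relative to $m_{\Omega,0}$; controlling $|\langle\varphi^L\rangle_\Omega-m_{\Omega,0}|$ requires integrating \eqref{weak1} in space, which gives $\tfrac{d}{dt}\langle\varphi^L\rangle_\Omega=-\tfrac{1}{|\Omega|}\int_\Gamma\partial_{\mathbf{n}}\mu^L\,\mathrm dS$, and then bounding $\int_0^T\Vert\partial_{\mathbf n}\mu^L\Vert_{H_\Gamma}$ via $\int_0^T\Vert\partial_{\mathbf n}\mu^L\Vert_{H_\Gamma}^2=\tfrac1{L^2}\int_0^T\Vert\theta^L-\mu^L\Vert_{H_\Gamma}^2\le\tfrac{C}{L}$ from \eqref{energyeq}. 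A second delicate point is ensuring that the constant $C_\infty$ in \eqref{toinftyrate} does not degenerate: this forces a careful choice of $L_0$ so that the Lipschitz perturbations $\pi$, $\pi_\Gamma$ and the convolution operators are absorbed into the coercive part with a $L_0$-independent margin, exactly as the strict inequalities $\gamma_1<a_\ast$, $\gamma_2<a_\circledast$ in $\mathbf{(A6)}$ (respectively the weaker $\mathbf{(A3)}$ for the $\mathcal{L}^2$-coercivity) are designed to permit. Once these uniform bounds are in place, the remaining computation is a routine Gronwall estimate.
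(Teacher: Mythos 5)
Your overall strategy is aligned with the paper's: test the difference of the weak formulations against the appropriate elliptic solver ($\mathfrak{S}^0$ for $L\to0$, $\mathcal{N}_\Omega$ and $\mathcal{N}_\Gamma$ for $L\to+\infty$), use the coercivity coming from $a_\Omega+\beta'$ and $a_\Gamma+\beta_\Gamma'$, control the defect via the energy identity, and close with Gronwall. For $L\to0$ the paper also tests \eqref{0rate1} with $\mathfrak{S}^0(\boldsymbol{\varphi}^L-\boldsymbol{\varphi}^0)$ and the defect term is precisely $-\int_\Gamma(\theta^L-\mu^L)\partial_{\mathbf n}\mathfrak{S}^0_\Omega(\boldsymbol{\varphi}^L-\boldsymbol{\varphi}^0)\,\mathrm dS$, which after Young's inequality and the bound $L^{-1}\|\theta^L-\mu^L\|^2_{L^2(0,T;H_\Gamma)}\le C$ produces an $O(L)$ forcing in the Gronwall inequality, hence the $\sqrt L$ rate. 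Your description captures this mechanism, if somewhat loosely.

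There are, however, two substantive gaps. First, and most importantly, you cannot absorb the cross terms $\int_\Omega(J\ast\zeta)\zeta\,\mathrm dx$ and $\int_\Gamma(K\circledast\zeta_\Gamma)\zeta_\Gamma\,\mathrm dS$ into the coercive part by the naive Young-convolution bound, because $\|J\|_{L^1}$ is not assumed small relative to $a_\ast+\alpha-\gamma_1$. Under $\mathbf{(A3)}$ (without even $\mathbf{(A6)}$) the margin $a_\ast+\alpha-\gamma_1$ is positive but otherwise uncontrolled relative to $\|J\|_{L^1}$, and the difference $\boldsymbol{\varphi}^L-\boldsymbol{\varphi}^0$ is only estimated in $\mathcal{H}^{-1}$-type norms, so one cannot blindly discard the convolution term by monotonicity either. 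This is exactly where $\mathbf{(A5)}$ enters: not merely to make $J\ast(\cdot)$ bounded on $\mathcal{L}^2$, but to make the operator $\mathbb J$ Hilbert--Schmidt (Lemma \ref{HS}), hence compact, so that one can split $\mathbb J=\mathbb J_N+(\mathbb J-\mathbb J_N)$ with $\|\mathbb J-\mathbb J_N\|_{\mathcal B(\mathcal L^2)}\to0$ as $N\to+\infty$ and use the equivalence of $\mathcal L^2$ and $\mathcal H^{-1}$ norms on the finite-dimensional range of $\mathbb J_N$. This finite-rank splitting is the key quantitative device (inherited from \cite{GST}); without it, the Gronwall argument does not close. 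Your proposal omits this entirely.

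Second, in the $L\to+\infty$ case your attribution of the $O(1/L^{1/4})$ rate to the boundary flux term $\chi(L)\int_\Gamma(\theta^L-\mu^L)z\,\mathrm dS$ is incorrect, and your claim of an $O(1/\sqrt L)$ rate in $\mathcal L^2$ together with an $O(1/L^{1/4})$ rate in $V'$ via a "weaker $V'$-pairing" does not match the structure of the Gronwall estimate: in the inequality \eqref{infrate7} both $\|\cdot\|_{L^\infty(0,T;V_0^\ast)}^2$ and $\|\cdot\|_{L^2(0,T;H)}^2$ are controlled by the same right-hand side, so both converge at the same rate. Tracing through the paper's estimate, the boundary flux terms $\mathcal I_2,\mathcal I_3$ contribute only $O(1/L)$ to the squared norm (via $L^{-2}\|\theta^L-\mu^L\|^2_{L^2(0,T;H_\Gamma)}\le CL^{-1}$), which is better than the final rate. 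The true bottleneck is the mean-mass mismatch term $|\langle\varphi^L-\varphi^\infty\rangle_\Omega|\,\int_\Omega(|\beta(\varphi^L)|+|\beta(\varphi^\infty)|+\cdots)\,\mathrm dx$, estimated in \eqref{infrate4-1} as $O(1/\sqrt L)$ using $|\langle\varphi^L-\varphi^\infty\rangle_\Omega|\le C/\sqrt L$ and the $L^2(0,T;L^1)$ bound on the singular terms $\beta(\varphi^L),\beta(\varphi^\infty)$ from Lemma \ref{Linfuni1}. That $O(1/\sqrt L)$ forcing in the Gronwall estimate is what produces $O(1/L^{1/4})$ after taking a square root. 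You do identify the mean discrepancy and the mechanism to control it, which is correct and important, but you attach the rate to the wrong term; and you do not mention the key identity $a_\Omega=J\ast 1$ that cancels the constant-mean contribution of the kernel, nor the need to estimate $\langle\partial_t\langle\varphi^L-\varphi^\infty\rangle_\Omega,\cdot\rangle$ separately (the term $\mathcal I_2$). These are fixable bookkeeping points, but as written your account of why the rate is $1/4$ rather than $1/2$ is wrong.
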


   Lastly, we state results on the regularity propagation and the strict separation property. The following result implies that every weak solution instantaneously regularizes for $t>0$.
   \begin{theorem}[Regularity of weak solutions]
   	\label{regularize}
   	Suppose that $\Omega\subset\mathbb{R}^{d}$ $(d\in\{2,3\})$ is a bounded domain with smooth boundary $\Gamma$,
   	the assumptions $(\mathbf{A1})$--$(\mathbf{A4})$ are satisfied and $L\in[0,+\infty]$.
   	Assume in addition, assumption $(\mathbf{A5})$ holds when $L=0$. Let $(\boldsymbol{\varphi},\boldsymbol{\mu})$ be the unique global weak solution to problem \eqref{model1}--\eqref{psiini} obtained in Theorem \ref{exist}.
   	For any $\tau>0$, it holds
   	\begin{align*}
   	&\boldsymbol{\varphi}\in L^{\infty}(\tau,+\infty;\mathcal{H}^{1}),\quad\partial_{t}\boldsymbol{\varphi}\in L^{2}_{\mathrm{uloc}}(\tau,+\infty;\mathcal{L}^{2}),\\
   	&\boldsymbol{\mu}\in L^{\infty}(\tau,+\infty;\mathcal{H}^{1})\cap L_{\mathrm{uloc}}^{2}(\tau,+\infty;\mathcal{H}^{2}),\\
   	&\begin{cases}
   	\partial_{t}\boldsymbol{\varphi}\in L^{\infty}(\tau,+\infty;\mathcal{H}_{L,0}^{-1}),&\text{if }L\in[0,+\infty),\\[1mm]
   	\partial_{t}\varphi\in L^{\infty}(\tau,+\infty;V_{0}^{\ast}),\quad\partial_{t}\psi\in L^{\infty}(\tau,+\infty;V_{\Gamma,0}^{\ast}),&\text{if }L=+\infty.
   	\end{cases}
   	\end{align*}
   	Moreover, we have $(\beta(\varphi), \beta_{\Gamma}(\psi)) \in L^{\infty}(\tau,+\infty;\mathcal{H}^{1})$
   	and thus
   	\begin{align}
   		&\beta(\varphi)\in L^{\infty}(\tau,+\infty;L^{p}(\Omega)),\quad
   		\begin{cases}
   		p\in[2,+\infty),&\text{if }d=2,\\[1mm]
   		p\in[2,6],&\text{if }d=3,
   		\end{cases}\label{regularize1}\\
   		&\beta_{\Gamma}(\psi)\in L^{\infty}(\tau,+\infty;L^{q}(\Gamma)),\quad q\in[2,+\infty).\label{regularize2}
   	\end{align}
   \end{theorem}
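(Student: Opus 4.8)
The plan is to follow the general strategy of \cite{GGG}, adapted to the bulk--surface setting, and to treat the three cases $L\in(0,+\infty)$, $L=0$ and $L=+\infty$ separately, exploiting the asymptotic limits to transfer estimates from the $L\in(0,+\infty)$ case to $L=0$, while handling $L=+\infty$ by the decoupled subsystems. I would begin with $L\in(0,+\infty)$. First I would improve the regularity of $\partial_t\boldsymbol{\varphi}^L$ by the difference-quotient method: setting $\boldsymbol{\varphi}^h(t):=h^{-1}(\boldsymbol{\varphi}(t+h)-\boldsymbol{\varphi}(t))$ and testing the (difference of) weak formulations \eqref{weak1}--\eqref{weak2} with $\mathfrak{S}^L\boldsymbol{\varphi}^h$, one obtains, using the strict monotonicity $\beta'\ge\alpha$, $\beta_\Gamma'\ge\alpha$ from $\mathbf{(A2)}$, the Lipschitz bounds on $\pi$, $\pi_\Gamma$ from $\mathbf{(A3)}$, the boundedness of $a_\Omega$, $a_\Gamma$ and the kernel estimates in $\mathbf{(A1)}$, a differential inequality of Gronwall type for $\|\boldsymbol{\varphi}^h(t)\|_{\mathcal{H}^{-1}_{L,0}}^2$ together with a uniform-in-$h$ control of $\int_s^t\|\boldsymbol{\varphi}^h\|_{\mathcal{L}^2}^2$. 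Multiplying by $t$ (or by a cutoff vanishing at $0$) to absorb the possible singularity at $t=0$ and letting $h\to0$ yields $\partial_t\boldsymbol{\varphi}^L\in L^\infty(\tau,+\infty;\mathcal{H}^{-1}_{L,0})\cap L^2_{\mathrm{uloc}}(\tau,+\infty;\mathcal{L}^2)$ for every $\tau>0$. This extra regularity of $\partial_t\boldsymbol{\varphi}^L$ legitimizes testing the time-differentiated chemical-potential relations and gives an energy equality on $(\tau,+\infty)$ in the higher-order norms.

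Next I would bootstrap to $\mathcal{H}^1$-regularity of $\boldsymbol{\varphi}^L$. From \eqref{weak3}--\eqref{weak4}, $\mu=a_\Omega\varphi-J\ast\varphi+\beta(\varphi)+\pi(\varphi)$ and similarly for $\theta$; testing the elliptic identities with $\beta(\varphi)$, $\beta_\Gamma(\psi)$ (which belong to $L^2(0,T;V)\times L^2(0,T;V_\Gamma)$ by the weak-solution regularity) and using $\mathbf{(A1)}$ to estimate $\nabla(J\ast\varphi)=(\nabla J)\ast\varphi$ in $L^\infty(0,T;H)$, one controls $\|\nabla\beta(\varphi)\|_H$ and $\|\nabla_\Gamma\beta_\Gamma(\psi)\|_{H_\Gamma}$ by $\|\nabla\mu\|_H$, $\|\nabla_\Gamma\theta\|_{H_\Gamma}$ and $\|\boldsymbol{\varphi}\|_{\mathcal{L}^2}$; since $\nabla\beta(\varphi)=\beta'(\varphi)\nabla\varphi$ with $\beta'\ge\alpha>0$, this yields $\boldsymbol{\varphi}^L\in L^\infty(\tau,+\infty;\mathcal{H}^1)$ and hence $(\beta(\varphi),\beta_\Gamma(\psi))\in L^\infty(\tau,+\infty;\mathcal{H}^1)$. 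The Sobolev embeddings $H^1(\Omega)\hookrightarrow L^p(\Omega)$ ($p\in[2,+\infty)$ for $d=2$, $p\in[2,6]$ for $d=3$) and $H^1(\Gamma)\hookrightarrow L^q(\Gamma)$ ($q\in[2,+\infty)$, since $\dim\Gamma\le2$) then give \eqref{regularize1}--\eqref{regularize2}. Finally, $\boldsymbol{\mu}\in L^\infty(\tau,+\infty;\mathcal{H}^1)$ follows from its defining expression, and since $\partial_t\varphi=\Delta\mu$, $\partial_t\psi=\Delta_\Gamma\theta-\partial_{\mathbf n}\mu$ in the weak sense with $\partial_t\boldsymbol{\varphi}\in L^2_{\mathrm{uloc}}(\tau,+\infty;\mathcal{L}^2)$, elliptic regularity for the system \eqref{ell2.2}-type problem (the operator $\mathfrak{S}^L$ and its $\mathcal{H}^{k+2}$-estimate) upgrades $\boldsymbol{\mu}$ to $L^2_{\mathrm{uloc}}(\tau,+\infty;\mathcal{H}^2)$.

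For $L=0$ I would obtain the result by passing to the limit in the asymptotic process $L\to0$: the above estimates for $\boldsymbol{\varphi}^L$, $\partial_t\boldsymbol{\varphi}^L$ and $\boldsymbol{\mu}^L$ on $(\tau,+\infty)$ are uniform in $L\in(0,1]$ (here assumption $\mathbf{(A5)}$ enters, as in Theorem \ref{weak-convergence}, to control the nonlocal terms uniformly), so by the convergence $\boldsymbol{\varphi}^L\to\boldsymbol{\varphi}^0$ together with lower semicontinuity of norms one transfers all the bounds to the $L=0$ solution, noting that the limit lies in $\mathcal{V}^k$ because of the trace constraint built into $\mathcal{V}^1$. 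For $L=+\infty$ the bulk system \eqref{infch1}--\eqref{infch4} and the surface system \eqref{surch1}--\eqref{surch3} decouple, so one applies the known regularity theory for the nonlocal Cahn--Hilliard equation with homogeneous Neumann data (as in \cite{GGG}) to $(\varphi,\mu)$ and an analogous argument on the closed manifold $\Gamma$ to $(\psi,\theta)$; the only new point is that the difference-quotient/bootstrap steps must be run on approximating (e.g. Yosida-regularized) problems and passed to the limit, giving $\partial_t\varphi\in L^\infty(\tau,+\infty;V_0^\ast)$ and $\partial_t\psi\in L^\infty(\tau,+\infty;V_{\Gamma,0}^\ast)$ separately. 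I expect the main obstacle to be the difference-quotient estimate in the coupled case $L\in(0,+\infty)$: one must handle the boundary coupling term $\chi(L)\int_\Gamma(\theta-\mu)z\,\mathrm{d}S$ and the nonlocal convolutions simultaneously while keeping track of the dependence on $\tau$ near $t=0$, and the resulting constants a priori blow up as $L\to0$ unless one carefully uses the $L$-independent norm on $\mathcal{V}^1_{(0)}$ and assumption $\mathbf{(A5)}$ to make them uniform.
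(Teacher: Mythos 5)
Your proposal follows essentially the same route as the paper: difference quotients tested against $\mathfrak{S}^{L}(\partial_t^h\boldsymbol{\varphi})$ to upgrade $\partial_t\boldsymbol{\varphi}$ to $L^{\infty}(\tau,+\infty;\mathcal{H}^{-1}_{L,0})\cap L^2_{\mathrm{uloc}}(\tau,+\infty;\mathcal{L}^2)$, a bootstrap to $\mathcal{H}^1$-regularity of $\boldsymbol{\varphi}$ via the gradient of the pointwise chemical-potential relations together with the lower bound $\beta',\beta_\Gamma'\geq\alpha$, an $\mathcal{H}^2$-estimate for $\boldsymbol{\mu}$ by elliptic regularity for the bulk--surface system, passage to $L=0$ by the asymptotic limit with uniform-in-$L$ constants obtained via $\mathbf{(A5)}$ and the Hilbert--Schmidt structure of $\mathbb{J}$, and the decoupled treatment of the bulk and surface subsystems for $L=+\infty$. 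Two small remarks: the paper obtains the $C/\tau$-type bound from the \emph{uniform Gronwall inequality} rather than by multiplying by a time cutoff (these are equivalent devices), and the $\mathcal{H}^1$-bootstrap is performed at the Yosida level (take the gradient of $\mu_\varepsilon=a_\Omega\varphi_\varepsilon-J\ast\varphi_\varepsilon+\beta_\varepsilon(\varphi_\varepsilon)+\pi(\varphi_\varepsilon)$, test by $\nabla\varphi_\varepsilon$, use $\beta_\varepsilon'\geq\alpha/(1+\alpha)$, then let $\varepsilon\to 0$), which sidesteps justifying the chain rule $\nabla\beta(\varphi)=\beta'(\varphi)\nabla\varphi$ directly for the singular $\beta$; your phrasing ``testing the elliptic identities with $\beta(\varphi)$'' is slightly loose here, but the underlying idea coincides.
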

   \begin{remark}\rm
   	When we establish the regularity propagation of global weak solutions for the case $L=0$,
   	we regard problem \eqref{model1}--\eqref{psiini} with $L\in(0,+\infty)$ as an approximating problem
   	for problem \eqref{model1}--\eqref{psiini} with $L=0$.
   	In this process, we have to derive uniform estimates with respect to $L\in(0,1)$.
   	To deal the following term (cf. \eqref{quotient5})
   	$$\langle\partial_{t}^{h}\boldsymbol{\varphi}^L_{\varepsilon}, (J\ast\partial_{t}^{h}\varphi^L_{\varepsilon},K\circledast\partial_{t}^{h}\psi^L_{\varepsilon})\rangle_{(\mathcal{H}^{1})',\mathcal{H}^{1}},$$
   	we need the additional assumption $\mathbf{(A5)}$. We also remark that, the weak solution $(\boldsymbol{\varphi},\boldsymbol{\mu})$ becomes a strong one for $t>0$ since $\tau>0$ is arbitrary.
   \end{remark}

	Finally, we show the instantaneous strict separation property in both two and three dimensions.
    To this end,
    we make the following additional assumptions on singular potentials:
	\begin{description}
	\item[(A7)] The bulk and boundary potentials coincide, i.e., $\beta=\beta_\Gamma$.
	
	\item[(A8)] 
	As $\delta\rightarrow0$, for some constant $\kappa$, there holds
	\begin{align*}
		&	\frac{1}{\beta(1-2\delta)}=O\Big(\frac{1}{|\mathrm{ln}(\delta)|^{\kappa}}\Big),\quad\frac{1}{|\beta(-1+2\delta)|}=O\Big(\frac{1}{|\mathrm{ln}(\delta)|^{\kappa}}\Big),
	\end{align*}
	with $\kappa>0$ if $d=3$ and $\kappa>1/2$ if $d=2$.
	
	\item[(A9)] There exist $\widetilde{\delta}\in(0,1/2)$ and $\widetilde{C}_0\geq1$ such that for any $\delta\in (0,\widetilde{\delta})$, it holds
	\begin{align*}
		&\frac{1}{\beta'(1-2\delta)}\leq \widetilde{C}_0\delta,\quad\frac{1}{\beta'(-1+2\delta)}\leq \widetilde{C}_0\delta.
	\end{align*}
		
	\item[(A10)] There exists $\delta_{1}\in (0,1)$ such that $\beta'$ is monotone non-decreasing on $[1-\delta_{1},1)$ and monotone non-increasing in $(-1,-1+\delta_{1}]$. 
	\end{description}

	\begin{remark}\rm
		\label{about kappa}
	In order to deal with an additional term $\mathcal{K}_3$ (cf. \eqref{k3}) caused by the boundary condition \eqref{model5} with $L\in(0,+\infty)$, we need the compatibility condition $\mathbf{(A7)}$. Furthermore, we make assumptions $\mathbf{(A8)}$--$\mathbf{(A10)}$ in order to apply a suitable De Giorgi's iteration scheme, adapting the idea in \cite{GP,Gior,Po}. In view of  \eqref{241202-1}, we only need to assume $\kappa>0$ in $\mathbf{(A8)}$ in the three-dimensional case, which is different from the assumption $\kappa=1$ made in \cite{Gior,Po}. This relaxation is due to the other two stronger assumptions $\mathbf{(A9)}$ and $\mathbf{(A10)}$.
	\end{remark}

	\begin{theorem}[Strict separation property]
	\label{separation}
	Suppose that $\Omega\subset\mathbb{R}^{d}$ $(d\in\{2,3\})$ is a bounded domain with smooth boundary $\Gamma$,
the assumptions $(\mathbf{A1})$--$(\mathbf{A4})$ are satisfied and $L\in(0,+\infty)$. Let $(\boldsymbol{\varphi},\boldsymbol{\mu})$ be the unique global weak solution to problem \eqref{model1}--\eqref{psiini} obtained in Theorem \ref{exist}. In addition, we assume
	\begin{itemize}
	\item[(1)] If $d=2$, $\mathbf{(A7)}$ and $\mathbf{(A8)}$ hold.
	
	\item[(2)] If $d=3$, $\mathbf{(A7)}$--$\mathbf{(A10)}$ hold.
	\end{itemize}
	Then, for any $\tau>0$, there exists $\delta\in(0,1)$ such that the unique global solution to problem \eqref{model1}--\eqref{psiini} satisfies
	\begin{align}
	&|\varphi(x,t)|\leq1-\delta,\quad\text{for a.e. }(x,t)\in\Omega\times[\tau,+\infty),\label{sepa1}\\[1mm]
	&|\psi(x,t)|\leq1-\delta,\quad\text{for a.e. }(x,t)\in\Gamma\times[\tau,+\infty).\label{sepa2}
	\end{align}
	Moreover, there exists a positive constant $\widehat{C}_0$ such that
	\begin{align}
	\sup_{t\geq\tau}\|\boldsymbol{\mu}(t)\|_{\mathcal{L}^\infty}\leq \widehat{C}_0,\quad\sup_{t\geq\tau}\|\partial_t\boldsymbol{\mu}\|_{L^2(t,t+1;\mathcal{L}^2)}\leq\widehat{C}_0.\label{mu-high}
	\end{align}
	\end{theorem}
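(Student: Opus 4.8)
The plan is to adapt the De Giorgi iteration of \cite{GP,Gior,Po} to the coupled bulk--surface nonlocal setting, taking as input the uniform-in-time regularity provided by Theorem \ref{regularize}; I will only derive the upper bounds $\varphi\le 1-\delta$, $\psi\le 1-\delta$, the lower bounds being obtained by running the same argument on $(-\varphi,-\psi)$. First I would fix $\tau>0$ and record, on $[\tau/2,+\infty)$, the bounds from Theorem \ref{regularize}: $\boldsymbol{\varphi},\boldsymbol{\mu},(\beta(\varphi),\beta_\Gamma(\psi))\in L^\infty(\tau/2,+\infty;\mathcal{H}^1)$, $\partial_t\boldsymbol{\varphi}\in L^2_{\mathrm{uloc}}(\tau/2,+\infty;\mathcal{L}^2)$ and $\boldsymbol{\mu}\in L^2_{\mathrm{uloc}}(\tau/2,+\infty;\mathcal{H}^2)$, hence in particular $\mu\in L^\infty(\tau/2,+\infty;L^6(\Omega))$ (any finite exponent if $d=2$). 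From \eqref{weak3}--\eqref{weak4} I would extract the pointwise identities $\beta(\varphi)=\mu-a_\Omega\varphi+J\ast\varphi-\pi(\varphi)$ and $\big(a_\Omega+\beta'(\varphi)+\pi'(\varphi)\big)\nabla\varphi=\nabla\mu-\varphi\nabla a_\Omega+(\nabla J)\ast\varphi$, together with their surface counterparts, noting that the coefficient satisfies $a_\Omega+\beta'(\varphi)+\pi'(\varphi)\ge a_\ast-\gamma_1+\beta'(\varphi)>0$ by $\mathbf{(A2)}$--$\mathbf{(A3)}$ and that all right-hand sides are bounded in $L^2$ uniformly in time. These identities will produce, on the super-level sets, both a coercivity gain and a bound $\beta(\text{level})\le\mu+C_0$ with $C_0$ depending only on the data.

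Next I would fix $\delta\in(0,\delta_1)$ small (with $\delta_1$ from $\mathbf{(A10)}$ when $d=3$) and set the common levels $k_n:=1-\delta-\delta 2^{-n}\nearrow 1-\delta$ --- here $\mathbf{(A7)}$ is essential, so that a single family of levels serves simultaneously in the bulk and on the boundary --- put $\varphi_n:=(\varphi-k_n)^+$, $\psi_n:=(\psi-k_n)^+$ with super-level sets $A_n\subset\Omega$ and $A_n^\Gamma\subset\Gamma$, and choose time cut-offs $\zeta_n\in C^1([0,+\infty))$, $0\le\zeta_n\le 1$, supported in $[\sigma_n,+\infty)$, equal to $1$ on $[\sigma_{n+1},+\infty)$, with $\sigma_n:=\tau(1-2^{-n-1})\nearrow\tau$ and $|\zeta_n'|\le C2^n/\tau$. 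Testing \eqref{weak1} with $\zeta_n^2\varphi_n$, \eqref{weak2} with $\zeta_n^2\psi_n$, adding and integrating over $[s,s+1]$ for $s\ge\sigma_n$, the left-hand side controls $\sup_{[s,s+1]}\big(\|\zeta_n\varphi_n\|_H^2+\|\zeta_n\psi_n\|_{H_\Gamma}^2\big)$ plus the coercive dissipation $\int_s^{s+1}\!\int\zeta_n^2\big[(a_\Omega+\beta'(\varphi)+\pi'(\varphi))|\nabla\varphi_n|^2+(a_\Gamma+\beta_\Gamma'(\psi)+\pi_\Gamma'(\psi))|\nabla_\Gamma\psi_n|^2\big]$, while the right-hand side splits into a cut-off term $\mathcal{R}_1\lesssim(2^n/\tau)\int_s^{s+1}\big(\|\varphi_{n-1}\|_H^2+\|\psi_{n-1}\|_{H_\Gamma}^2\big)$, a kernel-gradient term $\mathcal{R}_2$ which --- since $\varphi\nabla a_\Omega-(\nabla J)\ast\varphi=(\nabla J)\ast(\varphi(x)-\varphi(\cdot))$ is bounded in $L^\infty$ by $2b^\ast$ --- is absorbed by Young's inequality up to $C\beta'(k_n)^{-1}\int_s^{s+1}\big(|A_n|+|A_n^\Gamma|\big)$ (using $\beta'(\varphi)\ge\beta'(k_n)$ on $A_n$, legitimate by $\mathbf{(A10)}$; in $d=2$ one simply uses $\beta'\ge\alpha$), and the coupling term $\mathcal{K}_3=\tfrac1L\int_s^{s+1}\!\int_\Gamma(\theta-\mu)(\varphi_n|_\Gamma-\psi_n)\zeta_n^2$ coming from \eqref{model5}. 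For $\mathcal{K}_3$ I would invoke $\mathbf{(A7)}$: on $\{\varphi|_\Gamma>k_n\}\cap\{\psi>k_n\}$ one has $\varphi_n|_\Gamma-\psi_n=\varphi|_\Gamma-\psi$ while $\theta-\mu|_\Gamma=\beta(\psi)-\beta(\varphi|_\Gamma)+(\text{bounded})$, so the leading contribution is nonpositive by monotonicity of $\beta$, up to $O(|\varphi_n|_\Gamma|+|\psi_n|)$; on $\{\varphi|_\Gamma>k_n\}\cap\{\psi\le k_n\}$ we have $\varphi_n|_\Gamma-\psi_n>0$ while $\theta-\mu|_\Gamma\le C$ is bounded above (because $\beta(\psi)\le\beta(k_n)\le\beta(\varphi|_\Gamma)$ and everything else is bounded), so the contribution is $\lesssim\|\varphi_n\|_{L^1(\Gamma)}$; the symmetric set is analogous. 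Thus $\mathcal{K}_3\lesssim\|\varphi_n\|_{L^1(\Gamma)}+\|\psi_n\|_{L^1(\Gamma)}$, which after a trace inequality is bounded by $C\big(|A_n^\Gamma|+|A_{n-1}^\Gamma|\big)$ plus a small fraction of the surface dissipation.

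Setting $\mathcal{E}_n:=\sup_{s\ge\sigma_n}\big[\sup_{[s,s+1]}\big(\|\zeta_n\varphi_n\|_H^2+\|\zeta_n\psi_n\|_{H_\Gamma}^2\big)+\int_s^{s+1}\zeta_n^2\big(\|\nabla\varphi_n\|_H^2+\|\nabla_\Gamma\psi_n\|_{H_\Gamma}^2\big)\,\mathrm{d}t\big]$, I would then combine the above with the uniform bound $|A_n(t)|\lesssim\beta(k_n)^{-6}$ (from $\beta(k_n)-C_0\le\mu$ on $A_n$ and $\mu\in L^\infty_tL^6_x$), the measure-shrinking inequality $|A_{n+1}(t)|\lesssim(\delta^{-1}2^{n+1})^2\|\varphi_n(t)\|_{L^2}^2$, the corresponding surface statements, and Gagliardo--Nirenberg/Sobolev applied to $\zeta_n\varphi_n$ and $\zeta_n\psi_n$ (supported in $A_n$, $A_n^\Gamma$), to arrive at a recursive inequality $\mathcal{E}_{n+1}\le C b^n\delta^{-p_0}\mathcal{E}_n^{1+\varepsilon_0}$ for suitable $b>1$, $p_0>0$, $\varepsilon_0>0$ depending only on $d$ and the data. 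Assumptions $\mathbf{(A8)}$--$\mathbf{(A10)}$ are precisely what guarantee $\varepsilon_0>0$: when $d=3$, $\mathbf{(A9)}$ gives the coercivity gain $\beta'(k_n)\ge(\widetilde{C}_0\delta)^{-1}$ and $\mathbf{(A10)}$ legitimises $\beta'(\varphi)\ge\beta'(k_n)$ on the super-level sets, so only $\kappa>0$ in $\mathbf{(A8)}$ is needed (cf. Remark \ref{about kappa}), whereas $\kappa>1/2$ suffices when $d=2$ since every Sobolev exponent is admissible. The fast-convergence lemma then yields $\mathcal{E}_n\to 0$ once $\mathcal{E}_0$ is below an explicit $\delta$-dependent threshold, and $\mathcal{E}_0\to 0$ as $\delta\to0$ because $\varphi_0,\psi_0\le2\delta$, $|A_0|,|A_0^\Gamma|\lesssim\beta(1-2\delta)^{-6}\to0$, and the level-$0$ dissipation is controlled by $\mathcal{R}_1+\mathcal{R}_2+\mathcal{K}_3$ at level $0$, all of which vanish as $\delta\to0$; hence choosing $\delta$ (depending on $\tau$) small enough closes the scheme. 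Since $\zeta_n\equiv1$ on $[\tau,+\infty)$ for $n$ large, $\mathcal{E}_n\to0$ forces $\varphi\le1-\delta$ a.e.\ in $\Omega\times[\tau,+\infty)$ and $\psi\le1-\delta$ a.e.\ on $\Gamma\times[\tau,+\infty)$; together with the lower bounds this gives \eqref{sepa1}--\eqref{sepa2}. Finally, with $|\varphi|,|\psi|\le1-\delta$ the quantities $\beta',\beta'',\beta_\Gamma',\beta_\Gamma''$ are bounded along the solution, so \eqref{weak3}--\eqref{weak4} give $\|\boldsymbol{\mu}(t)\|_{\mathcal{L}^\infty}\le\widehat{C}_0$ for $t\ge\tau$, and differentiating \eqref{weak3}--\eqref{weak4} in time together with $\partial_t\boldsymbol{\varphi}\in L^2_{\mathrm{uloc}}(\tau,+\infty;\mathcal{L}^2)$ from Theorem \ref{regularize} yields $\partial_t\boldsymbol{\mu}\in L^2_{\mathrm{uloc}}(\tau,+\infty;\mathcal{L}^2)$, which is \eqref{mu-high}.

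The hard part will be the recursion step: tracking every constant through the coupled bulk--surface iteration and checking that $\mathbf{(A8)}$--$\mathbf{(A10)}$ really deliver an exponent $1+\varepsilon_0$ strictly above $1$, especially in the three-dimensional case where the Sobolev exponent is only $6$. Secondary difficulties, absent from the homogeneous Neumann case treated in \cite{GP,Gior,Po}, are the correct handling of the coupling term $\mathcal{K}_3$ (which is what forces the compatibility condition $\mathbf{(A7)}$) and the need to carry the two nonlocal convolution operators $J\ast\cdot$ and $K\circledast\cdot$ through the estimates simultaneously, together with the trace interplay between the bulk and surface dissipations.
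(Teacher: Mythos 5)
Your scheme follows the paper's overall strategy (De Giorgi iteration with matched bulk and surface levels, made possible by $\mathbf{(A7)}$, coercivity from $\beta'$, a Young absorption of the $(\nabla J)\ast\varphi-\varphi\nabla a_\Omega$ term, and a fast-convergence lemma), but there is a genuine gap in how you close the coupling term $\mathcal K_3$. After the sign decomposition you correctly reduce the remainder of $\mathcal K_3$ to $\|\varphi_n\|_{L^1(\Gamma)}+\|\psi_n\|_{L^1(\Gamma)}$. The piece $\|\psi_n\|_{L^1(\Gamma)}\leq 2\delta\,|A_n^\Gamma|$ is fine, but $\|\varphi_n\|_{L^1(\Gamma)}$ is controlled by the measure of $\widetilde A_n(t):=\{x\in\Gamma:\varphi(x,t)\geq\kappa_n\}$ — the level set of the \emph{trace} of $\varphi$ — which is a third set, distinct from both $A_n$ and $A_n^\Gamma$, and which you never track. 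Your proposed "trace inequality" cannot produce a bound in terms of $|A_n^\Gamma|$ (that set belongs to $\psi$, not $\varphi|_\Gamma$), nor in terms of "a small fraction of the surface dissipation" (the surface dissipation involves $\nabla_\Gamma\psi_n$). A trace estimate gives $\|\varphi_n\|_{L^1(\Gamma)}\lesssim\|\varphi_n\|_H+\|\nabla\varphi_n\|_H$, and the $\|\nabla\varphi_n\|_H$ term appears to the first power, so a Young split leaves a constant $\sim\widetilde\tau$ that is not driven to zero by the recursion. The paper resolves this precisely by adding $\int_{I_n}|\widetilde A_n(s)|\,\mathrm{d}s$ as a third summand in the iteration quantity $y_n$; then $\mathcal K_3\leq\widehat C\delta y_n$ and the measure-shrinking step treats $\widetilde A_n$ alongside $A_n$ and $B_n$. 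Without this your recursion does not close.

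Two further, smaller issues. First, once $\widetilde A_n$ is introduced, the $y_0$-smallness estimate needs $\int_{I_0}\int_{\widetilde A_0}1\,\mathrm{d}S\,\mathrm{d}s$ to vanish as $\delta\to0$; the natural comparison is with $|\beta(\varphi)|_\Gamma/|\beta(1-2\delta)|$, but $\beta$ is singular and does not commute with the trace, so the paper uses a truncation $h_j$ (with $h_j\circ\beta$ Lipschitz) so that $(h_j\circ\beta(\varphi))|_\Gamma=h_j\circ\beta(\varphi|_\Gamma)$, together with $\mathbf{(A8)}$; your sketch (via $\mu\in L^\infty_tL^6_x$ and $\beta(k_n)-C_0\leq\mu$ on $A_n$) addresses only the bulk set and does not transfer to the surface trace set. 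Second, and related, the nonpositive part of $\mathcal K_3$ uses $\beta(\varphi)|_\Gamma=\beta(\varphi|_\Gamma)$; the paper justifies this by passing to the Yosida regularization (the Lipschitz $\beta_\varepsilon$ does commute with the trace) and taking $\varepsilon\to0$, whereas your case analysis invokes this identity for the singular $\beta$ without comment. Finally, the 2D claim "$\kappa>1/2$ suffices since every Sobolev exponent is admissible" hides the actual closing step: one must track the $\sqrt{q}\,$-growth of $\|\beta(\varphi)\|_{L^q}$ against the level $\delta=e^{-q}$ and verify the resulting inequality, which is where $\kappa>1/2$ enters. These points are all fixable, but as written the $\mathcal K_3$ handling and the $y_0$-estimate for the trace set are genuine gaps.
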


	\begin{remark}\rm
		\label{L=infty}
	When $L=+\infty$, the subsystems in the bulk and on the boundary are completely decoupled. Hence, we can establish the strict separation property of $\varphi$ and $\psi$ separately. In this case, we do not  need to assume $\beta=\beta_\Gamma$ as $\mathbf{(A7)}$. Precisely, for the singular potential $\beta$, we assume  
	\begin{itemize}
	\item [(1)] If $d=2$, $\mathbf{(A2)}$ and $\mathbf{(A8)}$ hold.
	\item [(2)]	If $d=3$, $\mathbf{(A2)}$ and $\mathbf{(A8)}$--$\mathbf{(A10)}$ hold.
	\end{itemize}
    While for the singular potential $\beta_\Gamma$, since the boundary $\Gamma$ is a $(d-1)$-dimensional compact smooth manifold, with $d-1\leq2$, we only need to assume that $\beta_\Gamma$ satisfies $\mathbf{(A8)}$ with $\kappa>1/2$. For futher details, we refer to \cite{Gior,Po,GP} for the strict separation property of the bulk phase variable $\varphi$, while for the boundary phase variable $\psi$, the same argument applies after a minor modification.
	\end{remark}

	\begin{remark}\rm
	\label{except0}
	In order to prove Theorem \ref{separation}, we make use of a suitable De Giorgi's iteration scheme for parabolic equations, which involves integration-by-parts of the chemical potential $\mu$. More precisely, we take test functions $\varphi_n\eta_n^2$ in \eqref{weak1} and $\psi_n\eta_n^2$ in \eqref{weak2} (see the proof of Theorem \ref{separation} for their definitions) and use the boundary condition \eqref{model5}. For this purpose, we need $\mathbf{(A7)}$ to deal with the difficulty caused by \eqref{model5}. When $L=0$, we observe that in the weak formula \eqref{0weak1}, the test function $\boldsymbol{z}$ has to belong to $\mathcal{V}^1$, but the global weak solution $\boldsymbol{\varphi}$ to problem \eqref{model1}--\eqref{psiini} only belongs to $\mathcal{H}^1$. This makes it impossible for us to construct suitable test function in \eqref{0weak1} as for the case $L\in (0,+\infty)$. Therefore, we cannot deal with the case $L=0$ in the current framework.
	\end{remark}
	
	\section{Well-posedness for the Case $L\in(0,+\infty)$}
	\setcounter{equation}{0}
	\subsection{The approximating problem}
	To prove the existence of weak solutions to problem \eqref{model1}--\eqref{psiini} with general singular potentials for the case $L\in(0,+\infty)$,
	we regard the derivatives $\beta$ and $\beta_{\Gamma}$ as maximal monotone graphs in $\mathbb{R}\times\mathbb{R}$
	and approximate the maximal monotone operators by means of suitable Yosida approximations.
	For every $\varepsilon\in (0,1)$, set $\beta_{\varepsilon}$, $\beta_{\Gamma,\varepsilon}:\mathbb{R}\rightarrow\mathbb{R}$, along with the associated resolvent operators $J_{\varepsilon}$, $J_{\Gamma,\varepsilon}:\mathbb{R}\rightarrow\mathbb{R}$ by
	\begin{align*}
		&\beta_{\varepsilon}(s):=\frac{1}{\varepsilon}(s-J_{\varepsilon}(s)) :=\frac{1}{\varepsilon}(s-(I+\varepsilon\beta)^{-1}(s)), \\
		&\beta_{\Gamma,\varepsilon}(s):=\frac{1}{\varepsilon}(s-J_{\Gamma,\varepsilon}(s)) :=\frac{1}{\varepsilon}(s-(I+\varepsilon\beta_{\Gamma})^{-1}(s)),
	\end{align*}
	for all $s\in\mathbb{R}$.
	The related Moreau--Yosida regularizations $\widehat{\beta}_{\varepsilon}$, $\widehat{\beta}_{\Gamma,\varepsilon}$ of $\widehat{\beta}$, $\widehat{\beta}_{\Gamma}:\mathbb{R}\rightarrow\mathbb{R}$ are then given by (see, e.g., \cite{R.E.S})
	\begin{align}
		&\widehat{\beta}_{\varepsilon}(s) :=\inf_{\tau\in\mathbb{R}}\left\{\frac{1}{2\varepsilon}|s-\tau|^{2} +\widehat{\beta}(\tau)\right\}=\frac{1}{2\varepsilon}|s-J_{\varepsilon}(s)|^{2} +\widehat{\beta}(J_{\varepsilon}(s)) =\int_{0}^{s}\beta_{\varepsilon}(\tau)\,\mathrm{d}\tau,
		\notag\\
		&\widehat{\beta}_{\Gamma,\varepsilon}(s) :=\inf_{\tau\in\mathbb{R}}\left\{\frac{1}{2\varepsilon}|s-\tau|^{2} +\widehat{\beta}_{\Gamma}(\tau)\right\}=\int_{0}^{s}\beta_{\Gamma,\varepsilon}(\tau)\,\mathrm{d}\tau.
		\notag
	\end{align}
	From $\mathbf{(A2)}$, we find $\beta_{\varepsilon}(0)=\beta_{\Gamma,\varepsilon}(0)=0$ and
	  $\beta_{\varepsilon}$, $\beta_{\Gamma,\varepsilon}$ are Lipschitz continuous with the Lipschitz constant $1/\varepsilon$ (see \cite[Propositions 2.6 and 2.7]{Br}).
	Thus, it follows that $\widehat{\beta}_{\varepsilon}$ and $\widehat{\beta}_{\Gamma,\varepsilon}$ are nonnegative convex functions with (at most) quadratic growth and (see, e.g., \cite{B,R.E.S})
	\begin{align}
		&|\beta_{\varepsilon}(s)|\leq|\beta(s)|,\quad|\beta_{\Gamma,\varepsilon}(s)|\leq|\beta_{\Gamma}(s)|, \quad \forall\ s\in (-1,1),
		\notag\\[1mm]
		&0\leq\widehat{\beta}_{\varepsilon}(s)\leq\widehat{\beta}(s),\quad 0\leq\widehat{\beta}_{\Gamma,\varepsilon}(s)\leq\widehat{\beta}_{\Gamma}(s),
		\quad\forall\,s\in\mathbb{R}.\notag
	\end{align}
	Additionally, the following inequalities hold for $\varepsilon\in (0,1)$ (see \cite[Section 5]{GMS09}):
	for any given $s_0\in (-1,1)$,
	\begin{align}
		\beta_{\varepsilon}(s)(s-s_{0})\geq\delta_{0}|\beta_{\varepsilon}(s)|-c_{1},
		\quad\beta_{\Gamma,\varepsilon}(s)(s-s_{0})\geq\delta_{0}|\beta_{\Gamma,\varepsilon}(s)|-c_{1},\quad\forall\;s\in\mathbb{R},
		\label{eq2.8}
	\end{align}
	where $\delta_{0}$, $c_{1}$ are positive constants depending on $s_0$, but independent of $\varepsilon$.
	As shown in \cite{GGG}, for any $\varepsilon\in(0,1)$, $\beta_{\varepsilon}'(s)$ and $\beta_{\Gamma,\varepsilon}'(s)$ exist for all $s\in\mathbb{R}$ and
	\begin{align}
		\beta_{\varepsilon}'(s)\geq \frac{\alpha}{1+\alpha},\quad\beta_{\Gamma,\varepsilon}'(s) \geq\frac{\alpha}{1+\alpha},\quad\forall\,s\in\mathbb{R}.\label{unibelow}
	\end{align}
	Furthermore, $\widehat{\beta}_\varepsilon$ and $\widehat{\beta}_{\Gamma,\varepsilon}$ are bounded from below uniformly with respect to $\varepsilon\in(0,\varepsilon^\ast)$, that is,
	\begin{align}
		\widehat{\beta}_{\varepsilon}(s) \geq\frac{1}{4\varepsilon^{\ast}}s^{2}-\widetilde{C},\quad\widehat{\beta}_{\Gamma,\varepsilon}(s)\geq\frac{1}{4\varepsilon^{\ast}}s^{2}-\widetilde{C},\quad\forall\,s\in\mathbb{R},\quad\forall\,0<\varepsilon\leq\varepsilon^{\ast},\label{Funibelow}
	\end{align}
	where $\varepsilon^\ast>0$ is given by \eqref{epsi-star}
	and $\widetilde{C}>0$ depends only on $\varepsilon^{\ast}$ but is independent of $\varepsilon\in(0,\varepsilon^{\ast})$.

	Let us consider the following approximating problem with $\varepsilon\in(0,\varepsilon^\ast)$:
	\begin{align}
		&\partial_{t}\varphi=\Delta\mu,&&\text{in }Q_{T},\label{appro1}\\
		&\mu=a_{\Omega}\varphi-J\ast\varphi+\beta_{\varepsilon}(\varphi)+\pi(\varphi),&&\text{in }Q_{T},\label{appro2}\\
		&\partial_{t}\psi=\Delta_{\Gamma}\theta-\partial_{\mathbf{n}}\mu,&&\text{on }\Sigma_{T},\label{appro3}\\
		&\theta=a_{\Gamma}\psi-K\circledast\psi+\beta_{\Gamma,\varepsilon}(\psi)+\pi_{\Gamma}(\psi),&&\text{on }\Sigma_{T},\label{appro4}\\
		&L\partial_{\mathbf{n}}\mu=\theta-\mu,\quad L\in(0,+\infty),&&\text{on }\Sigma_{T},\label{appro5}\\
		&\varphi|_{t=0}=\varphi_{0},&&\text{in }\Omega,\label{phiappro}\\
		&\psi|_{t=0}=\psi_{0},&&\text{on }\Gamma.\label{psiappro}
	\end{align}
	\begin{definition}
		\label{approweak}
		Let $\Omega\subset\mathbb{R}^d$ $(d\in\{2,3\})$ be a bounded domain with smooth boundary $\Gamma$,
		$T\in(0,+\infty)$ be an arbitrary but given final time and $(\varepsilon,L)\in(0,\varepsilon^\ast)\times(0,+\infty)$.
		The pair $(\boldsymbol{\varphi}_\varepsilon,\boldsymbol{\mu}_\varepsilon)$ with
$\boldsymbol{\varphi}_\varepsilon:=(\varphi_\varepsilon,\psi_\varepsilon)$, $\bm{\mu}_\varepsilon:=(\mu_\varepsilon,\theta_\varepsilon)$,
		is called a weak solution to approximating problem \eqref{appro1}--\eqref{psiappro} on $[0,T]$,
		if the following conditions are fulfilled:
		\begin{description}
			\item[(i)] The pair $(\boldsymbol{\varphi}_\varepsilon,\boldsymbol{\mu}_\varepsilon)$ satisfies the following regularity properties
			\begin{align}
				\begin{array}{l}
					\varphi_\varepsilon\in H^{1}(0,T;V')\cap L^{\infty}(0,T;H)\cap L^{2}(0,T;V),\\[1mm]
					\psi_\varepsilon\in H^{1}(0,T;V_{\Gamma}')\cap L^{\infty}(0,T;H_{\Gamma})\cap L^{2}(0,T;V_{\Gamma}),\\[1mm]
					\mu_\varepsilon\in L^{2}(0,T;V),\quad\theta_\varepsilon\in L^{2}(0,T;V_{\Gamma}).
				\end{array}\label{approregu}
			\end{align}
			\item[(ii)] The weak formulations
			\begin{align}
				&\langle\partial_{t}\varphi_\varepsilon,z\rangle_{V',V}=-\int_{\Omega}\nabla\mu_\varepsilon\cdot\nabla z\,\mathrm{d}x+\frac{1}{L}\int_{\Gamma}(\theta_\varepsilon-\mu_\varepsilon)z\,\mathrm{d}S,\quad\forall\,z\in V,\label{approweak1}\\
				&\langle\partial_{t}\psi_\varepsilon,z_{\Gamma}\rangle_{V_{\Gamma}',V_{\Gamma}}=-\int_{\Gamma}\nabla_{\Gamma}\theta_\varepsilon\cdot\nabla_{\Gamma} z_{\Gamma}\,\mathrm{d}S-\frac{1}{L}\int_{\Gamma}(\theta_\varepsilon-\mu_\varepsilon)z_{\Gamma}\,\mathrm{d}S,\quad\forall\,z_\Gamma\in V_\Gamma,\label{approweak2}\\
				&\int_{\Omega}\mu_\varepsilon\zeta\,\mathrm{d}x=\int_{\Omega}(a_{\Omega}\varphi_\varepsilon-J\ast\varphi_\varepsilon+\beta_{\varepsilon}(\varphi_\varepsilon)+\pi(\varphi_\varepsilon))\zeta\,\mathrm{d}x,\quad\forall\,\zeta\in H,\label{approweak3}\\
				&\int_{\Gamma}\theta_\varepsilon\zeta_{\Gamma}\,\mathrm{d}S=\int_{\Gamma}(a_{\Gamma}\psi_\varepsilon-K\circledast\psi_\varepsilon+\beta_{\Gamma,\varepsilon}(\psi_\varepsilon)+\pi_{\Gamma}(\psi_\varepsilon))\zeta_{\Gamma}\,\mathrm{d}S,\quad\forall\,\zeta_\Gamma\in H_\Gamma,\label{approweak4}
			\end{align}
			hold for almost all $t\in(0,T)$.
			Moreover, the initial conditions $\varphi_\varepsilon|_{t=0}=\varphi_{0}$
			and $\psi_\varepsilon|_{t=0}=\psi_{0}$ are satisfied almost everywhere in $\Omega$ and on $\Gamma$, respectively.
			\item[(iii)] The energy inequality
			\begin{align}
				E_{\varepsilon}(\boldsymbol{\varphi}_\varepsilon(t))+\int_{0}^{t}\Big(\Vert\nabla\mu_\varepsilon(s)\Vert_{H}^{2}+\Vert\nabla_{\Gamma}\theta_\varepsilon(s)\Vert_{H_{\Gamma}}^{2}+\frac{1}{L}\Vert\theta_\varepsilon(s)-\mu_\varepsilon(s)\Vert_{H_{\Gamma}}^{2}\Big)\,\mathrm{d}s\leq E(\boldsymbol{\varphi}_{0})\label{approenergyineq}
			\end{align}
			holds for all $t\in[0,T]$,
			where the total free energy $E_{\varepsilon}(\boldsymbol{\varphi}_\varepsilon)$ of the approximating problem is defined by
			\begin{align}
				E_{\varepsilon}(\boldsymbol{\varphi}_\varepsilon) &=\frac{1}{2}\int_{\Omega}a_{\Omega}\varphi_\varepsilon^{2}\,\mathrm{d}x -\frac{1}{2}\int_{\Omega}(J\ast\varphi_\varepsilon)\varphi_\varepsilon \,\mathrm{d}x+\int_{\Omega}(\widehat{\beta}_{\varepsilon}(\varphi_\varepsilon) +\widehat{\pi}(\varphi_\varepsilon))\,\mathrm{d}x\notag\\
				&\quad+\frac{1}{2}\int_{\Gamma}a_{\Gamma}\psi_\varepsilon^{2}\,\mathrm{d}S -\frac{1}{2}\int_{\Gamma}(K\circledast\psi_\varepsilon)\psi_\varepsilon \,\mathrm{d}S+\int_{\Gamma}(\widehat{\beta}_{\Gamma,\varepsilon}(\psi_\varepsilon) +\widehat{\pi}_{\Gamma}(\psi_\varepsilon))\,\mathrm{d}S. \notag
			\end{align}
		\end{description}
	\end{definition}
	
	\begin{remark}\rm
	By the regularity \eqref{approregu} and the weak formulations \eqref{approweak3}, \eqref{approweak4}, we find
	\begin{align}
		&\mu_\varepsilon=a_{\Omega}\varphi_\varepsilon-J\ast\varphi_\varepsilon+\beta_{\varepsilon}(\varphi_\varepsilon)+\pi(\varphi_\varepsilon),\quad\text{a.e. in }Q_{T},\label{appropointmu}\\
		&\theta_\varepsilon=a_{\Gamma}\psi_\varepsilon-K\circledast\psi_\varepsilon+\beta_{\Gamma,\varepsilon}(\psi_\varepsilon)+\pi_{\Gamma}(\psi_\varepsilon),\quad\text{a.e. on }\Sigma_{T}.\label{appropointtheta}
	\end{align}
	\end{remark}
	
   \begin{proposition}
   	\label{approexist}
   	Let $\Omega\subset\mathbb{R}^{d}$ $(d\in\{2,3\})$ be a bounded domain with smooth boundary $\Gamma=\partial\Omega$,
   	$T\in(0,+\infty)$ be an arbitrary but given final time.
   	Assumptions $\mathbf{(A1)}$--$\mathbf{(A4)}$ hold and $(\varepsilon,L)\in (0,\varepsilon^\ast)\times(0,+\infty)$.
   	Then, the approximating problem \eqref{appro1}--\eqref{psiappro} admits a unique global weak solution $(\boldsymbol{\varphi}_\varepsilon,\boldsymbol{\mu}_\varepsilon)$ on $[0,T]$
   	in the sense of Definition \ref{approweak}.
   \end{proposition}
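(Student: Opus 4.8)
The approximating problem is semilinear for each fixed $\varepsilon\in(0,\varepsilon^\ast)$: the maps $\beta_\varepsilon,\beta_{\Gamma,\varepsilon},\pi,\pi_\Gamma$ are globally Lipschitz, the convolutions $J\ast\cdot$ and $K\circledast\cdot$ are bounded linear operators (by the estimates recorded after $\mathbf{(A1)}$), and the bulk--surface coupling in \eqref{appro5} enters only through the bounded boundary term in the bilinear form $a_L$. I would therefore run a Faedo--Galerkin scheme. Fix an orthonormal basis $\{\boldsymbol w_i\}_{i\ge1}=\{(\xi_i,\xi_{\Gamma,i})\}$ of $\mathcal{L}^2$ consisting of eigenfunctions of the symmetric, nonnegative form $a_L$ on $\mathcal{H}^1$, with eigenvalues $0=\lambda_1\le\lambda_2\le\cdots$ (such a basis exists since $\mathcal{H}^1\hookrightarrow\hookrightarrow\mathcal{L}^2$, and $\boldsymbol w_1\propto(1,1)\in\ker a_L$). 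Seek $\boldsymbol\varphi_n=\sum_{i=1}^n a_i^n(t)\,\boldsymbol w_i$ and define $\boldsymbol\mu_n=(\mu_n,\theta_n)$ by the algebraic relations \eqref{appropointmu}--\eqref{appropointtheta} with $\varepsilon$ fixed; then $\boldsymbol\mu_n\in\mathcal{H}^1$ since $\beta_\varepsilon',\beta_{\Gamma,\varepsilon}',\pi',\pi_\Gamma'$ are bounded. Testing \eqref{approweak1}--\eqref{approweak2} against $(\xi_k,\xi_{\Gamma,k})$ and adding, the two boundary penalty terms recombine into $a_L(\boldsymbol\mu_n,\boldsymbol w_k)=\lambda_k\big[(\mu_n,\xi_k)_H+(\theta_n,\xi_{\Gamma,k})_{H_\Gamma}\big]$, so the scheme becomes an ODE system for $(a_1^n,\dots,a_n^n)$ with Lipschitz right-hand side; standard ODE theory yields a unique local solution on some $[0,T_n)$, with $\boldsymbol\varphi_n(0)$ the $\mathcal{L}^2$-projection of $\boldsymbol\varphi_0$.

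For the a priori estimates I would test the scheme against $\boldsymbol\varphi_n$ itself (which belongs to the discrete space), obtaining $\tfrac{1}{2}\frac{d}{dt}\|\boldsymbol\varphi_n\|_{\mathcal{L}^2}^2=-a_L(\boldsymbol\mu_n,\boldsymbol\varphi_n)$. Expanding $\nabla\mu_n$ and $\nabla_\Gamma\theta_n$, the pointwise bounds $a_\Omega+\beta_\varepsilon'(\varphi_n)+\pi'(\varphi_n)\ge a_\ast+\tfrac{\alpha}{1+\alpha}-\gamma_1>0$ and its surface analogue (cf.\ $\mathbf{(A3)}$ and \eqref{unibelow}) make the gradient contributions coercive, while the convolution terms, the concave perturbations, and the boundary coupling are absorbed by trace and Young inequalities at the cost of a constant $C(\varepsilon,L)$; Grönwall's lemma then gives $\boldsymbol\varphi_n$ bounded in $L^\infty(0,T;\mathcal{L}^2)\cap L^2(0,T;\mathcal{H}^1)$ uniformly in $n$, so $T_n=T$ and the Galerkin solutions are global. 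From the algebraic relations and $\mathbf{(A1)}$ it follows that $\boldsymbol\mu_n$ is bounded in $L^2(0,T;\mathcal{H}^1)$, hence $\partial_t\boldsymbol\varphi_n$ is bounded in $L^2(0,T;(\mathcal{H}^1_L)')$ through the equation. By weak/weak-$\ast$ compactness and the Aubin--Lions--Simon lemma, along a subsequence $\varphi_n\to\varphi_\varepsilon$ and $\psi_n\to\psi_\varepsilon$ strongly in $L^2(0,T;H)$, $L^2(0,T;H_\Gamma)$ and a.e.; Lipschitz continuity of $\beta_\varepsilon,\pi$ (resp.\ $\beta_{\Gamma,\varepsilon},\pi_\Gamma$) and continuity of the convolutions yield $\mu_n\to\mu_\varepsilon$, $\theta_n\to\theta_\varepsilon$ in $L^2(Q_T)$, $L^2(\Sigma_T)$ with $\mu_\varepsilon,\theta_\varepsilon$ as in \eqref{appropointmu}--\eqref{appropointtheta}, together with $\mu_n\rightharpoonup\mu_\varepsilon$ in $L^2(0,T;V)$ and $\theta_n\rightharpoonup\theta_\varepsilon$ in $L^2(0,T;V_\Gamma)$. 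Passing to the limit in the Galerkin identities (first for fixed $\boldsymbol w_k$, then by density in $\mathcal{H}^1$) produces a solution satisfying \eqref{approweak1}--\eqref{approweak4} with the regularity \eqref{approregu}; the initial datum is attained since $\boldsymbol\varphi_n\rightharpoonup\boldsymbol\varphi_\varepsilon$ in $H^1(0,T;(\mathcal{H}^1_L)')\hookrightarrow C([0,T];(\mathcal{H}^1_L)')$ while $\boldsymbol\varphi_n(0)\to\boldsymbol\varphi_0$ in $\mathcal{L}^2$.

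Since now $\mu_\varepsilon\in L^2(0,T;V)$ and $\theta_\varepsilon\in L^2(0,T;V_\Gamma)$ are admissible test functions, the energy identity follows by testing \eqref{approweak1} with $\mu_\varepsilon$, \eqref{approweak2} with $\theta_\varepsilon$, adding and integrating over $(0,t)$, where the chain rule for $\widehat\beta_\varepsilon$ and $\widehat\pi$ is licit because these are $C^1$ with (at most) quadratic growth and $\boldsymbol\varphi_\varepsilon\in L^2(0,T;\mathcal{H}^1)\cap H^1(0,T;(\mathcal{H}^1_L)')$, and the remaining terms are treated using the evenness of $J,K$; this gives in fact an energy \emph{equality} with $E_\varepsilon(\boldsymbol\varphi_0)$ on the right, and since $\widehat\beta_\varepsilon\le\widehat\beta$ and $\widehat\beta_{\Gamma,\varepsilon}\le\widehat\beta_\Gamma$ pointwise we obtain $E_\varepsilon(\boldsymbol\varphi_0)\le E(\boldsymbol\varphi_0)$, i.e.\ \eqref{approenergyineq}. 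For uniqueness, testing \eqref{approweak1}--\eqref{approweak2} with the constant $1$ shows that $\overline m(\boldsymbol\varphi_\varepsilon(t))$ is conserved, so the difference $\bar{\boldsymbol\varphi}=\boldsymbol\varphi_{\varepsilon,1}-\boldsymbol\varphi_{\varepsilon,2}$ of two solutions with the same datum satisfies $\bar{\boldsymbol\varphi}(t)\in\mathcal{L}^2_{(0)}$; testing the difference of the equations with $\mathfrak{S}^L\bar{\boldsymbol\varphi}$ and using $a_L(\mathfrak{S}^L\bar{\boldsymbol\varphi},\bar{\boldsymbol\mu})=(\bar{\boldsymbol\varphi},\bar{\boldsymbol\mu})_{\mathcal{L}^2}$ leads to $\tfrac{1}{2}\frac{d}{dt}\|\bar{\boldsymbol\varphi}\|_{L,0,\ast}^2+(\bar{\boldsymbol\varphi},\bar{\boldsymbol\mu})_{\mathcal{L}^2}=0$, where $\beta_\varepsilon,\beta_{\Gamma,\varepsilon}$ contribute a coercive term $\tfrac{\alpha}{1+\alpha}\|\bar{\boldsymbol\varphi}\|_{\mathcal{L}^2}^2$ (see \eqref{unibelow}) that, combined with $a_\Omega\ge a_\ast$, $a_\Gamma\ge a_\circledast$ and $\mathbf{(A3)}$, dominates the Lipschitz perturbations, while the convolution cross-terms are controlled by $|(J\ast\bar\varphi,\bar\varphi)_H|\le C\|\bar\varphi\|_{V'}\|\bar\varphi\|_H\le\eta\|\bar{\boldsymbol\varphi}\|_{\mathcal{L}^2}^2+C_\eta\|\bar{\boldsymbol\varphi}\|_{L,0,\ast}^2$ (using $J\ast\bar\varphi\in V$ from $\mathbf{(A1)}$); Grönwall with $\bar{\boldsymbol\varphi}(0)=0$ then forces $\bar{\boldsymbol\varphi}\equiv0$.

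The main difficulty is organizational rather than conceptual: for fixed $\varepsilon$ everything is controlled by globally Lipschitz nonlinearities, and the work lies in propagating the bulk--surface coupling and the two families of nonlocal terms through each estimate, and in choosing a discretization compatible with $a_L$ so that the boundary penalty recombines cleanly. The genuinely delicate $\varepsilon$-independent bounds, which rest on the Moreau--Yosida structure and \eqref{Funibelow}, are not needed here and are postponed to the construction of weak solutions in Theorem \ref{exist}.
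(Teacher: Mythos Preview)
Your approach is correct but departs from the paper's in several structural choices. The paper discretizes with \emph{separate} bases---the Neumann--Laplace eigenfunctions $\{u_j\}$ in $\Omega$ and the Laplace--Beltrami eigenfunctions $\{v_j\}$ on $\Gamma$---and, crucially, \emph{projects} $\mu_m$ and $\theta_m$ onto these finite-dimensional spaces via \eqref{FGweak3}--\eqref{FGweak4}. Because $(\mu_m,\theta_m)$ then lies in the discrete space, testing \eqref{FGweak1}--\eqref{FGweak2} with $(\mu_m,\theta_m)$ is admissible and yields the exact discrete energy identity \eqref{FGuni1}; the a~priori bounds \eqref{FGuni3} follow immediately from the energy, with no sign-indefinite boundary term to absorb. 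The energy inequality \eqref{approenergyineq} is then obtained by weak lower semicontinuity after passing to the limit $m\to\infty$.

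By contrast, you use the coupled $a_L$-eigenbasis and define $\boldsymbol\mu_n$ pointwise (unprojected), which forces you to test with $\boldsymbol\varphi_n$ rather than $\boldsymbol\mu_n$ at the discrete level. This produces the cross-term $\tfrac{1}{L}\int_\Gamma(\mu_n|_\Gamma-\theta_n)(\varphi_n|_\Gamma-\psi_n)\,\mathrm{d}S$, which has no sign; absorbing it requires the multiplicative trace inequality $\|\varphi_n|_\Gamma\|_{H_\Gamma}^2\le C\|\varphi_n\|_H\|\varphi_n\|_V$ together with repeated Young inequalities, and the resulting Gr\"onwall constant depends on $(\varepsilon,L)$. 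Your route then recovers the energy \emph{equality} a posteriori by testing the limit equations with $(\mu_\varepsilon,\theta_\varepsilon)$ and invoking a chain rule. Both arguments are valid; the paper's is cleaner at the Galerkin stage and dovetails directly with the $\varepsilon$-uniform estimates needed later in Step~1 of the proof of Theorem~\ref{exist}, while yours exploits the spectral compatibility of the $a_L$-basis with the coupling (the identity $a_L(\boldsymbol\mu_n,\boldsymbol w_k)=\lambda_k(\boldsymbol\mu_n,\boldsymbol w_k)_{\mathcal L^2}$ is what makes your ODE system Lipschitz without requiring $\nabla\mu_n$) and delivers the energy identity rather than merely an inequality.
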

   \begin{proof}
   	\textbf{Step 1. The Faedo--Galerkin scheme.} It is well-known that the problems
   	\begin{align*}
   		\begin{cases}
   		-\Delta u=\lambda_{\Omega}u,&\text{in }\Omega,\\
   		\partial_{\mathbf{n}}u=0,&\text{on }\Gamma,
   		\end{cases}
   		\quad\text{and}\quad-\Delta_{\Gamma} v=\lambda_{\Gamma}v\quad\text{on }\Gamma,
   	\end{align*}
   	have countable many eigenvalues,
   	which can be written as increasing sequences $\{\lambda_{\Omega}^{j}\}_{j\in\mathbb{Z}^+}$ and $\{\lambda_{\Gamma}^{j}\}_{j\in\mathbb{Z}^+}$, respectively.
   	The associated eigenfunctions $\{u_{j}\}_{j\in\mathbb{Z}^+}\subset V$ and $\{v_{j}\}_{j\in\mathbb{Z}^+}\subset V_{\Gamma}$ form an orthonormal basis of $H$ and $H_{\Gamma}$, respectively.
   	In particular, we fix the eigenfunctions associated to the first eigenvalues $\lambda_{\Omega}^{1}=\lambda_{\Gamma}^{1}=0$ as $u_{1}=|\Omega|^{-1/2}$ and $v_{1}=|\Gamma|^{-1/2}$.
   	Moreover, the eigenfunctions $\{u_{j}\}_{j\in\mathbb{Z}^+}$ and $\{v_{j}\}_{j\in\mathbb{Z}^+}$ also form an orthogonal basis of $V$ and $V_{\Gamma}$, respectively.
   Since in the definition of weak solutions (recall Definition \ref{weakdefn}), the phase variable $\varphi$ and chemical potential $\mu$ in the bulk only belong to $V$, their normal derivatives $\partial_\mathbf{n}\varphi$, $\partial_\mathbf{n}\mu$ on $\Gamma$ are not necessarily to be well defined. Hence, we can apply the idea for the local Cahn--Hilliard equation with a local dynamic boundary condition in \cite{CKSS,KS24} and construct weak solutions using a Faedo--Galerkin scheme involving the basis $\{u_{j}\}_{j\in\mathbb{Z}^+}$ and $\{v_{j}\}_{j\in\mathbb{Z}^+}$.
   	
   	For any $m\in\mathbb{Z}^+$, we introduce the finite-dimensional subspaces
   	\begin{align*}
   		\mathcal{A}_{m}=\text{span}\{u_{1},...,u_{m}\}\subset V,\quad\mathcal{B}_{m}=\text{span}\{v_{1},...,v_{m}\}\subset V_{\Gamma},
   	\end{align*}
   	along with the orthogonal $L^{2}(\Omega)$-projection $\mathbb{P}_{\mathcal{A}_{m}}$ and the orthogonal $L^{2}(\Gamma)$-projection $\mathbb{P}_{\mathcal{B}_{m}}$.
   	There exist constants $\widetilde{C}_{\Omega}$, $\widetilde{C}_{\Gamma}>0$ depending on $\Omega$ and $\Gamma$, respectively,
   	such that for all $u\in V$ and $v\in V_{\Gamma}$,
   	\begin{align*}
   		\Vert\mathbb{P}_{\mathcal{A}_{m}}u\Vert_{V}\leq \widetilde{C}_{\Omega}\Vert u\Vert_{V}\quad\text{and}\quad\Vert\mathbb{P}_{\mathcal{B}_{m}}v\Vert_{V_{\Gamma}}\leq \widetilde{C}_{\Gamma}\Vert v\Vert_{V_{\Gamma}}.
   	\end{align*}
   	To construct a sequence of approximating solutions, we consider the ansatz
   	\begin{align}
   	&\varphi_{m}(x,t):=\sum_{j=1}^{m}a_{j}^{m}(t)u_{j}(x), \quad\psi_{m}(x,t):=\sum_{j=1}^{m}b_{j}^{m}(t)v_{j}(x),\label{FG1}\\
   	&\mu_{m}(x,t):=\sum_{j=1}^{m}c_{j}^{m}(t)u_{j}(x), \quad\theta_{m}(x,t):=\sum_{j=1}^{m}d_{j}^{m}(t)v_{j}(x),\label{FG2}
   	\end{align}
   for every $m\in\mathbb{Z}^+$, which satisfy the discretized weak formulations
   	\begin{align}
   		&\langle\partial_{t}\varphi_{m},z\rangle_{V',V} =-\int_{\Omega}\nabla\mu_{m}\cdot\nabla z\,\mathrm{d}x+\frac{1}{L}\int_{\Gamma}(\theta_{m} -\mu_{m})z\,\mathrm{d}S,\label{FGweak1}\\
   		&\langle\partial_{t}\psi_{m},z_{\Gamma} \rangle_{V_{\Gamma}',V_{\Gamma}} =-\int_{\Gamma}\nabla_{\Gamma}\theta_{m}\cdot\nabla_{\Gamma} z_{\Gamma}\,\mathrm{d}S-\frac{1}{L}\int_{\Gamma}(\theta_{m} -\mu_{m})z_{\Gamma}\,\mathrm{d}S,\label{FGweak2}\\
   		&\int_{\Omega}\mu_{m}\zeta\,\mathrm{d}x=\int_{\Omega} (a_{\Omega}\varphi_{m}-J\ast\varphi_{m}+\beta_{\varepsilon}(\varphi_{m})+\pi(\varphi_{m}))\zeta\,\mathrm{d}x,\label{FGweak3}\\
   		&\int_{\Gamma}\theta_{m}\zeta_{\Gamma}\,\mathrm{d}S =\int_{\Gamma}(a_{\Gamma}\psi_{m}-K\circledast\psi_{m} +\beta_{\Gamma,\varepsilon}(\psi_{m})+\pi_{\Gamma}(\psi_{m})) \zeta_{\Gamma}\,\mathrm{d}S,\label{FGweak4}
   	\end{align}
   	on $[0,T]$ for all $z,\zeta\in\mathcal{A}_{m}$ and $z_{\Gamma},\zeta_{\Gamma}\in \mathcal{B}_{m}$, supplemented with the initial conditions
   	\begin{align}
   	\varphi_{m}(0)=\mathbb{P}_{\mathcal{A}_{m}}(\varphi_{0})\text{ a.e. in }\Omega,\quad\psi_{m}(0)=\mathbb{P}_{\mathcal{B}_{m}}(\psi_{0})\text{ a.e. on }\Gamma.\label{FGini}
   	\end{align}
   	The time-dependent coefficient vectors
   	$\boldsymbol{a}^{m}:=(a_{1}^{m},...,a_{m}^{m})$, $\boldsymbol{b}^{m}:=(b_{1}^{m},...,b_{m}^{m})$,
     $\boldsymbol{c}^{m}:=(c_{1}^{m},...,c_{m}^{m})$ and $\boldsymbol{d}^{m}:=(d_{1}^{m},...,d_{m}^{m})$ are assumed to be continuously differentiable.
   	Taking $z=u_{j}$ in \eqref{FGweak1} and $z_{\Gamma}=v_{j}$ in \eqref{FGweak2}, $j\in\{1,...,m\}$,
   	we find that $(\boldsymbol{a}^{m},\boldsymbol{b}^{m})^{\top}$ can be  determined
   	by a system of $2m$ ordinary differential equations,
   	whose right-hand side depends continuously on $\boldsymbol{a}^{m}$, $\boldsymbol{b}^{m}$, $\boldsymbol{c}^{m}$ and $\boldsymbol{d}^{m}$.
   	In view of \eqref{FGini}, this system is subject to the initial conditions
   	\begin{align*}
   	& a_{j}^{m}(0)=(\varphi_{0},u_{j})_{H},\quad\text{for  }j\in\{1,...,m\},\\
   	& b_{j}^{m}(0)=(\psi_{0},v_{j})_{H_{\Gamma}},\quad\text{for  }j\in\{1,...,m\}.
   	\end{align*}
   	Moreover, taking $\zeta=u_j$ in \eqref{FGweak3} and $\zeta_\Gamma=v_j$ in \eqref{FGweak4}, $j\in\{1,...,m\}$,
   	we infer that $(\boldsymbol{c}^{m},\boldsymbol{d}^{m})^{\top}$ is explicitly given by a system of $2m$ algebraic equations,
   	whose right-hand side is Lipschitz continuously with respect to  $(\boldsymbol{a}^{m},\boldsymbol{b}^{m})^{\top}$.
   Inserting $(\boldsymbol{c}^{m},\boldsymbol{d}^{m})^{\top}$ into the right-hand side of the aforementioned ODE system for $(\boldsymbol{a}^{m},\boldsymbol{b}^{m})^{\top}$,
   	we obtain a closed $2m$-dimensional ODE system,
   	whose right-hand side is Lipschitz continuously with respect to $(\boldsymbol{a}^{m},\boldsymbol{b}^{m})^{\top}$ itself.
   	Applying the Cauchy--Lipschitz theorem, we obtain a unique local solution $(\boldsymbol{a}^{m},\boldsymbol{b}^{m})^{\top}:\,[0,T_{m}] \rightarrow\mathbb{R}^{2m}$
   	 to the corresponding initial value problem for some $T_{m}\in (0, T]$.
   	This enables us to construct $(\boldsymbol{c}^{m},\boldsymbol{d}^{m})^{\top}\,:[0,T_{m}] \rightarrow\mathbb{R}^{2m}$
   	by the aforementioned algebraic equations.
   	In view of \eqref{FG1}, \eqref{FG2}, we obtain a unique approximating solution  $(\boldsymbol{\varphi}_m, \boldsymbol{\mu}_m)$ with
   	\begin{align*}
   		\boldsymbol{\varphi}_m:=(\varphi_{m},\psi_{m})\in C^{1}([0,T_{m}];\mathcal{H}^{1}),\quad\boldsymbol{\mu}_m:=(\mu_{m},\theta_{m})\in C([0,T_{m}];\mathcal{H}^{1}),
   	\end{align*}
   	which solves the discretized system \eqref{FGweak1}--\eqref{FGweak4} on the interval $[0,T_{m}]$ subject to the initial conditions \eqref{FGini}.
   \medskip
   	
   	\textbf{Step 2. Uniform estimates with respect to $m\in\mathbb{Z}^+$.}
   	We derive suitable estimates for approximating solutions $(\boldsymbol{\varphi}_{m},\boldsymbol{\mu}_{m})$,
   	which are uniform with respect to the approximating parameter  $m\in\mathbb{Z}^+$ and $T_m$.
   	In the following, we denote by $C$ a generic non-negative constant
   	depending only on the initial datum and the constants introduced in the assumptions including the final time $T$, but not on $m$ or $T_{m}$.
   	
   Taking $z=\mu_{m}$ in \eqref{FGweak1} and $z_{\Gamma}=\theta_{m}$ in \eqref{FGweak2},
  we infer from  \eqref{FGweak3} and \eqref{FGweak4} that
   	\begin{align}
   		\frac{\mathrm{d}}{\mathrm{d}t}E_\varepsilon(\boldsymbol{\varphi}_m(s)) +\int_{\Omega}|\nabla\mu_{m}(s)|^{2}\,\mathrm{d}x +\int_{\Gamma}|\nabla_{\Gamma}\theta_{m}(s)|^{2}\,\mathrm{d}S +\frac{1}{L}\int_{\Gamma}(\theta_{m}(s)-\mu_{m}(s))^{2}\,\mathrm{d}S =0\label{FGuni1}
   	\end{align}
   	for all $s\in(0,T_{m})$. Integrating \eqref{FGuni1} over  $[0,t]\subset[0,T_m]$ yields
   	\begin{align}
   		&E_\varepsilon(\boldsymbol{\varphi}_m(t))+\int_{0}^{t}\int_{\Omega}|\nabla\mu_{m}(s)|^{2}\,\mathrm{d}x\,\mathrm{d}s+\int_{0}^{t}\int_{\Gamma}|\nabla_{\Gamma}\theta_{m}(s)|^{2}\,\mathrm{d}S\,\mathrm{d}s\notag\\
   		&\qquad+\frac{1}{L}\int_{0}^{t}\int_{\Gamma}(\theta_{m}(s) -\mu_{m}(s))^{2}\,\mathrm{d}S\,\mathrm{d}s =E_\varepsilon(\boldsymbol{\varphi}_m(0)).\label{FGuni2}
   	\end{align}
   	Thanks to $\mathbf{(A3)}$, there exist two positive constants $C_{1}$ and $C_{2}$,
   	independent of $m\in\mathbb{Z}^+$ and $\varepsilon\in(0,\varepsilon^\ast)$, such that
   	\begin{align*}
   	|\widehat{\pi}(s)|\leq \frac{\gamma_{1}}{2}r^{2}+C_{1},\quad|\widehat{\pi}_{\Gamma}(s)|\leq\frac{\gamma_{2}}{2}s^{2}+C_{2},\quad\forall\,s\in\mathbb{R}.
   	\end{align*}
   	It follows from \eqref{2.1}, \eqref{2.2} and \eqref{Funibelow} that
   	\begin{align}
   		&E_{\varepsilon}(\boldsymbol{\varphi}) \geq\frac{a_{\ast}+a_{\circledast}}{2}\Vert\boldsymbol{\varphi} \Vert_{\mathcal{L}^{2}}^{2}-(\widetilde{C}+C_{1}+C_{2})(|\Omega|+|\Gamma|), \label{ineq1}\\
   		&E_{\varepsilon}(\boldsymbol{\varphi}) \leq\frac{1}{2}\Big(a^{\ast}+a^{\circledast}+\frac{1}{\varepsilon} +\Vert J\Vert_{L^{1}(\Omega)}+\Vert K\Vert_{L^{1}(\Gamma)}+\gamma_{1}+\gamma_{2}\Big) \Vert\boldsymbol{\varphi}\Vert_{\mathcal{L}^{2}}^{2} +(C_{1}+C_{2})(|\Omega|+|\Gamma|).\label{ineq2}
   	\end{align}
   	From \eqref{FGuni2}, \eqref{ineq1} and \eqref{ineq2}, we find  $E_\varepsilon(\boldsymbol{\varphi}_m(0))$  can be controlled by a constant independent of $m$, so that 	
   \begin{align}
   		&\Vert\varphi_{m}\Vert_{L^{\infty}(0,T_{m};H)} +\Vert\psi_{m}\Vert_{L^{\infty}(0,T_{m};H_{\Gamma})} +\Vert\nabla\mu_{m}\Vert_{L^{2}(0,T_{m};H)}
   \notag\\
   &\qquad+\Vert\nabla_{\Gamma}\theta_{m}\Vert_{L^{2}(0,T_{m};H_{\Gamma})} +\frac{1}{\sqrt{L}}\Vert\theta_{m}-\mu_{m}\Vert_{L^{2}(0,T_{m};H_{\Gamma})} \leq C.\label{FGuni3}
   	\end{align}
   	Taking $\zeta=1$ in \eqref{FGweak3} and $\zeta_{\Gamma}=1$ in \eqref{FGweak4},
   	using \eqref{FGuni3}, we infer
   	\begin{align}
   	\Big|\int_{\Omega}\mu_{m}\,\mathrm{d}x\Big|&=\Big|\int_{\Omega}(a_{\Omega}\varphi_{m}-J\ast\varphi_{m}+\beta_{\varepsilon}(\varphi_{m})+\pi(\varphi_{m}))\,\mathrm{d}x\Big|\leq C(1+\Vert\varphi_{m}\Vert_{L^{1}(\Omega)})\leq C,\label{FGuni4}\\
   	\Big|\int_{\Gamma}\theta_{m}\,\mathrm{d}S\Big|&=\Big|\int_{\Gamma}(a_{\Gamma}\psi_{m}-K\circledast\psi_{m}+\beta_{\Gamma,\varepsilon}(\psi_{m})+\pi_{\Gamma}(\psi_{m}))\,\mathrm{d}S\Big|\leq C(1+\Vert\psi_{m}\Vert_{L^{1}(\Gamma)})\leq C,\label{FGuni5}
   	\end{align}
   	for almost all $t\in[0,T_{m}]$.
   	Then, by \eqref{FGuni3}, \eqref{FGuni4}, \eqref{FGuni5} and the generalized Poincar\'e's inequality \eqref{Po3}, we obtain
   	\begin{align}
   	\Vert\mu_{m}\Vert_{L^{2}(0,T_{m};V)}+\Vert\theta_{m}\Vert_{L^{2}(0,T_{m};V_{\Gamma})}\leq C.\label{FGuni6}
   	\end{align}
   	For any $z^{\ast}\in V$ and $z_{\Gamma}^{\ast}\in V_{\Gamma}$, we set  $z=\mathbb{P}_{\mathcal{A}_{m}}z^{\ast}$ and $z_{\Gamma}=\mathbb{P}_{\mathcal{B}_{m}}z_{\Gamma}^{\ast}$.
   	Then, by \eqref{FGweak1} and \eqref{FGweak2}, we find
   	\begin{align*}
   	|\langle\partial_{t}\varphi_{m},z^{\ast}\rangle_{V',V}|&=|\langle\partial_{t}\varphi_{m},z\rangle_{V',V}|\\
   	&=\Big|-\int_{\Omega}\nabla\mu_{m}\cdot\nabla z\,\mathrm{d}x+\frac{1}{L}\int_{\Gamma}(\theta_{m}-\mu_{m})z\,\mathrm{d}S\Big|\\
   	&\leq C\big(\Vert\nabla\mu_{m}\Vert_{H}+\frac{1}{L}\Vert\theta_{m}-\mu_{m}\Vert_{H_{\Gamma}}\big)\Vert z\Vert_{V}\\
   	&\leq C\big(\Vert\nabla\mu_{m}\Vert_{H}+\frac{1}{L}\Vert\theta_{m}-\mu_{m}\Vert_{H_{\Gamma}}\big)\Vert z^{\ast}\Vert_{V}
   	\end{align*}
   	and similarly,
   	\begin{align*}
   		|\langle\partial_{t}\psi_{m},z_{\Gamma}^{\ast}\rangle_{V_{\Gamma}',V_{\Gamma}}|&=|\langle\partial_{t}\psi_{m},z_{\Gamma}\rangle_{V_{\Gamma}',V_{\Gamma}}|\\
   		&\leq C\big(\Vert\nabla_{\Gamma}\theta_{m}\Vert_{H_{\Gamma}}+\frac{1}{L}\Vert\theta_{m}-\mu_{m}\Vert_{H_{\Gamma}}\big)\Vert z_{\Gamma}^{\ast}\Vert_{V_{\Gamma}}.
   	\end{align*}
   Hence, it follows from \eqref{FGuni3} that
   	\begin{align}
   		\Vert\partial_{t}\varphi_{m}\Vert_{L^{2}(0,T_{m};V')} +\Vert\partial_{t}\psi_{m}\Vert_{L^{2}(0,T_{m};V_{\Gamma}')} \leq C.\label{FGuni6-2}
   	\end{align}
   	Finally, we derive higher order estimates for $\varphi_{m}$ and $\psi_{m}$.
   	Taking $\zeta=-\Delta\varphi_{m}$ in \eqref{FGweak3}, using integration by parts, we obtain
   	\begin{align*}
   		\int_{\Omega}(a_{\Omega}+\beta_{\varepsilon}'(\varphi_{m}))|\nabla\varphi_{m}|^{2}\,\mathrm{d}x&=\int_{\Omega}\nabla\mu_{m}\cdot\nabla\varphi_{m}\,\mathrm{d}x+\int_{\Omega}(\nabla J\ast\varphi_{m})\cdot\nabla\varphi_{m}\,\mathrm{d}x\\
   		&\quad-\int_{\Omega}\varphi_{m}\nabla a_{\Omega}\cdot\nabla\varphi_{m}\,\mathrm{d}x-\int_{\Omega}\pi'(\varphi_{m})|\nabla\varphi_{m}|^{2}\,\mathrm{d}x.
   	\end{align*}
   	According to $\mathbf{(A3)}$, \eqref{unibelow} and Young's inequality for convolution, we get
   	\begin{align}
   	\frac{\beta_{1}}{2}\Vert\nabla\varphi_{m}\Vert_{H}^{2} \leq\frac{3}{2\beta_{1}}\Vert\nabla\mu_{m}\Vert_{H}^{2} +\frac{3}{2\beta_{1}}\Vert\nabla J\Vert_{L^{1}(\Omega)}^{2}\Vert\varphi_{m}\Vert_{H}^{2} +\frac{3}{2\beta_{1}}\Vert\nabla a_{\Omega}\Vert_{L^{\infty}(\Omega)}^{2} \Vert\varphi_{m}\Vert_{H}^{2},\label{FGuni7}
   	\end{align}
   	where $\beta_{1}=\alpha/(1+\alpha)+a_{\ast}-\gamma_{1}$.
   	Analogously, taking $\zeta_{\Gamma}=-\Delta_{\Gamma}\psi_{m}$ in \eqref{FGweak4}, we find
   	\begin{align*}
   		\int_{\Gamma}(a_{\Gamma} +\beta_{\Gamma,\varepsilon}'(\psi_{m}))|\nabla_{\Gamma}\psi_{m}|^{2}\, \mathrm{d}S&=\int_{\Gamma}\nabla_{\Gamma}\theta_{m} \cdot\nabla_{\Gamma}\psi_{m}\,\mathrm{d}S +\int_{\Gamma}(\nabla_{\Gamma}K\circledast\psi_{m}) \cdot\nabla_{\Gamma}\psi_{m}\,\mathrm{d}S\\
   		&\quad-\int_{\Gamma}\psi_{m}\nabla_{\Gamma}a_{\Gamma} \cdot\nabla_{\Gamma}\psi_{m}\,\mathrm{d}S -\int_{\Gamma}\pi_{\Gamma}'(\psi_{m})|\nabla_{\Gamma}\psi_{m}|^{2} \,\mathrm{d}S,
   	\end{align*}
   which yields
   	\begin{align}
   	\frac{\beta_{2}}{2}\Vert\nabla_{\Gamma}\psi_{m}\Vert_{H_{\Gamma}}^{2} \leq\frac{3}{2\beta_{2}}\Vert\nabla_{\Gamma}\theta_{m}\Vert_{H_{\Gamma}}^{2} +\frac{3}{2\beta_{2}}\Vert\nabla_{\Gamma} K\Vert_{L^{1}(\Gamma)}^{2}\Vert\psi_{m}\Vert_{H_{\Gamma}}^{2} +\frac{3}{2\beta_{2}}\Vert\nabla a_{\Gamma}\Vert_{L^{\infty}(\Gamma)}^{2}\Vert\psi_{m} \Vert_{H_{\Gamma}}^{2},\label{FGuni8}
   	\end{align}
   	where $\beta_{2}=\alpha/(1+\alpha)+a_{\circledast}-\gamma_{2}$.
   	Then, by \eqref{2.3}, \eqref{FGuni3}, \eqref{FGuni7} and \eqref{FGuni8}, we can conclude
   	\begin{align}
   		\Vert\varphi_{m}\Vert_{L^{2}(0,T_{m};V)}+\Vert\psi_{m}\Vert_{L^{2}(0,T_{m};V_{\Gamma})}\leq C.\label{FGuni9}
   	\end{align}

   	\textbf{Step 3. Extension of the approximating solution onto $[0,T]$.} Using the definition of the approximating solution \eqref{FG1}, \eqref{FG2} and the uniform estimate \eqref{FGuni3},
   	we obtain for all $t\in[0,T_{m}]$ and $j\in\{1,...,m\}$
   	\begin{align*}
   	|a_{j}^{m}(t)|+|b_{j}^{m}(t)|&=\Big|\int_{\Omega}\varphi_{m}(t)u_{j}\,\mathrm{d}x\Big|+\Big|\int_{\Gamma}\psi_{m}(t)v_{j}\,\mathrm{d}S\Big|\\
   	&\leq\Vert\varphi_{m}\Vert_{L^{\infty}(0,T_{m};H)} +\Vert\psi_{m}\Vert_{L^{\infty}(0,T_{m};H_{\Gamma})}\leq C.
   	\end{align*}
   	This yields that the solution $(\boldsymbol{a}^{m},\boldsymbol{b}^{m})^{\top}$ is bounded on the time interval $[0,T_{m}]$
   	by a positive constant that is independent of $m$ and $T_{m}$.
   	Hence, the classical ODE theory enables us to extend the local solution  $(\boldsymbol{a}^{m},\boldsymbol{b}^{m})^{\top}$ on the whole interval $[0,T]$.
   	Since the coefficients $(\boldsymbol{c}^{m},\boldsymbol{d}^{m})^{\top}$ can be reconstructed from $(\boldsymbol{a}^{m},\boldsymbol{b}^{m})^{\top}$,
   	we see that $(\boldsymbol{c}^{m},\boldsymbol{d}^{m})^{\top}$ also exist  on $[0,T]$.
   	As a consequence, the approximating solution $(\boldsymbol{\varphi}_m,\boldsymbol{\mu}_m)$ exists on $[0,T]$
   	and satisfies the weak formulations \eqref{FGweak1}--\eqref{FGweak4} for each $m\in\mathbb{Z}^+$.
   	Moreover, the uniform estimates \eqref{FGuni3}, \eqref{FGuni6}, \eqref{FGuni6-2} and \eqref{FGuni9} established in Step 2
   	remain true after replacing $T_{m}$ with $T$.
   	In summary, for each $m\in\mathbb{Z}^+$, the approximating solution $(\boldsymbol{\varphi}_m,\boldsymbol{\mu}_m)$ satisfies
   	\begin{align}
   	&\Vert\boldsymbol{\varphi}_m\Vert_{L^{\infty}(0,T;\mathcal{L}^2)} +\Vert\boldsymbol{\varphi}_m\Vert_{L^{2}(0,T;\mathcal{H}^1)} +\Vert\boldsymbol{\mu}_{m}\Vert_{L^{2}(0,T;\mathcal{H}^1)}\notag\\
   	&\quad+\frac{1}{\sqrt{L}}\Vert\theta_{m}-\mu_{m} \Vert_{L^{2}(0,T;H_{\Gamma})} +\Vert\partial_{t}\varphi_{m}\Vert_{L^{2}(0,T;V')}	+\Vert\partial_{t}\psi_{m}\Vert_{L^{2}(0,T;V_{\Gamma}')}\leq C.\label{FGuni}
   	\end{align}

   	\textbf{Step 4. Existence and uniqueness.} Thanks to the uniform estimate \eqref{FGuni}, the Banach--Alaoglu theorem as well as the Aubin--Lions--Simon lemma (cf. Lemma \ref{ALS}), there exist limit functions $\boldsymbol{\varphi}_\varepsilon=(\varphi_\varepsilon,\psi_\varepsilon)$, $\boldsymbol{\mu}_\varepsilon=(\mu_\varepsilon,\theta_\varepsilon)$ such that
   	\begin{align}
   		\varphi_{m}&\rightarrow\varphi_\varepsilon&&\text{weakly in }H^{1}(0,T;V')\cap L^{2}(0,T;V),\notag\\
   		& &&\text{strongly in }C([0,T];H)\ \text{and a.e. in }Q_T,\notag\\
   		\psi_{m}&\rightarrow\psi_\varepsilon&&\text{weakly in }H^{1}(0,T;V_{\Gamma}')\cap L^{2}(0,T;V_{\Gamma}),\notag\\
   		& &&\text{strongly in }C([0,T];H_{\Gamma})\ \text{and a.e. on } \Sigma_T,\notag\\
   		\mu_{m}&\rightarrow\mu_\varepsilon&&\text{weakly in }L^{2}(0,T;V),\notag\\
   		\theta_{m}&\rightarrow\theta_\varepsilon&&\text{weakly in }L^{2}(0,T;V_{\Gamma}),\notag\\
   		\theta_{m}-\mu_{m}&\rightarrow\theta_\varepsilon-\mu_\varepsilon&&\text{weakly in }L^{2}(0,T;H_{\Gamma}),\notag
   	\end{align}
   	as $m\rightarrow+\infty$, along a non-relabeled subsequence.
   	From these convergence results, we can deduce the desired regularity properties \eqref{approregu} of  $(\boldsymbol{\varphi}_\varepsilon,\boldsymbol{\mu}_\varepsilon)$.
   	Hence, Definition \ref{approweak}-(i) is fulfilled.
   	Next, we can pass to the limit as $m\rightarrow+\infty$ in \eqref{FGweak1}--\eqref{FGweak4}
   	to show the existence of a weak solution to the approximating problem  \eqref{appro1}--\eqref{psiappro} on $[0,T]$.
   	Furthermore, according to the above convergence properties, and passing to the limit $m\rightarrow+\infty$ in \eqref{FGuni2},
   	the weak solution satisfies the energy inequality \eqref{approenergyineq} for almost all $t\in [0,T]$.
   Since the time integrals in \eqref{approenergyineq} are continuous with respect to $t$ and the mapping $t\to E_\varepsilon(\boldsymbol{\varphi}_\varepsilon(t))$  is lower
semicontinuous, we conclude that \eqref{approenergyineq} actually holds   for all $t\in[0,T]$. The argument is similar to those in \cite{GGG,KS} and we omit the details here.
   	The continuous dependence on the initial data can be easily achieved by the standard energy method,
   	which yields the uniqueness of weak solutions to \eqref{appro1}--\eqref{psiappro}.
   	The proof of Proposition \ref{approexist} is complete.
   \end{proof}

   \subsection{Existence and uniqueness of weak solutions}
     \textbf{Proof of Theorem \ref{exist} with} $L\in(0,+\infty)$.  We divide the proof into several steps.
    \smallskip

   \textbf{Step 1. Uniform estimates with respect to $\varepsilon\in(0,\varepsilon^\ast)$.} Let $(\boldsymbol{\varphi}_{\varepsilon},\boldsymbol{\mu}_{\varepsilon})$
     be the unique weak solution to approximating problem \eqref{appro1}--\eqref{psiappro}
   obtained in Proposition \ref{approexist} corresponding to $\varepsilon\in(0,\varepsilon^\ast)$.
   In the following, the symbol $C$ denotes a generic positive constant
   independent of $\varepsilon\in(0,\varepsilon^\ast)$.
   First, by the energy inequality \eqref{approenergyineq} and \eqref{ineq1}, we find, for any $\varepsilon\in(0,\varepsilon^\ast)$ and all $t\in[0,T]$,
   \begin{align}
   	&\frac{a_{\ast}+a_{\circledast}}{2}\Vert\boldsymbol{\varphi}_\varepsilon(t) \Vert_{\mathcal{L}^{2}}^{2} +\int_{0}^{t}\int_{\Omega}|\nabla\mu_{\varepsilon}(s)|^{2} \,\mathrm{d}x\,\mathrm{d}s+\int_{0}^{t}\int_{\Gamma}|\nabla_{\Gamma} \theta_{\varepsilon}(s)|^{2}\,\mathrm{d}S\,\mathrm{d}s\notag\\
   	&\quad+\frac{1}{L}\int_{0}^{t}\int_{\Gamma}(\theta_{\varepsilon}(s) -\mu_{\varepsilon}(s))^{2}\,\mathrm{d}S\,\mathrm{d}s\leq E(\boldsymbol{\varphi}_0)+ (\widetilde{C}+C_{1}+C_{2})(|\Omega|+|\Gamma|),\notag
   \end{align}
   which yields
   \begin{align}
  &\Vert\boldsymbol{\varphi}_{\varepsilon}\Vert_{L^{\infty}(0,T;\mathcal{L}^2)} +\Vert\nabla\mu_{\varepsilon}\Vert_{L^{2}(0,T;H)} +\Vert\nabla_{\Gamma}\theta_{\varepsilon}\Vert_{L^{2}(0,T;H_{\Gamma})} +\frac{1}{\sqrt{L}}\Vert\theta_{\varepsilon}-\mu_{\varepsilon} \Vert_{L^{2}(0,T;H_{\Gamma})}\leq C.\label{approuni1}
   \end{align}
   Next, taking $\zeta=\varphi_{\varepsilon}-\overline{m}_{0}$ in \eqref{approweak3} and $\zeta_{\Gamma}=\psi_{\varepsilon}-\overline{m}_{0}$ in \eqref{approweak4}, we obtain
   \begin{align}
   	&\int_{\Omega}\beta_{\varepsilon}(\varphi_{\varepsilon})(\varphi_{\varepsilon} -\overline{m}_{0})\,\mathrm{d}x+\int_{\Gamma}\beta_{\Gamma,\varepsilon}(\psi_{\varepsilon})(\psi_{\varepsilon}-\overline{m}_{0})\,\mathrm{d}S\notag\\
   	&\quad=\int_{\Omega}\mu_{\varepsilon}(\varphi_{\varepsilon} -\overline{m}_{0})\,\mathrm{d}x+\int_{\Gamma}\theta_{\varepsilon}(\psi_{\varepsilon}-\overline{m}_{0})\,\mathrm{d}S\notag\\
   	&\qquad+\int_{\Omega}(J\ast\varphi_{\varepsilon})(\varphi_{\varepsilon} -\overline{m}_{0})\,\mathrm{d}x+\int_{\Gamma}(K\circledast\psi_{\varepsilon})(\psi_{\varepsilon}-\overline{m}_{0})\,\mathrm{d}S\notag\\
   	&\qquad-\int_{\Omega}(a_{\Omega}\varphi_{\varepsilon} +\pi(\varphi_{\varepsilon}))(\varphi_{\varepsilon}-\overline{m}_{0})\,\mathrm{d}x-\int_{\Gamma} (a_{\Gamma}\psi_{\varepsilon}+\pi_{\Gamma}(\psi_{\varepsilon}))(\psi_{\varepsilon}-\overline{m}_{0})\,\mathrm{d}S.\label{approuni2}
   \end{align}
   Concerning the terms on the left-hand side of \eqref{approuni2}, we infer from \eqref{eq2.8} that
   \begin{align}
   	&\int_{\Omega}\beta_{\varepsilon}(\varphi_{\varepsilon})(\varphi_{\varepsilon} -\overline{m}_{0})\,\mathrm{d}x+\int_{\Gamma}\beta_{\Gamma,\varepsilon} (\psi_{\varepsilon})(\psi_{\varepsilon}-\overline{m}_{0})\,\mathrm{d}S\notag\\
   	&\quad\geq \delta_{0}\Big(\int_{\Omega}|\beta_{\varepsilon}(\varphi_{\varepsilon})| \,\mathrm{d}x+\int_{\Gamma}|\beta_{\Gamma,\varepsilon}(\psi_{\varepsilon})| \,\mathrm{d}S\Big)-c_{1}(|\Omega|+|\Gamma|).\label{approuni4}
   \end{align}
   For the first line on the right-hand side of \eqref{approuni2}, by Young's inequality and the generalized Poincar\'e's inequality \eqref{Po3}, we obtain
   \begin{align}
   	&\int_{\Omega}\mu_{\varepsilon}(\varphi_{\varepsilon}-\overline{m}_{0})\,\mathrm{d}x+\int_{\Gamma}\theta_{\varepsilon}(\psi_{\varepsilon}-\overline{m}_{0})\,\mathrm{d}S\notag\\
   	&\quad=\int_{\Omega}(\mu_{\varepsilon}-\overline{m}(\boldsymbol{\mu}_\varepsilon))(\varphi_{\varepsilon}-\overline{m}_{0})\,\mathrm{d}x+\int_{\Gamma}(\theta_{\varepsilon}-\overline{m}(\boldsymbol{\mu}_\varepsilon))(\psi_{\varepsilon}-\overline{m}_{0})\,\mathrm{d}S\notag\\
   	&\quad=\int_{\Omega}(\mu_{\varepsilon}-\overline{m}(\boldsymbol{\mu}_\varepsilon))\varphi_{\varepsilon}\,\mathrm{d}x+\int_{\Gamma}(\theta_{\varepsilon}-\overline{m}(\boldsymbol{\mu}_\varepsilon))\psi_{\varepsilon}\,\mathrm{d}S\notag\\
   	&\quad\leq\Vert\mu_{\varepsilon}-\overline{m}(\boldsymbol{\mu}_\varepsilon)\Vert_{H}\Vert\varphi_{\varepsilon}\Vert_{H}+\Vert\theta_{\varepsilon}-\overline{m}(\boldsymbol{\mu}_\varepsilon)\Vert_{H_{\Gamma}}\Vert\psi_{\varepsilon}\Vert_{H_{\Gamma}}\notag\\
   	&\quad\leq C(\Vert\nabla\mu_{\varepsilon}\Vert_{H}+\Vert\nabla_{\Gamma}\theta_{\varepsilon}\Vert_{H_{\Gamma}}+\frac{1}{\sqrt{L}}\Vert\theta_{\varepsilon}-\mu_{\varepsilon}\Vert_{H_{\Gamma}})(\Vert\varphi_{\varepsilon}\Vert_{H}+\Vert\psi_{\varepsilon}\Vert_{H_{\Gamma}}).\label{approuni3}
   \end{align}
   For the remaining terms on the right-hand side of \eqref{approuni2}, using $\mathbf{(A1)}$, $\mathbf{(A3)}$, H\"older's inequality and Young's inequality for convolution, we have
   \begin{align}
   	&\int_{\Omega}(J\ast\varphi_{\varepsilon})(\varphi_{\varepsilon}-\overline{m}_{0})\,\mathrm{d}x+\int_{\Gamma}(K\circledast\psi_{\varepsilon})(\psi_{\varepsilon}-\overline{m}_{0})\,\mathrm{d}S\notag\\[1mm]
   	&\quad\leq\Vert J\ast\varphi_{\varepsilon}\Vert_{H} \Vert\varphi_{\varepsilon}-\overline{m}_{0}\Vert_{H} +\Vert K\circledast\psi_{\varepsilon}\Vert_{H_{\Gamma}} \Vert\psi_{\varepsilon}-\overline{m}_{0}\Vert_{H_{\Gamma}} \notag\\[1mm]
   	&\quad\leq \Vert J\Vert_{L^{1}(\Omega)}\Vert\varphi_{\varepsilon}\Vert_{H}\Vert\varphi_{\varepsilon}-\overline{m}_{0}\Vert_{H}+\Vert K\Vert_{L^{1}(\Gamma)}\Vert\psi_{\varepsilon}\Vert_{H_{\Gamma}}\Vert\psi_{\varepsilon}-\overline{m}_{0}\Vert_{H_{\Gamma}}\label{approuni5-1}
   	\end{align}
   	and
   	\begin{align}
   	&-\int_{\Omega}(a_{\Omega}\varphi_{\varepsilon}+\pi(\varphi_{\varepsilon}))(\varphi_{\varepsilon}-\overline{m}_{0})\,\mathrm{d}x-\int_{\Gamma}(a_{\Gamma}\psi_{\varepsilon}+\pi_{\Gamma}(\psi_{\varepsilon}))(\psi_{\varepsilon}-\overline{m}_{0})\,\mathrm{d}S\notag\\[1mm]
   	&\quad\leq(\Vert\pi(\varphi_{\varepsilon})\Vert_{H}+\Vert a_{\Omega}\Vert_{L^{\infty}(\Omega)}\Vert\varphi_{\varepsilon}\Vert_{H})\Vert\varphi_{\varepsilon}-\overline{m}_{0}\Vert_{H}\notag\\[1mm]
   	&\qquad+(\Vert \pi_{\Gamma}(\psi_{\varepsilon})\Vert_{H_{\Gamma}}+\Vert a_{\Gamma}\Vert_{L^{\infty}(\Gamma)}\Vert\psi_{\varepsilon}\Vert_{H_{\Gamma}}) \Vert\psi_{\varepsilon}-\overline{m}_{0}\Vert_{H_{\Gamma}}\notag\\[1mm]
   	&\quad\leq((a^{\ast}+\gamma_{1})\Vert\varphi_{\varepsilon}\Vert_{H}+|\pi(0)|\,|\Omega|^{1/2})\Vert\varphi_{\varepsilon}-\overline{m}_{0}\Vert_{H}\notag\\[1mm]
   	&\qquad+((a^{\circledast}+\gamma_{2})\Vert\psi_{\varepsilon}\Vert_{H_{\Gamma}}+|\pi_{\Gamma}(0)|\,|\Gamma|^{1/2})\Vert\psi_{\varepsilon}-\overline{m}_{0}\Vert_{H_{\Gamma}}.\label{approuni5}
   \end{align}
   Combining \eqref{approuni1}--\eqref{approuni5}, we can conclude that
   \begin{align}
   \Vert \beta_{\varepsilon}(\varphi_{\varepsilon})\Vert_{L^{2}(0,T;L^{1}(\Omega))}+\Vert \beta_{\Gamma,\varepsilon}(\psi_{\varepsilon})\Vert_{L^{2}(0,T;L^{1}(\Gamma))}\leq C.\label{approuni6}
   \end{align}
   Taking $\zeta=1$ in \eqref{approweak3} and $\zeta_{\Gamma}=1$ in \eqref{approweak4} yields
   \begin{align*}
   	\Big|\int_{\Omega}\mu_{\varepsilon}\,\mathrm{d}x\Big| +\Big|\int_{\Gamma}\theta_{\varepsilon}\,\mathrm{d}S\Big|&\leq \Vert J\ast\varphi_{\varepsilon}\Vert_{L^{1}(\Omega)}+\Vert \beta_{\varepsilon}(\varphi_{\varepsilon})\Vert_{L^{1}(\Omega)}+\Vert K\circledast\psi_{\varepsilon}\Vert_{L^{1}(\Gamma)}+\Vert \beta_{\Gamma,\varepsilon}(\psi_{\varepsilon})\Vert_{L^{1}(\Gamma)}.
   \end{align*}
   This together with \eqref{approuni6} implies
   \begin{align}
   	\Big\Vert\int_{\Omega}\mu_{\varepsilon}\,\mathrm{d}x \Big\Vert_{L^{2}(0,T)}+\Big\Vert\int_{\Gamma}\theta_{\varepsilon} \,\mathrm{d}S\Big\Vert_{L^{2}(0,T)}\leq C.\label{approuni7}
   \end{align}
   Then we infer from \eqref{approuni1}, \eqref{approuni7} and the generalized Poincar\'e's inequality \eqref{Po3} that
   \begin{align}
   	\Vert\mu_{\varepsilon}\Vert_{L^{2}(0,T;V)} +\Vert\theta_{\varepsilon}\Vert_{L^{2}(0,T;V_{\Gamma})}\leq C.\label{approuni8}
   \end{align}
   By comparison in \eqref{appropointmu} and \eqref{appropointtheta}, we further get
   \begin{align}
   	\Vert \beta_{\varepsilon}(\varphi_{\varepsilon})\Vert_{L^{2}(0,T;H)}+\Vert \beta_{\Gamma,\varepsilon}(\psi_{\varepsilon})\Vert_{L^{2}(0,T;H_{\Gamma})}\leq C.\label{approuni10}
   \end{align}
   Besides, using the weak formulations \eqref{approweak1}, \eqref{approweak2} and \eqref{approuni1}, we can conclude
   \begin{align}
   	\Vert\partial_{t}\varphi_{\varepsilon}\Vert_{L^{2}(0,T;V')}+	\Vert\partial_{t}\psi_{\varepsilon}\Vert_{L^{2}(0,T;V_{\Gamma}')}\leq C.\label{approuni9}
   \end{align}
   Finally, taking gradient on both sides of \eqref{appropointmu} and surface gradient
   on both sides of \eqref{appropointtheta}, testing the resultants by $\nabla\varphi_{\varepsilon}$ and $\nabla_{\Gamma}\psi_{\varepsilon}$, respectively, and applying similar calculations for  \eqref{FGuni7}--\eqref{FGuni9}, we have
   \begin{align}
   	\Vert\varphi_{\varepsilon}\Vert_{L^{2}(0,T;V)}+\Vert\psi_{\varepsilon}\Vert_{L^{2}(0,T;V_{\Gamma})}\leq C.\label{approuni11}
   \end{align}

   \textbf{Step 2. Passage to the limit as $\varepsilon\to 0$.}   By the uniform estimates \eqref{approuni1} and \eqref{approuni7}--\eqref{approuni11},
   we can conclude that there exist functions $\boldsymbol{\varphi}=(\varphi,\psi)$, $\boldsymbol{\mu}=(\mu,\theta)$ and $\boldsymbol{\xi}=(\xi,\xi_{\Gamma})$ such that
   \begin{align}
   	\varphi_{\varepsilon}&\rightarrow\varphi&&\text{weakly in }H^{1}(0,T;V')\cap L^{2}(0,T;V),\label{approconver1}\\
   	& &&\text{strongly in }C([0,T];H)\ \text{and a.e. in\ } Q_T,\notag\\
   	\psi_{\varepsilon}&\rightarrow\psi&&\text{weakly in }H^{1}(0,T;V_{\Gamma}')\cap L^{2}(0,T;V_{\Gamma}),\label{approconver2}\\
   	& &&\text{strongly in }C([0,T];H_{\Gamma})\ \text{and a.e.on }\Sigma_T,\notag\\
   	\mu_{\varepsilon}&\rightarrow\mu&&\text{weakly in }L^{2}(0,T;V),\label{approconver3}\\
   	\theta_{\varepsilon}&\rightarrow\theta &&\text{weakly in }L^{2}(0,T;V_{\Gamma}),\label{approconver4}\\
   	\theta_{\varepsilon}-\mu_{\varepsilon}&\rightarrow\theta-\mu&&\text{weakly in }L^{2}(0,T;H_{\Gamma}),\label{approconver5}\\
   	\beta_{\varepsilon}(\varphi_{\varepsilon})&\rightarrow\xi&&\text{weakly in }L^{2}(0,T;H),\label{approconver6}\\
   	\beta_{\Gamma,\varepsilon}(\psi_{\varepsilon}) &\rightarrow\xi_{\Gamma}&&\text{weakly in }L^{2}(0,T;H_{\Gamma}),\label{approconver7}
   \end{align}
   as $\varepsilon\rightarrow0$ in the sense of a subsequence.
   Thanks to $\mathbf{(A1)}$, Lemma \ref{lem-subsequent}, \eqref{approconver1} and \eqref{approconver2}, it holds
   \begin{align*}
   	J\ast\varphi_{\varepsilon}\to J\ast\varphi&\quad\text{strongly in }L^{2}(0,T;V),\\
   	K\circledast\psi_{\varepsilon}\to K\circledast\psi&\quad\text{strongly in }L^{2}(0,T;V_{\Gamma}).
   \end{align*}
   By the same argument as in \cite{GGG}, we can show that the limit functions $\varphi$ and $\psi$ fulfill
   \begin{align*}
   	|\varphi(x,t)|<1\quad\text{a.e. in } Q_{T},\qquad|\psi(x,t)|<1\quad\text{a.e. on }  \Sigma_{T},
   \end{align*}
   which yields \eqref{phiinfty} and \eqref{psiinfty}.
   As a byproduct, it holds
   \begin{align*}
   	\beta_{\varepsilon}(\varphi_{\varepsilon})\to\beta(\varphi)\quad\text{a.e. in } Q_{T},\quad \beta_{\Gamma,\varepsilon}(\psi_{\varepsilon}) \to\beta_{\Gamma}(\psi)\quad\text{a.e. on }  \Sigma_{T},
   \end{align*}
   where we have used the pointwise convergence of $\varphi_{\varepsilon}$, $\psi_{\varepsilon}$ and the uniform convergence of $\beta_{\varepsilon}$, $\beta_{\Gamma,\varepsilon}$ to $\beta$, $\beta_{\Gamma}$.
   Moreover, by \eqref{approconver6}, \eqref{approconver7} and the strong convergence results \eqref{approconver1}, \eqref{approconver2}, we can identify
   \begin{align}
   	\xi= \beta(\varphi)\quad\text{and}\quad\xi_{\Gamma}= \beta_{\Gamma}(\psi).\notag
   \end{align}
   Passing to the limit $\varepsilon\rightarrow0$ in \eqref{approweak1}--\eqref{approweak4} (in the sense of a convergent subsequence),
    we can check that $(\boldsymbol{\varphi},\boldsymbol{\mu})$ satisfies the formulations   \eqref{weak1}--\eqref{weak4}. By comparison in \eqref{weak3} and \eqref{weak4}, we see that $\beta(\varphi)\in L^2(0,T;V)$ and $\beta_\Gamma(\psi)\in L^2(0,T;V_\Gamma)$.
   Furthermore, by the lower weak semicontinuity of norms, passing to the limit $\varepsilon\to0$ in \eqref{approenergyineq}, we can conclude the energy inequality
   \begin{align}
   	E(\boldsymbol{\varphi}(t))+\int_{0}^{t} \Big(\Vert\nabla\mu(s)\Vert_{H}^{2}+\Vert\nabla_{\Gamma}\theta(s) \Vert_{H_{\Gamma}}^{2}+\frac{1}{L}\Vert\theta(s)-\mu(s)\Vert_{H_{\Gamma}}^{2} \Big)\,\mathrm{d}s\leq E(\boldsymbol{\varphi}_{0}),
   \quad \forall\,t\in[0,T].
   \notag 
   \end{align}

   \textbf{Step 3. Energy equality.} Since $T>0$ is arbitrary, the solution $(\boldsymbol{\varphi},\boldsymbol{\mu})$ can be defined on the whole interval $[0,+\infty)$. Next, we show that for any given $\tau>0$, there exists a constant $C>0$, independent of $\tau$, such that
   \begin{align}
   	\Vert\partial_t\boldsymbol{\varphi}\Vert_{L^\infty(\tau,+\infty;\mathcal{H}_{L,0}^{-1})}^{2} +\int_{t}^{t+1}\Vert\partial_t\boldsymbol{\varphi}(s) \Vert_{\mathcal{L}^{2}}^{2}\,\mathrm{d}s \leq\frac{C}{\tau},\quad\forall\,t\geq\tau.\label{Tuni3}
   \end{align}
   We denote the difference quotient of a function $f$ at time $t$ by $\partial_{t}^{h}f:=(f(t+h)-f(t))/h$.
   Let $(\boldsymbol{\varphi}_{\varepsilon},\boldsymbol{\mu}_{\varepsilon})$ be the weak solution to the approximating problem \eqref{appro1}--\eqref{psiappro}.
   Taking the difference of \eqref{approweak1}--\eqref{approweak4} at time $t$ and $t+h$, dividing the resultant by $h$, we obtain
   \begin{align}
   	&\langle\partial_{t}\partial_{t}^{h}\varphi_{\varepsilon},z\rangle_{V',V} =-\int_{\Omega}\nabla\partial_{t}^{h}\mu_{\varepsilon}\cdot\nabla z\,\mathrm{d}x+\frac{1}{L}\int_{\Gamma}(\partial_{t}^{h} \theta_{\varepsilon}-\partial_{t}^{h}\mu_{\varepsilon})z\,\mathrm{d}S, \label{quotient1}\\
   	&\langle\partial_{t}\partial_{t}^{h}\psi_{\varepsilon},z_{\Gamma} \rangle_{V_{\Gamma}',V_{\Gamma}}=-\int_{\Gamma}\nabla_{\Gamma}\partial_{t}^{h} \theta_{\varepsilon}\cdot\nabla_{\Gamma} z_{\Gamma}\,\mathrm{d}S-\frac{1}{L}\int_{\Gamma}(\partial_{t}^{h} \theta_{\varepsilon}-\partial_{t}^{h}\mu_{\varepsilon})z_{\Gamma}\,\mathrm{d}S,\label{quotient2}\\
   	&\int_{\Omega}\partial_{t}^{h}\mu_{\varepsilon}\,\zeta\,\mathrm{d}x =\int_{\Omega}\Big(a_{\Omega}\partial_{t}^{h}\varphi_{\varepsilon}-J\ast\partial_{t}^{h} \varphi_{\varepsilon}+\partial_{t}^{h}\beta_{\varepsilon}(\varphi_{\varepsilon}) +\partial_{t}^{h}\pi(\varphi_{\varepsilon})\Big)\zeta\,\mathrm{d}x,\label{quotient3}\\
   	&\int_{\Gamma}\partial_{t}^{h}\theta_{\varepsilon}\,\zeta_{\Gamma} \,\mathrm{d}S=\int_{\Gamma}\Big(a_{\Gamma}\partial_{t}^{h} \psi_{\varepsilon}-K\circledast\partial_{t}^{h}\psi_{\varepsilon} +\partial_{t}^{h}\beta_{\Gamma,\varepsilon}(\psi_{\varepsilon}) +\partial_{t}^{h}\pi_{\Gamma}(\psi_{\varepsilon})\Big)\zeta_{\Gamma} \,\mathrm{d}S,\label{quotient4}
   \end{align}
    for almost all $t\in(0,T)$ and for all test functions $(z,z_\Gamma)\in\mathcal{H}^1$, $(\zeta,\zeta_{\Gamma})\in \mathcal{L}^2$.
   Since $\overline{m}(\partial_{t}^{h}\boldsymbol{\varphi}_{\varepsilon})=0$,
   we can take $(z,z_{\Gamma})=\mathfrak{S}^{L}(\partial_{t}^{h}\boldsymbol{\varphi}_{\varepsilon})$ in \eqref{quotient1}, \eqref{quotient2}, respectively.
   Then, using \eqref{quotient3} and \eqref{quotient4}, we deduce that
   \begin{align}
   	0&=\frac{1}{2}\frac{\mathrm{d}}{\mathrm{d}t}\Vert\partial_{t}^{h}\boldsymbol{\varphi}_{\varepsilon}\Vert_{L,0,\ast}^{2}+\int_{\Omega}\partial_{t}^{h}\mu_{\varepsilon}\,\partial_{t}^{h}\varphi_{\varepsilon}\,\mathrm{d}x+\int_{\Gamma}\partial_{t}^{h}\theta_{\varepsilon}\,\partial_{t}^{h}\psi_{\varepsilon}\,\mathrm{d}S\notag\\
   	&=\frac{1}{2}\frac{\mathrm{d}}{\mathrm{d}t}\Vert\partial_{t}^{h} \boldsymbol{\varphi}_{\varepsilon}\Vert_{L,0,\ast}^{2} +\int_{\Omega}a_{\Omega}|\partial_{t}^{h}\varphi_{\varepsilon}|^{2}\,\mathrm{d}x+\int_{\Gamma}a_{\Gamma}|\partial_{t}^{h}\psi_{\varepsilon}|^{2}\,\mathrm{d}S\notag\\
   	&\quad-\int_{\Omega}(J\ast\partial_{t}^{h}\varphi_{\varepsilon}) \partial_{t}^{h}\varphi_{\varepsilon}\,\mathrm{d}x -\int_{\Gamma}(K\circledast\partial_{t}^{h}\psi_{\varepsilon}) \partial_{t}^{h}\psi_{\varepsilon}\,\mathrm{d}S\notag\\
   	&\quad+\int_{\Omega}\frac{\beta_{\varepsilon}(\varphi_{\varepsilon}(t+h)) -\beta_{\varepsilon}(\varphi_{\varepsilon}(t))}{h}\partial_{t}^{h} \varphi_{\varepsilon}\,\mathrm{d}x+\int_{\Omega}\frac{\pi(\varphi_{\varepsilon} (t+h))-\pi(\varphi_{\varepsilon}(t))}{h}\partial_{t}^{h}\varphi_{\varepsilon} \,\mathrm{d}x\notag\\
   	&\quad+\int_{\Gamma}\frac{\beta_{\Gamma,\varepsilon}(\psi_{\varepsilon}(t+h)) -\beta_{\Gamma,\varepsilon}(\psi_{\varepsilon}(t))}{h}\partial_{t}^{h} \psi_{\varepsilon}\,\mathrm{d}S+\int_{\Gamma}\frac{\pi_{\Gamma} (\psi_{\varepsilon}(t+h))-\pi_{\Gamma}(\psi_{\varepsilon}(t))}{h} \partial_{t}^{h}\psi_{\varepsilon}\,\mathrm{d}S\notag\\
   	&\geq\frac{1}{2}\frac{\mathrm{d}}{\mathrm{d}t}\Vert\partial_{t}^{h} \boldsymbol{\varphi}_{\varepsilon}\Vert_{L,0,\ast}^{2}+\Big(a_{\ast} +\frac{\alpha}{1+\alpha}-\gamma_{1}\Big)\Vert\partial_{t}^{h} \varphi_{\varepsilon}\Vert_{H}^{2}+\Big(a_{\circledast} +\frac{\alpha}{1+\alpha}-\gamma_{2}\Big)\Vert\partial_{t}^{h} \psi_{\varepsilon}\Vert_{H_{\Gamma}}^{2}\notag\\[1mm]
   	&\quad-\langle\partial_{t}^{h}\boldsymbol{\varphi}_{\varepsilon}, (J\ast\partial_{t}^{h}\varphi_{\varepsilon},K\circledast\partial_{t}^{h} \psi_{\varepsilon})\rangle_{(\mathcal{H}^{1})',\mathcal{H}^{1}}. \label{quotient5}
   \end{align}
   With the aid of Lemma \ref{lem-subsequent}, the last line of \eqref{quotient5} can be bounded as follows:
   \begin{align}
   &\langle\partial_{t}^{h}\boldsymbol{\varphi}_{\varepsilon},(J\ast\partial_{t}^{h}\varphi_{\varepsilon},K\circledast\partial_{t}^{h}\psi_{\varepsilon})\rangle_{(\mathcal{H}^{1})',\mathcal{H}^{1}}\notag\\
   &\quad\leq\Vert\partial_{t}^{h}\boldsymbol{\varphi}_{\varepsilon}\Vert_{(\mathcal{H}^{1})'}\Vert(J\ast\partial_{t}^{h}\varphi_{\varepsilon},K\circledast\partial_{t}^{h}\psi_{\varepsilon})\Vert_{\mathcal{H}^{1}}\notag\\
   &\quad\leq C\Vert\partial_{t}^{h}\boldsymbol{\varphi}_{\varepsilon}\Vert_{L,0,\ast}(\Vert J\ast\partial_{t}^{h}\varphi_{\varepsilon}\Vert_{V}+\Vert K\circledast\partial_{t}^{h}\psi_{\varepsilon}\Vert_{V_{\Gamma}})\notag\\
   &\quad\leq C\Vert\partial_{t}^{h}\boldsymbol{\varphi}_{\varepsilon}\Vert_{L,0,\ast}\Vert\partial_{t}^{h}\boldsymbol{\varphi}_{\varepsilon}\Vert_{\mathcal{L}^{2}}\notag\\
   &\quad\leq\epsilon\Vert\partial_{t}^{h} \boldsymbol{\varphi}_{\varepsilon}\Vert_{\mathcal{L}^{2}}^{2} +C(\epsilon)\Vert\partial_{t}^{h}\boldsymbol{\varphi}_{\varepsilon} \Vert_{L,0,\ast}^{2}.\label{quotient6}
   \end{align}
   Combining \eqref{quotient5} and \eqref{quotient6}, choosing $\epsilon>0$ sufficiently small,
   we find that there exist two positive constants $C_3$ and $C_4$,
   independent of $\varepsilon\in(0,\varepsilon^\ast)$ and $h\in(0,1)$, such that
   \begin{align}
   	\frac{\mathrm{d}}{\mathrm{d}t}\Vert\partial_{t}^{h} \boldsymbol{\varphi}_{\varepsilon}\Vert_{L,0,\ast}^{2} +C_{3}\Vert\partial_{t}^{h}\boldsymbol{\varphi}_{\varepsilon} \Vert_{\mathcal{L}^{2}}^{2}\leq C_{4}\Vert\partial_{t}^{h}\boldsymbol{\varphi}_{\varepsilon} \Vert_{L,0,\ast}^{2}.\label{quotient10}
   \end{align}
   Applying the uniform Gronwall inequality, we obtain that, for all $\tau>0$, it holds
   \begin{align}
   	\Big\Vert\frac{\boldsymbol{\varphi}_{\varepsilon}(t+\tau+h) -\boldsymbol{\varphi}_{\varepsilon}(t+\tau)}{h}\Big\Vert_{L,0,\ast}^{2}\leq \frac{e^{C\tau}}{\tau}\int_{t}^{t+\tau} \Big\Vert\frac{\boldsymbol{\varphi}_{\varepsilon}(s+h) -\boldsymbol{\varphi}_{\varepsilon}(s)}{h}\Big\Vert_{L,0,\ast}^{2} \,\mathrm{d}s,\quad\forall\,t\geq0.\label{quotient7}
   \end{align}
   From \eqref{approenergyineq}, we get $\partial_t\boldsymbol{\varphi}_{\varepsilon}\in L^{2}(0,+\infty;\mathcal{H}_{L,0}^{-1})$,
   then for almost all $t>0$, it holds
   \begin{align*}
   	\Big\Vert\frac{\boldsymbol{\varphi}_{\varepsilon}(s+h) -\boldsymbol{\varphi}_{\varepsilon}(s)}{h} \Big\Vert_{L,0,\ast}\leq\frac{1}{h}\int_{t}^{t+h} \Vert\partial_t\boldsymbol{\varphi}_{\varepsilon}(s)\Vert_{L,0,\ast} \,\mathrm{d}s\rightarrow\Vert\partial_t\boldsymbol{\varphi}_{\varepsilon}(t) \Vert_{L,0,\ast}\quad\text{as }h\rightarrow0^{+},
   \end{align*}
   and for all $t\geq0$, it holds
   \begin{align}
   \int_{t}^{t+\tau}\Big\Vert\frac{\boldsymbol{\varphi}_{\varepsilon}(s+h) -\boldsymbol{\varphi}_{\varepsilon}(s)}{h}\Big\Vert_{L,0,\ast}^{2} \,\mathrm{d}s\leq\int_{t}^{t+\tau+h}\Vert\partial_t \boldsymbol{\varphi}_{\varepsilon}(s)\Vert_{L,0,\ast}^{2} \,\mathrm{d}s.\label{quotient8}
   \end{align}
   By \eqref{quotient7}, \eqref{quotient8} and the change of variable $\eta:=t+\tau$, we obtain
   \begin{align}
   \Big\Vert\frac{\boldsymbol{\varphi}_{\varepsilon}(\eta+h)-\boldsymbol{\varphi}_{\varepsilon}(\eta)}{h}\Big\Vert_{L,0,\ast}^{2}\leq\frac{e^{C\tau}}{\tau}\int_{t}^{\eta+h}\Vert\partial_t\boldsymbol{\varphi}_{\varepsilon}(s)\Vert_{L,0,\ast}^{2}\,\mathrm{d}s\leq\frac{e^{C\tau}}{\tau}(E_{\varepsilon}(\boldsymbol{\varphi}_{0})-E_{\varepsilon}(\boldsymbol{\varphi}_{\varepsilon}(\eta+h))).\label{quotient9}
   \end{align}
   Recalling \eqref{ineq1}, we get
  \begin{align*}
   	E_{\varepsilon}(\boldsymbol{\varphi}_{0}) -E_{\varepsilon}(\boldsymbol{\varphi}_{\varepsilon}(\eta+h))\leq E(\boldsymbol{\varphi}_{0})+(\widetilde{C}+C_{1}+C_{2})(|\Omega|+|\Gamma|),
   \end{align*}
   which, combined with \eqref{quotient9}, implies that
  \begin{align}
    \Big\Vert\frac{\boldsymbol{\varphi}_{\varepsilon}(\eta+h)-\boldsymbol{\varphi}_{\varepsilon}(\eta)}{h}\Big\Vert_{L,0,\ast}^{2}\leq\frac{e^{C\tau}}{\tau}(E(\boldsymbol{\varphi}_{0})+(\widetilde{C}+C_{1}+C_{2})(|\Omega|+|\Gamma|)),\label{quotient11}
   \end{align}
   and the right-hand side of \eqref{quotient11} is independent of $h$.
   Letting $h\rightarrow0^+$ gives
   \begin{align*}
   	\partial_{t}^{h}\boldsymbol{\varphi}_{\varepsilon}\rightarrow \partial_t\boldsymbol{\varphi}_{\varepsilon}\qquad\text{weakly star in }L^{\infty}(\tau,+\infty;\mathcal{H}_{L,0}^{-1}),
   \end{align*}
   and the following estimate
   \begin{align}
   	\Vert\partial_t\boldsymbol{\varphi}_{\varepsilon} \Vert_{L^\infty(\tau,+\infty;\mathcal{H}^{-1}_{L,0})}^{2}\leq \frac{e^{C\tau}}{\tau}(E(\boldsymbol{\varphi}_{0}) +(\widetilde{C}+C_{1}+C_{2})(|\Omega|+|\Gamma|)).\label{quotient12}
   \end{align}
  Since the right-hand side of \eqref{quotient12} is independent of $\varepsilon\in(0,\varepsilon^\ast)$,
   this implies that as $\varepsilon\rightarrow0$ (in the sense of a subsequence), there holds
   \begin{align*}
   	\partial_t\boldsymbol{\varphi}_{\varepsilon}\rightarrow \partial_t\boldsymbol{\varphi}\qquad\text{weakly star in }L^{\infty}(\tau,+\infty;\mathcal{H}_{L,0}^{-1}),
   \end{align*}
   with the same estimate
   \begin{align}
   	\Vert\partial_t\boldsymbol{\varphi}\Vert_{L^\infty(\tau,+\infty;\mathcal{H}_{L,0}^{-1})}^{2}\leq \frac{e^{C\tau}}{\tau}(E(\boldsymbol{\varphi}_{0})+(\widetilde{C}+C_{1}+C_{2})(|\Omega|+|\Gamma|)).\notag
   \end{align}
   Finally, integrating \eqref{quotient10} over $[t,t+1]$ with respect to $s$, we obtain
   \begin{align*}
   \int_{t}^{t+1}\Vert\partial_{t}^{h}\boldsymbol{\varphi}_{\varepsilon}(s) \Vert_{\mathcal{L}^{2}}^{2}\,\mathrm{d}s\leq C\Big(\Vert\partial_{t}^{h}\boldsymbol{\varphi}_{\varepsilon}(t) \Vert_{L,0,\ast}^{2}+\int_{t}^{t+1}\Vert\partial_{t}^{h} \boldsymbol{\varphi}_{\varepsilon}(s)\Vert_{L,0,\ast}^{2}\,\mathrm{d}s\Big),
   \quad t\geq\tau.
   \end{align*}
   Letting $h\rightarrow0^+$ and $\varepsilon\rightarrow0$, we can conclude \eqref{Tuni3}, which implies that  $\partial_t\boldsymbol{\varphi}(s)\in\mathcal{L}^{2}$, for almost all $s\geq\tau$. Testing \eqref{weak3} by $\zeta=\partial_t\varphi(s)$ and \eqref{weak4} by $\zeta_{\Gamma}=\partial_t\psi(s)$, using \eqref{weak1} and \eqref{weak2}, we get
   \begin{align}
   	&\frac{\mathrm{d}}{\mathrm{ds}}E(\boldsymbol{\varphi}(s)) +\int_{\Omega}|\nabla\mu(s)|^{2}\,\mathrm{d}x +\int_{\Gamma}|\nabla_{\Gamma}\theta(s)|^{2}\,\mathrm{d}S +\frac{1}{L}\int_{\Gamma}|\theta(s)-\mu(s)|^{2}\,\mathrm{d}S=0,\quad\text{for a.a. }s\geq\tau.\label{Lv-1}
   \end{align}
   Integrating \eqref{Lv-1} over $[\tau,t]$ for any $0<\tau<t\leq T$, we obtain
   \begin{align}
   &E(\boldsymbol{\varphi}(t))-E(\boldsymbol{\varphi}(\tau))\notag\\
   &\quad=-\int_{\tau}^{t}\int_{\Omega}|\nabla\mu(s)|^{2}\,\mathrm{d}x\,\mathrm{d}s-\int_{\tau}^{t}\int_{\Gamma}|\nabla_{\Gamma}\theta(s)|^{2}\,\mathrm{d}S\,\mathrm{d}s-\frac{1}{L}\int_{\tau}^{t}\int_{\Gamma}|\theta(s)-\mu(s)|^{2}\,\mathrm{d}S\,\mathrm{d}s\notag\\
   &\quad=-\int_{\tau}^{t}\Vert\partial_t\boldsymbol{\varphi}(s)\Vert_{L,0,*}^{2}\,\mathrm{d}s.\label{Lv-2}
   \end{align}
  	Hence, for any given $\tau>0$, the mapping $t\mapsto E(\boldsymbol{\varphi}(t))$ is absolutely continuous and non-increasing for all $t\in[\tau,T]$.
  	From Remark \ref{L2-conti} and Lebesgue's dominated convergence theorem,
  	we have $\lim_{\tau\to0}E(\boldsymbol{\varphi}(\tau))= E(\boldsymbol{\varphi}(0))$.
  	This allows us to pass to the limit as $\tau\to0$ in \eqref{Lv-2}
  	to conclude the energy equality \eqref{energyeq} for the case with $L\in(0,+\infty)$.
  \hfill $\square$
  \medskip

   \textbf{Proof of Theorem \ref{contidependence} with $L\in(0,+\infty)$.}
   Suppose that $(\boldsymbol{\varphi}_{i},\boldsymbol{\mu}_{i})$ is the unique weak solution to problem \eqref{model1}--\eqref{psiini}
   corresponding to the initial datum $\boldsymbol{\varphi}_{0,i}$ $(i\in\{1,2\})$
   and $\overline{m}(\boldsymbol{\varphi}_{0,1})=\overline{m}(\boldsymbol{\varphi}_{0,2})=\overline{m}_{0}$.
   Denote
   $$
   \boldsymbol{\varphi}=(\varphi,\psi) :=(\varphi_{1}-\varphi_{2},\psi_{1}-\psi_{2}),
   \quad
   \boldsymbol{\varphi}_0:=\boldsymbol{\varphi}_{0,1} -\boldsymbol{\varphi}_{0,2},
   $$
   then $\varphi$ and $\psi$ satisfy \eqref{weak1} and \eqref{weak2}, with
   \begin{align}
   	&\mu:=\mu_{1}-\mu_{2}=a_{\Omega}\varphi-J\ast\varphi +\beta(\varphi_{1})-\beta(\varphi_{2}) +\pi(\varphi_{1})-\pi(\varphi_{2}),\quad\text{ a.e. in } Q_T,\label{difference-mu}\\[1mm]
   	&\theta:=\theta_{1}-\theta_{2}=a_{\Gamma}\psi-K\circledast\psi +\beta_{\Gamma}(\psi_{1})-\beta_{\Gamma}(\psi_{2}) +\pi_{\Gamma}(\psi_{1})-\pi_{\Gamma}(\psi_{2}),\quad\text{ a.e. on } \Sigma_T.\label{difference-theta}
   \end{align}
   Since $\overline{m}(\boldsymbol{\varphi})=0$,
   we can take $\boldsymbol{z}=\mathfrak{S}^{L}(\boldsymbol{\varphi})$ in \eqref{weak1} and \eqref{weak2}.
   By the definition of $\mathfrak{S}^{L}$, \eqref{difference-mu} and \eqref{difference-theta}, we find
   \begin{align}
   	0&=\frac{1}{2}\frac{\mathrm{d}}{\mathrm{d}t} \Vert\boldsymbol{\varphi}\Vert_{L,0,*}^{2}+\int_{\Omega}\mu\varphi\,\mathrm{d}x+\int_{\Gamma}\theta\psi\,\mathrm{d}S\notag\\
   	&=\frac{1}{2}\frac{\mathrm{d}}{\mathrm{d}t} \Vert\boldsymbol{\varphi}\Vert_{L,0,*}^{2}+\int_\Omega a_\Omega|\varphi|^2\,\mathrm{d}x+\int_{\Omega}(\beta(\varphi_{1}) -\beta(\varphi_{2}))\varphi\,\mathrm{d}x-\int_{\Omega}(J\ast\varphi) \varphi\,\mathrm{d}x+\int_{\Omega}(\pi(\varphi_{1})-\pi(\varphi_{2}))\varphi\,\mathrm{d}x\notag\\
   	&\quad+\int_\Gamma a_\Gamma |\psi|^2\,\mathrm{d}S+\int_{\Gamma}(\beta_{\Gamma}(\psi_{1}) -\beta_{\Gamma}(\psi_{2}))\psi\,\mathrm{d}S-\int_{\Gamma} (K\circledast\psi)\psi\,\mathrm{d}S+\int_{\Gamma}(\pi_{\Gamma}(\psi_{1}) -\pi_{\Gamma}(\psi_{2}))\psi\,\mathrm{d}S\notag\\
   	&\geq\frac{1}{2}\frac{\mathrm{d}}{\mathrm{d}t}\Vert\boldsymbol{\varphi} \Vert_{L,0,*}^{2}+(\alpha+a_{\ast}-\gamma_{1})\Vert\varphi\Vert_{H}^{2} +(\alpha+a_{\circledast}-\gamma_{2})\Vert\psi\Vert_{H_{\Gamma}}^{2} -\langle\boldsymbol{\varphi},(J\ast\varphi,K\circledast\psi) \rangle_{(\mathcal{H}^1)',\mathcal{H}^1},\notag
   \end{align}
   which yields
   \begin{align}
   	&\frac{1}{2}\frac{\mathrm{d}}{\mathrm{d}t} \Vert\boldsymbol{\varphi}\Vert_{L,0,*}^{2} +(\alpha+a_{\ast}-\gamma_{1})\Vert\varphi\Vert_{H}^{2} +(\alpha+a_{\circledast}-\gamma_{2})\Vert\psi\Vert_{H_{\Gamma}}^{2}\notag\\
   	&\quad\leq\Vert\boldsymbol{\varphi}\Vert_{(\mathcal{H}^1)'}\Vert (J\ast\varphi,K\circledast\psi)\Vert_{\mathcal{H}^1}\notag\\
   	&\quad\leq C\Vert\boldsymbol{\varphi}\Vert_{L,0,\ast} \|\boldsymbol{\varphi}\|_{\mathcal{L}^2}\notag\\
   	&\quad\leq C\Vert\boldsymbol{\varphi}\Vert_{L,0,*}^{2} +\frac{\alpha+a_{\ast}-\gamma_{1}}{2}\Vert\varphi\Vert_{H}^{2} +\frac{\alpha+a_{\circledast}-\gamma_{2}}{2} \Vert\psi\Vert_{H_{\Gamma}}^{2}.\notag
   \end{align}
   Applying Gronwall's inequality, we can conclude that
   \begin{align*}
   \Vert\boldsymbol{\varphi}\Vert_{L^{\infty}(0,T;\mathcal{H}_{L,0}^{-1})}+\Vert\boldsymbol{\varphi}\Vert_{L^{2}(0,T;\mathcal{L}^{2})}\leq C\Vert\boldsymbol{\varphi}_{0}\Vert_{L,0,*}.
   \end{align*}
   This leads to the continuous dependence estimate \eqref{contiesti} for the case with $L\in(0,+\infty)$.
   \hfill $\square$

   \subsection{Convergence rate of the Yosida approximation}
   Inspired by \cite{GST}, we establish convergence rate of the Yosida approximation as $\varepsilon\to 0$.
   To this end, we introduce the linear operator $\mathbb{J}:\,\mathcal{L}^{2}\rightarrow\mathcal{L}^{2}$ as follows
   \begin{align*}
   	\mathbb{J}(\boldsymbol{z})(x,y) :=\Big(\int_{\Omega}J(x-p)z(p)\,\mathrm{d}p,\int_{\Gamma}K(y-q)z_{\Gamma}(q) \,\mathrm{d}S_{q}\Big),\quad\forall\,(x,y)\in\Omega\times\Gamma, \quad\forall\,\boldsymbol{z}\in\mathcal{L}^{2}.
   \end{align*}
   Clearly, $\mathbb{J}$ is bounded and self-adjoint.
   According to \cite{KL}, $\mathfrak{S}^{L}$ admits eigenfunctions $\{\boldsymbol{\phi}_{k}\}_{k\geq1}\subset\mathcal{L}^{2}$ that form an orthonormal basis of $\mathcal{L}^{2}$.
   Besides, the following lemma will be useful to derive the convergence rate.
   \begin{lemma}
   	\label{HS}
   	The operator $\mathbb{J}:\,\mathcal{L}^{2}\rightarrow\mathcal{L}^{2}$ is a Hilbert--Schmidt operator with
   	$$
   \sum_{k=1}^{+\infty}\Vert\mathbb{J}(\boldsymbol{\phi}_{k}) \Vert_{\mathcal{L}^{2}}^{2}<+\infty.
   $$
   \end{lemma}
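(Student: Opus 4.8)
The plan is to reduce the statement to the classical theory of integral operators. The key structural observation is that $\mathbb{J}$ decouples: for $\boldsymbol{z}=(z,z_\Gamma)\in\mathcal{L}^{2}$ the bulk component of $\mathbb{J}(\boldsymbol{z})$ is $J\ast z$ and depends only on $z$, while the boundary component is $K\circledast z_\Gamma$ and depends only on $z_\Gamma$. Identifying $\mathcal{L}^{2}$ with $L^{2}(X)$, where $X:=\Omega\sqcup\Gamma$ carries Lebesgue measure on $\Omega$ and surface measure on $\Gamma$, the operator $\mathbb{J}$ is precisely the integral operator on $L^{2}(X)$ with kernel $\mathcal{K}$ equal to $J(x-p)$ on $\Omega\times\Omega$, equal to $K(x-y)$ on $\Gamma\times\Gamma$, and vanishing on the two mixed blocks. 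I would then invoke the classical fact that such an integral operator is Hilbert--Schmidt if and only if $\mathcal{K}\in L^{2}(X\times X)$, and that in that case $\sum_{k}\|\mathbb{J}(\boldsymbol{e}_{k})\|_{\mathcal{L}^{2}}^{2}=\|\mathcal{K}\|_{L^{2}(X\times X)}^{2}$ for every orthonormal basis $\{\boldsymbol{e}_{k}\}$ of $\mathcal{L}^{2}$ (and $\leq\|\mathcal{K}\|_{L^{2}(X\times X)}^{2}$ for any orthonormal system). Applying this to the eigenbasis $\{\boldsymbol{\phi}_{k}\}_{k\geq1}$ of $\mathfrak{S}^{L}$ then gives exactly the asserted bound, so the whole task collapses to checking that $(x,p)\mapsto J(x-p)\in L^{2}(\Omega\times\Omega)$ and $(x,y)\mapsto K(x-y)\in L^{2}(\Gamma\times\Gamma)$.

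For the bulk block I would use the extra hypothesis $\mathbf{(A5)}$, namely $J\in L^{2}(\mathbb{R}^{d})$: for each $x\in\Omega$ one has $\int_{\Omega}|J(x-p)|^{2}\,\mathrm{d}p\leq\|J\|_{L^{2}(\mathbb{R}^{d})}^{2}$, hence $\|\mathcal{K}\|_{L^{2}(\Omega\times\Omega)}^{2}\leq|\Omega|\,\|J\|_{L^{2}(\mathbb{R}^{d})}^{2}<+\infty$. (Note that $\mathbf{(A1)}$ alone only gives $J\in W^{1,1}(\mathbb{R}^{d})\hookrightarrow L^{d/(d-1)}(\mathbb{R}^{d})$, which for $d=3$ does not reach $L^{2}$; this is exactly why $\mathbf{(A5)}$ is imposed.) For the boundary block the regularity already contained in $\mathbf{(A1)}$ suffices: since $K\in W^{2,r}(\mathbb{R}^{d})$ with $r>1$, the fractional trace theorem gives $K(x-\cdot)|_{\Gamma}\in W^{2-1/r,r}(\Gamma)$ with $\|K(x-\cdot)|_{\Gamma}\|_{W^{2-1/r,r}(\Gamma)}\leq C\|K\|_{W^{2,r}(\mathbb{R}^{d})}$, and this bound is uniform in $x\in\Gamma$ because $x-\cdot$ acts on $\mathbb{R}^{d}$ as an isometry and the $W^{2,r}(\mathbb{R}^{d})$-norm is translation and reflection invariant; since $\Gamma$ is a smooth compact $(d-1)$-manifold with $d\in\{2,3\}$, the Sobolev embedding $W^{2-1/r,r}(\Gamma)\hookrightarrow L^{2}(\Gamma)$ holds (the relevant condition $r\geq 2d/(d+3)$ is satisfied for every $r>1$). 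Hence $\|K(x-\cdot)|_{\Gamma}\|_{L^{2}(\Gamma)}\leq C'\|K\|_{W^{2,r}(\mathbb{R}^{d})}$ uniformly in $x$, so that $\|\mathcal{K}\|_{L^{2}(\Gamma\times\Gamma)}^{2}\leq|\Gamma|\,(C')^{2}\|K\|_{W^{2,r}(\mathbb{R}^{d})}^{2}<+\infty$. Alternatively, one may bound $K(x-\cdot)|_{\Gamma}$ uniformly in $W^{1,1}(\Gamma)$ using $a^{\circledast}$ and $b^{\circledast}$ and conclude by the Sobolev embedding $W^{1,1}(\Gamma)\hookrightarrow L^{2}(\Gamma)$, which again holds for $d\in\{2,3\}$.

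Combining the two estimates yields $\mathcal{K}\in L^{2}(X\times X)$, so $\mathbb{J}$ is Hilbert--Schmidt and $\sum_{k=1}^{+\infty}\|\mathbb{J}(\boldsymbol{\phi}_{k})\|_{\mathcal{L}^{2}}^{2}=\|\mathcal{K}\|_{L^{2}(X\times X)}^{2}<+\infty$, which is the claim. The only genuinely delicate point is the $L^{2}(\Gamma\times\Gamma)$-bound on the boundary kernel: a naive argument passing through $L^{\infty}(\Gamma)$ would need $r>d/2$ and therefore fails for $d=3$ and $r$ close to $1$, so one must go through the fractional trace theorem together with the sharp Sobolev embedding on $\Gamma$ (or through the $W^{1,1}(\Gamma)$-route), and one must be careful to track the translation/reflection invariance that makes these bounds uniform in the shift. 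Everything else is routine.
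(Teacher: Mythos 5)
Your proof is correct, and the underlying estimates are the same ones the paper checks; the difference is in packaging. The paper proves Hilbert--Schmidtness by expanding $J(x-\cdot)$ and $K(y-\cdot)$ in the orthonormal bases $\{u_k\}\subset H$, $\{v_k\}\subset H_\Gamma$, summing, exchanging sum and integral, and invoking Parseval to reach
\[
\sum_{k}\big(\Vert J\ast u_k\Vert_H^2+\Vert K\circledast v_k\Vert_{H_\Gamma}^2\big)
=\int_\Omega\Vert J(x-\cdot)\Vert_{H}^2\,\mathrm{d}x+\int_\Gamma\Vert K(y-\cdot)\Vert_{H_\Gamma}^2\,\mathrm{d}S_y<+\infty,
\]
then passes to the basis $\{\boldsymbol{\phi}_k\}$ via the basis-independence of the Hilbert--Schmidt sum. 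You instead identify $\mathcal{L}^2\cong L^2(X)$ with $X=\Omega\sqcup\Gamma$, observe that $\mathbb{J}$ is the integral operator with block-diagonal kernel $\mathcal{K}$, and invoke the characterization ``integral operator is HS $\iff$ $\mathcal{K}\in L^2(X\times X)$'', whose $L^2$-norm of the kernel is exactly the quantity displayed above. So the two routes verify the same finiteness and are mathematically equivalent. What your version buys is (i) it avoids any issue with the intermediate system $\{\boldsymbol{u}_j\}=\{(u_j,v_j)\}$ used in the paper's Parseval step (which is orthogonal but neither normalized nor total in $\mathcal{L}^2$; the paper's end result is still correct because $\mathbb{J}$ decouples, but the step as written is not Parseval), and (ii) it supplies the justification, which the paper leaves implicit, that $K\in W^{2,r}(\mathbb{R}^d)$ with $r>1$ implies $K(y-\cdot)|_\Gamma\in L^2(\Gamma)$ uniformly in $y$, via the trace theorem $W^{2,r}(\mathbb{R}^d)\to W^{2-1/r,r}(\Gamma)$ and the Sobolev embedding $W^{2-1/r,r}(\Gamma)\hookrightarrow L^2(\Gamma)$ on the $(d-1)$-manifold (or your alternative $W^{1,1}(\Gamma)\hookrightarrow L^2(\Gamma)$ route using $a^\circledast$, $b^\circledast$). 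Your exponent check is correct in both $d=2$ and $d=3$, and your remark that $\mathbf{(A1)}$ alone would not give $J\in L^2$ in $d=3$ correctly explains why $\mathbf{(A5)}$ is needed.
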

   \begin{proof}
   Recall that the eigenfunctions
    $\{u_{k}\}_{k\in\mathbb{Z}^+}$, $\{v_{k}\}_{k\in\mathbb{Z}^+}$
   form an orthonormal basis of $H$ and $H_{\Gamma}$, respectively.
   Define $\boldsymbol{u}_{k}=(u_{k},v_{k})$. Since $J\in L^2(\mathbb{R}^d)$ and $K\in W^{2,r}(\mathbb{R}^d)$ with $r>1$,
   we have $J(x-\cdot)\in H$ and $K(y-\cdot)\in H_\Gamma$ for all $x\in\Omega$ and $y\in\Gamma$. In particular, it holds
   \begin{align*}
   	J(x-\cdot)=\sum_{k=1}^{+\infty}(J(x-\cdot),u_{k})_{H}u_{k},\quad K(y-\cdot)=\sum_{k=1}^{+\infty}(K(y-\cdot),v_{k})_{H_{\Gamma}}v_{k}.
   \end{align*}
   Then we have
   \begin{align*}
   	\sum_{k=1}^{+\infty}\Vert\mathbb{J}(\boldsymbol{u}_{k}) \Vert_{\mathcal{L}^{2}}^{2}&=\sum_{k=1}^{+\infty}\int_{\Omega} \Big(\int_{\Omega}J(x-p)u_{k}(p)\,\mathrm{d}p\Big)^{2}\,\mathrm{d}x +\sum_{k=1}^{+\infty}\int_{\Gamma}\Big(\int_{\Gamma}K(y-q)v_{k}(q) \,\mathrm{d}S_{q}\Big)^{2}\,\mathrm{d}S_{y}\\
   	&=\sum_{k=1}^{+\infty}\int_{\Omega}|(J(x-\cdot),u_{k})_{H}|^{2} \,\mathrm{d}x+\sum_{k=1}^{+\infty}\int_{\Gamma} |(K(y-\cdot),v_{k})_{H_\Gamma}|^{2}\,\mathrm{d}S_{y}\\
   	&=\int_{\Omega}\sum_{k=1}^{+\infty}|(J(x-\cdot),u_{k})_{H}|^{2} \,\mathrm{d}x+\int_{\Gamma}\sum_{k=1}^{+\infty}|(K(y-\cdot),v_{k} )_{H_\Gamma}|^{2}\,\mathrm{d}S_{y}\\
   	&=\int_{\Omega}\Vert J(x-\cdot)\Vert_{H}^{2}\,\mathrm{d}x+\int_{\Gamma}\Vert K(y-\cdot)\Vert_{H_{\Gamma}}^{2}\,\mathrm{d}S_{y}\\
   	&\leq C(|\Omega|\Vert J\Vert_{L^{2}(\mathbb{R}^{d})}^{2}+|\Gamma|\Vert K\Vert_{W^{2,r}(\mathbb{R}^{d})}^{2})<+\infty.
   \end{align*}
   This implies
   \begin{align*}
   \sum_{k=1}^{+\infty}\Vert\mathbb{J}(\boldsymbol{\phi}_{k}) \Vert_{\mathcal{L}^{2}}^{2}=\sum_{k=1}^{+\infty}\sum_{j=1}^{+\infty} |(\mathbb{J}(\boldsymbol{\phi}_{k}),\boldsymbol{u}_{j} )_{\mathcal{L}^{2}}|^{2}=\sum_{j=1}^{+\infty} \sum_{k=1}^{+\infty}|(\boldsymbol{\phi}_{k}, \mathbb{J}(\boldsymbol{u}_{j}))_{\mathcal{L}^{2}}|^{2} =\sum_{j=1}^{+\infty}\Vert\mathbb{J}(\boldsymbol{u}_{j}) \Vert_{\mathcal{L}^{2}}^{2}<+\infty.
   \end{align*}
   The proof of Lemma \ref{HS} is complete.
   \end{proof}

   \noindent \textbf{Proof of Theorem \ref{rate}.}
   We first consider the approximating problem \eqref{appro1}--\eqref{psiappro}. Let $\boldsymbol{\varphi}_{\varepsilon_{i}}:=(\varphi_{\varepsilon_{i}},\psi_{\varepsilon_{i}})$, $\boldsymbol{\mu}_{\varepsilon_{i}}:=(\mu_{\varepsilon_{i}},\theta_{\varepsilon_{i}})$ be the unique weak solution to problem \eqref{appro1}--\eqref{psiappro}
   corresponding to the parameter $\varepsilon_{i}$ $(i\in\{1,2\})$.
   We denote their differences by $\boldsymbol{\varphi}^{\sharp}:=\boldsymbol{\varphi}_{\varepsilon_{1}} -\boldsymbol{\varphi}_{\varepsilon_{2}}$, $\boldsymbol{\mu}^{\sharp}:=\boldsymbol{\mu}_{\varepsilon_{1}} -\boldsymbol{\mu}_{\varepsilon_{2}}$. Then $(\boldsymbol{\varphi}^{\sharp},\boldsymbol{\mu}^{\sharp})$ satisfies the following weak formulations
   \begin{align}
   	\langle\partial_{t}\boldsymbol{\varphi}^{\sharp},\boldsymbol{z} \rangle_{(\mathcal{H}^{1})',\mathcal{H}^{1}} &=-\int_{\Omega}\nabla\mu^{\sharp}\cdot\nabla z\,\mathrm{d}x-\int_{\Gamma}\nabla_{\Gamma}\theta^{\sharp} \cdot\nabla_{\Gamma}z_{\Gamma}\,\mathrm{d}S-\frac{1}{L}\int_{\Gamma} (\theta^{\sharp}-\mu^{\sharp})(z_{\Gamma}-z)\,\mathrm{d}S,\label{error1}\\
   	\int_{\Omega}\mu^{\sharp}\zeta\,\mathrm{d}x&=\int_{\Omega} a_{\Omega}\varphi^{\sharp}\zeta\,\mathrm{d}x-\int_{\Omega} (J\ast\varphi^{\sharp})\zeta\,\mathrm{d}x+\int_{\Omega} (\beta_{\varepsilon_{1}}(\varphi_{\varepsilon_{1}}) -\beta_{\varepsilon_{2}}(\varphi_{\varepsilon_{2}}))\zeta \,\mathrm{d}x\notag\\
   	&\quad+\int_{\Omega}(\pi(\varphi_{\varepsilon_{1}}) -\pi(\varphi_{\varepsilon_{2}}))\zeta\,\mathrm{d}x,\label{error2}\\
   	\int_{\Gamma}\theta^{\sharp}\zeta_{\Gamma}\,\mathrm{d}S &=\int_{\Gamma}a_{\Gamma}\psi^{\sharp}\zeta_{\Gamma}\,\mathrm{d}S -\int_{\Gamma}(K\circledast\psi^{\sharp})\zeta_{\Gamma} \,\mathrm{d}S+\int_{\Gamma}(\beta_{\Gamma,\varepsilon_{1}} (\psi_{\varepsilon_{1}})-\beta_{\Gamma,\varepsilon_{2}} (\psi_{\varepsilon_{2}}))\zeta_{\Gamma}\,\mathrm{d}S\notag\\
   	&\quad+\int_{\Gamma}(\pi_{\Gamma}(\psi_{\varepsilon_{1}}) -\pi_{\Gamma}(\psi_{\varepsilon_{2}}))\zeta_{\Gamma}\,\mathrm{d}S, \label{error3}
   \end{align}
   for almost every $t\in[0,T]$ and any test functions $\boldsymbol{z}=(z,z_{\Gamma})\in\mathcal{H}^{1}$, $\boldsymbol{\zeta}=(\zeta,\zeta_{\Gamma})\in \mathcal{L}^2$.
   Since $\overline{m}(\boldsymbol{\varphi}^\sharp)=0$,
   we can take $\boldsymbol{z}=\mathfrak{S}^{L}(\boldsymbol{\varphi}^{\sharp})$ in \eqref{error1}
    and then integrate over $(0,t)$, using \eqref{error2}, \eqref{error3}, we obtain
   \begin{align}
   	&\frac{1}{2}\Vert\boldsymbol{\varphi}^{\sharp}(t)\Vert_{L,0,*}^{2} +\int_{0}^{t}\int_{\Omega}a_{\Omega}|\varphi^{\sharp}(s)|^{2} \,\mathrm{d}x\,\mathrm{d}s+\int_{0}^{t}\int_{\Gamma}a_{\Gamma} |\psi^{\sharp}(s)|^{2}\,\mathrm{d}S\,\mathrm{d}s\notag\\
   	&\quad=\int_{0}^{t}\int_{\Omega}(J\ast\varphi^{\sharp}(s)) \varphi^{\sharp}(s)\,\mathrm{d}x\,\mathrm{d}s +\int_{0}^{t}\int_{\Gamma}(K\circledast\psi^{\sharp}(s)) \psi^{\sharp}(s)\,\mathrm{d}S\,\mathrm{d}s\notag\\
   	&\qquad+\int_{0}^{t}\int_{\Omega}(-\beta_{\varepsilon_{1}} (\varphi_{\varepsilon_{1}}(s))+\beta_{\varepsilon_{2}} (\varphi_{\varepsilon_{2}}(s)))\varphi^{\sharp}(s) \,\mathrm{d}x\,\mathrm{d}s\notag\\
   	&\qquad+\int_{0}^{t}\int_{\Gamma}(-\beta_{\Gamma,\varepsilon_{1}} (\psi_{\varepsilon_{1}}(s))+\beta_{\Gamma,\varepsilon_{2}} (\psi_{\varepsilon_{2}}(s)))\psi^{\sharp}(s) \,\mathrm{d}S\,\mathrm{d}s\notag\\
   	&\qquad+\int_{0}^{t}\int_{\Omega}(-\pi(\varphi_{\varepsilon_{1}}(s)) +\pi(\varphi_{\varepsilon_{2}}(s)))\varphi^{\sharp}(s) \,\mathrm{d}x\,\mathrm{d}s\notag\\
   	&\qquad+\int_0^t\int_{\Gamma}(-\pi_{\Gamma}(\psi_{\varepsilon_{1}}(s) )+\pi_{\Gamma}(\psi_{\varepsilon_{2}}(s)))\psi^{\sharp}(s) \,\mathrm{d}S\,\mathrm{d}s\notag\\
   	&\quad=\sum_{i=1}^6 H_{i}.\label{error4}
   \end{align}
   Thanks to $\mathbf{(A1)}$, we have
   \begin{align}
   	\int_{0}^{t}\int_{\Omega}a_{\Omega}|\varphi^{\sharp}(s)|^{2} \,\mathrm{d}x\,\mathrm{d}s+\int_{0}^{t}\int_{\Gamma}a_{\Gamma} |\psi^{\sharp}(s)|^{2}\,\mathrm{d}S\,\mathrm{d}s\geq a_{\ast}\Vert\varphi^{\sharp}\Vert_{L^{2}(0,t;H)}^{2}+a_{\circledast} \Vert\psi^{\sharp}\Vert_{L^{2}(0,t;H_{\Gamma})}^{2}.\label{error5}
   \end{align}
   For $H_{3}$ and $H_{4}$, we can follow the standard procedure (see, e.g., \cite{CG00,GST,LvWu}) to conclude
   \begin{align}
   	H_{3}+H_{4}\leq C(\varepsilon_{1}+\varepsilon_{2}).\label{error6}
   \end{align}
   Besides, from $\mathbf{(A3)}$ we infer that
   \begin{align}
   	|H_{5}|+|H_{6}|&\leq\gamma_{1}\Vert\varphi^{\sharp} \Vert_{L^{2}(0,t;H)}^{2}+\gamma_{2}\Vert\psi^{\sharp} \Vert_{L^{2}(0,t;H_{\Gamma})}^{2}.\label{error7}
   \end{align}
   Finally, we estimate $H_{1}+H_2$. Since $\{\boldsymbol{\phi}_{k}\}_{k\geq1}\subset\mathcal{L}^{2}$ form an orthonormal basis of $\mathcal{L}^{2}$,
   then $\mathbb{J}(\boldsymbol{\varphi}^{\sharp})$ can be expanded as
   \begin{align}
   \mathbb{J}(\boldsymbol{\varphi}^{\sharp})=\sum_{k=1}^{+\infty}(\mathbb{J}(\boldsymbol{\varphi}^{\sharp}),\boldsymbol{\phi}_{k})_{\mathcal{L}^{2}}\boldsymbol{\phi}_{k}=\underbrace{\sum_{k=1}^{N}(\mathbb{J}(\boldsymbol{\varphi}^{\sharp}),\boldsymbol{\phi}_{k})_{\mathcal{L}^{2}}\boldsymbol{\phi}_{k}}_{\mathbb{J}_{N}(\boldsymbol{\varphi}^{\sharp})}+\underbrace{\sum_{k=N+1}^{+\infty}(\mathbb{J}(\boldsymbol{\varphi}^{\sharp}),\boldsymbol{\phi}_{k})_{\mathcal{L}^{2}}\boldsymbol{\phi}_{k}}_{(\mathbb{J}-\mathbb{J}_{N})(\boldsymbol{\varphi}^{\sharp})},\notag
   \end{align}
   and, similarly, $\boldsymbol{\varphi}^{\sharp}$ can be written as
   \begin{align}
   	\boldsymbol{\varphi}^{\sharp}=\boldsymbol{\varphi}^{\sharp}_{A} +\boldsymbol{\varphi}^{\sharp}_{B},\quad\text{with }\boldsymbol{\varphi}^{\sharp}_{A}\in\,A_{N} :=\text{span}\{\boldsymbol{\phi}_{1},...,\boldsymbol{\phi}_{N}\}, \,\,\boldsymbol{\varphi}^{\sharp}_{B}\in\,B_{N}:=\text{span} \{\boldsymbol{\phi}_{N+1},...\}.\notag
   \end{align}
  It follows that
   \begin{align}
   	H_{1}+H_2&=\int_{0}^{t}(\mathbb{J}(\boldsymbol{\varphi}^{\sharp}(s)), \boldsymbol{\varphi}^{\sharp}(s))_{\mathcal{L}^{2}}\,\mathrm{d}s\notag\\
   	&=\int_{0}^{t}(\mathbb{J}_{N}(\boldsymbol{\varphi}^{\sharp}(s)), \boldsymbol{\varphi}^{\sharp}(s))_{\mathcal{L}^{2}}\,\mathrm{d}s+\int_{0}^{t}((\mathbb{J}-\mathbb{J}_{N})(\boldsymbol{\varphi}^{\sharp}(s)),\boldsymbol{\varphi}^{\sharp}(s))_{\mathcal{L}^{2}}\,\mathrm{d}s\notag\\
   	&=\int_{0}^{t}(\mathbb{J}_{N}(\boldsymbol{\varphi}^{\sharp}(s)), \boldsymbol{\varphi}^{\sharp}_{A}(s))_{\mathcal{L}^{2}}\,\mathrm{d}s+\int_{0}^{t}((\mathbb{J}-\mathbb{J}_{N})(\boldsymbol{\varphi}^{\sharp}(s)),\boldsymbol{\varphi}^{\sharp}(s))_{\mathcal{L}^{2}}\,\mathrm{d}s\notag\\
   	&\leq\Vert\mathbb{J}_{N}\Vert_{\mathcal{B}(\mathcal{L}^{2})} \Vert\boldsymbol{\varphi}^{\sharp}\Vert_{L^{2}(0,t;\mathcal{L}^{2})} \Vert\boldsymbol{\varphi}_{A}^{\sharp}\Vert_{L^{2}(0,t;\mathcal{L}^{2})} +\Vert\mathbb{J}-\mathbb{J}_{N}\Vert_{\mathcal{B}(\mathcal{L}^{2})} \Vert\boldsymbol{\varphi}^{\sharp}\Vert_{L^{2}(0,t;\mathcal{L}^{2})}^{2}\notag\\
   	&\leq\Vert\mathbb{J}\Vert_{\mathcal{B}(\mathcal{L}^{2})} \Vert\boldsymbol{\varphi}^{\sharp}\Vert_{L^{2}(0,t;\mathcal{L}^{2})} \Vert\boldsymbol{\varphi}_{A}^{\sharp}\Vert_{L^{2}(0,t;\mathcal{L}^{2})} +\Vert\mathbb{J}-\mathbb{J}_{N}\Vert_{\mathcal{B}(\mathcal{L}^{2})} \Vert\boldsymbol{\varphi}^{\sharp}\Vert_{L^{2}(0,t;\mathcal{L}^{2})}^{2} \notag\\
   &\leq(\epsilon\Vert\mathbb{J}\Vert_{\mathcal{B}(\mathcal{L}^{2})}^{2} +\Vert\mathbb{J}-\mathbb{J}_{N}\Vert_{\mathcal{B}(\mathcal{L}^{2})}) \Vert\boldsymbol{\varphi}^{\sharp}\Vert_{L^{2}(0,t;\mathcal{L}^{2})}^{2} +C(\epsilon)\Vert\boldsymbol{\varphi}_{A}^{\sharp}\Vert_{L^{2} (0,t;\mathcal{L}^{2})}^{2},\label{error10}
   \end{align}
   where the notation $\Vert\cdot\Vert_{\mathcal{B}(\mathcal{L}^{2})}$
   denotes the operator norm of linear bounded operators from $\mathcal{L}^{2}$ to $\mathcal{L}^{2}$.
   For the last term on the right-hand side of \eqref{error10},
   since $A_{N}$ is a finite dimensional space, the norms on $A_{N}$ are equivalent, namely,
   \begin{align}
   \Vert\boldsymbol{\varphi}_{A}^{\sharp}\Vert_{L^{2}(0,t;\mathcal{L}^{2})}^{2}&\leq C(N)\Vert\boldsymbol{\varphi}_{A}^{\sharp}\Vert_{L^{2}(0,t;\mathcal{H}_{L,0}^{-1})}^{2}\notag\\
   &=C(N)\Vert\mathfrak{S}^{L}(\boldsymbol{\varphi}_{A}^{\sharp})\Vert_{L^{2}(0,t;\mathcal{H}_{L,0}^{1})}^{2}\notag\\
   &\leq C(N)\Vert\mathfrak{S}^{L}(\boldsymbol{\varphi}^{\sharp})\Vert_{L^{2}(0,t;\mathcal{H}_{L,0}^{1})}^{2}\notag\\
   &=C(N)\Vert\boldsymbol{\varphi}^{\sharp}\Vert_{L^{2}(0,t;\mathcal{H}_{L,0}^{-1})}^{2}.\label{error11}
   \end{align}
   Collecting \eqref{error4}--\eqref{error11}, we arrive at
   \begin{align}
   	&\frac{1}{2}\Vert\boldsymbol{\varphi}^{\sharp}\Vert_{L,0,*}^{2} +(a_{\ast}-\gamma_{1}-\epsilon\Vert\mathbb{J}\Vert_{\mathcal{B}(\mathcal{L}^{2})}^{2}-\Vert\mathbb{J}-\mathbb{J}_{N}\Vert_{\mathcal{B}(\mathcal{L}^{2})})\Vert\varphi^{\sharp}\Vert_{L^{2}(0,T;H)}^{2}\notag\\
   	&\qquad+(a_{\circledast}-\gamma_{2}-\epsilon\Vert\mathbb{J} \Vert_{\mathcal{B}(\mathcal{L}^{2})}^{2}-\Vert\mathbb{J}-\mathbb{J}_{N}\Vert_{\mathcal{B}(\mathcal{L}^{2})})\Vert\psi^{\sharp}\Vert_{L^{2}(0,T;H_{\Gamma})}^{2}\notag\\
   	&\quad\leq C(\epsilon)C(N)\int_{0}^{t}\Vert\boldsymbol{\varphi}^{\sharp}(s) \Vert_{L,0,*}^{2}\,\mathrm{d}s+C(\varepsilon_{1}+\varepsilon_{2}). \label{error12}
   \end{align}
   As a consequence of Lemma \ref{HS}, it holds $\Vert\mathbb{J}-\mathbb{J}_{N} \Vert_{\mathcal{B}(\mathcal{L}^{2})}\rightarrow0$ as $N\rightarrow+\infty$.
   We can first choose a sufficiently large $N\in\mathbb{Z}^+$,
   then a sufficiently small $\epsilon>0$ such that
   \begin{align}
   	&a_{\ast}-\gamma_{1}-\epsilon\Vert\mathbb{J} \Vert_{\mathcal{B}(\mathcal{L}^{2})}^{2}-\Vert\mathbb{J}-\mathbb{J}_{N} \Vert_{\mathcal{B}(\mathcal{L}^{2})}>0,\notag\\
   	&a_{\circledast}-\gamma_{2}-\epsilon\Vert\mathbb{J} \Vert_{\mathcal{B}(\mathcal{L}^{2})}^{2}-\Vert\mathbb{J}-\mathbb{J}_{N} \Vert_{\mathcal{B}(\mathcal{L}^{2})}>0.\notag
   \end{align}
   Then, from \eqref{error12} and Gronwall's inequality, we can conclude that
   \begin{align}
   	\Vert\boldsymbol{\varphi}_{\varepsilon_{1}} -\boldsymbol{\varphi}_{\varepsilon_{2}} \Vert_{L^{\infty}(0,T;\mathcal{H}_{L,0}^{-1})} +\Vert\boldsymbol{\varphi}_{\varepsilon_{1}} -\boldsymbol{\varphi}_{\varepsilon_{2}}\Vert_{L^{2}(0,T;\mathcal{L}^{2})} \leq C\sqrt{\varepsilon_{1}+\varepsilon_{2}}.\label{error15}
   \end{align}
   Passing to the limit as $\varepsilon_{2}\rightarrow0$ in \eqref{error15},
   using the convergence results \eqref{approconver1}, \eqref{approconver2}
   and the weak lower semicontinuity of norms, we find
   \begin{align*}
   	\Vert\boldsymbol{\varphi}_{\varepsilon_{1}} -\boldsymbol{\varphi}\Vert_{L^{\infty}(0,T;\mathcal{H}_{L,0}^{-1})} +\Vert\boldsymbol{\varphi}_{\varepsilon_{1}} -\boldsymbol{\varphi}\Vert_{L^{2}(0,T;\mathcal{L}^{2})}\leq C\sqrt{\varepsilon_{1}},
   \end{align*}
   which completes the proof of Theorem \ref{rate}.
   \hfill $\square$

   \section{Asymptotic Limits as $L\to 0$ and $L\to+\infty$}
   \setcounter{equation}{0}
   In this section, we first prove Theorems \ref{exist} and \ref{contidependence} for the limiting cases $L=0$ and $L=+\infty$.
   The existence of weak solutions follows from the asymptotic limits as $L\rightarrow0$ and $L\rightarrow+\infty$, respectively,
   whereas the continuous dependence on the initial data can be carried out by a standard energy method.
   The second part is devoted to the proof of Theorem \ref{weak-convergence} on the convergence rate of asymptotic limits.

   In the following, we denote by $(\boldsymbol{\varphi}_{\varepsilon}^{L},\boldsymbol{\mu}_{\varepsilon}^{L})$
    the unique weak solution to the approximating problem \eqref{appro1}--\eqref{psiappro}
   corresponding to the pair of parameters $(\varepsilon,L)\in(0,\varepsilon^\ast)\times(0,+\infty)$ obtained in Proposition \ref{approexist},
   and by $(\boldsymbol{\varphi}^{L},\boldsymbol{\mu}^{L})$ the unique weak solution to the original problem  \eqref{model1}--\eqref{psiini}
   corresponding to $L\in(0,+\infty)$ obtained in Theorem \ref{exist}. Owing to \eqref{phiinfty}, \eqref{psiinfty} and the energy equality \eqref{energyeq}, it holds
   \begin{align}
   	&\|\boldsymbol{\varphi}^{L}\|_{L^{\infty}(0,T;\mathcal{L}^{2})} +\Vert\nabla\mu^{L}\Vert_{L^{2}(0,T;H)} +\Vert\nabla_{\Gamma}\theta^{L}\Vert_{L^{2}(0,T;H_{\Gamma})} +\frac{1}{\sqrt{L}}\Vert\theta^{L}-\mu^{L}\Vert_{L^{2}(0,T;H_{\Gamma})}\leq C.\label{Luni1}
   \end{align}
   Besides, according to the weak formulations \eqref{weak1} and \eqref{weak2}, we can obtain
   \begin{align}
   	&\Vert\partial_{t}\varphi^{L}\Vert_{L^{2}(0,T;V')}\leq \Vert\nabla\mu^{L}\Vert_{L^{2}(0,T;H)} +\frac{1}{\sqrt{L}}\cdot\frac{1}{\sqrt{L}}\Vert\theta^{L}-\mu^{L} \Vert_{L^{2}(0,T;H_{\Gamma})}\leq C\left(1+\frac{1}{\sqrt{L}}\right),\label{0uni1}\\
   	&\Vert\partial_{t}\psi^{L}\Vert_{L^{2}(0,T;V_{\Gamma}')}\leq \Vert\nabla_{\Gamma}\theta^{L}\Vert_{L^{2}(0,T;H_{\Gamma})} +\frac{1}{\sqrt{L}}\cdot\frac{1}{\sqrt{L}}\Vert\theta^{L}-\mu^{L} \Vert_{L^{2}(0,T;H_{\Gamma})}\leq C\left(1+\frac{1}{\sqrt{L}}\right),\label{0uni2}\\
   	&\Vert\partial_{t}\boldsymbol{\varphi}^{L}\Vert_{L^{2}(0,T;(\mathcal{V}^{1})')} \leq\Vert\nabla\mu^{L}\Vert_{L^{2}(0,T;H)} +\Vert\nabla_{\Gamma}\theta^{L}\Vert_{L^{2}(0,T;H_{\Gamma})}\leq C,\label{0uni3}
   \end{align}
   where the constant $C>0$ is independent of $L\in(0,+\infty)$.

   \subsection{The case $L\to0$}
   \begin{lemma}
   	\label{L0uni1}
   	Suppose $L\in(0,1]$. Then there exists a positive constant $C$, independent of $L\in(0,1]$, such that
   	\begin{align}
   		&\Vert\boldsymbol{\varphi}^{L}\Vert_{L^{\infty}(0,T;\mathcal{L}^{2})} +\Vert\boldsymbol{\varphi}^{L}\Vert_{L^{2}(0,T;\mathcal{H}^{1})} +\Vert\boldsymbol{\mu}^{L}\Vert_{L^{2}(0,T;\mathcal{H}^{1})}\notag\\
   		&\quad+	\Vert \beta(\varphi^{L})\Vert_{L^{2}(0,T;H)}+\Vert \beta_{\Gamma}(\psi^{L})\Vert_{L^{2}(0,T;H_{\Gamma})}\leq C.\label{0uni5}
   	\end{align}
   \end{lemma}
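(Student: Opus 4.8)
The plan is to reprise Step~1 of the proof of Theorem~\ref{exist} for the case $L\in(0,+\infty)$, now carried out directly on the weak solution $(\boldsymbol{\varphi}^{L},\boldsymbol{\mu}^{L})$ (alternatively, one may run it on the approximating pair $(\boldsymbol{\varphi}^{L}_{\varepsilon},\boldsymbol{\mu}^{L}_{\varepsilon})$ uniformly in $(\varepsilon,L)$ and pass to the limit), while tracking carefully that every constant appearing is independent of $L\in(0,1]$. The starting point is \eqref{Luni1}, which already supplies the bound on $\|\boldsymbol{\varphi}^{L}\|_{L^{\infty}(0,T;\mathcal{L}^{2})}$ together with $\|\nabla\mu^{L}\|_{L^{2}(0,T;H)}$, $\|\nabla_{\Gamma}\theta^{L}\|_{L^{2}(0,T;H_{\Gamma})}$ and $\frac{1}{\sqrt{L}}\|\theta^{L}-\mu^{L}\|_{L^{2}(0,T;H_{\Gamma})}$ with an $L$-independent constant, since it follows from the energy equality \eqref{energyeq}, the coercivity inequality \eqref{ineq1}, and the fact that $E(\boldsymbol{\varphi}_{0})$ does not depend on $L$. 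It thus remains to recover the $L^{2}(0,T;\mathcal{H}^{1})$-bounds on $\boldsymbol{\varphi}^{L}$ and $\boldsymbol{\mu}^{L}$ and the $L^{2}(0,T;H)\times L^{2}(0,T;H_{\Gamma})$-bound on $(\beta(\varphi^{L}),\beta_{\Gamma}(\psi^{L}))$, all with $L$-independent constants.

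Before that I would record that the Poincar\'{e} constant $c_{P}$ in \eqref{Po3} can be fixed uniformly on $L\in(0,1]$: since $1/L\geq1$ there, $\|(y,y_{\Gamma})\|_{\mathcal{H}^{1}_{L,0}}\geq\|(y,y_{\Gamma})\|_{\mathcal{H}^{1}_{1,0}}$ for all $(y,y_{\Gamma})\in\mathcal{H}^{1}$, so $c_{P}=c_{P}(1,\Omega)$ works for every $L\in(0,1]$. With this in hand, I would test \eqref{weak3} by $\zeta=\varphi^{L}-\overline{m}_{0}$ and \eqref{weak4} by $\zeta_{\Gamma}=\psi^{L}-\overline{m}_{0}$ and sum, exactly as in \eqref{approuni2}. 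By mass conservation (take $z=z_{\Gamma}=1$ in \eqref{weak1}--\eqref{weak2}), $\overline{m}(\boldsymbol{\varphi}^{L})=\overline{m}_{0}$, so the constant $\overline{m}(\boldsymbol{\mu}^{L})$ drops out and the chemical-potential contribution collapses to $\int_{\Omega}(\mu^{L}-\overline{m}(\boldsymbol{\mu}^{L}))\varphi^{L}\,\mathrm{d}x+\int_{\Gamma}(\theta^{L}-\overline{m}(\boldsymbol{\mu}^{L}))\psi^{L}\,\mathrm{d}S$, which by \eqref{Po3} is bounded by $C\,\|(\mu^{L},\theta^{L})-\overline{m}(\boldsymbol{\mu}^{L})\mathbf{1}\|_{\mathcal{H}^{1}_{L,0}}\|\boldsymbol{\varphi}^{L}\|_{\mathcal{L}^{2}}$; but $\|(\mu^{L},\theta^{L})-\overline{m}(\boldsymbol{\mu}^{L})\mathbf{1}\|_{\mathcal{H}^{1}_{L,0}}^{2}=\|\nabla\mu^{L}\|_{H}^{2}+\|\nabla_{\Gamma}\theta^{L}\|_{H_{\Gamma}}^{2}+\frac{1}{L}\|\theta^{L}-\mu^{L}\|_{H_{\Gamma}}^{2}$, which is bounded in $L^{2}(0,T)$ uniformly in $L$ by \eqref{Luni1}. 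Combining this with \eqref{eq2.8} and with $\mathbf{(A1)}$, $\mathbf{(A3)}$ as in \eqref{approuni4}--\eqref{approuni6} yields $\|\beta(\varphi^{L})\|_{L^{2}(0,T;L^{1}(\Omega))}+\|\beta_{\Gamma}(\psi^{L})\|_{L^{2}(0,T;L^{1}(\Gamma))}\leq C$ with $C$ independent of $L\in(0,1]$.

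From here the remaining estimates follow the usual chain. Testing \eqref{weak3}, \eqref{weak4} with the constant $1$ controls $\int_{\Omega}\mu^{L}\,\mathrm{d}x$ and $\int_{\Gamma}\theta^{L}\,\mathrm{d}S$ in $L^{2}(0,T)$; together with the gradient bounds and the Poincar\'{e}--Wirtinger inequalities in $\Omega$ and on $\Gamma$ this gives $\|\boldsymbol{\mu}^{L}\|_{L^{2}(0,T;\mathcal{H}^{1})}\leq C$. A comparison in \eqref{weak3}, \eqref{weak4} (using $\mathbf{(A1)}$, $\mathbf{(A3)}$ and Young's inequality for convolutions) then upgrades the bound on $(\beta(\varphi^{L}),\beta_{\Gamma}(\psi^{L}))$ to $L^{2}(0,T;H)\times L^{2}(0,T;H_{\Gamma})$. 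Finally, taking the gradient of \eqref{weak3} and the surface gradient of \eqref{weak4}, testing by $\nabla\varphi^{L}$ and $\nabla_{\Gamma}\psi^{L}$ respectively, and using $\beta',\beta_{\Gamma}'\geq\alpha$ from $\mathbf{(A2)}$, the lower bounds $a_{\Omega}\geq a_{\ast}$, $a_{\Gamma}\geq a_{\circledast}$, $\mathbf{(A3)}$ and Young's convolution inequality, one argues exactly as in \eqref{FGuni7}--\eqref{FGuni9}, the coercivity constants $\alpha+a_{\ast}-\gamma_{1}$ and $\alpha+a_{\circledast}-\gamma_{2}$ being positive by $\mathbf{(A3)}$; integrating in time and invoking \eqref{Luni1} and the bound on $\|\boldsymbol{\mu}^{L}\|_{L^{2}(0,T;\mathcal{H}^{1})}$ gives $\|\boldsymbol{\varphi}^{L}\|_{L^{2}(0,T;\mathcal{H}^{1})}\leq C$, whence \eqref{0uni5}.

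The only genuinely delicate point, and the one I expect to be the main obstacle, is that in the test with $\varphi^{L}-\overline{m}_{0}$, $\psi^{L}-\overline{m}_{0}$ a naive estimate of the chemical-potential terms would spend a factor $1/\sqrt{L}$ and blow up as $L\to0$. The resolution combines two facts already noted: mass conservation removes the mean $\overline{m}(\boldsymbol{\mu}^{L})$, reducing these terms to quantities controlled by $\|(\mu^{L},\theta^{L})-\overline{m}(\boldsymbol{\mu}^{L})\mathbf{1}\|_{\mathcal{H}^{1}_{L,0}}$, and the term $\frac{1}{L}\|\theta^{L}-\mu^{L}\|_{H_{\Gamma}}^{2}$ that then appears is precisely the boundary dissipation kept uniformly bounded in \eqref{Luni1} by the energy equality. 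The subsidiary point, easy but necessary, is the uniformity of $c_{P}$ on $(0,1]$, which holds because $\|\cdot\|_{\mathcal{H}^{1}_{L,0}}$ is monotone nondecreasing in $1/L$.
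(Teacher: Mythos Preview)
Your proposal is correct and follows essentially the same route as the paper: both hinge on estimating the chemical-potential term $\int_{\Omega}\mu^{L}(\varphi^{L}-\overline{m}_{0})\,\mathrm{d}x+\int_{\Gamma}\theta^{L}(\psi^{L}-\overline{m}_{0})\,\mathrm{d}S$ by $C\|\boldsymbol{\varphi}^{L}-\overline{m}_{0}\mathbf{1}\|_{\mathcal{L}^{2}}\|\boldsymbol{\mu}^{L}-\overline{m}(\boldsymbol{\mu}^{L})\|_{\mathcal{H}^{1}_{L,0}}$ with an $L$-independent constant, and then repeating \eqref{approuni2}--\eqref{approuni11}. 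The only cosmetic difference is in how that uniform constant is obtained: the paper works at the $\varepsilon$-level and invokes the dual bound $\|\boldsymbol{z}\|_{L,0,*}\leq\widehat{C}\|\boldsymbol{z}\|_{\mathcal{L}^{2}}$ from \cite[Lemma~5.1]{LvWu}, whereas you supply the equivalent primal statement (uniform $c_{P}$ in \eqref{Po3}) via the elementary monotonicity $\|\cdot\|_{\mathcal{H}^{1}_{L,0}}\geq\|\cdot\|_{\mathcal{H}^{1}_{1,0}}$ for $L\in(0,1]$; these are two sides of the same inequality, and your self-contained argument is arguably cleaner.
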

   \begin{proof}
   For $L\in(0,1]$ we can derive a refined estimate of the first line on the right-hand side of  \eqref{approuni2}.
   Observe that the estimate \eqref{approuni1} is uniform
   with respect to $\varepsilon\in(0,\varepsilon^\ast)$ and $L\in(0,+\infty)$.
   Recalling \cite[Lemma 5.1]{LvWu}, we find there exists a constant $\widehat{C}>0$,
   independent of $L\in(0,1]$, such that
   \begin{align}
   	\Vert\boldsymbol{z}\Vert_{L,0,*} \leq\widehat{C}\Vert\boldsymbol{z}\Vert_{\mathcal{L}^{2}},\quad\text{for all }\boldsymbol{z}\in\mathcal{L}_{(0)}^{2}.\label{Lv0905}
   \end{align}
   Thank to \eqref{approuni1} and \eqref{Lv0905}, the first line on the right-hand side of \eqref{approuni2} can be estimated as follows
   \begin{align}
  	&\int_{\Omega}\mu^{L}_{\varepsilon}(\varphi^{L}_{\varepsilon} -\overline{m}_{0})\,\mathrm{d}x+\int_{\Gamma}\theta^{L}_{\varepsilon}(\psi^{L}_{\varepsilon}-\overline{m}_{0})\,\mathrm{d}S\notag\\
  	&\quad=\langle\boldsymbol{\varphi}_{\varepsilon}^{L} -\overline{m}_{0}\boldsymbol{1},\boldsymbol{\mu}_{\varepsilon}^{L}-\overline{m}(\boldsymbol{\mu}_{\varepsilon}^{L})\rangle_{\mathcal{H}_{L,0}^{-1},\mathcal{H}_{L,0}^{1}}\notag\\
  	&\quad\leq\Vert\boldsymbol{\varphi}_{\varepsilon}^{L} -\overline{m}_{0}\boldsymbol{1}\Vert_{L,0,*}\Vert\boldsymbol{\mu}_{\varepsilon}^{L}-\overline{m}(\boldsymbol{\mu}_{\varepsilon}^{L})\Vert_{\mathcal{H}_{L,0}^{1}}\notag\\
  	&\quad\leq \widehat{C}\Vert\boldsymbol{\varphi}_{\varepsilon}^{L} -\overline{m}_{0}\boldsymbol{1}\Vert_{\mathcal{L}^{2}}\Vert\boldsymbol{\mu}_{\varepsilon}^{L}-\overline{m}(\boldsymbol{\mu}_{\varepsilon}^{L})\Vert_{\mathcal{H}_{L,0}^{1}}\notag\\
  	&\quad\leq C\Vert\boldsymbol{\mu}_{\varepsilon}^{L} -\overline{m}(\boldsymbol{\mu}_{\varepsilon}^{L})\Vert_{\mathcal{H}_{L,0}^{1}} ,\notag
  \end{align}
  where the constant $C>0$ is independent of $\varepsilon\in(0,\varepsilon^\ast)$ and $L\in(0,1]$.
  Thus, repeating the process for \eqref{approuni2}--\eqref{approuni11}, and passing to the limit as $\varepsilon\rightarrow0$, we can obtain the uniform estimate \eqref{0uni5}.
   \end{proof}

    \textbf{Proof of Theorems \ref{exist} and \ref{contidependence} with $L=0$.}
   	From \eqref{Luni1}, \eqref{0uni3}, \eqref{0uni5} and Aubin--Lions--Simon lemma (see Lemma \ref{ALS}),
   	we can conclude that there exist functions $\boldsymbol{\varphi}^{0}:=(\varphi^{0}$, $\psi^{0})$, $\boldsymbol{\mu}^{0}:=(\mu^{0}$, $\theta^{0})$ and $\boldsymbol{\xi}^{0}:=(\xi^{0},\xi^{0}_{\Gamma})$ such that
   	\begin{align}
   		\boldsymbol{\varphi}^{L}&\rightarrow\boldsymbol{\varphi}^{0}&&\text{weakly in }H^{1}(0,T;(\mathcal{V}^{1})')\cap L^{2}(0,T;\mathcal{H}^{1}),\label{0conver1}\\
   		& &&\text{strongly in }C([0,T];\mathcal{L}^{2}),\notag\\
   		\mu^{L}&\rightarrow\mu^{0}&&\text{weakly in }L^{2}(0,T;V),\label{0conver2}\\
   		\theta^{L}&\rightarrow\theta^{0}&&\text{weakly in }L^{2}(0,T;V_{\Gamma}),\label{0conver3}\\
   		\theta^{L}-\mu^{L}&\rightarrow0&&\text{strongly in }L^{2}(0,T;H_{\Gamma}),\label{0conver4}\\
   		\beta(\varphi^{L})&\rightarrow\xi^{0}&&\text{weakly in }L^{2}(0,T;H),\label{0conver5}\\
   		\beta_{\Gamma}(\psi^{L})&\rightarrow\xi^{0}_{\Gamma}&&\text{weakly in }L^{2}(0,T;H_{\Gamma}),\label{0conver6}
   	\end{align}
   	as $L\rightarrow0$ in the sense of a subsequence. By the strong convergence \eqref{0conver1} and Lemma \ref{lem-subsequent}, it holds
   	\begin{align*}
   		J\ast\varphi^{L}\to J\ast\varphi^{0}&\qquad\text{strongly in }L^{2}(0,T;V),\\
   		K\circledast\psi^{L}\to K\circledast\psi^{0}&\qquad\text{strongly in }L^{2}(0,T;V_{\Gamma}).
   	\end{align*} 	
   Applying the same argument as \cite{GGG} again, we can conclude
   \begin{align*}
   |\varphi^{0}(x,t)|<1\quad\text{a.e. in } Q_{T},\quad|\psi^{0}(x,t)|<1\quad\text{a.e. on } \Sigma_{T},
   \end{align*}
   as well as
   	\begin{align*}
   	\xi^{0}= \beta(\varphi^{0}),\qquad\xi_{\Gamma}^{0}= \beta_{\Gamma}(\psi^{0}).
   	\end{align*}
   	By \eqref{weak3} and \eqref{weak4}, the following formulations
   	\begin{align}
   	&\int_{\Omega}\mu^{L}\zeta\,\mathrm{d}x =\int_{\Omega}(a_{\Omega}\varphi^{L}-J\ast\varphi^{L}+\beta(\varphi^{L})+\pi(\varphi^{L}))\zeta\,\mathrm{d}x,\label{weak3'}\\
   	&\int_{\Gamma}\theta^{L}\zeta_{\Gamma}\,\mathrm{d}S =\int_{\Gamma}(a_{\Gamma}\psi^{L}-K\circledast\psi^{L}+\beta_{\Gamma}(\psi^{L})+\pi_{\Gamma}(\psi^{L}))\zeta_{\Gamma}\,\mathrm{d}S,\label{weak4'}
   	\end{align}
   hold for almost all $t\in[0,T]$ and for all test functions $\boldsymbol{\zeta}=(\zeta,\zeta_{\Gamma})\in \mathcal{L}^2$.
   	Then, we can pass to the limit as $L\rightarrow0$ in \eqref{weak1}, \eqref{weak2}, \eqref{weak3'} and \eqref{weak4'}
   	to obtain the existence of a weak solution to problem \eqref{model1}--\eqref{psiini} on $[0,T]$ for the limiting case $L=0$. By comparison in \eqref{weak3} and \eqref{weak4}, we see that $\beta(\varphi^0)\in L^2(0,T;V)$ and $\beta_\Gamma(\psi^0)\in L^2(0,T;V_\Gamma)$.
   	The continuous dependence estimate can be established similar to the proof of Theorem \ref{contidependence} for the case $L\in(0,+\infty)$.
   	Finally, in order to establish the energy equality \eqref{energyeq},
   	we need to show the instantaneous regularity of $\partial_t\boldsymbol{\varphi}^{0}$ (cf. \eqref{Tuni3}).
   	This can be achieved as in Step 3 of the proof of Theorem \ref{exist} for the case $L\in(0,+\infty)$. Here, we should take the difference quotient of the weak formulations \eqref{0weak1}--\eqref{weak4} instead of the approximating problem. The details are omitted.
   \hfill $\square$

   \subsection{The case $L\to +\infty$}
   \begin{lemma}
   	\label{Linfuni1}
   	Suppose $L\geq L_{0}$, with $L_{0}\geq1$ being a large constant to be determined. Then, there exists a positive constant $C$, independent of $L\in[L_{0},+\infty)$, such that
   	\begin{align}
   		\Vert \beta(\varphi^{L})\Vert_{L^{2}(0,T;L^{1}(\Omega))}+\Vert \beta_{\Gamma}(\psi^{L})\Vert_{L^{2}(0,T;L^{1}(\Gamma))}\leq C.\label{infuni1}
   	\end{align}
   \end{lemma}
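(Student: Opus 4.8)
The plan is to follow the template of the proof of Lemma \ref{L0uni1}: I redo the estimate at the level of the approximating problem \eqref{appro1}--\eqref{psiappro}, making all constants uniform in both $\varepsilon\in(0,\varepsilon^\ast)$ and $L\geq L_0$, and then let $\varepsilon\to0$. The new feature compared with the regime $L\to0$ is that the inner product of $\mathcal{H}^{-1}_{L,0}$ degenerates as $L\to+\infty$, so I do not work with the generalized mean $\overline{m}_0$; instead I treat the bulk and surface identities for the chemical potentials separately and subtract from $\varphi^L_\varepsilon$ and $\psi^L_\varepsilon$ their own spatial means. The price is that these means are no longer conserved in time, and the crux of the argument is to quantify their drift and show it is harmless once $L$ is large.

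First I would establish the drift estimate. Testing \eqref{approweak1} with $z=1$ and \eqref{approweak2} with $z_\Gamma=1$ yields
$$|\Omega|\,\frac{\mathrm{d}}{\mathrm{d}t}\langle\varphi^L_\varepsilon\rangle_\Omega=\frac{1}{L}\int_\Gamma(\theta^L_\varepsilon-\mu^L_\varepsilon)\,\mathrm{d}S,\qquad |\Gamma|\,\frac{\mathrm{d}}{\mathrm{d}t}\langle\psi^L_\varepsilon\rangle_\Gamma=-\frac{1}{L}\int_\Gamma(\theta^L_\varepsilon-\mu^L_\varepsilon)\,\mathrm{d}S.$$
Integrating in time, using the Cauchy--Schwarz inequality and the bound $\tfrac{1}{\sqrt L}\|\theta^L_\varepsilon-\mu^L_\varepsilon\|_{L^2(0,T;H_\Gamma)}\leq C$ from \eqref{approuni1}, which is uniform with respect to $\varepsilon\in(0,\varepsilon^\ast)$ and $L\in(0,+\infty)$, I obtain
$$\big|\langle\varphi^L_\varepsilon(t)\rangle_\Omega-m_{\Omega,0}\big|+\big|\langle\psi^L_\varepsilon(t)\rangle_\Gamma-m_{\Gamma,0}\big|\leq\frac{C}{\sqrt L},\qquad\forall\,t\in[0,T].$$
Since $m_{\Omega,0}=\langle\varphi_0\rangle_\Omega$ and $m_{\Gamma,0}=\langle\psi_0\rangle_\Gamma$ lie in $(-1,1)$ by $\mathbf{(A4)}$, I fix $\rho_0\in(0,1)$ with $|m_{\Omega,0}|,|m_{\Gamma,0}|\leq1-2\rho_0$ and then choose $L_0\geq1$ so large that $C/\sqrt{L_0}\leq\rho_0$; it follows that $\langle\varphi^L_\varepsilon(t)\rangle_\Omega,\langle\psi^L_\varepsilon(t)\rangle_\Gamma\in[-1+\rho_0,1-\rho_0]$ for all $L\geq L_0$, $t\in[0,T]$ and $\varepsilon\in(0,\varepsilon^\ast)$.

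Then I would argue as in \eqref{approuni2}--\eqref{approuni6}. Writing $m^\varepsilon_\Omega(t)=\langle\varphi^L_\varepsilon(t)\rangle_\Omega$ and $m^\varepsilon_\Gamma(t)=\langle\psi^L_\varepsilon(t)\rangle_\Gamma$, I test \eqref{approweak3} with $\zeta=\varphi^L_\varepsilon-m^\varepsilon_\Omega$ and \eqref{approweak4} with $\zeta_\Gamma=\psi^L_\varepsilon-m^\varepsilon_\Gamma$; by \eqref{eq2.8}, whose constants $\delta_0,c_1$ can be chosen uniformly for a base point lying in $[-1+\rho_0,1-\rho_0]$, the left-hand sides dominate $\delta_0\|\beta_\varepsilon(\varphi^L_\varepsilon)\|_{L^1(\Omega)}-c_1|\Omega|$ and $\delta_0\|\beta_{\Gamma,\varepsilon}(\psi^L_\varepsilon)\|_{L^1(\Gamma)}-c_1|\Gamma|$. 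On the right-hand sides, the chemical-potential terms are rewritten with their means subtracted (legitimate since $\int_\Omega(\varphi^L_\varepsilon-m^\varepsilon_\Omega)\,\mathrm{d}x=0$ and $\int_\Gamma(\psi^L_\varepsilon-m^\varepsilon_\Gamma)\,\mathrm{d}S=0$) and bounded, through the Poincar\'e--Wirtinger inequalities and $\|\boldsymbol{\varphi}^L_\varepsilon\|_{L^\infty(0,T;\mathcal{L}^2)}\leq C$, by $C(\|\nabla\mu^L_\varepsilon\|_H+\|\nabla_\Gamma\theta^L_\varepsilon\|_{H_\Gamma})$; the terms carrying $a_\Omega\varphi^L_\varepsilon$, $J\ast\varphi^L_\varepsilon$, $\pi(\varphi^L_\varepsilon)$ and their surface analogues are bounded by a constant via H\"older's inequality, Young's inequality for convolution and $\mathbf{(A1)}$, $\mathbf{(A3)}$. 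This gives $\|\beta_\varepsilon(\varphi^L_\varepsilon)\|_{L^1(\Omega)}+\|\beta_{\Gamma,\varepsilon}(\psi^L_\varepsilon)\|_{L^1(\Gamma)}\leq C(1+\|\nabla\mu^L_\varepsilon\|_H+\|\nabla_\Gamma\theta^L_\varepsilon\|_{H_\Gamma})$; squaring, integrating over $(0,T)$ and invoking \eqref{approuni1} once more produces the uniform (in $\varepsilon$ and $L\geq L_0$) bound $\|\beta_\varepsilon(\varphi^L_\varepsilon)\|_{L^2(0,T;L^1(\Omega))}+\|\beta_{\Gamma,\varepsilon}(\psi^L_\varepsilon)\|_{L^2(0,T;L^1(\Gamma))}\leq C$. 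Finally, passing to the limit $\varepsilon\to0$ via the a.e. convergences $\beta_\varepsilon(\varphi^L_\varepsilon)\to\beta(\varphi^L)$ and $\beta_{\Gamma,\varepsilon}(\psi^L_\varepsilon)\to\beta_\Gamma(\psi^L)$ (already obtained in the proof of Theorem \ref{exist} for fixed $L$) together with two applications of Fatou's lemma, in space and then in time, yields \eqref{infuni1}. I expect the only genuinely delicate point to be the drift estimate of Step 1: one has to verify that the mass exchanged through the boundary condition \eqref{model5}, which scales like $1/L$, is precisely small enough to keep both spatial means trapped in a fixed compact subinterval of $(-1,1)$ for $L\geq L_0$; everything after that is the standard estimate for singular potentials.
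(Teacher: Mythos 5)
Your proof is correct, but it takes a genuinely different route from the paper's, and it is worth recording the comparison. The two arguments share the key drift estimate $|\langle\varphi^{L}(t)\rangle_{\Omega}-m_{\Omega,0}|+|\langle\psi^{L}(t)\rangle_{\Gamma}-m_{\Gamma,0}|\leq C/\sqrt{L}$ obtained by integrating the mass flux through $\Gamma$ against the uniform bound $L^{-1/2}\|\theta^{L}-\mu^{L}\|_{L^2(0,T;H_\Gamma)}\leq C$; this is precisely the paper's \eqref{infuni8}--\eqref{infuni9} and is what makes a fixed choice of $L_0$ possible. Where you diverge is in the variational setup. The paper tests the evolution equations \eqref{weak1}, \eqref{weak2} with the inverse-Laplacian functions $\mathcal{N}_\Omega(\varphi^{L}-\langle\varphi^{L}\rangle_\Omega)$, $\mathcal{N}_\Gamma(\psi^{L}-\langle\psi^{L}\rangle_\Gamma)$ (see \eqref{infuni2}), which produces three extra terms — $J_1,J_2$ from $\partial_t\varphi^L,\partial_t\psi^L$ and $J_5$ from the $1/L$-coupling — that all have to be controlled via \eqref{0uni1}, \eqref{0uni2} and the trace theorem. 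You instead test only the constitutive relations \eqref{approweak3}, \eqref{approweak4} with the zero-mean functions $\varphi^{L}_\varepsilon-\langle\varphi^{L}_\varepsilon\rangle_\Omega$ and $\psi^{L}_\varepsilon-\langle\psi^{L}_\varepsilon\rangle_\Gamma$, and then bound the resulting terms $\int_\Omega\mu^{L}_\varepsilon(\varphi^{L}_\varepsilon-\langle\varphi^{L}_\varepsilon\rangle_\Omega)\,\mathrm{d}x$ and $\int_\Gamma\theta^{L}_\varepsilon(\psi^{L}_\varepsilon-\langle\psi^{L}_\varepsilon\rangle_\Gamma)\,\mathrm{d}S$ directly via the classical Poincar\'e--Wirtinger inequalities on $\Omega$ and $\Gamma$ \emph{separately}, which, unlike the bilinear-form Poincar\'e inequality \eqref{Po3}, do not degenerate as $L\to+\infty$. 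This removes $J_1,J_2,J_5$ from the ledger entirely and makes the argument structurally parallel to the estimates \eqref{approuni2}--\eqref{approuni6} for the finite-$L$ case, just with separate bulk and surface means in place of $\overline{m}_0$. The remaining difference is bookkeeping around the moving mean inside the monotonicity inequality: the paper keeps the fixed base points $\langle\varphi_0\rangle_\Omega,\langle\psi_0\rangle_\Gamma$ in \eqref{L1control1}--\eqref{L1control2} and absorbs the error $|\langle\varphi_0\rangle_\Omega-\langle\varphi^L(t)\rangle_\Omega|\,\|\beta(\varphi^L)\|_{L^1(\Omega)}$ (see \eqref{infuni7}--\eqref{infuni11}), whereas you use the time-dependent base point and invoke \eqref{eq2.8} uniformly over the compact subinterval $[-1+\rho_0,1-\rho_0]$. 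That uniformity over compacta is a standard but unstated extension of \eqref{eq2.8} (the constants $\delta_0,c_1$ vary continuously with $s_0$) and should be flagged explicitly if this version of the proof is used. Finally, you run the whole argument at the $\varepsilon$-level and close with Fatou (twice), while the paper works directly with the weak solution using \eqref{L1control1}--\eqref{L1control2}, which is licit since $|\varphi^L|<1$ a.e.; both routes are valid.
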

   \begin{proof}
    Taking $z=\mathcal{N}_{\Omega}(\varphi^{L}-\langle\varphi^{L}\rangle_{\Omega})$ in \eqref{weak1} and $z=\mathcal{N}_{\Gamma}(\psi^{L}-\langle\psi^{L}\rangle_{\Gamma})$ in \eqref{weak2}, we obtain
   	\begin{align}
   	0&=\langle\partial_{t}\varphi^{L},\mathcal{N}_{\Omega}(\varphi^{L}-\langle\varphi^{L}\rangle_{\Omega})\rangle_{V',V}+\langle\partial_{t}\psi^{L},\mathcal{N}_{\Gamma}(\psi^{L}-\langle\psi^{L}\rangle_{\Gamma})\rangle_{V_{\Gamma}',V_{\Gamma}}\notag\\
   	&\quad+\int_{\Omega}\mu^{L}(\varphi^{L}-\langle\varphi^{L}\rangle_{\Omega})\,\mathrm{d}x+\int_{\Gamma}\theta^{L}(\psi^{L}-\langle\psi^{L}\rangle_{\Gamma})\,\mathrm{d}S\notag\\
   	&\quad-\frac{1}{L}\int_{\Gamma}(\theta^{L}-\mu^{L})(\mathcal{N}_{\Omega}(\varphi^{L}-\langle\varphi^{L}\rangle_{\Omega})-\mathcal{N}_{\Gamma}(\psi^{L}-\langle\psi^{L}\rangle_{\Gamma}))\,\mathrm{d}S\notag\\
   	&=\sum_{i=1}^{5}J_{i}.\label{infuni2}
   	\end{align}
   It follows from the definition of $\mathcal{N}_{\Omega}$ and $\mathcal{N}_{\Gamma}$ that
   	\begin{align}
   	|J_{1}|+|J_{2}|&\leq\Vert\partial_{t}\varphi^{L}\Vert_{V'} \Vert\mathcal{N}_{\Omega}(\varphi^{L} -\langle\varphi^{L}\rangle_{\Omega})\Vert_{V} +\Vert\partial_{t}\psi^{L}\Vert_{V_{\Gamma}'}\Vert\mathcal{N}_{\Gamma}(\psi^{L}-\langle\psi^{L}\rangle_{\Gamma})\Vert_{V_{\Gamma}}\notag\\[1mm]
   	&\leq C\Vert\partial_{t}\varphi^{L}\Vert_{V'}\Vert\varphi^{L} -\langle\varphi^{L}\rangle_{\Omega}\Vert_{H} +C\Vert\partial_{t}\psi^{L}\Vert_{V_{\Gamma}'}\Vert\psi^{L} -\langle\psi^{L}\rangle_{\Gamma}\Vert_{H_{\Gamma}}.\label{infuni3}
   	\end{align}
   	Next, from the trace theorem, H\"older's inequality and the definition of $\mathcal{N}_{\Omega}$ and $\mathcal{N}_{\Gamma}$, we obtain
   	\begin{align}
   	|J_{5}|&\leq\frac{1}{L}\Vert\theta^{L}-\mu^{L}\Vert_{H_{\Gamma}} \Vert\mathcal{N}_{\Omega}(\varphi^{L}-\langle\varphi^{L}\rangle_{\Omega})-\mathcal{N}_{\Gamma}(\psi^{L}-\langle\psi^{L}\rangle_{\Gamma})\Vert_{H_{\Gamma}}\notag\\
   	&\leq\frac{C}{L}\Vert\theta^{L}-\mu^{L}\Vert_{H_{\Gamma}} (\Vert\mathcal{N}_{\Omega}(\varphi^{L}-\langle\varphi^{L}\rangle_{\Omega})\Vert_{V}+\Vert\mathcal{N}_{\Gamma}(\psi^{L}-\langle\psi^{L}\rangle_{\Gamma})\Vert_{V_{\Gamma}})\notag\\
   	&\leq\frac{C}{L}\Vert\theta^{L}-\mu^{L}\Vert_{H_{\Gamma}} (\Vert\varphi^{L}-\langle\varphi^{L}\rangle_{\Omega}\Vert_{H} +\Vert\psi^{L}-\langle\psi^{L}\rangle_{\Gamma}\Vert_{H_{\Gamma}}). \label{infuni4}
   	\end{align}
   	Using \eqref{weak3} and \eqref{weak4}, we rewrite $J_{3}+J_{4}$ as follows
   	\begin{align}
   	J_{3}+J_{4}&=\int_{\Omega}(a_{\Omega}\varphi^{L} -J\ast\varphi^{L})(\varphi^{L} -\langle\varphi^{L}\rangle_{\Omega})\,\mathrm{d}x +\int_{\Gamma}(a_{\Gamma}\psi^{L}-K\circledast\psi^{L}) (\psi^{L}-\langle\psi^{L}\rangle_{\Gamma})\,\mathrm{d}S\notag\\
   	&\quad+\int_{\Omega}\beta(\varphi^{L})(\varphi^{L} -\langle\varphi^{L}\rangle_{\Omega})\,\mathrm{d}x +\int_{\Gamma}\beta_{\Gamma}(\psi^{L})(\psi^{L} -\langle\psi^{L}\rangle_{\Gamma})\,\mathrm{d}S\notag\\
   	&\quad+\int_{\Omega}\pi(\varphi^{L})(\varphi^{L} -\langle\varphi^{L}\rangle_{\Omega})\,\mathrm{d}x +\int_{\Gamma}\pi_{\Gamma}(\psi^{L})(\psi^{L} -\langle\psi^{L}\rangle_{\Gamma})\,\mathrm{d}S\notag\\
   	&=\sum_{i=1}^{6}K_{i}.\label{infuni5}
   	\end{align}
   	By H\"older's inequality, Young's inequality for convolution, $\mathbf{(A1)}$ and $\mathbf{(A3)}$, we deduce that
   	\begin{align}
   	&|K_{1}|+|K_{2}|+|K_{5}|+|K_{6}|\notag\\[1mm]
   	&\quad\leq(\Vert a_{\Omega}\varphi^{L}\Vert_{H}+\Vert J\ast\varphi^{L}\Vert_{H})\Vert\varphi^{L}-\langle\varphi^{L}\rangle_{\Omega}\Vert_{H}\notag\\[1mm]
   	&\qquad+(\Vert a_{\Gamma}\psi^{L}\Vert_{H_{\Gamma}}+\Vert K\circledast\psi^{L}\Vert_{H_{\Gamma}})\Vert\psi^{L}-\langle\psi^{L}\rangle_{\Gamma}\Vert_{H_{\Gamma}}\notag\\[1mm]
   	&\qquad+\Vert \pi(\varphi^{L})\Vert_{H}\Vert\varphi^{L}-\langle\varphi^{L}\rangle_{\Omega}\Vert_{H}+\Vert \pi_{\Gamma}(\psi^{L})\Vert_{H_{\Gamma}}\Vert\psi^{L}-\langle\psi^{L}\rangle_{\Gamma}\Vert_{H_{\Gamma}}\notag\\[1mm]
   	&\quad\leq(\Vert a_{\Omega}\Vert_{L^{\infty}(\Omega)}+\Vert J\Vert_{L^{1}(\Omega)})\Vert\varphi^{L}\Vert_{H}\Vert\varphi^{L}-\langle\varphi^{L}\rangle_{\Omega}\Vert_{H}\notag\\[1mm]
   	&\qquad+(\Vert a_{\Gamma}\Vert_{L^{\infty}(\Gamma)}+\Vert K\Vert_{L^{1}(\Gamma)})\Vert\psi^{L}\Vert_{H_{\Gamma}}\Vert\psi^{L}-\langle\psi^{L}\rangle_{\Gamma}\Vert_{H_{\Gamma}}\notag\\[1mm]
   	&\qquad+\Vert \pi(\varphi^{L})\Vert_{H}\Vert\varphi^{L}-\langle\varphi^{L}\rangle_{\Omega}\Vert_{H}+\Vert \pi_{\Gamma}(\psi^{L})\Vert_{H_{\Gamma}}\Vert\psi^{L}-\langle\psi^{L}\rangle_{\Gamma}\Vert_{H_{\Gamma}}\notag\\[1mm]
   	&\quad\leq C(1+\Vert\varphi^{L}\Vert_{H}^{2}+\Vert\psi^{L}\Vert_{H_{\Gamma}}^{2}).\label{infuni6}
   	\end{align}
   	Thanks to $\mathbf{(A2)}$, for any $s_0\in(-1,1)$, there exist positive constants $\widetilde{\delta}_0$ and $\widetilde{c}_1$ such that (see \cite{MZ04,GMS09})
   	\begin{align}
   	&\widetilde{\delta}_0|\beta(s)|\leq \beta(s)(s-s_0)+\widetilde{c}_1,\quad\forall\,s\in(-1,1),\label{L1control1}\\
   	&\widetilde{\delta}_0|\beta_\Gamma(s)|\leq \beta_\Gamma(s)(s-s_0)+\widetilde{c}_1,\quad\forall\,s\in(-1,1).\label{L1control2}
   	\end{align}
   	Using \eqref{L1control1} and \eqref{L1control2}, we can control  $K_{3}+K_{4}$ as follows
   	\begin{align}
   	K_{3}+K_{4}&=\int_{\Omega}\beta(\varphi^{L})(\varphi^{L} -\langle\varphi_{0}\rangle_{\Omega})\,\mathrm{d}x +\int_{\Gamma}\beta_{\Gamma}(\psi^{L})(\psi^{L} -\langle\psi_{0}\rangle_{\Gamma})\,\mathrm{d}S\notag\\
   &\quad+\int_{\Omega}\beta(\varphi^L)(\langle\varphi_{0}\rangle_{\Omega} -\langle\varphi^{L}\rangle_{\Omega})\,\mathrm{d}x+\int_{\Gamma} \beta_{\Gamma}(\psi^L)(\langle\psi_{0}\rangle_{\Gamma} -\langle\psi^{L}\rangle_{\Gamma})\,\mathrm{d}S\notag\\
   	&\geq \widetilde{\delta}_{0}\Big(\int_{\Omega}|\beta(\varphi^{L})| \,\mathrm{d}x+\int_{\Gamma}|\beta_{\Gamma}(\psi^{L})|\,\mathrm{d}S\Big) -\widetilde{c}_{1}(|\Omega|+|\Gamma|)\notag\\
   	&\quad-|\langle\varphi_{0}\rangle_{\Omega}-\langle\varphi^{L} \rangle_{\Omega}|\int_{\Omega}|\beta(\varphi^{L})|\,\mathrm{d}x\notag\\
   	&\quad-|\langle\psi_{0}\rangle_{\Gamma} -\langle\psi^{L}\rangle_{\Gamma}|\int_{\Gamma}|\beta_{\Gamma}(\psi^{L})| \,\mathrm{d}S.\label{infuni7}
   	\end{align}
   	It follows from \eqref{weak1} and \eqref{weak2} that
   	\begin{align}
   		|\langle\varphi_{0}\rangle_{\Omega} -\langle\varphi^{L}(t)\rangle_{\Omega}|
   		&=\frac{1}{|\Omega|}\left|\int_0^t\langle \partial_t\varphi^{L}(s),1\rangle_{V',V}\,\mathrm{d}s\right|
   		\notag\\
   		&\leq \frac{1}{|\Omega|}\frac{1}{L}\int_{0}^{t}\|\mu^L(s) -\theta^L(s)\|_{L^1(\Gamma)}\,\mathrm{d}s
   		\notag\\
   		&\leq\frac{|\Gamma|^{1/2}|T|^{1/2}}{|\Omega|}\frac{1}{\sqrt{L}} \left(\frac{1}{\sqrt{L}}\Vert\mu^L-\theta^L\Vert_{L^{2}(0,T;H_{\Gamma})} \right)
   		\notag\\
   		&\leq\frac{C'|\Gamma|^{1/2}|T|^{1/2}}{|\Omega|}\frac{1}{\sqrt{L}}, \quad \forall\, t\in [0,T],
   		\label{infuni8}
   	\end{align}
   	and in a similar manner,
   	\begin{align}
   		|\langle\psi_{0}\rangle_{\Gamma}-\langle\psi^{L}(t)\rangle_{\Gamma}|
   		\leq\frac{C''|T|^{1/2}}{|\Gamma|^{1/2}}\frac{1}{\sqrt{L}},\quad \forall\, t\in [0,T],
   		\label{infuni9}
   	\end{align}
   	where the constants $C',C''>0$ in \eqref{infuni8}, \eqref{infuni9} are independent of $L$.
   	Then, there exists some $L_{0}\geq1$ sufficiently large such that
   	\begin{align}
   	\frac{C'|\Gamma|^{1/2}|T|^{1/2}}{|\Omega|}\frac{1}{\sqrt{L}} \leq\frac{\widetilde{\delta}_{0}}{2},\quad\frac{C''|T|^{1/2}}{|\Gamma|^{1/2}} \frac{1}{\sqrt{L}}\leq\frac{\widetilde{\delta}_{0}}{2},\qquad\forall\,L\geq L_{0}.\label{infuni10}
   	\end{align}
   	Combining \eqref{infuni7}--\eqref{infuni10}, we can conclude that
   	\begin{align}
   	K_{3}+K_{4}\geq\frac{\widetilde{\delta}_{0}}{2} \Big(\int_{\Omega}|\beta(\varphi^{L})|\,\mathrm{d}x +\int_{\Gamma}|\beta_{\Gamma}(\psi^{L})|\,\mathrm{d}S\Big) -\widetilde{c}_{1}(|\Omega|+|\Gamma|).\label{infuni11}
   	\end{align}
   	In summary, from the estimates \eqref{infuni2}--\eqref{infuni5}, \eqref{infuni11}
   	as well as \eqref{Luni1}--\eqref{0uni2}, we can conclude \eqref{infuni1}.
   \end{proof}

   \begin{lemma}
   	Suppose $L\in[L_{0},+\infty)$. Then there exists a positive constant $C$,
   	independent of $L\in[L_{0},+\infty)$, such that
   	\begin{align}
    &\Vert\boldsymbol{\varphi}^{L}\Vert_{L^{\infty}(0,T;\mathcal{L}^{2})} +\Vert\boldsymbol{\varphi}^{L}\Vert_{L^{2}(0,T;\mathcal{H}^{1})} +\Vert\boldsymbol{\mu}^{L}\Vert_{L^{2}(0,T;\mathcal{H}^{1})}+\Vert \beta(\varphi^{L})\Vert_{L^{2}(0,T;H)}+\Vert \beta_{\Gamma}(\psi^{L})\Vert_{L^{2}(0,T;H_{\Gamma})}\leq C.\label{infuni12}
   	\end{align}
   \end{lemma}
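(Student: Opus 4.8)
The plan is to transcribe, with uniformity now tracked in $L\in[L_{0},+\infty)$ in place of the Yosida parameter $\varepsilon$, the chain of a priori estimates leading to \eqref{approuni8}, \eqref{approuni10} and \eqref{approuni11} in Step~1 of the proof of Theorem~\ref{exist} (compare also the $L\to0$ counterpart, Lemma~\ref{L0uni1}). Two facts are already at hand: \eqref{Luni1} gives $\|\boldsymbol{\varphi}^{L}\|_{L^{\infty}(0,T;\mathcal{L}^{2})}+\|\nabla\mu^{L}\|_{L^{2}(0,T;H)}+\|\nabla_{\Gamma}\theta^{L}\|_{L^{2}(0,T;H_{\Gamma})}\leq C$ and the harmless $\tfrac{1}{\sqrt{L}}\|\theta^{L}-\mu^{L}\|_{L^{2}(0,T;H_{\Gamma})}\leq C$, uniformly in $L\in(0,+\infty)$; and Lemma~\ref{Linfuni1} gives $\|\beta(\varphi^{L})\|_{L^{2}(0,T;L^{1}(\Omega))}+\|\beta_{\Gamma}(\psi^{L})\|_{L^{2}(0,T;L^{1}(\Gamma))}\leq C$ uniformly for $L\geq L_{0}$. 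In particular the $L^{\infty}(0,T;\mathcal{L}^{2})$-part of \eqref{infuni12} is already in \eqref{Luni1}, so only the $\mathcal{H}^{1}$-bounds on $\boldsymbol{\mu}^{L}$ and $\boldsymbol{\varphi}^{L}$ and the $L^{2}(0,T;\mathcal{L}^{2})$-bounds on $\beta(\varphi^{L}),\beta_{\Gamma}(\psi^{L})$ remain. Since $\boldsymbol{\varphi}^{L}$ is a genuine weak solution (so that $\mu^{L},\beta(\varphi^{L})\in L^{2}(0,T;V)$ and $\theta^{L},\beta_{\Gamma}(\psi^{L})\in L^{2}(0,T;V_{\Gamma})$), all the tests below can be carried out directly, without returning to the Faedo--Galerkin or Yosida levels.

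First I would control the means of the chemical potentials: taking $\zeta=1$ in \eqref{weak3} and $\zeta_{\Gamma}=1$ in \eqref{weak4} bounds $|\langle\mu^{L}\rangle_{\Omega}|+|\langle\theta^{L}\rangle_{\Gamma}|$ by $\|a_{\Omega}\varphi^{L}\|_{L^{1}(\Omega)}+\|J\ast\varphi^{L}\|_{L^{1}(\Omega)}+\|\beta(\varphi^{L})\|_{L^{1}(\Omega)}+\|\pi(\varphi^{L})\|_{L^{1}(\Omega)}$ plus the analogous boundary quantities; by \eqref{Luni1}, Lemma~\ref{Linfuni1}, $\mathbf{(A1)}$ and $\mathbf{(A3)}$ (using $|\pi(s)|\leq|\pi(0)|+\gamma_{1}|s|$ and Young's inequality for convolution) each of them lies in $L^{2}(0,T)$ with a bound independent of $L\geq L_{0}$. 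Together with the gradient bounds of \eqref{Luni1} and the Poincar\'e--Wirtinger inequalities this gives $\|\boldsymbol{\mu}^{L}\|_{L^{2}(0,T;\mathcal{H}^{1})}\leq C$. A comparison in the pointwise identities \eqref{weak3}--\eqref{weak4}, i.e. $\beta(\varphi^{L})=\mu^{L}-a_{\Omega}\varphi^{L}+J\ast\varphi^{L}-\pi(\varphi^{L})$ and its boundary analogue, then upgrades the $L^{1}$-control of $\beta(\varphi^{L}),\beta_{\Gamma}(\psi^{L})$ to $\|\beta(\varphi^{L})\|_{L^{2}(0,T;H)}+\|\beta_{\Gamma}(\psi^{L})\|_{L^{2}(0,T;H_{\Gamma})}\leq C$.

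For the last bound, I would differentiate the identity \eqref{weak3} in space (legitimate since $\mu^{L},\varphi^{L},\beta(\varphi^{L})\in L^{2}(0,T;V)$, $a_{\Omega}\in W^{1,\infty}(\Omega)$, and $\nabla(J\ast\varphi^{L})=(\nabla J)\ast\varphi^{L}$) and test with $\nabla\varphi^{L}$, which gives, as in \eqref{FGuni7},
\[
\int_{\Omega}\bigl(a_{\Omega}+\beta'(\varphi^{L})+\pi'(\varphi^{L})\bigr)|\nabla\varphi^{L}|^{2}\,\mathrm{d}x=\int_{\Omega}\nabla\mu^{L}\cdot\nabla\varphi^{L}\,\mathrm{d}x-\int_{\Omega}\varphi^{L}\,\nabla a_{\Omega}\cdot\nabla\varphi^{L}\,\mathrm{d}x+\int_{\Omega}\bigl((\nabla J)\ast\varphi^{L}\bigr)\cdot\nabla\varphi^{L}\,\mathrm{d}x .
\]
Since $\beta'\geq\alpha$ by $\mathbf{(A2)}$, $\pi'\geq-\gamma_{1}$, $a_{\Omega}\geq a_{\ast}$ and $a_{\ast}+\alpha-\gamma_{1}>0$ by $\mathbf{(A3)}$, the left-hand side dominates $(a_{\ast}+\alpha-\gamma_{1})\|\nabla\varphi^{L}\|_{H}^{2}$, while the right-hand side is absorbed through Young's inequality, Young's inequality for convolution, \eqref{Luni1} and the bound on $\|\nabla\mu^{L}\|_{L^{2}(0,T;H)}$; the same computation on $\Gamma$ via \eqref{weak4} controls $\|\nabla_{\Gamma}\psi^{L}\|_{L^{2}(0,T;H_{\Gamma})}$. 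Adding the $L^{\infty}(0,T;\mathcal{L}^{2})$-bound then yields $\|\boldsymbol{\varphi}^{L}\|_{L^{2}(0,T;\mathcal{H}^{1})}\leq C$, and collecting everything gives \eqref{infuni12}.

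The one point to watch — which is exactly the point already settled in Lemma~\ref{Linfuni1} by the choice of $L_{0}$ — is that here, unlike the case $L\in(0,+\infty)$, the total mass is not conserved, so one must use that the separate means $\langle\varphi^{L}(t)\rangle_{\Omega}$, $\langle\psi^{L}(t)\rangle_{\Gamma}$ stay in fixed compact subsets of $(-1,1)$ uniformly for $L\geq L_{0}$ (a consequence of the drift estimates \eqref{infuni8}--\eqref{infuni9}). Granting this, every constant appearing above — Poincar\'e, trace, Young's, convolution — is independent of $L$, and the only $L$-dependent quantity $\tfrac{1}{\sqrt{L}}\|\theta^{L}-\mu^{L}\|$ works in our favour; so no genuine obstacle arises, and the estimate is a direct transcription of Step~1 of the proof of Theorem~\ref{exist}.
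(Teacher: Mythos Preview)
Your proposal is correct and follows essentially the same route as the paper: the paper's proof simply says to reproduce the chain \eqref{approuni7}--\eqref{approuni11} (mean control of $\mu,\theta$ via $\zeta=1,\zeta_\Gamma=1$; Poincar\'e to full $\mathcal{H}^1$; comparison in \eqref{weak3}--\eqref{weak4} for $\beta,\beta_\Gamma$ in $\mathcal{L}^2$; gradient test \`a la \eqref{FGuni7}--\eqref{FGuni8} for $\boldsymbol{\varphi}^L$ in $\mathcal{H}^1$), with Lemma~\ref{Linfuni1} playing the role of \eqref{approuni6}. The one cosmetic difference is that the paper carries these estimates at the $\varepsilon$-level and then passes to the limit (so that $\beta_\varepsilon$ is globally Lipschitz and the gradient identity is automatic), whereas you work directly on the limit solution; since $\beta(\varphi^L)\in L^2(0,T;V)$ and $|\varphi^L|<1$ a.e.\ the chain rule $\nabla\beta(\varphi^L)=\beta'(\varphi^L)\nabla\varphi^L$ is available, so your direct computation is legitimate.
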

   \begin{proof}
   With the aid of Lemma \ref{Linfuni1}, we can conclude \eqref{infuni12} by proceeding similarly as for \eqref{approuni7}--\eqref{approuni11} and then passing to the limit as $\varepsilon\rightarrow0$.
   \end{proof}

    \textbf{Proof of Theorems \ref{exist} and \ref{contidependence} with $L=+\infty$.}
    Thanks to the uniform estimates \eqref{Luni1}, \eqref{0uni1}, \eqref{0uni2}, \eqref{0uni3} and \eqref{infuni12} with respect to $L\in[L_0,+\infty)$,
   the existence of a weak solution for the case $L=+\infty$ can be obtained by taking the limit $L\to +\infty$ (in the sense of a subsequence), similar to the proof of Theorem \ref{exist} for the case $L=0$.
  Concerning the continuous dependence estimate \eqref{0contiesti}, let $\boldsymbol{\varphi}_i$ be the weak solution to problem \eqref{model1}--\eqref{psiini}
   corresponding to initial data $\boldsymbol{\varphi}_{0,i}$ $(i\in\{1,2\})$,
   with $\langle\varphi_{0,1}\rangle_\Omega=\langle\varphi_{0,2}\rangle_\Omega =m_{\Omega,0}$
   and $\langle\psi_{0,1}\rangle_\Gamma=\langle\psi_{0,2}\rangle_\Gamma =m_{\Gamma,0}$.
   Denote $\boldsymbol{\varphi}_0=\boldsymbol{\varphi}_{0,1}- \boldsymbol{\varphi}_{0,2}$
   and $\boldsymbol{\varphi}=(\varphi,\psi)=\boldsymbol{\varphi}_1- \boldsymbol{\varphi}_2$.
   Then it holds
   \begin{align}
   	&\langle\partial_{t}\varphi,z\rangle_{V',V} =-\int_{\Omega}\nabla\mu\cdot\nabla z\,\mathrm{d}x,\quad\forall\,z\in V,\label{diffweak1}\\
   	&\langle\partial_{t}\psi,z_{\Gamma}\rangle_{V_{\Gamma}', V_{\Gamma}}=-\int_{\Gamma}\nabla_{\Gamma}\theta\cdot\nabla_{\Gamma} z_{\Gamma}\,\mathrm{d}S,\quad\forall\,z_\Gamma \in V_\Gamma,\label{diffweak2}
   \end{align}
   for almost all $t\in(0,T)$, with
   \begin{align}
   	&\mu:=\mu_{1}-\mu_{2}=a_{\Omega}\varphi-J\ast\varphi +\beta(\varphi_{1})-\beta(\varphi_{2})+\pi(\varphi_{1}) -\pi(\varphi_{2}),\quad\text{ a.e. in }  Q_T,\label{diffweak3}\\[1mm]
   	&\theta:=\theta_{1}-\theta_{2}=a_{\Gamma}\psi-K\circledast\psi +\beta_{\Gamma}(\psi_{1})-\beta_{\Gamma}(\psi_{2}) +\pi_{\Gamma}(\psi_{1})-\pi_{\Gamma}(\psi_{2}),\quad\text{ a.e. on } \Sigma_{T}.\label{diffweak4}
   \end{align}
   Since $\langle\varphi\rangle_\Omega=0$ and $\langle\psi\rangle_\Gamma=0$,
   we can take $z=\mathcal{N}_\Omega\varphi$ in \eqref{diffweak1}
   and $z_\Gamma=\mathcal{N}_\Gamma\psi$ in \eqref{diffweak2}, and then  obtain
   \begin{align*}
   0=\frac{1}{2}\frac{\mathrm{d}}{\mathrm{d}t}\|\varphi\|_{V_0^\ast}^2+ \frac{1}{2}\frac{\mathrm{d}}{\mathrm{d}t}\|\psi\|_{V_{\Gamma,0}^\ast}^2 +\int_\Omega\mu\varphi\,\mathrm{d}x+\int_\Gamma\theta\psi\,\mathrm{d}S.
   \end{align*}
   Keeping \eqref{diffweak3} and \eqref{diffweak4} in mind,
   by a similar procedure as we did for the case $L\in(0,+\infty)$,
   we can conclude continuous dependence estimate \eqref{0contiesti}.
   Finally, the energy equality can be proven in a similar way as for the case $L=0$.
   The details are omitted.
  \hfill $\square$

    \begin{remark}\rm
   	Since the subsystems for $(\varphi,\mu)$ and $(\psi,\theta)$ are completely decoupled in the case $L=+\infty$,
   	we can solve the two systems \eqref{infch1}--\eqref{infch4} and \eqref{surch1}--\eqref{surch3} separately,
   	by applying the Yosida approximation and the Faedo--Galerkin scheme in the bulk and on the boundary.
   	In particular, concerning the subsystem \eqref{infch1}--\eqref{infch4}, we refer to \cite{GGG} for more details.
   \end{remark}

   \subsection{Convergence rate of asymptotic limits}

   The last part of this section is devoted to prove Theorem \ref{weak-convergence}
   on the convergence rate of weak solutions $(\boldsymbol{\varphi}^{L},\boldsymbol{\mu}^{L})$.
   \smallskip

    \textbf{Proof of Theorem \ref{weak-convergence}.}
   We first deal with the case as $L\to0$. Using the weak formulations \eqref{weak1}, \eqref{weak2}, \eqref{0weak1}, we deduce that
   \begin{align}
   \langle\partial_{t}(\boldsymbol{\varphi}^{L}-\boldsymbol{\varphi}^{0}),\boldsymbol{z}\rangle_{(\mathcal{V}^{1})',\mathcal{V}^{1}}=-\int_{\Omega}\nabla(\mu^{L}-\mu^{0})\cdot\nabla z\,\mathrm{d}x-\int_{\Gamma}\nabla_{\Gamma}(\theta^{L}-\theta^{0})\cdot\nabla_{\Gamma}z_{\Gamma}\,\mathrm{d}S,\label{0rate1}
   \end{align}
   for almost all $t\in[0,T]$ and for all $\boldsymbol{z}=(z,z_\Gamma)\in \mathcal{V}^{1}$.
   Thanks to \eqref{weak3}, \eqref{weak4}, it holds
   \begin{align}
    &\mu^{L}-\mu^{0}=a_{\Omega}(\varphi^{L}-\varphi^{0})-J\ast(\varphi^{L}-\varphi^{0})+\beta(\varphi^{L})-\beta(\varphi^{0})+\pi(\varphi^{L})-\pi(\varphi^{0}),\quad\text{a.e. in }Q_{T},\label{0rate2}\\[1mm]
    &\theta^{L}-\theta^{0}=a_{\Gamma}(\psi^{L}-\psi^{0})-K\circledast(\psi^{L}-\psi^{0})+\beta_{\Gamma}(\psi^{L})-\beta_{\Gamma}(\psi^{0})+\pi_{\Gamma}(\psi^{L})-\pi_{\Gamma}(\psi^{0}),\quad\text{a.e. on }\Sigma_{T}.\label{0rate3}
   \end{align}
   Since $\overline{m}(\boldsymbol{\varphi}^{L}-\boldsymbol{\varphi}^{0})=0$,
   we can take $\boldsymbol{z}=\mathfrak{S}^{0}(\boldsymbol{\varphi}^{L} -\boldsymbol{\varphi}^{0})$
   in \eqref{0rate1} and then infer from \eqref{0rate2}, \eqref{0rate3} that
   \begin{align}
   	0&=\frac{1}{2}\frac{\mathrm{d}}{\mathrm{d}t}\|\boldsymbol{\varphi}^{L} -\boldsymbol{\varphi}^{0}\|_{0,0,\ast}^{2}+\int_{\Omega}(\mu^{L}-\mu^{0}) (\varphi^{L}-\varphi^{0}) \,\mathrm{d}x+\int_{\Gamma}(\theta^{L}-\theta^{0})(\psi^{L}-\psi^{0}) \,\mathrm{d}S\notag\\
   	&\quad-\int_{\Gamma}(\theta^{L}-\mu^{L})\partial_{\mathbf{n}} \mathfrak{S}^{0}_{\Omega}(\boldsymbol{\varphi}^{L}-\boldsymbol{\varphi}^{0}) \,\mathrm{d}S\notag\\
   	&=\frac{1}{2}\frac{\mathrm{d}}{\mathrm{d}t}\|\boldsymbol{\varphi}^{L} -\boldsymbol{\varphi}^{0}\|_{0,0,\ast}^{2}+\int_{\Omega}a_{\Omega}(\varphi^{L} -\varphi^{0})^{2}\,\mathrm{d}x+\int_{\Omega}(\beta(\varphi^{L}) -\beta(\varphi^{0}))(\varphi^{L}-\varphi^{0})\,\mathrm{d}x\notag\\
   	&\quad+\int_{\Gamma}a_{\Gamma}(\psi^{L}-\psi^{0})^{2}\,\mathrm{d}S +\int_{\Gamma}(\beta_{\Gamma}(\psi^{L})-\beta_{\Gamma}(\psi^{0})) (\psi^{L}-\psi^{0})\,\mathrm{d}S\notag\\
   	&\quad+\int_{\Omega}(\pi(\varphi^{L})-\pi(\varphi^{0})) (\varphi^{L}-\varphi^{0})\,\mathrm{d}x+\int_{\Gamma} (\pi_{\Gamma}(\psi^{L})-\pi_{\Gamma}(\psi^{0}))(\psi^{L}-\psi^{0} )\,\mathrm{d}S\notag\\
   	&\quad-\int_{\Omega}(J\ast(\varphi^{L}-\varphi^{0})) (\varphi^{L}-\varphi^{0})\,\mathrm{d}x-\int_{\Gamma} (K\circledast(\psi^{L}-\psi^{0}))(\psi^{L}-\psi^{0}) \,\mathrm{d}S\notag\\
   	&\quad-\int_{\Gamma}(\theta^{L}-\mu^{L})\partial_{\mathbf{n}} \mathfrak{S}^{0}_{\Omega}(\boldsymbol{\varphi}^{L}-\boldsymbol{\varphi}^{0}) \,\mathrm{d}S\notag\\
   	&\geq\frac{1}{2}\frac{\mathrm{d}}{\mathrm{d}t}\|\boldsymbol{\varphi}^{L} -\boldsymbol{\varphi}^{0}\|_{0,0,\ast}^{2}+(a_{\ast} +\alpha-\gamma_{1})\|\varphi^{L}-\varphi^{0}\|_{H}^{2} +(a_{\circledast}+\alpha-\gamma_{2})\|\psi^{L}-\psi^{0}\|_{H_{\Gamma}}^{2} \notag\\
   &\quad-\int_{\Omega}(J\ast(\varphi^{L}-\varphi^{0}))(\varphi^{L} -\varphi^{0})\,\mathrm{d}x-\int_{\Gamma}(K\circledast(\psi^{L}-\psi^{0}))(\psi^{L}-\psi^{0})\,\mathrm{d}S\notag\\
   	&\quad-\int_{\Gamma}(\theta^{L}-\mu^{L})\partial_{\mathbf{n}} \mathfrak{S}^{0}_{\Omega}(\boldsymbol{\varphi}^{L}-\boldsymbol{\varphi}^{0}) \,\mathrm{d}S.\label{0rate4}
   \end{align}
   By similar calculations as we did in \eqref{error10}--\eqref{error11}, we find
   \begin{align}
   	&\int_{\Omega}(J\ast(\varphi^{L}-\varphi^{0})) (\varphi^{L}-\varphi^{0})\,\mathrm{d}x +\int_{\Gamma}(K\circledast(\psi^{L}-\psi^{0})) (\psi^{L}-\psi^{0})\,\mathrm{d}S\notag\\
   	&\quad\leq(\epsilon\|\mathbb{J}\|_{\mathcal{B}(\mathcal{L}^{2})}^{2} +\|\mathbb{J}-\mathbb{J}_{N}\|_{\mathcal{B}(\mathcal{L}^{2})}) \|\boldsymbol{\varphi}^{L}-\boldsymbol{\varphi}^{0}\|_{\mathcal{L}^{2}}^{2} +C(\epsilon)C(N)\|\boldsymbol{\varphi}^{L}-\boldsymbol{\varphi}^{0}\|_{0,0,\ast}^{2}.\label{0rate5}
   \end{align}
   Using H\"older's inequality and the trace theorem, we infer from \eqref{Luni1} that
   \begin{align}
    &\Big|\int_{\Gamma}(\theta^{L}-\mu^{L})\partial_{\mathbf{n}}\mathfrak{S}^{0}_{\Omega}(\boldsymbol{\varphi}^{L}-\boldsymbol{\varphi}^{0})\,\mathrm{d}S\Big|\notag\\
    &\quad\leq\|\theta^{L}-\mu^{L}\|_{H_{\Gamma}}\|\partial_{\mathbf{n}}\mathfrak{S}^{0}_{\Omega}(\boldsymbol{\varphi}^{L}-\boldsymbol{\varphi}^{0})\|_{H_{\Gamma}}\notag\\[1mm]
    &\quad\leq C\|\theta^{L}-\mu^{L}\|_{H_{\Gamma}} \|\mathfrak{S}^{0}_{\Omega}(\boldsymbol{\varphi}^{L}-\boldsymbol{\varphi}^{0})\|_{H^{2}(\Omega)}\notag\\[1mm]
    &\quad\leq C\|\theta^{L}-\mu^{L}\|_{H_{\Gamma}} \|\boldsymbol{\varphi}^{L}-\boldsymbol{\varphi}^{0}\|_{\mathcal{L}^{2}}\notag\\[1mm]
    &\quad\leq C(\widetilde{\epsilon})\|\theta^{L}-\mu^{L}\|_{H_{\Gamma}}^{2} +\widetilde{\epsilon}\|\boldsymbol{\varphi}^{L} -\boldsymbol{\varphi}^{0}\|_{\mathcal{L}^{2}}^{2}.\label{0rate6}
   \end{align}
   Taking $N\in\mathbb{Z}^{+}$ sufficiently large,
   then $\epsilon>0$ sufficiently small in \eqref{0rate5} and $\widetilde{\epsilon}>0$ sufficiently small in \eqref{0rate6}, we get
   \begin{align*}
   &a_{\ast}+\alpha-\gamma_{1}-\epsilon\|\mathbb{J}\|_{\mathcal{B}(\mathcal{L}^{2})}^{2}-\|\mathbb{J}-\mathbb{J}_{N}\|_{\mathcal{B}(\mathcal{L}^{2})}-\widetilde{\epsilon}>0,\\
   &a_{\circledast}+\alpha-\gamma_{2}-\epsilon\|\mathbb{J}\|_{\mathcal{B} (\mathcal{L}^{2})}^{2}-\|\mathbb{J}-\mathbb{J}_{N}\|_{\mathcal{B} (\mathcal{L}^{2})}-\widetilde{\epsilon}>0.
   \end{align*}
   Then it follows that
   \begin{align*}
   \frac{\mathrm{d}}{\mathrm{d}t}\|\boldsymbol{\varphi}^{L} -\boldsymbol{\varphi}^{0}\|_{0,0,\ast}^{2}+C_{5}\|\boldsymbol{\varphi}^{L} -\boldsymbol{\varphi}^{0}\|_{\mathcal{L}^{2}}^{2}\leq C_{6}\|\boldsymbol{\varphi}^{L}-\boldsymbol{\varphi}^{0}\|_{0,0,\ast}^{2} +C_{7}\|\theta^{L}-\mu^{L}\|_{H_{\Gamma}}^{2}.
   \end{align*}
   Applying Gronwall's inequality and using \eqref{Luni1}, we can conclude \eqref{to0rate}.

   Let us now turn to the case as $L\to+\infty$. From \eqref{weak1} and \eqref{weak3}, we obtain
   \begin{align}
   	&\langle\partial_{t}(\varphi^{L}-\varphi^{\infty}), z\rangle_{V',V}=-\int_{\Omega}\nabla(\mu^{L}-\mu^{\infty})\cdot\nabla z\,\mathrm{d}x+\frac{1}{L}\int_{\Gamma}(\theta^{L}-\mu^{L})z \,\mathrm{d}S\label{infrate1}
   	\end{align}
   for almost all $t\in[0,T]$ and for all $z\in V$, and
   	\begin{align}
   	&\mu^{L}-\mu^{\infty}=a_{\Omega}(\varphi^{L}-\varphi^{\infty}) -J\ast(\varphi^{L}-\varphi^{\infty})+\beta(\varphi^{L}) -\beta(\varphi^{\infty})+\pi(\varphi^{L})-\pi(\varphi^{\infty}), \quad\text{a.e. in }Q_{T}.\label{infrate2}
   \end{align}
   Taking $z=\mathcal{N}_{\Omega}(\varphi^{L}-\varphi^{\infty} -\langle\varphi^{L}-\varphi^{\infty}\rangle_{\Omega})$ in \eqref{infrate1} yields
   \begin{align}
   0&=\frac{1}{2}\frac{\mathrm{d}}{\mathrm{d}t}\|\varphi^{L} -\varphi^{\infty}-\langle\varphi^{L}-\varphi^{\infty}\rangle_{\Omega}\|_{V_{0}^{\ast}}^{2}\notag\\
   &\quad+\int_{\Omega}(\mu^{L}-\mu^{\infty})(\varphi^{L} -\varphi^{\infty}-\langle\varphi^{L}-\varphi^{\infty}\rangle_{\Omega})\,\mathrm{d}x\notag\\
   &\quad+\langle\partial_{t}\langle\varphi^{L} -\varphi^{\infty}\rangle_{\Omega},\mathcal{N}_{\Omega}(\varphi^{L} -\varphi^{\infty}-\langle\varphi^{L}-\varphi^{\infty}\rangle_{\Omega}) \rangle_{V',V}\notag\\
   &\quad-\frac{1}{L}\int_{\Gamma}(\theta^{L}-\mu^{L})\mathcal{N}_{\Omega} (\varphi^{L}-\varphi^{\infty}-\langle\varphi^{L} -\varphi^{\infty}\rangle_{\Omega})\,\mathrm{d}S\notag\\
   &=\frac{1}{2}\frac{\mathrm{d}}{\mathrm{d}t}\|\varphi^{L} -\varphi^{\infty}-\langle\varphi^{L}-\varphi^{\infty}\rangle_{\Omega}\|_{V_{0}^{\ast}}^{2} +\mathcal{I}_{1}+\mathcal{I}_{2}+\mathcal{I}_{3}.\label{infrate3}
   \end{align}
   For the term $\mathcal{I}_{1}$, we infer from \eqref{infrate1} that
   \begin{align}
   \mathcal{I}_{1}&=\int_{\Omega}a_{\Omega}(\varphi^{L}-\varphi^{\infty} -\langle\varphi^{L}-\varphi^{\infty}\rangle_{\Omega})^{2}\,\mathrm{d}x\notag\\
   &\quad+\int_{\Omega}(\beta(\varphi^{L})-\beta(\varphi^{\infty})) (\varphi^{L}-\varphi^{\infty})\,\mathrm{d}x+\int_{\Omega}(\pi(\varphi^{L}) -\pi(\varphi^{\infty}))(\varphi^{L}-\varphi^{\infty})\,\mathrm{d}x\notag\\
   &\quad-\int_{\Omega}(J\ast(\varphi^{L}-\varphi^{\infty}-\langle\varphi^{L} -\varphi^{\infty}\rangle_{\Omega}))(\varphi^{L}-\varphi^{\infty} -\langle\varphi^{L}-\varphi^{\infty}\rangle_{\Omega})\,\mathrm{d}x\notag\\
   &\quad+\langle\varphi^{L}-\varphi^{\infty}\rangle_{\Omega} \int_{\Omega}a_{\Omega}(\varphi^{L}-\varphi^{\infty}-\langle\varphi^{L}-\varphi^{\infty}\rangle_{\Omega})\,\mathrm{d}x\notag\\
   &\quad-\langle\varphi^{L}-\varphi^{\infty}\rangle_{\Omega} \int_{\Omega}(J\ast1)(\varphi^{L}-\varphi^{\infty} -\langle\varphi^{L}-\varphi^{\infty}\rangle_{\Omega})\,\mathrm{d}x\notag\\
   &\quad-\langle\varphi^{L}-\varphi^{\infty}\rangle_{\Omega} \int_{\Omega}(\beta(\varphi^{L})-\beta(\varphi^{\infty}) +\pi(\varphi^{L})-\pi(\varphi^{\infty}))\,\mathrm{d}x.\label{infrate3-1}
   \end{align}
   For the first two lines on the right-hand side of \eqref{infrate3-1},
   by $\mathbf{(A1)}$, $\mathbf{(A2)}$, $\mathbf{(A3)}$ and \eqref{infuni8}, we find
   \begin{align}
   &\int_{\Omega}a_{\Omega}(\varphi^{L}-\varphi^{\infty} -\langle\varphi^{L}-\varphi^{\infty}\rangle_{\Omega})^{2}\,\mathrm{d}x\notag\\
   	&\qquad+\int_{\Omega}(\beta(\varphi^{L})-\beta(\varphi^{\infty})) (\varphi^{L}-\varphi^{\infty})\,\mathrm{d}x+\int_{\Omega}(\pi(\varphi^{L}) -\pi(\varphi^{\infty}))(\varphi^{L}-\varphi^{\infty})\,\mathrm{d}x\notag\\
   	&\quad\geq a_{\ast}\|\varphi^{L}-\varphi^{\infty}-\langle\varphi^{L}-\varphi^{\infty} \rangle_{\Omega}\|_{H}^{2}+(\alpha-\gamma_{1})\|\varphi^{L} -\varphi^{\infty}\|_{H}^{2}\notag\\[1mm]
   	&\quad=(a_{\ast}+\alpha-\gamma_{1})\|\varphi^{L}-\varphi^{\infty} -\langle\varphi^{L}-\varphi^{\infty}\rangle_{\Omega}\|_{H}^{2} +(\alpha-\gamma_{1})|\Omega||\langle\varphi^{L}-\varphi^{\infty}\rangle_{\Omega}|^{2}\notag\\
   	&\quad\geq(a_{\ast}+\alpha-\gamma_{1})\|\varphi^{L}-\varphi^{\infty} -\langle\varphi^{L}-\varphi^{\infty}\rangle_{\Omega}\|_{H}^{2}-\frac{C}{L}. \label{infrate3-2}
   \end{align}
   Since $a_{\Omega}=J\ast1$, the sum of the forth and fifth lines on the right-hand side of \eqref{infrate3-1} simply equal to zero.
   For the third line on the right-hand side of \eqref{infrate3-1}, we define a linear operator $\widetilde{\mathbb{J}}:H\to H$ by
   $$\widetilde{\mathbb{J}}(z)[x]:=\int_{\Omega}J(x-y)z(y)\,\mathrm{d}y,\quad\forall\,x\in\Omega,\quad\forall\,z\in H.$$
   As in \cite{GST}, the operator $\widetilde{\mathbb{J}}$ is a Hilbert--Schmidt operator,
   and then, by a similar calculation as we did in \eqref{error10}--\eqref{error12}, we get
   \begin{align}
   &\Big|\int_{\Omega}(J\ast(\varphi^{L}-\varphi^{\infty}-\langle\varphi^{L} -\varphi^{\infty}\rangle_{\Omega}))(\varphi^{L}-\varphi^{\infty} -\langle\varphi^{L}-\varphi^{\infty}\rangle_{\Omega})\,\mathrm{d}x\Big|\notag\\
   &\quad\leq\Big( \epsilon\|\widetilde{\mathbb{J}}\|_{\mathcal{B}(H)}^{2} +\|\widetilde{\mathbb{J}}-\widetilde{\mathbb{J}}_{N}\|_{\mathcal{B}(H)}\Big) \|\varphi^{L}-\varphi^{\infty}-\langle\varphi^{L}-\varphi^{\infty}\rangle_{\Omega}\|_{H}^{2}\notag\\
   &\qquad+C(\epsilon)C(N)\|\varphi^{L}-\varphi^{\infty}-\langle\varphi^{L} -\varphi^{\infty}\rangle_{\Omega}\|_{V_{0}^{\ast}}^{2}.\label{infrate3-4}
   \end{align}
   By \eqref{infuni1} and \eqref{infuni8}, the last term in \eqref{infrate4} can be estimated as
   \begin{align}
   &\Big|\langle\varphi^{L}-\varphi^{\infty}\rangle_{\Omega}\int_{\Omega}(\beta(\varphi^{L})-\beta(\varphi^{\infty})+\pi(\varphi^{L})-\pi(\varphi^{\infty}))\,\mathrm{d}x\Big|\notag\\
   &\quad\leq \frac{C}{\sqrt{L}}(\|\beta(\varphi^{L})\|_{L^{1}(\Omega)}+\|\beta(\varphi^{\infty})\|_{L^{1}(\Omega)}+1).\label{infrate4-1}
   \end{align}
   In summary, we obtain
   \begin{align}
   	\mathcal{I}_{1}&\geq(a_{\ast}+\alpha-\gamma_{1} -\epsilon\|\widetilde{\mathbb{J}}\|_{\mathcal{B}(H)}^{2} -\|\widetilde{\mathbb{J}}-\widetilde{\mathbb{J}}_{N}\|_{\mathcal{B}(H)}) \|\varphi^{L}-\varphi^{\infty}-\langle\varphi^{L} -\varphi^{\infty}\rangle_{\Omega}\|_{H}^{2}\notag\\
   	&\quad-C(\epsilon)C(N)\|\varphi^{L}-\varphi^{\infty} -\langle\varphi^{L}-\varphi^{\infty}\rangle_{\Omega}\|_{V_{0}^{\ast}}^{2} -\frac{C}{L}\notag\\
   	&\quad-\frac{C}{\sqrt{L}}(\|\beta(\varphi^{L})\|_{L^{1}(\Omega)} +\|\beta(\varphi^{\infty})\|_{L^{1}(\Omega)}+1).\label{infrate4}
   \end{align}
   For the terms $\mathcal{I}_{2}$ and $\mathcal{I}_{3}$, using H\"older's inequality, the trace theorem, \eqref{weak1} and \eqref{Luni1}, we can conclude that
   \begin{align}
   |\mathcal{I}_{2}|&\leq \|\langle\partial_{t}\varphi^{L}\rangle_{\Omega}\|_{V'}\|\mathcal{N}_{\Omega}(\varphi^{L}-\varphi^{\infty}-\langle\varphi^{L}-\varphi^{\infty}\rangle_{\Omega})\|_{V}\notag\\[1mm]
   &\leq\|\langle\partial_{t}\varphi^{L}\rangle_{\Omega}\|_{V'}\|\varphi^{L}-\varphi^{\infty}-\langle\varphi^{L}-\varphi^{\infty}\rangle_{\Omega}\|_{H}\notag\\
   &\leq\widetilde{\epsilon}\|\varphi^{L}-\varphi^{\infty}-\langle\varphi^{L}-\varphi^{\infty}\rangle_{\Omega}\|_{H}^{2}+\frac{C(\widetilde{\epsilon})}{L^{2}}\|\theta^{L}-\mu^{L}\|_{H_{\Gamma}}^{2},\label{infrate5}\\
   |\mathcal{I}_{3}|&\leq\frac{1}{L}\|\theta^{L}-\mu^{L}\|_{H_{\Gamma}}\|\mathcal{N}_{\Omega}(\varphi^{L}-\varphi^{\infty}-\langle\varphi^{L}-\varphi^{\infty}\rangle_{\Omega})\|_{H_{\Gamma}}\notag\\
   &\leq\widetilde{\epsilon}\|\varphi^{L}-\varphi^{\infty}-\langle\varphi^{L}-\varphi^{\infty}\rangle_{\Omega}\|_{H}^{2}+\frac{C(\widetilde{\epsilon})}{L^2}\|\theta^{L}-\mu^{L}\|_{H_{\Gamma}}^{2}.\label{infrate6}
   \end{align}
   Let us first take $N\in\mathbb{Z}^+$ sufficiently large, then $\epsilon>0$ in \eqref{infrate4} and $\widetilde{\epsilon}>0$ in \eqref{infrate5}, \eqref{infrate6} sufficiently small. Then by \eqref{infrate3}--\eqref{infrate6}, we see that
   \begin{align}
   	&\frac{\mathrm{d}}{\mathrm{d}t}\|\varphi^{L}-\varphi^{\infty} -\langle\varphi^{L}-\varphi^{\infty}\rangle_{\Omega}\|_{V_{0}^{\ast}}^{2} +\widetilde{C}_{3}\|\varphi^{L}-\varphi^{\infty}-\langle\varphi^{L} -\varphi^{\infty}\rangle_{\Omega}\|_{H}^{2}\notag\\
   	&\quad\leq \widetilde{C}_{4}\|\varphi^{L}-\varphi^{\infty}-\langle\varphi^{L} -\varphi^{\infty}\rangle_{\Omega}\|_{V_{0}^{\ast}}^{2} +\frac{\mathcal{F}(t)}{\sqrt{L}},\label{infrate7}
   \end{align}
   with
   $$\mathcal{F}(t):=C\left(\|\beta(\varphi^{L})\|^{2}_{L^{1}(\Omega)} +\|\beta(\varphi^{\infty})\|_{L^{1}(\Omega)}^{2} +\frac{\|\theta^{L}-\mu^{L}\|_{H_{\Gamma}}^{2}}{L}+1\right)\in L^{1}(0,T),$$
   and $\|\mathcal{F}\|_{L^{1}(0,T)}\leq C$, where the constant $C$ is independent of $L\geq L_{0}$. Then, by \eqref{infrate7} and Gronwall's inequality, we can conclude
   \begin{align}
   \|\varphi^{L}-\varphi^{\infty}-\langle\varphi^{L}-\varphi^{\infty}\rangle_{\Omega}\|_{L^{\infty}(0,T;V_{0}^{\ast})}+\|\varphi^{L}-\varphi^{\infty}-\langle\varphi^{L}-\varphi^{\infty}\rangle_{\Omega}\|_{L^{2}(0,T;H)}\leq\frac{C}{L^{1/4}}.\label{infrate8}
   \end{align}
   Since $|\langle\varphi^{L}-\varphi^{\infty}\rangle_{\Omega}|\leq C/\sqrt{L}$,
   which, together with \eqref{infrate8}, implies the convergence rate \eqref{toinftyrate} for the bulk phase variable.
   The estimate for the boundary phase variable can be carried out in a similar way. We omit the details here.
    \hfill $\square$

   \section{Instantaneous Regularization and Strict Separation Property}
   \setcounter{equation}{0}

   In this section, we first establish the instantaneous regularizing property of weak solutions to problem \eqref{model1}--\eqref{psiini} with $L\in[0,+\infty]$.
   Using these global regularity results, we can show the instantaneous strict separation property by applying some known results in the literature based on a suitable De Giorgi's iteration scheme (see, e.g., \cite{GP,Gior,Po}).

   In what follows, we use the symbol $(\boldsymbol{\varphi}^{L}_\varepsilon,\boldsymbol{\mu}^{L}_\varepsilon)$
   to denote the unique global weak solution to the approximating problem \eqref{appro1}--\eqref{psiappro}
   corresponding to $(\varepsilon,L)\in(0,\varepsilon^\ast)\times(0,+\infty)$ obtained in Proposition \ref{approexist}
   and the symbol $(\boldsymbol{\varphi}^{L},\boldsymbol{\mu}^{L})$ to denote the unique global weak solution
   to problem \eqref{model1}--\eqref{psiini} corresponding to $L\in[0,+\infty]$ obtained in Theorem \ref{exist}.
   The strategy to prove Theorem \ref{regularize} can be divided into three steps:
   $(1)$ We first establish the instantaneous regularization for the case $L\in(0,+\infty)$
   since we need to take advantage of the approximating problem \eqref{appro1}--\eqref{psiappro}.
   $(2)$ We derive uniform estimates with respect to $L\in(0,1)$,
   and then pass to the limit as $L\to0$ to recover the instantaneous regularization for the case $L=0$.
   $(3)$ For the case $L=+\infty$, since the two subsystems are completely decoupled, we can verify the regularity property of the bulk and boundary variables separately by suitable approximations (see \cite{GGG} for the subsystem in the bulk).

   \begin{lemma}
   	\label{Tlemma1}
   	Suppose $L\in(0,+\infty)$. Then there exists a constant $C>0$ such that
   	\begin{align}
   \Vert\boldsymbol{\varphi}^{L}\Vert_{L^{\infty}(0,+\infty;\mathcal{L}^{2})} +\Vert\nabla\mu^{L}\Vert_{L^{2}(0,+\infty;H)}+\Vert\nabla_{\Gamma}\theta^{L} \Vert_{L^{2}(0,+\infty;H_{\Gamma})}+\frac{1}{\sqrt{L}}\Vert\theta^{L} -\mu^{L}\Vert_{L^{2}(0,+\infty;H_{\Gamma})}\leq C.\label{Tuni1}
   	\end{align}
   	Furthermore, it holds
   	\begin{align}
   	\Vert\partial_{t}\boldsymbol{\varphi}^{L}\Vert_{L^{2}(0,+\infty; \mathcal{H}_{L,0}^{-1})}\leq C.\label{Tuni2}
   	\end{align}
   \end{lemma}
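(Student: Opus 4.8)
The plan is to establish \eqref{Tuni1} and \eqref{Tuni2} by revisiting the energy identity on the whole half-line and then interpreting the dissipation integral via the dual norm $\|\cdot\|_{L,0,*}$. First I would note that the energy equality \eqref{energyeq}, which holds for every $t\in[0,T]$ and for arbitrary $T\in(0,+\infty)$, can be read on $[0,+\infty)$: for all $t\geq0$,
\begin{align}
E(\boldsymbol{\varphi}^L(t))+\int_0^t\Big(\|\nabla\mu^L(s)\|_H^2+\|\nabla_\Gamma\theta^L(s)\|_{H_\Gamma}^2+\tfrac1L\|\theta^L(s)-\mu^L(s)\|_{H_\Gamma}^2\Big)\,\mathrm{d}s=E(\boldsymbol{\varphi}_0).\notag
\end{align}
Since by \eqref{ineq1} the energy $E$ is bounded below by $\tfrac{a_*+a_\circledast}{2}\|\cdot\|_{\mathcal{L}^2}^2$ minus a fixed constant, letting $t\to+\infty$ in this identity yields simultaneously the uniform-in-$t$ bound $\|\boldsymbol{\varphi}^L(t)\|_{\mathcal{L}^2}\leq C$ and the integrability of the three dissipation terms over $(0,+\infty)$. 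This already gives \eqref{Tuni1}, with $C$ depending only on $E(\boldsymbol{\varphi}_0)$, $\widetilde C$, $C_1$, $C_2$, $a_*$, $a_\circledast$, $|\Omega|$, $|\Gamma|$, exactly as in the finite-time estimate \eqref{approuni1} but now global in time.

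For \eqref{Tuni2}, the idea is that $\partial_t\boldsymbol{\varphi}^L(t)\in\mathcal{H}_{L,0}^{-1}$ with $\overline{m}(\partial_t\boldsymbol{\varphi}^L(t))=0$ (mass conservation), so $\|\partial_t\boldsymbol{\varphi}^L(t)\|_{L,0,*}^2=\|\mathfrak{S}^L(\partial_t\boldsymbol{\varphi}^L(t))\|_{\mathcal{H}^1_{L,0}}^2$. Testing the weak formulations \eqref{weak1}, \eqref{weak2} with $(z,z_\Gamma)=\mathfrak{S}^L(\partial_t\boldsymbol{\varphi}^L(t))$ and using the identity $a_L((u,u_\Gamma),\mathfrak{S}^L(z,z_\Gamma))=((u,u_\Gamma),(z,z_\Gamma))_{\mathcal{L}^2}$ recorded just after \eqref{ell2.2}, one obtains, by Cauchy--Schwarz,
\begin{align}
\|\partial_t\boldsymbol{\varphi}^L(t)\|_{L,0,*}^2
=\langle\partial_t\boldsymbol{\varphi}^L(t),\mathfrak{S}^L(\partial_t\boldsymbol{\varphi}^L(t))\rangle
\leq\Big(\|\nabla\mu^L\|_H^2+\|\nabla_\Gamma\theta^L\|_{H_\Gamma}^2+\tfrac1L\|\theta^L-\mu^L\|_{H_\Gamma}^2\Big)^{1/2}\|\partial_t\boldsymbol{\varphi}^L(t)\|_{L,0,*},\notag
\end{align}
where the first factor on the right is precisely the square root of the instantaneous dissipation; here the boundary term combines as $\tfrac1L\int_\Gamma(\theta^L-\mu^L)(\mathfrak{S}^L_\Gamma-\mathfrak{S}^L_\Omega)\,\mathrm{d}S$, bounded by $\tfrac1{\sqrt L}\|\theta^L-\mu^L\|_{H_\Gamma}\cdot\tfrac1{\sqrt L}\|\mathfrak{S}^L_\Gamma-\mathfrak{S}^L_\Omega\|_{H_\Gamma}$, and the latter is controlled by $\|\mathfrak{S}^L(\partial_t\boldsymbol{\varphi}^L)\|_{\mathcal{H}^1_{L,0}}$ through the definition \eqref{norm-hL} of the $\mathcal{H}^1_{L,0}$-norm. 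Dividing through gives $\|\partial_t\boldsymbol{\varphi}^L(t)\|_{L,0,*}^2\leq\|\nabla\mu^L(t)\|_H^2+\|\nabla_\Gamma\theta^L(t)\|_{H_\Gamma}^2+\tfrac1L\|\theta^L(t)-\mu^L(t)\|_{H_\Gamma}^2$ for a.e. $t$, and integrating over $(0,+\infty)$ together with \eqref{Tuni1} yields \eqref{Tuni2}, since the norm $\|\cdot\|_{L,0,*}$ is equivalent to $\|\cdot\|_{(\mathcal{H}^1_L)'}$ on $\mathcal{H}_{L,0}^{-1}$.

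The main obstacle I anticipate is not the estimate itself but ensuring that $\partial_t\boldsymbol{\varphi}^L$ is an admissible test direction, i.e. that $\mathfrak{S}^L(\partial_t\boldsymbol{\varphi}^L(t))$ lies in $\mathcal{H}^1_L$ so that \eqref{weak1}--\eqref{weak2} apply; this is fine because $\partial_t\boldsymbol{\varphi}^L\in L^2(0,T;(\mathcal{H}^1_L)')$ with zero generalized mean, hence $\mathfrak{S}^L(\partial_t\boldsymbol{\varphi}^L)\in L^2(0,T;\mathcal{H}^1_{L,0})\subset L^2(0,T;\mathcal{H}^1_L)$, and the pairing manipulation is exactly the one already used in Step 3 of the proof of Theorem \ref{exist} (cf. the passage around \eqref{Lv-2}). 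A minor point is to keep the constant $C$ independent of $T$: this is automatic since everything is driven by $E(\boldsymbol{\varphi}_0)$ and the fixed lower bound in \eqref{ineq1}. Alternatively, one can obtain \eqref{Tuni2} directly from \eqref{Lv-2}: the left-hand side $E(\boldsymbol{\varphi}^L(\tau))-E(\boldsymbol{\varphi}^L(t))$ is bounded uniformly, and its right-hand side is exactly $\int_\tau^t\|\partial_t\boldsymbol{\varphi}^L(s)\|_{L,0,*}^2\,\mathrm{d}s$, letting $\tau\to0$ and $t\to+\infty$.
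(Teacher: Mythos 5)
Your proof is correct and follows essentially the same route as the paper's (one-line) argument: \eqref{Tuni1} is read off from the energy equality \eqref{energyeq} together with the coercivity lower bound \eqref{ineq1}, and \eqref{Tuni2} from testing \eqref{weak1}--\eqref{weak2} with $\mathfrak{S}^L(\partial_t\boldsymbol{\varphi}^L)$ and recognizing the dissipation as the $a_L$-norm of $\mathfrak{S}^L(\partial_t\boldsymbol{\varphi}^L)$. The alternative you mention at the end is in fact the cleanest path, since \eqref{Lv-2} already records the identity $\|\partial_t\boldsymbol{\varphi}^L\|_{L,0,*}^2=\|\nabla\mu^L\|_H^2+\|\nabla_\Gamma\theta^L\|_{H_\Gamma}^2+\frac{1}{L}\|\theta^L-\mu^L\|_{H_\Gamma}^2$ almost everywhere, from which \eqref{Tuni2} is an immediate consequence of \eqref{Tuni1}.
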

   \begin{proof}
   	The estimate \eqref{Tuni1} follows from the energy equality \eqref{energyeq}, while \eqref{Tuni2}
   	can be deduced from \eqref{weak1}, \eqref{weak2}, \eqref{Tuni1} and the definition of $\mathfrak{S}^{L}$.
   \end{proof}

   \begin{lemma}
   	Suppose $L\in(0,+\infty)$. Then for any given $\tau>0$, there exists a constant $C>0$, independent of $\tau$, such that
   	\begin{align}
   	\Vert\nabla\mu^{L}(t)\Vert_{H}^{2}+\Vert\nabla_{\Gamma}\theta^{L}(t) \Vert_{H_{\Gamma}}^{2}+\frac{1}{L}\Vert\theta^{L}(t)-\mu^{L}(t)\Vert_{H_{\Gamma}}^{2}\leq\frac{C}{\tau},\quad\text{for a.a. }t\geq\tau.\label{Tuni4}
   	\end{align}
   \end{lemma}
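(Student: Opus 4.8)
The plan is to bound the instantaneous dissipation
\[
D(t):=\|\nabla\mu^{L}(t)\|_{H}^{2}+\|\nabla_{\Gamma}\theta^{L}(t)\|_{H_{\Gamma}}^{2}+\tfrac{1}{L}\|\theta^{L}(t)-\mu^{L}(t)\|_{H_{\Gamma}}^{2}
\]
directly by the $\mathcal{H}_{L,0}^{-1}$-norm of $\partial_{t}\boldsymbol{\varphi}^{L}$, for which a sharp control is already available from \eqref{Tuni3}. First I would note that for almost all $t$ one has $\mu^{L}(t)\in V$ and $\theta^{L}(t)\in V_{\Gamma}$, so that $z=\mu^{L}(t)$ is admissible in \eqref{weak1} and $z_{\Gamma}=\theta^{L}(t)$ in \eqref{weak2}; adding the two identities, the two boundary terms combine into $-\tfrac{1}{L}\|\theta^{L}(t)-\mu^{L}(t)\|_{H_{\Gamma}}^{2}$, which gives $D(t)=-\langle\partial_{t}\boldsymbol{\varphi}^{L}(t),\boldsymbol{\mu}^{L}(t)\rangle_{(\mathcal{H}^{1})',\mathcal{H}^{1}}$. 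Taking instead $z=1$, $z_{\Gamma}=1$ and adding shows $\langle\partial_{t}\boldsymbol{\varphi}^{L}(t),(1,1)\rangle=0$, i.e.\ $\overline{m}(\partial_{t}\boldsymbol{\varphi}^{L}(t))=0$, hence $\partial_{t}\boldsymbol{\varphi}^{L}(t)\in\mathcal{H}_{L,0}^{-1}$ and the pairing is unchanged if $\boldsymbol{\mu}^{L}(t)$ is replaced by $\boldsymbol{\mu}^{L}(t)-\overline{m}(\boldsymbol{\mu}^{L}(t))\mathbf{1}$.

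Since gradients of constants vanish, a direct computation of the bilinear form $a_{L}$ yields the key identity
\[
\|\boldsymbol{\mu}^{L}(t)-\overline{m}(\boldsymbol{\mu}^{L}(t))\mathbf{1}\|_{\mathcal{H}_{L,0}^{1}}^{2}=D(t).
\]
Combining this with the elementary identity $\langle\boldsymbol{w},\boldsymbol{v}\rangle_{(\mathcal{H}_{L}^{1})',\mathcal{H}_{L}^{1}}=(\mathfrak{S}^{L}\boldsymbol{w},\boldsymbol{v})_{\mathcal{H}_{L,0}^{1}}$, valid for $\boldsymbol{w}\in\mathcal{H}_{L,0}^{-1}$, $\boldsymbol{v}\in\mathcal{H}_{L,0}^{1}$ (an immediate consequence of the weak formulation defining $\mathfrak{S}^{L}$ and of $a_{L}=(\cdot,\cdot)_{\mathcal{H}_{L,0}^{1}}$ on $\mathcal{H}_{L,0}^{1}$), and the Cauchy--Schwarz inequality in $\mathcal{H}_{L,0}^{1}$, I obtain
\[
D(t)=-\big(\mathfrak{S}^{L}(\partial_{t}\boldsymbol{\varphi}^{L}(t)),\,\boldsymbol{\mu}^{L}(t)-\overline{m}(\boldsymbol{\mu}^{L}(t))\mathbf{1}\big)_{\mathcal{H}_{L,0}^{1}}\leq\|\partial_{t}\boldsymbol{\varphi}^{L}(t)\|_{L,0,*}\,\sqrt{D(t)},
\]
whence the self-improving bound $D(t)\leq\|\partial_{t}\boldsymbol{\varphi}^{L}(t)\|_{L,0,*}^{2}$ for almost all $t$. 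Finally, \eqref{Tuni3} provides $\|\partial_{t}\boldsymbol{\varphi}^{L}\|_{L^{\infty}(\tau,+\infty;\mathcal{H}_{L,0}^{-1})}^{2}\leq C/\tau$ with $C$ independent of $\tau$, so $D(t)\leq C/\tau$ for almost all $t\geq\tau$, which is exactly \eqref{Tuni4}.

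I do not expect a serious obstacle here: the crux is the observation that the dissipation $D(t)$ coincides both with $-\langle\partial_{t}\boldsymbol{\varphi}^{L}(t),\boldsymbol{\mu}^{L}(t)\rangle$ and with the squared $\mathcal{H}_{L,0}^{1}$-norm of the mean-free chemical potential, which closes the estimate without any new differential inequality. The remaining points are routine bookkeeping --- verifying the admissibility of $\mu^{L}(t),\theta^{L}(t)$ as test functions and that $\partial_{t}\boldsymbol{\varphi}^{L}(t)\in\mathcal{H}_{L,0}^{-1}$ for a.a.\ $t$ --- which follow from the regularity class in Definition \ref{weakdefn} and from \eqref{Tuni3}. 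An alternative route, closer in spirit to the difference-quotient computation underlying \eqref{Tuni3}, would be to differentiate $D$ in time and run a uniform Gronwall argument, but the direct estimate above is shorter and avoids re-differentiating the coupled system.
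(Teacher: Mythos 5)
Your argument is correct and is exactly the route the paper compresses into one line: test \eqref{weak1} with $\mu^{L}$ and \eqref{weak2} with $\theta^{L}$, identify the resulting dissipation with the squared $\mathcal{H}^{1}_{L,0}$-norm of the mean-free chemical potential, and pair via $\mathfrak{S}^{L}$ with $\partial_{t}\boldsymbol{\varphi}^{L}$, then invoke \eqref{Tuni3}. As a small remark, the Cauchy--Schwarz step can be bypassed: \eqref{weak1}--\eqref{weak2} say precisely that $a_{L}(\boldsymbol{\mu}^{L},\boldsymbol{z})=-\langle\partial_{t}\boldsymbol{\varphi}^{L},\boldsymbol{z}\rangle$ for all $\boldsymbol{z}\in\mathcal{H}^{1}_{L}$, i.e. $\mathbf{P}\boldsymbol{\mu}^{L}=-\mathfrak{S}^{L}(\partial_{t}\boldsymbol{\varphi}^{L})$, so $D(t)=\Vert\partial_{t}\boldsymbol{\varphi}^{L}(t)\Vert_{L,0,*}^{2}$ holds with equality.
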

   \begin{proof}
   The estimate \eqref{Tuni4} follows from \eqref{weak1}, \eqref{weak2}, \eqref{Tuni3} and the definition of $\mathfrak{S}^{L}$.
   \end{proof}

   \begin{lemma}
   	Suppose $L\in(0,+\infty)$. Then for any given $\tau>0$, there exists a constant $C>0$, independent of $\tau$, such that
   	\begin{align}
   	\Vert\boldsymbol{\varphi}^{L}(t)\Vert_{\mathcal{H}^{1}} \leq\frac{C}{\sqrt{\tau}},\quad\text{for a.a. }t\geq\tau.\label{Tuni11}
   	\end{align}
   \end{lemma}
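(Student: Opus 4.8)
The plan is to deduce the $\mathcal{H}^{1}$-bound for $\boldsymbol{\varphi}^{L}(t)$ from the pointwise-in-time control of the chemical potentials already available in \eqref{Tuni4}, combined with the $\mathcal{L}^{2}$-bound of \eqref{Tuni1}, by means of an elliptic estimate applied to the pointwise relations \eqref{weak3}--\eqref{weak4}. Recall from Definition \ref{weakdefn} that $\varphi^{L}\in L^{2}(0,T;V)$, $\psi^{L}\in L^{2}(0,T;V_{\Gamma})$, $\beta(\varphi^{L})\in L^{2}(0,T;V)$, $\beta_{\Gamma}(\psi^{L})\in L^{2}(0,T;V_{\Gamma})$, and $\mu^{L}\in L^{2}(0,T;V)$, $\theta^{L}\in L^{2}(0,T;V_{\Gamma})$; hence for a.a. fixed $t\in(0,T)$ all these quantities are well defined as elements of $V$ or $V_{\Gamma}$. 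First I would differentiate the identities \eqref{weak3}, \eqref{weak4} in space at such a fixed time, using $\nabla(J\ast\varphi^{L})=(\nabla J)\ast\varphi^{L}$, $\nabla a_{\Omega}=(\nabla J)\ast 1$, the chain rule $\nabla\beta(\varphi^{L})=\beta'(\varphi^{L})\nabla\varphi^{L}$ (legitimate since $\beta\in C^{1}(-1,1)$, $|\varphi^{L}|<1$ a.e., and $\beta(\varphi^{L}(t))\in V$), and the analogous identities on $\Gamma$.

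Testing the resulting relation for $\nabla\mu^{L}(t)$ against $\nabla\varphi^{L}(t)$ in $H$ --- this is a purely algebraic pairing and requires no integration by parts, so no boundary terms arise --- gives
\begin{align}
\int_{\Omega}\big(a_{\Omega}+\beta'(\varphi^{L})+\pi'(\varphi^{L})\big)|\nabla\varphi^{L}|^{2}\,\mathrm{d}x
=\int_{\Omega}\nabla\mu^{L}\cdot\nabla\varphi^{L}\,\mathrm{d}x-\int_{\Omega}\varphi^{L}\,\nabla a_{\Omega}\cdot\nabla\varphi^{L}\,\mathrm{d}x+\int_{\Omega}\big((\nabla J)\ast\varphi^{L}\big)\cdot\nabla\varphi^{L}\,\mathrm{d}x.\notag
\end{align}
By $\mathbf{(A1)}$, $\mathbf{(A2)}$ and $\mathbf{(A3)}$ the left-hand side is bounded below by $\beta_{1}\|\nabla\varphi^{L}\|_{H}^{2}$ with $\beta_{1}:=a_{\ast}+\alpha-\gamma_{1}>0$, while, by Young's inequality together with Young's inequality for convolutions (exactly as in the derivation of \eqref{FGuni7}), the right-hand side is at most $\frac{\beta_{1}}{2}\|\nabla\varphi^{L}\|_{H}^{2}+C(\|\nabla\mu^{L}\|_{H}^{2}+\|\varphi^{L}\|_{H}^{2})$, with $C$ depending on $\beta_{1}$, $b^{\ast}$ and $\|\nabla J\|_{L^{1}(\mathbb{R}^{d})}$. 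Hence, for a.a. $t$, $\|\nabla\varphi^{L}(t)\|_{H}^{2}\leq C(\|\nabla\mu^{L}(t)\|_{H}^{2}+\|\varphi^{L}(t)\|_{H}^{2})$, and by the same computation on $\Gamma$ (with $\beta_{2}:=a_{\circledast}+\alpha-\gamma_{2}>0$ in place of $\beta_{1}$) $\|\nabla_{\Gamma}\psi^{L}(t)\|_{H_{\Gamma}}^{2}\leq C(\|\nabla_{\Gamma}\theta^{L}(t)\|_{H_{\Gamma}}^{2}+\|\psi^{L}(t)\|_{H_{\Gamma}}^{2})$.

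Finally, I would combine these two elliptic estimates with \eqref{Tuni1}, which yields $\|\varphi^{L}(t)\|_{H}+\|\psi^{L}(t)\|_{H_{\Gamma}}\leq C$, and with \eqref{Tuni4}, which yields $\|\nabla\mu^{L}(t)\|_{H}^{2}+\|\nabla_{\Gamma}\theta^{L}(t)\|_{H_{\Gamma}}^{2}\leq C/\tau$ for a.a. $t\geq\tau$; this gives $\|\boldsymbol{\varphi}^{L}(t)\|_{\mathcal{H}^{1}}^{2}\leq C(1+1/\tau)$ for a.a. $t\geq\tau$, hence $\|\boldsymbol{\varphi}^{L}(t)\|_{\mathcal{H}^{1}}\leq C/\sqrt{\tau}$ whenever $\tau\in(0,1]$ (and for $\tau>1$ one applies the estimate with $\tau=1$). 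This argument is essentially routine once \eqref{Tuni1} and \eqref{Tuni4} are available; the only point requiring a little care is the spatial chain rule $\nabla\beta(\varphi^{L}(t))=\beta'(\varphi^{L}(t))\nabla\varphi^{L}(t)$ in $H$ at a fixed time. If one wishes to sidestep it, the same estimate can instead be obtained first for the approximating solution $\boldsymbol{\varphi}^{L}_{\varepsilon}$ --- for which $\beta_{\varepsilon}\in C^{1}(\mathbb{R})$ with $\beta_{\varepsilon}'\geq\alpha/(1+\alpha)$ by \eqref{unibelow}, so the lower-bound constants become $a_{\ast}+\alpha/(1+\alpha)-\gamma_{1}$ and $a_{\circledast}+\alpha/(1+\alpha)-\gamma_{2}$, still positive by $\mathbf{(A3)}$ --- uniformly in $\varepsilon\in(0,\varepsilon^{\ast})$, and then passed to the limit $\varepsilon\to0$ via weak lower semicontinuity of the $\mathcal{H}^{1}$-norm.
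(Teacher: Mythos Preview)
Your proof is correct and follows essentially the same strategy as the paper: take the gradient of the pointwise relation for the chemical potential and test against the gradient of the phase field, use the lower bound $a_{\Omega}+\beta'+\pi'\geq\beta_{1}>0$ (respectively on $\Gamma$), and conclude via \eqref{Tuni1} and the pointwise-in-time control of $(\nabla\mu^{L},\nabla_{\Gamma}\theta^{L})$. The only difference is organizational: the paper carries out the computation at the approximation level (with $\beta_{\varepsilon}'\geq\alpha/(1+\alpha)$ and the $\varepsilon$-analogue of \eqref{Tuni4} coming from \eqref{quotient12}), and then passes to the limit $\varepsilon\to0$ --- exactly the alternative you describe in your last paragraph --- rather than working directly with $\beta(\varphi^{L})$ and invoking the chain rule at the limit.
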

   \begin{proof}
   	We start with the approximating problem \eqref{appro1}--\eqref{psiappro}.
   	Taking gradient in both sides of \eqref{appropointmu} and testing the resultant by $\nabla\varphi^{L}_{\varepsilon}$, we obtain
   	\begin{align}
   &\int_{\Omega}a_{\Omega}|\nabla\varphi^{L}_{\varepsilon}|^{2}\,\mathrm{d}x +\int_{\Omega}\beta_{\varepsilon}'(\varphi^{L}_{\varepsilon}) |\nabla\varphi^{L}_{\varepsilon}|^{2}\,\mathrm{d}x\notag\\
   	&\quad=\int_{\Omega}\nabla\mu^{L}_{\varepsilon} \cdot\nabla\varphi^{L}_{\varepsilon}\,\mathrm{d}x-\int_{\Omega}(\nabla a_{\Omega}\cdot\nabla\varphi^{L}_{\varepsilon})\varphi^{L}_{\varepsilon} \,\mathrm{d}x+\int_{\Omega}(\nabla J\ast\varphi^{L}_{\varepsilon})\cdot\nabla\varphi^{L}_{\varepsilon} \,\mathrm{d}x\notag\\
   	&\qquad-\int_{\Omega}\pi'(\varphi^{L}_{\varepsilon}) |\nabla\varphi^{L}_{\varepsilon}|^{2}\,\mathrm{d}x\notag\\
   	&\quad\leq\Vert\nabla\mu^{L}_{\varepsilon}\Vert_{H} \Vert\nabla\varphi^{L}_{\varepsilon}\Vert_{H}+\Vert\nabla a_{\Omega}\Vert_{L^{\infty}(\Omega)}\Vert\nabla\varphi^{L}_{\varepsilon} \Vert_{H}\Vert\varphi_{\varepsilon}^L\Vert_{H}\notag\\[1mm]
   	&\qquad+\Vert\nabla J\Vert_{L^{1}(\Omega)}\Vert\nabla\varphi^{L}_{\varepsilon}\Vert_{H} \Vert\varphi^{L}_{\varepsilon}\Vert_{H}+\gamma_{1}\Vert\nabla \varphi^{L}_{\varepsilon}\Vert_{H}^{2}\notag\\[1mm]
   	&\quad\leq (\epsilon+\gamma_{1})\Vert\nabla\varphi^{L}_{\varepsilon} \Vert_{H}^{2}+C(\epsilon)(\Vert\varphi^{L}_{\varepsilon}\Vert_{H}^{2} +\Vert\nabla\mu^{L}_{\varepsilon}\Vert_{H}^{2}).\label{Tuni12}
   	\end{align}
   We infer from \eqref{approweak1}, \eqref{approweak2} and \eqref{quotient12} that
   	\begin{align}
   	&\Vert\nabla\mu^{L}_{\varepsilon}\Vert_{H}^{2} +\Vert\nabla_{\Gamma}\theta^{L}_{\varepsilon}\Vert_{H_{\Gamma}}^{2} +\frac{1}{L}\Vert\theta^{L}_{\varepsilon}-\mu^{L}_{\varepsilon}\Vert_{H_{\Gamma}}^{2}\notag\\
   	&\quad\leq \frac{e^{C\tau}}{\tau}(E(\boldsymbol{\varphi}_{0}) +(\widetilde{C}+C_{1}+C_{2})(|\Omega|+|\Gamma|)),
   \quad\text{for a.a. }t\geq\tau.\label{quotient14}
   	\end{align}
   	Then, by $\mathbf{(A1)}$--$\mathbf{(A3)}$, \eqref{Tuni12} and \eqref{quotient14}, we obtain
   	\begin{align*}
   		\Vert\nabla\varphi^{L}_{\varepsilon}(t)\Vert_{H} \leq\frac{C}{\sqrt{\tau}},\quad\text{for a.a. }t\geq\tau.
   	\end{align*}
   	Similarly, taking surface gradient in both sides of \eqref{appropointtheta}
   	and testing the resultant by $\nabla_{\Gamma}\psi^{L}_{\varepsilon}$, we can deduce that
   	\begin{align*}
   		\Vert\nabla_{\Gamma}\psi^{L}_{\varepsilon}(t)\Vert_{H_\Gamma} \leq\frac{C}{\sqrt{\tau}},\quad\text{for a.a. }t\geq\tau.
   	\end{align*}
   As a consequence, it holds
   	\begin{align*}
   	\Vert\boldsymbol{\varphi}^{L}_{\varepsilon}(t)\Vert_{\mathcal{H}^{1}} \leq\frac{C}{\sqrt{\tau}},\quad\text{for a.a. }t\geq\tau,
   	\end{align*}
   	which yields \eqref{Tuni11} by passing to the limit as $\varepsilon\rightarrow0$.
   \end{proof}
   \begin{lemma}
   	Suppose $L\in(0,+\infty)$. Then for any given $\tau>0$, there exists a constant $C>0$, independent of $\tau$, such that
   	\begin{align}
   	&\Vert\boldsymbol{\mu}^{L}(t)\Vert_{\mathcal{H}^{1}}^{2} +\int_{t}^{t+1}\Vert\boldsymbol{\mu}^{L}(s)\Vert_{\mathcal{H}^{2}}^{2}\,\mathrm{d}s\leq\frac{C}{\tau},\quad\text{for a.a. }t\geq\tau,\label{Tuni13}\\
   	&\Vert \beta(\varphi^{L}(t))\Vert_{V}+\Vert \beta_{\Gamma}(\psi^{L}(t))\Vert_{V_{\Gamma}}\leq\frac{C}{\sqrt{\tau}}, \quad\text{for a.a. }t\geq\tau.\label{Tuni14}
   	\end{align}
   \end{lemma}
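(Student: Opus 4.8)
The plan is to obtain \eqref{Tuni13} and \eqref{Tuni14} by bootstrapping from the instantaneous bounds \eqref{Tuni4} and \eqref{Tuni11} already at our disposal, the pointwise identities \eqref{weak3}--\eqref{weak4}, and the elliptic regularity theory for problem \eqref{ell2.2} established in \cite{KL}. Throughout, $\tau>0$ is fixed and all estimates below are meant for a.a. $t\geq\tau$, with $C$ independent of $\tau$.

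\emph{Step 1: pointwise $L^{1}$-bounds for the singular terms.} First we would test \eqref{weak3} with $\varphi^{L}(t)-\overline{m}_{0}$ and \eqref{weak4} with $\psi^{L}(t)-\overline{m}_{0}$ and add. The key observation is that, upon summing the bulk and boundary identities, the contribution of the generalized mean $\overline{m}(\boldsymbol{\mu}^{L}(t))$ cancels, because
\begin{align}
\int_{\Omega}(\varphi^{L}-\overline{m}_{0})\,\mathrm{d}x+\int_{\Gamma}(\psi^{L}-\overline{m}_{0})\,\mathrm{d}S=(|\Omega|+|\Gamma|)\big(\overline{m}(\boldsymbol{\varphi}^{L})-\overline{m}_{0}\big)=0\notag
\end{align}
by mass conservation \eqref{totalmassconser}. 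Using \eqref{eq2.8} with $s_{0}=\overline{m}_{0}$ on the left, Young's inequality for convolutions together with $\mathbf{(A1)}$ and $\mathbf{(A3)}$ for the zero-order terms, and the bound $\|(\mu^{L},\theta^{L})(t)-\overline{m}(\boldsymbol{\mu}^{L}(t))\boldsymbol{1}\|_{\mathcal{L}^{2}}\leq c_{P}\,[a_{L}((\mu^{L},\theta^{L}),(\mu^{L},\theta^{L}))]^{1/2}\leq C\tau^{-1/2}$ provided by \eqref{Po3} and \eqref{Tuni4}, together with $\|\boldsymbol{\varphi}^{L}(t)\|_{\mathcal{L}^{2}}\leq C$ from \eqref{Tuni1}, we would deduce $\|\beta(\varphi^{L}(t))\|_{L^{1}(\Omega)}+\|\beta_{\Gamma}(\psi^{L}(t))\|_{L^{1}(\Gamma)}\leq C\tau^{-1/2}$.

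\emph{Step 2: proof of \eqref{Tuni14} and of the $\mathcal{H}^{1}$-bound in \eqref{Tuni13}.} Differentiating \eqref{weak3} (resp.\ applying $\nabla_{\Gamma}$ to \eqref{weak4}) gives
\begin{align}
\nabla\beta(\varphi^{L})=\nabla\mu^{L}-a_{\Omega}\nabla\varphi^{L}-\varphi^{L}\nabla a_{\Omega}+(\nabla J)\ast\varphi^{L}-\pi'(\varphi^{L})\nabla\varphi^{L},\notag
\end{align}
and an analogous identity on $\Gamma$. Since $a_{\Omega}=J\ast1\in W^{1,\infty}(\Omega)$ and $a_{\Gamma}=K\circledast1\in W^{1,\infty}(\Gamma)$ by $\mathbf{(A1)}$, $|\varphi^{L}|,|\psi^{L}|\leq1$, and $\pi',\pi_{\Gamma}'$ are bounded, each term on the right is controlled in $H$ (resp.\ $H_{\Gamma}$) by $C\tau^{-1/2}$ thanks to \eqref{Tuni4} and \eqref{Tuni11}; hence $\|\nabla\beta(\varphi^{L}(t))\|_{H}+\|\nabla_{\Gamma}\beta_{\Gamma}(\psi^{L}(t))\|_{H_{\Gamma}}\leq C\tau^{-1/2}$. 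Combining this with the Poincar\'e--Wirtinger inequality and the $L^{1}$-bounds of Step 1 yields \eqref{Tuni14}. Feeding \eqref{Tuni14} back into \eqref{weak3}--\eqref{weak4}, together with $\|J\ast\varphi^{L}(t)\|_{V}+\|K\circledast\psi^{L}(t)\|_{V_{\Gamma}}\leq C$ and \eqref{Tuni11}, we would get $\|\boldsymbol{\mu}^{L}(t)\|_{\mathcal{H}^{1}}\leq C\tau^{-1/2}$, which after squaring is the first term in \eqref{Tuni13}.

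\emph{Step 3: the time-averaged $\mathcal{H}^{2}$-bound.} Summing \eqref{weak1} and \eqref{weak2} and rearranging shows that, for a.a. $t\geq\tau$, the pair $(\mu^{L},\theta^{L})(t)-\overline{m}(\boldsymbol{\mu}^{L}(t))\boldsymbol{1}$ solves problem \eqref{ell2.2} with right-hand side $-\partial_{t}\boldsymbol{\varphi}^{L}(t)\in\mathcal{L}^{2}_{(0)}$ (the generalized mean of $\partial_{t}\boldsymbol{\varphi}^{L}$ vanishes by \eqref{totalmassconser}), that is, it equals $\mathfrak{S}^{L}(-\partial_{t}\boldsymbol{\varphi}^{L}(t))$. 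The $\mathcal{H}^{k+2}$-regularity estimate of \cite{KL} with $k=0$ then gives $\|(\mu^{L},\theta^{L})(t)\|_{\mathcal{H}^{2}}\leq C\|\partial_{t}\boldsymbol{\varphi}^{L}(t)\|_{\mathcal{L}^{2}}+C|\overline{m}(\boldsymbol{\mu}^{L}(t))|$, and $|\overline{m}(\boldsymbol{\mu}^{L}(t))|\leq C\tau^{-1/2}$ by Step 1 (via \eqref{weak3}--\eqref{weak4}). Squaring and integrating over $[t,t+1]$, then invoking the bound $\int_{t}^{t+1}\|\partial_{t}\boldsymbol{\varphi}^{L}(s)\|_{\mathcal{L}^{2}}^{2}\,\mathrm{d}s\leq C\tau^{-1}$ from \eqref{Tuni3}, we would conclude the second term in \eqref{Tuni13}.

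The hard part is Step 1: the weak solution framework only furnishes $L^{2}$-in-time control of $\|\beta(\varphi^{L})\|_{L^{1}(\Omega)}$ and $\|\beta_{\Gamma}(\psi^{L})\|_{L^{1}(\Gamma)}$, whereas everything afterwards requires their pointwise-in-time counterparts. The way around this is precisely the simultaneous bulk--boundary cancellation of $\overline{m}(\boldsymbol{\mu}^{L})$ against the conserved mass $\overline{m}_{0}$, coupled with the instantaneous decay rate for $\nabla\mu^{L}$, $\nabla_{\Gamma}\theta^{L}$ and $(\theta^{L}-\mu^{L})/\sqrt{L}$ supplied by \eqref{Tuni4}; without this cancellation one would be forced to estimate $|\langle\mu^{L}(t)\rangle_{\Omega}|$, which itself involves the very $L^{1}$-norm we are trying to bound.
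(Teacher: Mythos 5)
Your proposal is correct and follows essentially the same route as the paper: the decisive Step 1 (pointwise-in-time $L^{1}$-bounds on $\beta(\varphi^{L})$, $\beta_{\Gamma}(\psi^{L})$ obtained by testing with $\varphi^{L}-\overline{m}_{0}$, $\psi^{L}-\overline{m}_{0}$ and exploiting the mass-conservation cancellation of $\overline{m}(\boldsymbol{\mu}^{L})$ together with \eqref{Tuni4} and the Poincar\'e inequality \eqref{Po3}) is precisely the argument spelled out in the paper around \eqref{Tuni15}--\eqref{Tuni17}, and your Step 3 (identifying $(\mu^{L},\theta^{L})-\overline{m}(\boldsymbol{\mu}^{L})\boldsymbol{1}=\mathfrak{S}^{L}(-\partial_{t}\boldsymbol{\varphi}^{L})$ and invoking the $\mathcal{H}^{k+2}$-regularity of \cite{KL} together with \eqref{Tuni3}) is exactly what the paper summarizes as ``by using \eqref{weak1}, \eqref{weak2}, \eqref{Tuni3} and elliptic regularity theory.'' The only deviation is a harmless reordering in Step 2: you deduce \eqref{Tuni14} first by taking the gradient of \eqref{weak3}--\eqref{weak4} and then feed that back into the equation to get the $\mathcal{H}^{1}$-bound on $\boldsymbol{\mu}^{L}$, whereas the paper first tests with $\zeta=1$, $\zeta_\Gamma=1$ to control the means $|\int_\Omega\mu^{L}|$, $|\int_\Gamma\theta^{L}|$, gets the $\mathcal{H}^{1}$-bound on $\boldsymbol{\mu}^{L}$ via Poincar\'e and \eqref{Tuni4}, and only then concludes \eqref{Tuni14} ``by comparison''; these are the same manipulations applied in opposite order. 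Two small notational remarks: for the original singular nonlinearities one should cite \eqref{L1control1}--\eqref{L1control2} rather than \eqref{eq2.8} (the latter is stated for the Yosida regularizations), and several intermediate terms (e.g. $\widetilde c_{1}(|\Omega|+|\Gamma|)$ and $\|J\ast\varphi^{L}\|_{V}\lesssim 1$) are bounded by a constant rather than by $C\tau^{-1/2}$, so the claimed right-hand side really is $C(1+\tau^{-1/2})$; this imprecision is inherited from the paper itself and is immaterial in the regime $\tau\le 1$ where the lemma is used.
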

   \begin{proof}
   Testing \eqref{weak3} and \eqref{weak4} by $\zeta=\varphi^{L}-\overline{m}_{0}$ and $\zeta_{\Gamma}=\psi^{L}-\overline{m}_{0}$, respectively, we deduce from \eqref{L1control1} and \eqref{L1control2} that
   \begin{align}
   &\widetilde{\delta}_{0}\Big(\int_{\Omega}|\beta(\varphi^{L})|\,\mathrm{d}x +\int_{\Gamma}|\beta_{\Gamma}(\psi^{L})|\,\mathrm{d}S\Big)\notag\\
   	&\quad\leq \widetilde{c}_{1}(|\Omega|+|\Gamma|)+\int_{\Omega}\mu^{L} (\varphi^{L}-\overline{m}_{0})\,\mathrm{d}x+\int_{\Gamma}\theta^{L}(\psi^{L} -\overline{m}_{0})\,\mathrm{d}S\notag\\
   	&\qquad+\int_{\Omega}(J\ast\varphi^{L})(\varphi^{L}-\overline{m}_{0}) \,\mathrm{d}x+\int_{\Gamma}(K\circledast\psi^{L})(\psi^{L}-\overline{m}_{0}) \,\mathrm{d}S\notag\\
   	&\qquad-\int_{\Omega}a_{\Omega}\varphi^{L}(\varphi^{L}-\overline{m}_{0}) \,\mathrm{d}x-\int_{\Gamma}a_{\Gamma}\psi^{L}(\psi^{L}-\overline{m}_{0}) \,\mathrm{d}S\notag\\
   	&\qquad-\int_{\Omega}\pi(\varphi^{L})(\varphi^{L}-\overline{m}_{0}) \,\mathrm{d}x-\int_{\Gamma}\pi_{\Gamma}(\psi^{L})(\psi^{L}-\overline{m}_{0}) \,\mathrm{d}S.\label{Tuni15}
   \end{align}
   For the second line of \eqref{Tuni15}, from \eqref{Tuni4}, we get
   \begin{align}
   &\int_{\Omega}\mu^{L}(\varphi^{L}-\overline{m}_{0})\,\mathrm{d}x +\int_{\Gamma}\theta^{L}(\psi^{L}-\overline{m}_{0})\,\mathrm{d}S\notag\\
   &\quad=\int_{\Omega}(\mu^{L}-\overline{m}(\boldsymbol{\mu}^{L})) \varphi^{L}\,\mathrm{d}x+\int_{\Gamma}(\theta^{L}-\overline{m} (\boldsymbol{\mu}^{L}))\psi^{L}\,\mathrm{d}S\notag\\
   &\quad\leq\Vert\mu^{L}-\overline{m}(\boldsymbol{\mu}^{L})\Vert_{H} \Vert\varphi^{L}\Vert_{H}+\Vert\theta^{L}-\overline{m}(\boldsymbol{\mu}^{L}) \Vert_{H_{\Gamma}}\Vert\psi^{L}\Vert_{H_{\Gamma}}\notag\\
   &\quad\leq C\Vert\boldsymbol{\varphi}^{L}\Vert_{\mathcal{L}^{2}} \Big(\Vert\nabla\mu^{L}(t)\Vert_{H}+\Vert\nabla_{\Gamma}\theta^{L}(t) \Vert_{H_{\Gamma}}+\frac{1}{\sqrt{L}}\Vert\theta^{L}(t)-\mu^{L}(t) \Vert_{H_{\Gamma}}\Big)\notag\\
   &\quad\leq\frac{C}{\sqrt{\tau}},\quad\text{for a.a. }t\geq\tau.\label{Tuni16}
   \end{align}
   For the last three lines of \eqref{Tuni15}, thanks to $\mathbf{(A1)}$, $\mathbf{(A3)}$, $\mathbf{(A4)}$, it holds
   \begin{align*}
   &\int_{\Omega}(J\ast\varphi^{L})(\varphi^{L}-\overline{m}_{0})\,\mathrm{d}x +\int_{\Gamma}(K\circledast\psi^{L})(\psi^{L}-\overline{m}_{0})\,\mathrm{d}S \notag\\
   	&\qquad\leq\Vert J\Vert_{L^{1}(\Omega)}\Vert\varphi^{L}\Vert_{H}\Vert\varphi^{L} -\overline{m}_{0}\Vert_{H}+\Vert K\Vert_{L^{1}(\Gamma)}\Vert\psi^{L}\Vert_{H_{\Gamma}}\Vert\psi^{L} -\overline{m}_{0}\Vert_{H_{\Gamma}},\notag
   \end{align*}
   \begin{align*}
   	&-\int_{\Omega}a_{\Omega}\varphi^{L}(\varphi^{L}-\overline{m}_{0}) \,\mathrm{d}x-\int_{\Gamma}a_{\Gamma}\psi^{L}(\psi^{L} -\overline{m}_{0})\,\mathrm{d}S\notag\\
   	&\qquad\leq a^{\ast}\Vert\varphi^L\Vert_{H}\Vert\varphi^{L}-\overline{m}_{0}\Vert_{H} +a^{\circledast}\Vert\psi^{L}\Vert_{H_{\Gamma}}\Vert\psi^{L} -\overline{m}_{0}\Vert_{H_{\Gamma}},\notag
    \end{align*}
   \begin{align*}
   &-\int_{\Omega}\pi(\varphi^{L})(\varphi^{L}-\overline{m}_{0})\,\mathrm{d}x -\int_{\Gamma}\pi_{\Gamma}(\psi^{L})(\psi^{L}-\overline{m}_{0})\,\mathrm{d}S \notag\\
   	&\qquad\leq\gamma_{1}\Vert\varphi^{L}\Vert_{H}\Vert\varphi^{L} -\overline{m}_{0}\Vert_{H}+\gamma_{2}\Vert\psi^{L}\Vert_{H_{\Gamma}} \Vert\psi^{L}-\overline{m}_{0}\Vert_{H_{\Gamma}}\notag\\[1mm]
   	&\qquad\quad+|\pi(0)||\Omega|^{1/2}\Vert\varphi^{L}-\overline{m}_{0} \Vert_{H}+|\pi_{\Gamma}(0)||\Gamma|^{1/2}\Vert\psi^{L}-\overline{m}_{0} \Vert_{H_{\Gamma}}.
   \end{align*}
   The above estimates together with \eqref{Tuni1}, \eqref{Tuni15}, and \eqref{Tuni16} imply that
   \begin{align}
   	\Vert \beta(\varphi^{L}(t))\Vert_{L^{1}(\Omega)}+\Vert \beta_{\Gamma}(\psi^{L}(t))\Vert_{L^{1}(\Gamma)}\leq\frac{C}{\sqrt{\tau}},\quad\text{for a.a. }t\geq\tau.\label{Tuni17}
   \end{align}
   Then, taking $\zeta=1$ in \eqref{weak3} and $\zeta_{\Gamma}=1$ in \eqref{weak4}, by $\mathbf{(A1)}$, $\mathbf{(A3)}$, $\mathbf{(A4)}$ and \eqref{Tuni17}, we obtain
   \begin{align}
   	\Big|\int_{\Omega}\mu^{L}(t)\,\mathrm{d}x\Big|+\Big|\int_{\Gamma}\theta^{L}(t)\,\mathrm{d}S\Big|\leq\frac{C}{\sqrt{\tau}},\quad\text{for a.a. }t\geq\tau,\notag
   \end{align}
   which, together with \eqref{Tuni4} and generalized Poincar\'e's inequality \eqref{Po3} yields
   \begin{align}
   	\Vert\boldsymbol{\mu}^{L}\Vert_{\mathcal{H}^{1}} \leq\frac{C}{\sqrt{\tau}},\quad\text{for a.a. }t\geq\tau.\notag
   \end{align}
   Hence, we obtain the first estimate in \eqref{Tuni13}.
   The second estimate in \eqref{Tuni13} can be derived by using \eqref{weak1}, \eqref{weak2}, \eqref{Tuni3} and the elliptic regularity theory.
   Finally, by comparison in \eqref{weak3}, \eqref{weak4}, we can conclude \eqref{Tuni14}.
   \end{proof}





   \begin{lemma}
   	\label{Tlemma5}
   	Let $\sigma\in\{0,+\infty\}$. Suppose that  $(\boldsymbol{\varphi}^{\sigma},\boldsymbol{\mu}^{\sigma})$ is the unique global weak solution
   	to problem \eqref{model1}--\eqref{psiini} obtained in Theorem \ref{exist} corresponding to $\sigma\in\{0,+\infty\}$, respectively.
   	Then, for any $\tau>0$, there exists a constant $C>0$ independent of $\tau$, such that
   	\begin{align}
   	&\|\partial_{t}\boldsymbol{\varphi}^{\sigma} \|_{L^{2}_{\mathrm{uloc}}(\tau,+\infty;\mathcal{L}^{2})} +\|\boldsymbol{\varphi}^{\sigma}\|_{L^{\infty}(\tau,+\infty;\mathcal{H}^{1})} +\|\boldsymbol{\mu}^{\sigma}\|_{L^{\infty}(\tau,+\infty;\mathcal{H}^{1})} \notag\\
   	&\qquad +\|\boldsymbol{\beta}(\boldsymbol{\varphi}^{\sigma}) \|_{L^{\infty}(\tau,+\infty;\mathcal{H}^{1})} +\|\boldsymbol{\mu}^{\sigma}\|_{L^{2}_{\mathrm{uloc}} (\tau,+\infty;\mathcal{H}^{2})}
   \leq \frac{C}{\sqrt{\tau}}, \label{0global}
   	\end{align}
   where $\boldsymbol{\beta}(\boldsymbol{\varphi}^{\sigma}):=(\beta(\varphi^\sigma), \beta_\Gamma(\psi^\sigma))$.
    \end{lemma}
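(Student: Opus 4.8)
The plan is to transfer the regularity estimates already obtained for $L\in(0,+\infty)$, namely \eqref{Tuni1}--\eqref{Tuni14}, to the two limiting regimes, by two somewhat different routes. For $\sigma=0$, I will regard problem \eqref{model1}--\eqref{psiini} with $L\in(0,1)$ as an approximating problem for the case $L=0$, re-examine the proofs of the preceding lemmas so as to make all the constants independent of $L\in(0,1)$, and then pass to the limit $L\to0$ using the weak and weak-$*$ convergences \eqref{0conver1}--\eqref{0conver6} together with the lower semicontinuity of norms. For $\sigma=+\infty$, the bulk subsystem \eqref{infch1}--\eqref{infch4} and the boundary subsystem \eqref{surch1}--\eqref{surch3} decouple completely, so I will establish the regularization of $(\varphi^{\infty},\mu^{\infty})$ and of $(\psi^{\infty},\theta^{\infty})$ separately, deriving the analogues of \eqref{Tuni1}--\eqref{Tuni14} for suitable Yosida approximations of each subsystem and letting the approximating parameter tend to zero.

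For $\sigma=0$, the point to verify is that every step of the preceding lemmas is uniform in $L\in(0,1)$. The energy equality \eqref{energyeq} gives \eqref{Tuni1} and \eqref{Tuni2} with constants independent of $L$; the bound \eqref{Lv0905} (uniform for $L\in(0,1]$, cf.\ \cite[Lemma 5.1]{LvWu}) and the Poincar\'e inequality \eqref{Po3} make the controls of the chemical-potential terms in \eqref{Tuni15}--\eqref{Tuni16} uniform in $L$. The only genuinely delicate term is the nonlocal coupling term in the last line of \eqref{quotient5},
$$\langle\partial_{t}^{h}\boldsymbol{\varphi}_{\varepsilon}^{L},(J\ast\partial_{t}^{h}\varphi_{\varepsilon}^{L},K\circledast\partial_{t}^{h}\psi_{\varepsilon}^{L})\rangle_{(\mathcal{H}^{1})',\mathcal{H}^{1}},$$
because $\boldsymbol{\varphi}^{0}$ belongs only to $\mathcal{H}^{1}$ and not to $\mathcal{V}^{1}$, so the crude estimate used for fixed $L$ is not uniform as $L\to0$. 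Here I will invoke assumption $\mathbf{(A5)}$: by Lemma \ref{HS} the operator $\mathbb{J}$ is Hilbert--Schmidt, hence compact, and exactly as in \eqref{error10}--\eqref{error11} I will split $\partial_{t}^{h}\boldsymbol{\varphi}_{\varepsilon}^{L}$ into a finite-dimensional part (on which the $\mathcal{L}^{2}$ and $\mathcal{H}_{L,0}^{-1}$ norms are equivalent with an $L$-independent constant for $L\in(0,1]$, thanks to \eqref{Lv0905}) and a remainder whose contribution is controlled by $\|\mathbb{J}-\mathbb{J}_{N}\|_{\mathcal{B}(\mathcal{L}^{2})}$. Absorbing the remainder into the coercive terms $\big(a_{\ast}+\frac{\alpha}{1+\alpha}-\gamma_{1}\big)\|\partial_{t}^{h}\varphi_{\varepsilon}^{L}\|_{H}^{2}$ and $\big(a_{\circledast}+\frac{\alpha}{1+\alpha}-\gamma_{2}\big)\|\partial_{t}^{h}\psi_{\varepsilon}^{L}\|_{H_{\Gamma}}^{2}$ (positive by $\mathbf{(A3)}$), I recover a differential inequality of the form \eqref{quotient10} with constants independent of $\varepsilon$, $h$ and $L\in(0,1)$; the uniform Gronwall lemma then yields the analogue of \eqref{quotient12}, and hence uniform-in-$L$ versions of \eqref{Tuni11}--\eqref{Tuni14}. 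Passing to the limit $L\to0$ gives \eqref{0global} with $\sigma=0$.

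For $\sigma=+\infty$, no uniformity in an auxiliary parameter is required. On the bulk side, $(\varphi^{\infty},\mu^{\infty})$ solves the standard nonlocal Cahn--Hilliard equation \eqref{infch1}--\eqref{infch4} with homogeneous Neumann boundary condition, for which the instantaneous regularization $\varphi^{\infty}\in L^{\infty}(\tau,+\infty;H^{1}(\Omega))$, $\mu^{\infty}\in L^{\infty}(\tau,+\infty;H^{1}(\Omega))\cap L^{2}_{\mathrm{uloc}}(\tau,+\infty;H^{2}(\Omega))$, $\beta(\varphi^{\infty})\in L^{\infty}(\tau,+\infty;H^{1}(\Omega))$ and $\partial_{t}\varphi^{\infty}\in L^{2}_{\mathrm{uloc}}(\tau,+\infty;L^{2}(\Omega))$, all with rate $C/\sqrt{\tau}$, is precisely the content of the argument in \cite{GGG} (a difference-quotient estimate on the Yosida-regularized problem followed by elliptic regularity). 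Since $\Gamma$ is a smooth compact $(d-1)$-dimensional manifold and $K$ satisfies the corresponding conditions in $\mathbf{(A1)}$, the boundary pair $(\psi^{\infty},\theta^{\infty})$ solving \eqref{surch1}--\eqref{surch3} enjoys the same regularization by a verbatim adaptation of that argument, with $\Delta_{\Gamma}$ and $\nabla_{\Gamma}$ replacing $\Delta$ and $\nabla$. Combining the two sets of estimates yields \eqref{0global} with $\sigma=+\infty$.

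The hard part will be the uniform-in-$L$ control of the nonlocal coupling term in the difference-quotient identity \eqref{quotient5}; this is exactly what forces assumption $\mathbf{(A5)}$ in the case $L=0$. Once that estimate is in place, the remainder is a routine reproduction of the preceding lemmas followed by a passage to the limit, while the $L=+\infty$ case reduces to a citation of the decoupled bulk and surface theories.
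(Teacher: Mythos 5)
The overall strategy matches the paper's, and your handling of the $\sigma=+\infty$ case and of the chemical-potential terms in \eqref{Tuni15}--\eqref{Tuni16} for $\sigma=0$ is sound. However, your treatment of the nonlocal coupling term in \eqref{quotient5} has a genuine gap. After rewriting the pairing as $(\partial_t^h\boldsymbol{\varphi}_\varepsilon^L,\mathbb{J}(\partial_t^h\boldsymbol{\varphi}_\varepsilon^L))_{\mathcal{L}^2}$ and applying the Hilbert--Schmidt decomposition, the finite-dimensional reduction needs an estimate of the form $\|z\|_{\mathcal{L}^2}\leq C(N)\|z\|_{L,0,\ast}$ on the span of the first $N$ eigenfunctions, with $C(N)$ independent of $L\in(0,1)$. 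You claim this follows from \eqref{Lv0905}, but that inequality goes the opposite way, namely $\|z\|_{L,0,\ast}\leq\widehat{C}\|z\|_{\mathcal{L}^2}$, which cannot supply the bound you need. Moreover, the eigenfunction basis $\{\boldsymbol{\phi}_k\}$ of $\mathfrak{S}^{L}$ used in \eqref{error10}--\eqref{error11} itself depends on $L$, so invoking those lines \emph{exactly} does not yield an $L$-uniform constant $C(N)$.

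The paper's proof resolves this differently: it performs the Hilbert--Schmidt decomposition with respect to the $L$-independent operator $\mathfrak{S}^{0}$ and its basis, so that the estimate produces $C(\epsilon)C(N)\|\partial_t^h\boldsymbol{\varphi}_\varepsilon^L\|_{0,0,\ast}^2$ with $C(N)$ independent of $L$, and then invokes the monotonicity
\begin{align*}
\|z^{\ast}\|_{0,0,\ast}\leq\|z^{\ast}\|_{L,0,\ast},\quad\forall\,z^{\ast}\in\mathcal{L}_{(0)}^{2},\ \ L\in(0,+\infty),
\end{align*}
proved via the duality computation
\begin{align*}
\|z^{\ast}\|_{0,0,\ast}^{2}=(z^{\ast},\mathfrak{S}^{0}z^{\ast})_{\mathcal{L}^{2}}=\langle z^{\ast},\mathfrak{S}^{0}z^{\ast}\rangle_{\mathcal{H}_{L,0}^{-1},\mathcal{H}_{L,0}^{1}}\leq\|z^{\ast}\|_{L,0,\ast}\|\mathfrak{S}^{0}z^{\ast}\|_{\mathcal{H}_{L,0}^1}=\|z^{\ast}\|_{L,0,\ast}\|z^{\ast}\|_{0,0,\ast},
\end{align*}
the last equality holding because $\mathfrak{S}^{0}z^{\ast}\in\mathcal{V}_{(0)}^1$, where the $\mathcal{H}_{L,0}^1$-norm is $L$-independent. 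This comparison is the one ingredient your proposal lacks; with it the differential inequality for $\|\partial_t^h\boldsymbol{\varphi}_\varepsilon^L\|_{L,0,\ast}^2$ closes uniformly as $L\to0$, without it your argument does not.
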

    \begin{proof}
    For the case $\sigma=+\infty$, since the two subsystems are completely decoupled,
    we can derive the estimates for the bulk and boundary variables  separately.
    The subsystem in the bulk has been treated in \cite{GGG}, and similar estimates for the subsystem on the boundary can be obtained in a similar way.

    Next, let us consider the case $\sigma=0$. In this aspect, we need to derive uniform estimates with respect to
    $(\varepsilon,L)\in(0,\varepsilon^\ast)\times(0,1)$ for the approximating solutions $(\boldsymbol{\varphi}_{\varepsilon}^{L},\boldsymbol{\mu}_{\varepsilon}^{L})$.
    Recalling the proof for the case with $L\in(0,+\infty)$, we find there are two estimates that may depend on $L\in(0,1)$.
    The first one is \eqref{quotient6}, where we have used the equivalent norm in $\mathcal{H}_{L,0}^{-1}$.
    On one hand, as we did in \eqref{0rate5}, it holds
    \begin{align}
    	&\langle\partial_{t}^{h}\boldsymbol{\varphi}^L_{\varepsilon}, (J\ast\partial_{t}^{h}\varphi^L_{\varepsilon},K\circledast\partial_{t}^{h} \psi^L_{\varepsilon})\rangle_{(\mathcal{H}^{1})',\mathcal{H}^{1}}\notag\\[1mm]
    	&\quad=(\partial_{t}^{h}\boldsymbol{\varphi}^L_{\varepsilon}, (J\ast\partial_{t}^{h}\varphi^L_{\varepsilon},K\circledast\partial_{t}^{h} \psi^L_{\varepsilon}))_{\mathcal{L}^2}\notag\\
    &\quad=\int_\Omega(J\ast\partial_{t}^{h}\varphi^L_{\varepsilon}) \partial_{t}^{h}\varphi^L_\varepsilon\,\mathrm{d}x +\int_\Gamma(K\circledast\partial_{t}^{h}\psi^L_{\varepsilon}) \partial_{t}^{h}\psi^L_{\varepsilon}\,\mathrm{d}S \notag\\
    &\quad\leq(\epsilon\|\mathbb{J}\|_{\mathcal{B}(\mathcal{L}^{2})}^{2} +\|\mathbb{J}-\mathbb{J}_{N}\|_{\mathcal{B}(\mathcal{L}^{2})}) \|\partial_{t}^{h}\boldsymbol{\varphi}^L_{\varepsilon}\|_{\mathcal{L}^{2}}^{2} +C(\epsilon)C(N)\|\partial_{t}^{h}\boldsymbol{\varphi}^L_{\varepsilon}\|_{0,0,\ast}^{2}.\label{0817-1}
    \end{align}
    On the other hand, the following estimate holds
    \begin{align*}
    \|z^{\ast}\|_{0,0,\ast}^{2}&=\|\mathfrak{S}^{0}z^{\ast}\|_{\mathcal{H}_{0,0}^1}^{2}\\
    &=(z^{\ast},\mathfrak{S}^{0}z^{\ast})_{\mathcal{L}^{2}}\\
    &=\langle z^{\ast},\mathfrak{S}^{0}z^{\ast}\rangle_{\mathcal{H}_{L,0}^{-1}, \mathcal{H}_{L,0}^{1}}\\
    &\leq\|z^{\ast}\|_{L,0,\ast}\|\mathfrak{S}^{0}z^{\ast}\|_{\mathcal{H}_{L,0}^1}\\
    &=\|z^{\ast}\|_{L,0,\ast}\|\mathfrak{S}^{0}z^{\ast}\|_{\mathcal{H}_{0,0}^1}\\
    &=\|z^{\ast}\|_{L,0,\ast}\|z^{\ast}\|_{0,0,\ast},
    \end{align*}
    which implies that
    \begin{align}
    \|z^{\ast}\|_{0,0,\ast}\leq \|z^{\ast}\|_{L,0,\ast},\quad\forall\,z^{\ast}\in \mathcal{L}_{(0)}^{2},\quad L\in(0,+\infty).\label{compare2}
    \end{align}
    Using  \eqref{0817-1} and \eqref{compare2}, we can deduce from \eqref{quotient5} that
    \begin{align}
    &\frac{1}{2}\frac{\mathrm{d}}{\mathrm{d}t}\|\partial_{t}^{h} \boldsymbol{\varphi}^L_{\varepsilon}\|_{L,0,\ast}^2 +\Big(a_\ast+\frac{\alpha}{1+\alpha}-\gamma_{1}-\epsilon\|\mathbb{J} \|_{\mathcal{B}(\mathcal{L}^{2})}^{2}-\|\mathbb{J}-\mathbb{J}_{N} \|_{\mathcal{B}(\mathcal{L}^{2})}\Big)\|\partial_t^h \varphi_\varepsilon^L\|_{H}^2\notag\\
    &\qquad+\Big(a_\circledast+\frac{\alpha}{1+\alpha}-\gamma_{2} -\epsilon\|\mathbb{J}\|_{\mathcal{B}(\mathcal{L}^{2})}^{2} -\|\mathbb{J}-\mathbb{J}_{N}\|_{\mathcal{B}(\mathcal{L}^{2})}\Big) \|\partial_t^h\psi_\varepsilon^L\|_{H_\Gamma}^2\notag\\[1mm]
    &\quad\leq C(\epsilon)C(N)\|\partial_{t}^{h}\boldsymbol{\varphi}^L_{\varepsilon} \|_{L,0,\ast}^{2}.\label{0817-2}
    \end{align}
    By $\mathbf{(A3)}$, we can first select $N\in\mathbb{Z}^+$ sufficiently large,
    then $\epsilon>0$ sufficiently small, to get
    \begin{align*}
    	&a_\ast+\frac{\alpha}{1+\alpha}-\gamma_{1} -\epsilon\|\mathbb{J}\|_{\mathcal{B}(\mathcal{L}^{2})}^{2} -\|\mathbb{J}-\mathbb{J}_{N}\|_{\mathcal{B}(\mathcal{L}^{2})}>0,\\
    	&a_\circledast+\frac{\alpha}{1+\alpha}-\gamma_{2} -\epsilon\|\mathbb{J}\|_{\mathcal{B}(\mathcal{L}^{2})}^{2} -\|\mathbb{J}-\mathbb{J}_{N}\|_{\mathcal{B}(\mathcal{L}^{2})}>0.
    \end{align*}
    The above facts together with \eqref{0817-2} yield that there exist two positive constants $\widetilde{C}_1$
    and $\widetilde{C}_2$, independent of $L\in(0,1)$, such that
    \begin{align*}
    \frac{\mathrm{d}}{\mathrm{d}t}\Vert\partial_{t}^{h}\boldsymbol{\varphi}_{\varepsilon}\Vert_{L,0,\ast}^{2}+\widetilde{C}_{1}\Vert\partial_{t}^{h}\boldsymbol{\varphi}_{\varepsilon}\Vert_{\mathcal{L}^{2}}^{2}\leq \widetilde{C}_{2}\Vert\partial_{t}^{h}\boldsymbol{\varphi}_{\varepsilon}\Vert_{L,0,\ast}^{2}.
    \end{align*}
    As a consequence, we have obtained refined estimates of \eqref{quotient6} and \eqref{quotient10},
    which are uniform with respect to the parameters $(\varepsilon,L)\in(0,\varepsilon^\ast)\times(0,1)$.
     Next, we observe that the constant in \eqref{Tuni16} may also depend on $L\in(0,1)$. To overcome this, we can derive uniform estimates with respect to $L\in(0,1)$ by the same calculation as we did in the proof of Lemma \ref{L0uni1}.
    Hence, the constants on the right-hand side of \eqref{Tuni3}, \eqref{Tuni1}, \eqref{Tuni2}, \eqref{Tuni4}, \eqref{Tuni11} and \eqref{Tuni14}
    are all independent of $L\in(0,1)$. Then, passing to the limit as $L\to0$,
    we can conclude the first four estimates in \eqref{0global}.
    Based on the weak formulation \eqref{0weak1} for the case $L=0$
    and the first estimate in \eqref{0global},
    we can conclude the last estimate in \eqref{0global} from the elliptic regularity theory.
    \end{proof}

     \textbf{Proof of Theorem \ref{regularize}.}
     The conclusion of Theorem \ref{regularize} follows from Lemmas \ref{Tlemma1}--\ref{Tlemma5}. Moreover, since $\boldsymbol{\beta}(\boldsymbol{\varphi}):=(\beta(\varphi), \beta_\Gamma(\psi))\in L^{\infty}(\tau,+\infty;\mathcal{H}^{1})$, by the Sobolev embedding theorem, we can conclude \eqref{regularize1} and \eqref{regularize2}. In the two dimensional case, it holds
    \begin{align}
    \|\beta(\varphi(t))\|_{L^{p}(\Omega)}\leq C\sqrt{p}\|\beta(\varphi(t))\|_{V}\leq C\sqrt{p},\quad\text{for a.a.\ }t\geq\tau,\quad\forall\,p\geq2,\notag
    \end{align}
    while in both two and three dimension cases, we have
    \begin{align}
    \|\beta_{\Gamma}(\psi(t))\|_{L^{q}(\Gamma)}\leq C\sqrt{q}\|\beta_{\Gamma}(\psi(t))\|_{V_{\Gamma}}\leq C\sqrt{q},\quad\text{for a.a.\ }t\geq\tau,\quad\forall\,q\geq2,\notag
    \end{align}
    where the constant $C>0$ is independent of $p,q\geq2$.
    \hfill $\square$
    \medskip

Now, we proceed to prove Theorem \ref{separation} on the instantaneous strict separation property and additional regularity for chemical potentials. The proof is based on a suitable De Giorgi's iteration scheme (cf. \cite{Gior,Po,GP}).
  \medskip

\textbf{Proof of Theorem \ref{separation} with $d=3$.}
Let $\tau>0$ be fixed. We consider three positive parameters $T$, $\widetilde{\tau}$ and $\delta$ such that $T-3\widetilde{\tau}\geq\tau/2$ and $\delta\in(0,\min\{\delta_{1}/2,\widetilde{\delta}\})$ (cf. $\mathbf{(A9)}$, $\mathbf{(A10)}$). The precise value of $\widetilde{\tau}$ and $\delta$ will be chosen later. We define two sequences
\begin{align*}
\begin{cases}
t_{-1}=T-3\widetilde{\tau},\\[1mm]
t_{n}=t_{n-1}+\frac{\widetilde{\tau}}{2^{n}},
\end{cases}
\quad\text{and}\quad\kappa_{n}=1-\delta-\frac{\delta}{2^{n}},\quad\forall\,n\in\mathbb{N}.
\end{align*}
Notice that
\begin{align*}
t_{-1}<t_{n}<t_{n+1}<T-\widetilde{\tau},\quad\forall\,n\in\mathbb{N}\quad\text{such that }\quad t_{n}\rightarrow t_{-1}+2\widetilde{\tau}=T-\widetilde{\tau}\ \text{ as }n\rightarrow+\infty,
\end{align*}
and
\begin{align*}
	1-2\delta\leq\kappa_{n}<\kappa_{n+1}<1-\delta,\quad\forall\,n\in\mathbb{N}\quad\text{such that}\quad\kappa_{n}\rightarrow1-\delta\ \text{ as }n\rightarrow+\infty.
	\end{align*}
For $n\in\mathbb{N}$, we introduce $\eta_{n}\in C^{1}(\mathbb{R})$ such that
\begin{align*}
\eta_{n}=
\begin{cases}
1,&t\geq t_{n},\\
0,&t\leq t_{n-1},
\end{cases}
\quad\text{and}\quad|\eta_{n}'(t)|\leq\frac{2^{n+1}}{\widetilde{\tau}}.
\end{align*}
Next, we consider the function $\boldsymbol{\varphi}_{n}=(\varphi_{n},\psi_{n})$ for $n\in\mathbb{N}$, with
\begin{align*}
&\varphi_{n}(x,t)=\max\big\{\varphi(x,t)-\kappa_{n},0\big\}=(\varphi-\kappa_{n})^{+},\\
&\psi_{n}(x,t)=\max\big\{\psi(x,t)-\kappa_{n},0\big\}=(\psi-\kappa_{n})^{+}.
\end{align*}
Consequently, we introduce the sets $I_{n}=[t_{n-1},T]$ and for all $t\in I_n$,
\begin{align*}
&A_{n}(t)=\big\{x\in\Omega:\,\varphi(x,t)-\kappa_{n}\geq0\big\},\\
&\widetilde{A}_n(t)=\big\{x\in\Gamma:\,\varphi(x,t)-\kappa_{n}\geq0\big\},\\
&B_{n}(t)=\big\{x\in\Gamma:\,\psi(x,t)-\kappa_{n}\geq0\big\}.
\end{align*}
If $t\in[0,t_{n-1})$, we set $A_{n}(t)=\widetilde{A}_{n}(t)=B_{n}(t)=\emptyset$. We observe that
\begin{align*}
I_{n+1}\subset I_{n}\quad\forall\,n\in\mathbb{N},\quad I_{n}\rightarrow[T-\widetilde{\tau},T]\quad\text{as }n\rightarrow+\infty
\end{align*}
and
\begin{align*}
A_{n+1}(t)\subset A_{n}(t),\quad \widetilde{A}_{n+1}(t)\subset \widetilde{A}_{n}(t),\quad B_{n+1}(t)\subset B_{n}(t),\quad\forall\,n\in\mathbb{N},\quad t\in I_{n+1}.
\end{align*}
The last ingredient is
\begin{align*}
y_{n}=\int_{I_{n}}\Big(\int_{A_{n}(s)}1\,\mathrm{d}x+\int_{\widetilde{A}_{n}(s)}1\,\mathrm{d}S+\int_{B_{n}(s)}1\,\mathrm{d}S\Big)\mathrm{d}s,\quad\forall\,n\in\mathbb{N}.
\end{align*}
Notice that we introduce a term $\int_{I_n}\int_{\widetilde{A}_n(s)}1\,\mathrm{d}S\,\mathrm{d}s$ in the definition of $y_n$, which enables us to deal with the term $\mathcal{K}_3$ in \eqref{Tuni19}.

For any $n\in\mathbb{N}$, we choose $z=\varphi_{n}\eta_{n}^{2}$ in \eqref{weak1} and $z_\Gamma=\psi_n\eta_n^2$ in \eqref{weak2}, integrating over $[t_{n-1},t]$ where $t_{n}\leq t\leq T$, using \eqref{weak3} and \eqref{weak4}, we obtain
	\begin{align}
&\int_{t_{n-1}}^{t}\langle\partial_{t}\varphi,\varphi_{n}\eta_{n}^{2}\rangle_{V',V}\,\mathrm{d}s+\int_{t_{n-1}}^{t}\langle\partial_{t}\psi,\psi_{n}\eta_{n}^{2}\rangle_{V_\Gamma',V_\Gamma}\,\mathrm{d}s\notag\\
&\qquad+\int_{t_{n-1}}^{t}\int_{A_{n}(s)}(a_\Omega+\beta'(\varphi)+\pi'(\varphi))\nabla\varphi\cdot\nabla\varphi_{n}\eta_{n}^{2}\,\mathrm{d}x\,\mathrm{d}s\notag\\
&\qquad+\int_{t_{n-1}}^{t}\int_{B_{n}(s)}(a_\Gamma+\beta_\Gamma'(\psi)+\pi_\Gamma'(\psi))\nabla_\Gamma\psi\cdot\nabla_\Gamma\psi_{n}\eta_{n}^{2}\,\mathrm{d}S\,\mathrm{d}s\notag\\
&\quad=\int_{t_{n-1}}^{t}\int_{A_{n}(s)}(\nabla J\ast\varphi-\varphi\nabla a_\Omega)\cdot\nabla\varphi_{n}\eta_{n}^{2}\,\mathrm{d}x\,\mathrm{d}s\notag\\
&\qquad+\int_{t_{n-1}}^{t}\int_{B_{n}(s)}(\nabla_{\Gamma} K\circledast\psi-\psi\nabla_{\Gamma}a_\Gamma)\cdot\nabla_{\Gamma}\psi_{n}\eta_{n}^{2}\,\mathrm{d}S\,\mathrm{d}s\notag\\
&\qquad+\frac{1}{L}\int_{t_{n-1}}^{t}\int_\Gamma(\theta-\mu)(\varphi_n-\psi_n)\eta_n^2\,\mathrm{d}S\,\mathrm{d}s\notag\\
&\quad=\mathcal{K}_{1}+\mathcal{K}_{2}+\mathcal{K}_3.\label{Tuni19}
\end{align}
Integrating by parts with respect to time, the first two terms on the left-hand side of \eqref{Tuni19} can be rewritten as
\begin{align}
\int_{t_{n-1}}^{t}\langle\partial_{t}\varphi,\varphi_{n}\eta_{n}^{2}\rangle_{V',V}\,\mathrm{d}s&=\frac{1}{2}\Vert\varphi_{n}(t)\Vert_{H}^{2}-\int_{t_{n-1}}^{t}\Vert\varphi_{n}(s)\Vert_{H}^{2}\eta_{n}\eta_{n}'\,\mathrm{d}s,\label{Tuni20.1}\\
\int_{t_{n-1}}^{t}\langle\partial_{t}\psi,\psi_{n}\eta_{n}^{2}\rangle_{V_\Gamma',V_\Gamma}\,\mathrm{d}s&=\frac{1}{2}\|\psi_n(t)\|_{H_\Gamma}^2-\int_{t_{n-1}}^t\|\psi_n(s)\|_{H_{\Gamma}}^2\eta_n\eta_n'\,\mathrm{d}s.\label{Tuni20.2}
\end{align}
Recall  $|\eta'(t)|\leq\frac{2^{n+1}}{\widetilde{\tau}}$ and the observation (cf. \cite{Po})
\begin{align*}
&0\leq\varphi_{n}\leq2\delta\quad\text{a.e. in }\overline{\Omega},\quad\forall\,t\in[T-2\widetilde{\tau},T],\\
&0\leq\psi_{n}\leq2\delta\quad\text{a.e. on }\Gamma,\quad\forall\,t\in[T-2\widetilde{\tau},T],
\end{align*}
we have
\begin{align}
&\int_{t_{n-1}}^{t}\Vert\varphi_{n}(s)\Vert_{H}^{2}\eta_{n}\eta_{n}'\,\mathrm{d}s+\int_{t_{n-1}}^t\|\psi_n(s)\|_{H_{\Gamma}}^2\eta_n\eta_n'\,\mathrm{d}s\notag\\
&\quad\leq\frac{2^{n+1}}{\widetilde{\tau}}\int_{I_{n}}\int_{A_{n}(s)}(2\delta)^{2}\,\mathrm{d}x\,\mathrm{d}s+\frac{2^{n+1}}{\widetilde{\tau}}\int_{I_{n}}\int_{B_{n}(s)}(2\delta)^{2}\,\mathrm{d}S\,\mathrm{d}s\notag\\
&\quad\leq\frac{2^{n+3}}{\widetilde{\tau}}\delta^{2}y_{n}.\label{Tuni20-1}
\end{align}
By $\mathbf{(A9)}$, $\mathbf{(A10)}$ and $A_{n}(t)\subset A_{0}(t)$, choosing $\delta>0$ sufficiently small, for $t\geq t_{n}$, there holds
\begin{align}
&\int_{t_{n-1}}^{t}\int_{A_{n}(s)}(a_\Omega+\beta'(\varphi)+\pi'(\varphi))\nabla\varphi\cdot\nabla\varphi_{n}\eta_{n}^{2}\,\mathrm{d}x\,\mathrm{d}s\geq \frac{\beta'(1-2\delta)}{2}\int_{t_{n-1}}^{t}\Vert\nabla\varphi_{n}\Vert_{H}^{2}\eta_{n}^{2}\,\mathrm{d}s,\label{Tuni21-1}\\
&\int_{t_{n-1}}^{t}\int_{B_{n}(s)}(a_\Gamma+\beta_\Gamma'(\psi)+\pi_\Gamma'(\psi))\nabla_\Gamma\psi\cdot\nabla_\Gamma\psi_{n}\eta_{n}^{2}\,\mathrm{d}S\,\mathrm{d}s\geq \frac{\beta_\Gamma'(1-2\delta)}{2}\int_{t_{n-1}}^{t}\Vert\nabla_\Gamma\psi_{n}\Vert_{H_\Gamma}^{2}\eta_{n}^{2}\,\mathrm{d}s.\label{Tuni21-2}
\end{align}
For the term $\mathcal{K}_{1}$, using H\"older's inequality and $\mathbf{(A1)}$, we obtain
\begin{align}
|\mathcal{K}_{1}|&=\Big|\int_{t_{n-1}}^{t}\int_{A_{n}(s)}(\nabla J\ast\varphi-\varphi\nabla a_\Omega)\eta_{n}\cdot\nabla\varphi_{n}\eta_{n}\,\mathrm{d}x\,\mathrm{d}s\Big|\notag\\
&\leq\frac{1}{4}\beta'(1-2\delta)\int_{t_{n-1}}^{t}\Vert\nabla\varphi_{n}\Vert_{H}^{2}\eta_{n}^{2}\,\mathrm{d}s+\frac{1}{\beta'(1-2\delta)}\int_{t_{n-1}}^{t}\int_{A_{n}(s)}|\nabla J\ast\varphi-\varphi \nabla a_\Omega|^{2}\eta_{n}^{2}\,\mathrm{d}x\,\mathrm{d}s\notag\\
&\leq\frac{1}{4}\beta'(1-2\delta)\int_{t_{n-1}}^{t}\Vert\nabla\varphi_{n}\Vert_{H}^{2}\eta_{n}^{2}\,\mathrm{d}s+\frac{1}{\beta'(1-2\delta)}\int_{t_{n-1}}^{t}\Vert\nabla J\ast\varphi-\varphi\nabla a_\Omega\Vert^{2}_{L^{\infty}(\Omega)}\int_{A_{n}(s)}1\,\mathrm{d}x\,\mathrm{d}s\notag\\
&\leq\frac{1}{4}\beta'(1-2\delta)\int_{t_{n-1}}^{t}\Vert\nabla\varphi_{n}\Vert_{H}^{2}\eta_{n}^{2}\,\mathrm{d}s+\frac{4(b^{\ast})^{2}}{\beta'(1-2\delta)}\int_{t_{n-1}}^{t}\int_{A_{n}(s)}1\,\mathrm{d}x\,\mathrm{d}s\notag\\
&\leq\frac{1}{4}\beta'(1-2\delta)\int_{t_{n-1}}^{t}\Vert\nabla\varphi_{n}\Vert_{H}^{2}\eta_{n}^{2}\,\mathrm{d}s+\frac{4(b^{\ast})^{2}}{\beta'(1-2\delta)}y_{n}.\label{Tuni22}
\end{align}
Similarly,
\begin{align}
|\mathcal{K}_{2}|\leq\frac{1}{4}\beta_\Gamma'(1-2\delta)\int_{t_{n-1}}^{t}\Vert\nabla_\Gamma\psi_{n}\Vert_{H_\Gamma}^{2}\eta_{n}^{2}\,\mathrm{d}s+\frac{4(b^{\circledast})^{2}}{\beta_\Gamma'(1-2\delta)}y_{n}.\label{Tuni23}
\end{align}
For the term $\mathcal{K}_3$, taking the equations of $\mu$ and $\theta$ (cf. \eqref{weak3}, \eqref{weak4}) into account, since $\beta=\beta_\Gamma$ is monotone non-decreasing (cf. $\mathbf{(A2)}$, $\mathbf{(A7)}$), it holds
\begin{align}
	&\int_{t_{n-1}}^{t}\int_\Gamma(\theta-\mu)(\varphi_n-\psi_n)\eta_n^2\,\mathrm{d}S\,\mathrm{d}s\notag\\
	&\quad=\lim_{\varepsilon\to0}\int_{t_{n-1}}^{t}\int_\Gamma(\theta_\varepsilon-\mu_\varepsilon)((\varphi_\varepsilon-\kappa_{n})^+-(\psi_\varepsilon-\kappa_n)^+)\eta_n^2\,\mathrm{d}S\,\mathrm{d}s\notag\\
	&\quad=\underbrace{\lim_{\varepsilon\to0}\int_{t_{n-1}}^{t}\int_\Gamma (\beta_{\Gamma,\varepsilon}(\psi_\varepsilon)-\beta_\varepsilon(\varphi_\varepsilon))((\varphi_\varepsilon-\kappa_{n})^+-(\psi_\varepsilon-\kappa_n)^+)\eta_n^2\,\mathrm{d}S\,\mathrm{d}s}_{\leq0}\notag\\
	&\qquad+\int_{t_{n-1}}^{t}\int_\Gamma(a_\Gamma\psi-K\circledast\psi+\pi_\Gamma(\psi)-a_\Omega\varphi+J\ast\varphi-\pi(\varphi))(\varphi_n-\psi_n)\eta_n^2\,\mathrm{d}S\,\mathrm{d}s\notag\\
	&\quad\leq \widehat{C}\delta y_n.\label{k3}
\end{align}
The limiting process in \eqref{k3} is essential since $\beta_\varepsilon$ is Lipschitz continuous which infers that $\beta_\varepsilon(\varphi_\varepsilon)|_\Gamma=\beta_\varepsilon(\varphi_\varepsilon|_\Gamma)$ and enables us to use the monotonicity of $\beta$.
Hence, by \eqref{Tuni19}--\eqref{k3}, we obtain
\begin{align}
&\Vert\varphi_{n}(t)\Vert_{H}^{2}+\beta'(1-2\delta)\int_{t_{n-1}}^{t}\Vert\nabla\varphi_{n}\Vert_{H}^{2}\eta_{n}^{2}\,\mathrm{d}s\notag\\
&\qquad+\Vert\psi_{n}(t)\Vert_{H_\Gamma}^{2}+\beta_\Gamma'(1-2\delta)\int_{t_{n-1}}^{t}\Vert\nabla_\Gamma\psi_{n}\Vert_{H_\Gamma}^{2}\eta_{n}^{2}\,\mathrm{d}s\notag\\
&\quad\leq\frac{16(b^{\ast})^{2}}{\beta'(1-2\delta)}y_{n}+\frac{16(b^{\circledast})^{2}}{\beta_\Gamma'(1-2\delta)}y_{n}+\frac{2^{n+5}}{\widetilde{\tau}}\delta^{2}y_{n}+4\widehat{C}\delta y_n,\quad\forall\,t\in[t_{n},T].\label{Tuni24}
\end{align}
In particular, we infer that
\begin{align}
&\max_{t\in I_{n+1}}\Vert\varphi_{n}(t)\Vert_{H}^{2}\leq X_{n},\quad \beta'(1-2\delta)\int_{I_{n+1}}\Vert\nabla\varphi_{n}\Vert_{H}^{2}\,\mathrm{d}s\leq X_{n},\notag\\
&\max_{t\in I_{n+1}}\Vert\psi_{n}(t)\Vert_{H_{\Gamma}}^{2}\leq X_{n},\quad \beta_\Gamma'(1-2\delta)\int_{I_{n+1}}\Vert\nabla_{\Gamma}\psi_{n}\Vert_{H_{\Gamma}}^{2}\,\mathrm{d}s\leq X_{n},\notag
\end{align}
with
\begin{align*}
X_{n}=2^n\max\Big\{\frac{16(b^{\ast})^{2}}{\beta'(1-2\delta)},\frac{16(b^{\circledast})^{2}}{\beta_\Gamma'(1-2\delta)},\frac{2^{5}}{\widetilde{\tau}}\delta^{2},4\widehat{C}\delta\Big\}y_{n}.
\end{align*}
Next, for $t\in I_{n+1}$ and for almost every $x\in A_{n+1}(t)$, following \cite{GGG23}, we find
\begin{align*}
\varphi_{n}(x,t)&=\varphi(x,t)-\Big[1-\delta-\frac{\delta}{2^{n}}\Big]\\
&=\varphi(x,t)-\Big[1-\delta-\frac{\delta}{2^{n+1}}+\frac{\delta}{2^{n+1}}-\frac{\delta}{2^{n}}\Big]\\
&=\varphi(x,t)-\Big[1-\delta-\frac{\delta}{2^{n+1}}\Big]+\delta\Big[\frac{1}{2^{n}}-\frac{1}{2^{n+1}}\Big]\geq\frac{\delta}{2^{n+1}}.
\end{align*}
Analogously, there holds
\begin{align*}
	\varphi_n(x,t)&\geq\frac{\delta}{2^{n+1}},\quad\text{for }t\in I_{n+1}\text{ and a.e. }x\in\widetilde{A}_{n+1}(t),\\
	\psi_n(x,t)&\geq\frac{\delta}{2^{n+1}},\quad\text{for }t\in I_{n+1}\text{ and a.e. }x\in B_{n+1}(t).
\end{align*}
As a consequence, we obtain
\begin{align}
y_{n+1}&=\int_{I_{n+1}}\Big(\int_{A_{n+1}(s)}1\,\mathrm{d}x+\int_{\widetilde{A}_{n+1}(s)}1\,\mathrm{d}S+\int_{B_{n+1}(s)}1\,\mathrm{d}S\Big)\mathrm{d}s\notag\\
&\leq \Big(\frac{2^{n+1}}{\delta}\Big)^{\frac{8}{3}}\int_{I_{n+1}}\int_{A_{n+1}(s)}|\varphi_n|^{\frac{8}{3}}\,\mathrm{d}x\,\mathrm{d}s+\Big(\frac{2^{n+1}}{\delta}\Big)^{\frac{8}{3}}\int_{I_{n+1}}\int_{\widetilde{A}_{n+1}(s)}|\varphi_n|^{\frac{8}{3}}\,\mathrm{d}S\,\mathrm{d}s\notag\\
&\quad+\Big(\frac{2^{n+1}}{\delta}\Big)^{\frac{8}{3}}\int_{I_{n+1}}\int_{B_{n+1}(s)}|\psi_n|^{\frac{8}{3}}\,\mathrm{d}S\,\mathrm{d}s\notag\\
&\leq\Big(\frac{2^{n+1}}{\delta}\Big)^{\frac{8}{3}}\int_{I_{n+1}}\|\varphi_n\|_H^{\frac{5}{3}}\|\varphi\|_{V}\,\mathrm{d}s+C\Big(\frac{2^{n+1}}{\delta}\Big)^{\frac{8}{3}}\int_{I_{n+1}}\|\varphi_n\|_{H^{\frac{3}{4}}(\Omega)}^{\frac{8}{3}}\,\mathrm{d}s\notag\\
&\quad+\Big(\frac{2^{n+1}}{\delta}\Big)^{\frac{8}{3}}\int_{I_{n+1}}\int_{B_{n+1}(s)}|\psi_n|^{\frac{8}{3}}\,\mathrm{d}S\,\mathrm{d}s\notag\\
&\leq\underbrace{\Big(\frac{2^{n+1}}{\delta}\Big)^{\frac{8}{3}}\int_{I_{n+1}}\|\varphi_n\|_H^{\frac{5}{3}}\|\varphi\|_{V}\,\mathrm{d}s+C\Big(\frac{2^{n+1}}{\delta}\Big)^{\frac{8}{3}}\int_{I_{n+1}}\|\varphi_n\|_{H}^{\frac{2}{3}}\|\varphi_n\|_V^2\,\mathrm{d}s}_{\mathcal{K}_4}\notag\\
&\quad+\underbrace{\Big(\frac{2^{n+1}}{\delta}\Big)^{\frac{8}{3}}\int_{I_{n+1}}\int_{B_{n+1}(s)}|\psi_n|^{\frac{8}{3}}\,\mathrm{d}S\,\mathrm{d}s}_{\mathcal{K}_5}.\label{Tuni26}
\end{align}
For the term $\mathcal{K}_4$, recall $0\leq \varphi_n\leq 2\delta$ almost everywhere in $\Omega\times[T-2\widetilde{\tau},T]$, we obtain
\begin{align}
\Big(\frac{\delta}{2^{n+1}}\Big)^{\frac{8}{3}}\mathcal{K}_4&\leq C\int_{I_{n+1}}\|\varphi_n\|_{H}^{\frac{2}{3}}(\|\varphi_n\|_{H}^2+\|\nabla\varphi_n\|_H^2)\,\mathrm{d}s\notag\\
&\leq CX_n^{\frac{1}{3}}\int_{I_{n+1}}\int_{A_n(s)}(2\delta)^2\,\mathrm{d}x\,\mathrm{d}s\notag\\
&\quad+\frac{C}{\beta'(1-2\delta)}\max_{t\in I_{n+1}}\|\varphi_n\|_H^{\frac{2}{3}}\,\beta'(1-2\delta)\int_{I_{n+1}}\|\nabla\varphi_n\|_{H}^2\,\mathrm{d}s\notag\\
&\leq CX_n^{\frac{1}{3}}\delta^2 y_n+\frac{C}{\beta'(1-2\delta)}X_n^{\frac{4}{3}}.\label{k4}
\end{align}
Exploiting interpolation inequality to $\mathcal{K}_5$, it holds
\begin{align}
\Big(\frac{\delta}{2^{n+1}}\Big)^{\frac{8}{3}}\mathcal{K}_5&\leq C \int_{I_{n+1}}\|\psi_n\|_{H_\Gamma}^{\frac{5}{3}}\|\psi_n\|_{V_\Gamma}\,\mathrm{d}s\notag\\
&\leq C \int_{I_{n+1}}\|\psi_n\|_{H_\Gamma}^{\frac{2}{3}}\|\psi_n\|_{V_\Gamma}^2\,\mathrm{d}s\notag\\
& = C \int_{I_{n+1}}\|\psi_n\|_{H_\Gamma}^{\frac{2}{3}}(\|\psi_n\|_{H_\Gamma}^2+\|\nabla_{\Gamma}\psi_n\|_{H_\Gamma}^2)\,\mathrm{d}s\notag\\
&\leq CX_n^{\frac{1}{3}}\int_{I_{n+1}}\int_{B_{n}(s)}(2\delta)^2\,\mathrm{d}S\,\mathrm{d}s\notag\\
&\quad+\frac{C}{\beta_\Gamma'(1-2\delta)}\max_{t\in I_{n+1}}\|\psi_n\|_{H_\Gamma}^{\frac{2}{3}}\,\beta_\Gamma'(1-2\delta)\int_{I_{n+1}}\|\nabla_{\Gamma}\psi_n\|_{H_\Gamma}^2\,\mathrm{d}s\notag\\
&\leq C X_n^{\frac{1}{3}}\delta^2 y_n+\frac{C}{\beta_\Gamma'(1-2\delta)}X_n^{\frac{4}{3}}\notag\\
&= C X_n^{\frac{1}{3}}\delta^2 y_n+\frac{C}{\beta'(1-2\delta)}X_n^{\frac{4}{3}}.\label{Tuni28}
\end{align}
Then, collecting \eqref{Tuni26}, \eqref{k4} and \eqref{Tuni28}, we deduce that
\begin{align}
y_{n+1}&\leq C\Big(\frac{2^{n+1}}{\delta}\Big)^{\frac{8}{3}}\Big(X_n^{\frac{1}{3}}\delta^2 y_n+\frac{1}{\beta'(1-2\delta)}X_n^{\frac{4}{3}}\Big).\label{Tuni29}
 \end{align}
By the definition of $X_n$, we can conclude
\begin{align}
X_n\leq C_\sharp2^n \delta y_n,\quad \text{provided that }\ \widetilde{\tau}\geq\frac{2}{\widetilde{C}_0\min\{(b^\ast)^2+(b^\circledast)^2\}}\delta.\label{restrict1}
\end{align}
Then, by \eqref{Tuni29}, we see that
\begin{align*}
	y_{n+1}\leq \frac{\widetilde{C}_\sharp}{\delta^{\frac{1}{3}}} 16^n y_n^{\frac{4}{3}}.
\end{align*}
Now, we apply Lemma \ref{iteration} for $z_{n}=y_{n}$, $C=\widetilde{C}_{\sharp}\delta^{-\frac{1}{3}}$, $b=16$, $\epsilon=\frac{1}{3}$, to conclude that
\begin{align}
y_{n}\rightarrow0\label{key}
\end{align}
if
\begin{align}
y_{0}\leq \widetilde{C}_\sharp^{-3}16^{-9}\delta.\notag
\end{align}
By the definition of $y_{0}$ and the monotone non-decreasing property of $\beta$, $\beta_\Gamma$, we have
\begin{align*}
y_{0}&=\int_{I_{0}}\Big(\int_{A_{0}(s)}1\,\mathrm{d}x+\int_{\widetilde{A}_{0}(s)}1\,\mathrm{d}S+\int_{B_{0}(s)}1\,\mathrm{d}S\Big)\mathrm{d}s\notag\\
&\leq\int_{I_{0}}\int_{\{x\in\Omega:\,\varphi(x,s)\geq1-2\delta\}}1\,\mathrm{d}x\,\mathrm{d}s+\int_{I_{0}}\int_{\{x\in\Gamma:\,\varphi(x,s)\geq1-2\delta\}}1\,\mathrm{d}S\,\mathrm{d}s\notag\\
&\quad+\int_{I_{0}}\int_{\{x\in\Gamma:\,\psi(x,s)\geq1-2\delta\}}1\,\mathrm{d}S\,\mathrm{d}s\notag\\
&\leq\int_{I_{0}}\int_{A_{0}(s)}\frac{|\beta(\varphi)|}{|\beta(1-2\delta)|}\,\mathrm{d}x\,\mathrm{d}s+\int_{I_{0}}\int_{\widetilde{A}_{0}(s)}\frac{|h_j\circ\beta(\varphi)|}{|\beta(1-2\delta)|}\,\mathrm{d}S\,\mathrm{d}s\notag\\
&\quad+\int_{I_{0}}\int_{B_{0}(s)}\frac{|\beta_\Gamma(\psi)|}{|\beta_\Gamma(1-2\delta)|}\,\mathrm{d}S\,\mathrm{d}s\notag\\
&\leq 3\widetilde{\tau}\|\beta(\varphi)\|_{L^\infty(\frac{\tau}{2},+\infty;V)}\frac{C}{|\text{ln}(\delta)|^\kappa}+3\widetilde{\tau}\|h_j\circ\beta(\varphi)\|_{L^\infty(\frac{\tau}{2},+\infty;V)}\frac{C}{|\text{ln}(\delta)|^\kappa}\notag\\
&\quad+3\widetilde{\tau}\|\beta_\Gamma(\psi)\|_{L^\infty(\frac{\tau}{2},+\infty;V_\Gamma)}\frac{C}{|\text{ln}(\delta)|^\kappa}\notag\\
&\leq 6\widetilde{\tau}\|\beta(\varphi)\|_{L^\infty(\frac{\tau}{2},+\infty;V)}\frac{C}{|\text{ln}(\delta)|^\kappa}+3\widetilde{\tau}\|\beta_\Gamma(\psi)\|_{L^\infty(\frac{\tau}{2},+\infty;V_\Gamma)}\frac{C}{|\text{ln}(\delta)|^\kappa}\notag\\
&\leq C(\tau,E(\boldsymbol{\varphi}_0))\frac{\widetilde{\tau}}{|\text{ln}(\delta)|^\kappa},
\end{align*}
for sufficiently large $j\in\mathbb{N}$ such that $j>|\beta(1-2\delta)|$, where the function $h_j$ is defined by
\begin{align}
	h_j:\ \mathbb{R}\to\mathbb{R},\quad h_j(s)=
	\begin{cases}
		j,&\text{if }s\geq j,\\
		s,&\text{if }s\in(-j,j),\\
		-j,&\text{if }s\leq -j,
	\end{cases}
	\quad\forall\,j\in\mathbb{N}.
\end{align}
Note that the truncation function $h_j$ enables us to have the relation $(h_j\circ\beta(\varphi))|_\Gamma=h_j\circ\beta(\varphi|_\Gamma)$.
Thus, we impose that
\begin{align}
	C(\tau,E(\boldsymbol{\varphi}_0))\frac{\widetilde{\tau}}{|\text{ln}(\delta)|^\kappa}\leq\widetilde{C}_\sharp^{-3}16^{-9}\delta.\label{restrict}
\end{align}
In light of \eqref{restrict1} and \eqref{restrict}, we choose $\delta$ sufficiently small such that $\widetilde{\tau}$ satisfies the relation
\begin{align}
\frac{4}{C_F\min\{(b^\ast)^2+(b^\circledast)^2\}}\delta\leq \widetilde{\tau}\leq 	\frac{\widetilde{C}_\sharp^{-3}16^{-9}}{	C(\tau,E(\boldsymbol{\varphi}_0))}\delta|\text{ln}(\delta)|^\kappa.\label{241202-1}
\end{align}
Now, set $T=\tau+\frac{\widetilde{\tau}}{2}$, reduce $\delta$ to get $\widetilde{\tau}$ even smaller, we clearly have $\tau-\frac{5\widetilde{\tau}}{2}\geq\frac{\tau}{2}$. Hence, we can conclude \eqref{key}, which implies that
\begin{align*}
&\Vert(\varphi-(1-\delta))^{+}\Vert_{L^{\infty}(\Omega\times(T-\widetilde{\tau},T))}=0,\\[1mm]
&\Vert(\psi-(1-\delta))^{+}\Vert_{L^{\infty}(\Gamma\times(T-\widetilde{\tau},T))}=0.
\end{align*}
We can repeat the same argument, with the same $T$ and $\widetilde{\tau}$ fixed, for the case $(\varphi-(-1+\delta))^{-}$ (resp. $(\psi-(-1+\delta))^{-}$), using $\varphi_{n}(t)=(\varphi(t)+k_{n})^{-}$ (resp. $\psi_{n}(t)=(\psi(t)+k_{n})^{-}$). The argument remains exactly the same due to assumptions $\mathbf{(A7)}$--$\mathbf{(A10)}$. We choose the minimum of the $\delta$ obtained in the two cases, ensuring that:
\begin{align*}
&-1+\delta\leq\varphi(x,t)\leq1-\delta,\quad\text{a.e. in }\Omega\times(T-\widetilde{\tau},T),\\[1mm]
&-1+\delta\leq\psi(x,t)\leq1-\delta,\quad\text{a.e. on }\Gamma\times(T-\widetilde{\tau},T).
\end{align*}
In conclusion, since $T-2\widetilde{\tau}=\tau-\frac{3}{2}\widetilde{\tau}\geq\frac{\tau}{2}$, we can repeat the same procedure on the interval $(T,T+\widetilde{\tau})$ (with a new starting time at $t_{-1}=T-2\widetilde{\tau}\geq\frac{\tau}{2}$) and so on, eventually covering the entire interval $[\tau,+\infty)$. Hence, we complete the proof of \eqref{sepa1} and \eqref{sepa2}.

Now, we turn to the proof of the additional regularity results \eqref{mu-high}.
By the definition of $\mu$ in \eqref{weak3} and $\theta$ in \eqref{weak4}, we have
\begin{align*}
	\sup_{t\geq\tau}\|\mu(t)\|_{L^\infty(\Omega)}&\leq \sup_{t\geq\tau}\Big(\|a_\Omega \varphi\|_{L^\infty(\Omega)}+\|J\ast\varphi\|_{L^\infty(\Omega)} +\|\beta(\varphi)\|_{L^\infty(\Omega)}+\|\pi(\varphi)\|_{L^\infty(\Omega)}\Big)\\
	&\leq a^\ast+\|J\|_{L^1(\mathbb{R}^d)}+\max\big\{|\beta(1-\delta)|,|\beta(-1+\delta) |\big\}+\max_{r\in[-1,1]}|\pi(r)|\\
	&\leq \widehat{C}_0,
\end{align*}
and
\begin{align*}
	\sup_{t\geq\tau}\|\theta(t)\|_{L^\infty(\Gamma)}&\leq \sup_{t\geq\tau}\Big(\|a_\Gamma \psi\|_{L^\infty(\Gamma)}+\|K\circledast\psi\|_{L^\infty(\Gamma)} +\|\beta_\Gamma(\psi)\|_{L^\infty(\Gamma)} +\|\pi_\Gamma(\psi)\|_{L^\infty(\Gamma)}\Big)\\
	&\leq a^\circledast+\|K\|_{W^{2,r}(\mathbb{R}^d)} +\max\big\{|\beta_\Gamma(1-\delta)|,|\beta_\Gamma(-1+\delta)|\big\} +\max_{r\in[-1,1]}|\pi_\Gamma(r)|\\
	&\leq \widehat{C}_0.
\end{align*}
Next, using the idea in \cite{GGGP}, we observe that for any $0<t\leq T-h$, it holds
\begin{align}
	\partial_t^h\mu(\cdot)&=a_\Omega\partial_t^h\varphi(\cdot) -J\ast\partial_t^h\varphi(\cdot)+\partial_t^h\varphi(\cdot) \Big(\int_0^1\beta'(s\varphi(\cdot+h)+(1-s)\varphi(\cdot)) \,\mathrm{d}s\Big)\notag\\
	&\quad+\partial_t^h\varphi(\cdot)\Big(\int_0^1\pi'(s\varphi(\cdot+h) +(1-s)\varphi(\cdot))\,\mathrm{d}s\Big)\notag\\
	\partial_t^h\theta(\cdot)&=a_\Gamma\partial_t^h\psi(\cdot) -K\circledast\partial_t^h\psi(\cdot)+\partial_t^h\psi(\cdot)\Big(\int_0^1 \beta_\Gamma'(s\psi(\cdot+h)+(1-s)\psi(\cdot))\,\mathrm{d}s\Big)\notag\\
	&\quad+\partial_t^h\psi(\cdot)\Big(\int_0^1\pi_\Gamma'(s\psi(\cdot+h) +(1-s)\psi(\cdot))\,\mathrm{d}s\Big).\notag
\end{align}
From \eqref{sepa1}, \eqref{sepa2} and strict separation properties of $\varphi$, $\psi$, we find
\begin{align*} &\|s\varphi(\cdot+h)+(1-s)\varphi(\cdot)\|_{L^\infty(\Omega\times(\tau,+\infty))} \leq 1-\delta,\quad\forall\,s\in(0,1),\\[1mm] &\|s\psi(\cdot+h)+(1-s)\psi(\cdot)\|_{L^\infty(\Gamma\times(\tau,+\infty))} \leq 1-\delta,\quad\forall\,s\in(0,1).
\end{align*}
Then, exploiting the fact $\|\partial_t^h\boldsymbol{\varphi}\|_{L^2(0,T-h;\mathcal{L}^2)} \leq\|\partial_t\boldsymbol{\varphi}\|_{L^2(0,T;\mathcal{L}^2)}$, we infer from $\partial_{t}\boldsymbol{\varphi}\in L^{2}_{\mathrm{uloc}}(\tau,+\infty;\mathcal{L}^{2})$ that $\partial_{t}\boldsymbol{\mu}\in L^{2}_{\mathrm{uloc}}(\tau,+\infty;\mathcal{L}^{2})$ and as a result, \eqref{mu-high} holds. The proof is complete.\hfill$\square$
\bigskip

\textbf{Proof of Theorem \ref{separation} with $d=2$.}
When the spatial dimension is two, recall that the assumption $\mathbf{(A8)}$ holds with $\kappa>1/2$, we first refine some estimates we made for the case where the spatial dimension is three:
\begin{itemize}
\item  The estimates \eqref{Tuni21-1} and \eqref{Tuni21-2} can be refined as
 \begin{align}
 	&\int_{t_{n-1}}^{t}\int_{A_{n}(s)}(a_\Omega+\beta'(\varphi)+\pi'(\varphi))\nabla\varphi\cdot\nabla\varphi_{n}\eta_{n}^{2}\,\mathrm{d}x\,\mathrm{d}s\geq \alpha_{\text{min}}\int_{t_{n-1}}^{t}\Vert\nabla\varphi_{n}\Vert_{H}^{2}\eta_{n}^{2}\,\mathrm{d}s,\label{Tuni21-1-2d}\\
 	&\int_{t_{n-1}}^{t}\int_{B_{n}(s)}(a_\Gamma+\beta_\Gamma'(\psi)+\pi_\Gamma'(\psi))\nabla_\Gamma\psi\cdot\nabla_\Gamma\psi_{n}\eta_{n}^{2}\,\mathrm{d}S\,\mathrm{d}s\geq \alpha_{\text{min}}\int_{t_{n-1}}^{t}\Vert\nabla_\Gamma\psi_{n}\Vert_{H_\Gamma}^{2}\eta_{n}^{2}\,\mathrm{d}s,\label{Tuni21-2-2d}
 \end{align}
 with $\alpha_{\text{min}}:=\min\big\{a_\ast+\alpha-\gamma_{1},a_\circledast+\alpha-\gamma_{2}\big\}.$

 \item The estimates \eqref{Tuni22} and \eqref{Tuni23} can be refined as
 \begin{align}
 &|\mathcal{K}_1|\leq\frac{\alpha_{\text{min}}}{4}\int_{t_{n-1}}^{t}\Vert\nabla\varphi_{n}\Vert_{H}^{2}\eta_{n}^{2}\,\mathrm{d}s+\frac{4(b^{\ast})^{2}}{\alpha_{\text{min}}}y_{n},\label{Tuni22-2d}\\
 &|\mathcal{K}_{2}|\leq\frac{\alpha_{\text{min}}}{4}\int_{t_{n-1}}^{t}\Vert\nabla_\Gamma\psi_{n}\Vert_{H_\Gamma}^{2}\eta_{n}^{2}\,\mathrm{d}s+\frac{4(b^{\circledast})^{2}}{\alpha_{\text{min}}}y_{n}.\label{Tuni23-2d}
 \end{align}
\end{itemize}
By \eqref{k3}, \eqref{Tuni24}, \eqref{Tuni21-1-2d}, \eqref{Tuni21-2-2d}, \eqref{Tuni22-2d} and \eqref{Tuni23-2d}, we obtain
\begin{align}
	&\Vert\varphi_{n}(t)\Vert_{H}^{2}+\alpha_{\text{min}}\int_{t_{n-1}}^{t}\Vert\nabla\varphi_{n}\Vert_{H}^{2}\eta_{n}^{2}\,\mathrm{d}s\notag\\
	&\qquad+\Vert\psi_{n}(t)\Vert_{H_\Gamma}^{2}+\alpha_{\text{min}}\int_{t_{n-1}}^{t}\Vert\nabla_\Gamma\psi_{n}\Vert_{H_\Gamma}^{2}\eta_{n}^{2}\,\mathrm{d}s\notag\\
	&\quad\leq\frac{16(b^{\ast})^{2}}{\alpha_{\text{min}}}y_{n}+\frac{16(b^{\circledast})^{2}}{\alpha_{\text{min}}}y_{n}+\frac{2^{n+5}}{\widetilde{\tau}}\delta^{2}y_{n}+4\widehat{C}\delta y_n,\quad\forall\,t\in[t_{n},T].\notag
\end{align}
In particular, we infer that
\begin{align}
	&\max_{t\in I_{n+1}}\Vert\varphi_{n}(t)\Vert_{H}^{2}\leq Y_{n},\quad \alpha_{\text{min}}\int_{I_{n+1}}\Vert\nabla\varphi_{n}\Vert_{H}^{2}\,\mathrm{d}s\leq Y_{n},\notag\\
	&\max_{t\in I_{n+1}}\Vert\psi_{n}(t)\Vert_{H_{\Gamma}}^{2}\leq Y_{n},\quad \alpha_{\text{min}}\int_{I_{n+1}}\Vert\nabla_{\Gamma}\psi_{n}\Vert_{H_{\Gamma}}^{2}\,\mathrm{d}s\leq Y_{n},\notag
\end{align}
with
\begin{align*}
	Y_{n}=2^n\max\Big\{\frac{16(b^{\ast})^{2}}{\alpha_{\text{min}}},\frac{16(b^{\circledast})^{2}}{\alpha_{\text{min}}},\frac{2^{5}}{\widetilde{\tau}}\delta^{2},4\widehat{C}\delta\Big\}y_{n}=2^n \alpha_{\text{max}}y_{n},
\end{align*}
where $\alpha_{\text{max}}=\max\Big\{16(b^{\ast})^{2}/\alpha_{\text{min}},16(b^{\circledast})^{2}/\alpha_{\text{min}}\big\}$, provided that
\begin{align}
	\frac{2^{5}}{\widetilde{\tau}}\delta^{2}\leq \alpha_{\text{max}},\quad4\widehat{C}\delta\leq\alpha_{\text{max}}.\notag
\end{align}
By a similar calculation as \eqref{Tuni26}--\eqref{Tuni29}, it holds
\begin{align}
	y_{n+1}&\leq C\Big(\frac{2^{n+1}}{\delta}\Big)^{\frac{8}{3}}\Big(Y_n^{\frac{1}{3}}\delta^2 y_n+\frac{1}{\alpha_{\text{min}}}Y_n^{\frac{4}{3}}\Big)\notag\\
	&\leq \widetilde{C}_\ast 16^n\delta^{-\frac{8}{3}}y_n^{\frac{4}{3}},\qquad\forall\, n\in\mathbb{N}.\notag
\end{align}
Now, we apply Lemma \ref{iteration} for $z_{n}=y_{n}$, $C=\widetilde{C}_{\sharp}\delta^{-\frac{8}{3}}$, $b=16$, $\epsilon=\frac{1}{3}$, to conclude that
\begin{align}
	y_{n}\rightarrow0\notag
\end{align}
if
\begin{align}
	y_{0}\leq \widetilde{C}_\sharp^{-3}16^{-9}\delta^8.\label{restrict2-2d}
\end{align}
By the definition of $y_{0}$, we have
\begin{align*}
	y_{0}&=\int_{I_{0}}\Big(\int_{A_{0}(s)}1\,\mathrm{d}x+\int_{\widetilde{A}_{0}(s)}1\,\mathrm{d}S+\int_{B_{0}(s)}1\,\mathrm{d}S\Big)\mathrm{d}s\notag\\
	&\leq\int_{I_{0}}\int_{\{x\in\Omega:\,\varphi(x,s)\geq1-2\delta\}}1\,\mathrm{d}x\,\mathrm{d}s+\int_{I_{0}}\int_{\{x\in\Gamma:\,\varphi(x,s)\geq1-2\delta\}}1\,\mathrm{d}S\,\mathrm{d}s\notag\\
	&\quad+\int_{I_{0}}\int_{\{x\in\Gamma:\,\psi(x,s)\geq1-2\delta\}}1\,\mathrm{d}S\,\mathrm{d}s\notag\\
	&\leq\int_{I_{0}}\int_{A_{0}(s)}\frac{|\beta(\varphi)|^q}{|\beta(1-2\delta)|^q}\,\mathrm{d}x\,\mathrm{d}s+\int_{I_{0}}\int_{\widetilde{A}_{0}(s)}\frac{|h_j\circ\beta(\varphi)|^q}{|\beta(1-2\delta)|^q}\,\mathrm{d}S\,\mathrm{d}s\notag\\
	&\quad+\int_{I_{0}}\int_{B_{0}(s)}\frac{|\beta_\Gamma(\psi)|^q}{|\beta_\Gamma(1-2\delta)|^q}\,\mathrm{d}S\,\mathrm{d}s\notag\\
	&\leq 6\widetilde{\tau}(\sqrt{q})^q\|\beta(\varphi)\|_{L^\infty(\frac{\tau}{2},+\infty;V)}^q\frac{C}{|\text{ln}(\delta)|^\kappa}+3\widetilde{\tau}(\sqrt{q})^q\|\beta_\Gamma(\psi)\|_{L^\infty(\frac{\tau}{2},+\infty;V_\Gamma)}^q\frac{C}{|\text{ln}(\delta)|^{\kappa q}}\notag\\
	&\leq [C(\tau,E(\boldsymbol{\varphi}_0))]^q\frac{\widetilde{\tau}(\sqrt{q})^q}{|\text{ln}(\delta)|^{\kappa q}},
\end{align*}
for sufficiently large $j\in\mathbb{N}$ such that $j>|\beta(1-2\delta)|$. Let us set $\delta=e^{-q}$ with $q\geq2$ sufficiently large. The estimate for $y_0$ becomes:
\[y_0\leq [C(\tau,E(\boldsymbol{\varphi}_0))]^q\frac{\widetilde{\tau}(\sqrt{q})^q}{|\text{ln}(\delta)|^{\kappa q}}\leq[C(\tau,E(\boldsymbol{\varphi}_0))]^q \widetilde{\tau}q^{q(\frac{1}{2}-\kappa)}. \]
To ensure \eqref{restrict2-2d} with $\delta=e^{-q}$, we must assume:
\begin{align}
	[C(\tau,E(\boldsymbol{\varphi}_0))]^q \widetilde{\tau}q^{q(\frac{1}{2}-\kappa)}\leq\widetilde{C}_\sharp^{-3}16^{-9}e^{-8q},\notag
\end{align}
which leads to
\begin{align}
\text{exp}\Big[\Big(\text{ln}(C(\tau,E(\boldsymbol{\varphi}_0)))+8+(\frac{1}{2}-\kappa)\text{ln}(q)\Big)q\Big]\leq \frac{\widetilde{C}_\sharp^{-3}16^{-9}}{\widetilde{\tau}}.\label{condition-2d}
\end{align}
It is important to note that:
\[\text{exp}\Big[\Big(\text{ln}(C(\tau,E(\boldsymbol{\varphi}_0)))+8+(\frac{1}{2}-\kappa)\text{ln}(q)\Big)q\Big]\to 0\quad\text{as }q\to+\infty,\]
since $\kappa>1/2$ when $d=2$. Therefore, it is sufficient to select a sufficiently large $q$ such that \eqref{condition-2d} holds, and thus \eqref{restrict2-2d}. With such choice $q$ sufficiently large (and thus $\delta=e^{-q}$ sufficiently small), by passing to the limit as $n\to+\infty$, we can conclude that
\begin{align*}
	&\Vert(\varphi-(1-\delta))^{+}\Vert_{L^{\infty}(\Omega\times(T-\widetilde{\tau},T))}=0,\\[1mm]
	&\Vert(\psi-(1-\delta))^{+}\Vert_{L^{\infty}(\Gamma\times(T-\widetilde{\tau},T))}=0.
\end{align*}
The following argument is same as above for the case $d=3$. Hence, we complete the proof of Theorem \ref{separation}.
\hfill$\square$
	\appendix
	\section{Useful tools}
	\setcounter{equation}{0}
	\noindent We report some technical lemmas that have been used
	in our analysis. First, we recall the compactness lemma of Aubin--Lions--Simon
	type (see, for instance, \cite{Lions} in the case $q>1$ and \cite{Simon}
	when $q=1 $).
	
	\begin{lemma}
		\label{ALS} Let $X_{0} \overset{c}{\hookrightarrow } X_{1}\subset X_{2}$
		where $X_{j}$ are (real) Banach spaces ($j=0,1,2$). Let $1<p\leq +\infty $, $%
		1\leq q\leq +\infty ~$and $I$ be a bounded subinterval of $\mathbb{R}$. Then,
		the sets
		\begin{equation*}
			\left\{ \varphi \in L^{p}\left( I;X_{0}\right) :\partial _{t}\varphi \in
			L^{q}\left( I;X_{2}\right) \right\} \overset{c}{\hookrightarrow }
			L^{p}\left( I;X_{1}\right),\quad \text{ if }1<p<+\infty,
		\end{equation*}
		and
		\begin{equation*}
			\left\{ \varphi \in L^{p}\left( I;X_{0}\right) :\partial _{t}\varphi \in
			L^{q}\left( I;X_{2}\right) \right\} \overset{c}{\hookrightarrow } C\left(
			I;X_{1}\right),\quad \text{ if }p=+\infty ,\text{ }q>1.
		\end{equation*}
	\end{lemma}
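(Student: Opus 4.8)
The plan is to deduce the statement from two classical ingredients: an Ehrling--Lions type interpolation inequality attached to the triple $X_0\overset{c}{\hookrightarrow}X_1\subset X_2$, and the Riesz--Fr\'echet--Kolmogorov compactness criterion for Bochner spaces in the form of \cite{Simon}. Write $I=(a,b)$, $\tau_h\varphi(\cdot)=\varphi(\cdot+h)$ and $I_h=\{t:\,t,t+h\in I\}$. First I would record the interpolation lemma: since $X_0\overset{c}{\hookrightarrow}X_1$, for every $\eta>0$ there is $C_\eta>0$ with
\[
\|v\|_{X_1}\le \eta\|v\|_{X_0}+C_\eta\|v\|_{X_2},\qquad\forall\,v\in X_0.
\]
This follows by contradiction: a violating normalized sequence $(v_n)$ with $\|v_n\|_{X_1}=1$ stays bounded in $X_0$ and tends to $0$ in $X_2$; by the compact embedding a subsequence converges in $X_1$, while the continuous inclusion $X_1\subset X_2$ forces the limit to be $0$, contradicting $\|v_n\|_{X_1}=1$.

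Next, fix a bounded set $F\subset\{\varphi\in L^p(I;X_0):\partial_t\varphi\in L^q(I;X_2)\}$, say $\|\varphi\|_{L^p(I;X_0)}+\|\partial_t\varphi\|_{L^q(I;X_2)}\le M$ for all $\varphi\in F$. For $1<p<+\infty$ I would verify Simon's two conditions for relative compactness of $F$ in $L^p(I;X_1)$. The \emph{mean-value condition}: for $a\le t_1<t_2\le b$, the family $\{\int_{t_1}^{t_2}\varphi(s)\,\mathrm{d}s:\varphi\in F\}$ is bounded in $X_0$ by H\"older's inequality, hence relatively compact in $X_1$. The \emph{translation condition}: writing $\tau_h\varphi(t)-\varphi(t)=\int_t^{t+h}\partial_t\varphi(s)\,\mathrm{d}s$ gives $\|\tau_h\varphi-\varphi\|_{L^q(I_h;X_2)}\le M h^{1-1/q}$; when $q>1$ this also yields a uniform bound for $\varphi$ in $C^{0,1-1/q}(\overline I;X_2)$, so interpolating between $L^q$ and $L^\infty$ in time gives $\sup_{F}\|\tau_h\varphi-\varphi\|_{L^p(I_h;X_2)}\to0$ as $h\to0^+$. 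The interpolation lemma then yields
\[
\|\tau_h\varphi-\varphi\|_{L^p(I_h;X_1)}\le 2\eta\,\|\varphi\|_{L^p(I;X_0)}+C_\eta\,\|\tau_h\varphi-\varphi\|_{L^p(I_h;X_2)}\le 2M\eta+C_\eta\,o_h(1),
\]
and letting $h\to0^+$, then $\eta\to0$, proves the translation condition. Simon's theorem then delivers the relative compactness of $F$ in $L^p(I;X_1)$.

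For $p=+\infty$ with $q>1$, the H\"older bound $\|\varphi(t)-\varphi(s)\|_{X_2}\le M|t-s|^{1-1/q}$ together with the $L^\infty(I;X_0)$-bound and the interpolation lemma (applied to $\varphi(t)-\varphi(s)$) show that $F$ is equicontinuous as a family of maps $\overline I\to X_1$, and that each $\varphi\in F$ admits a representative in $C(\overline I;X_1)$. Since $\{\varphi(t):\varphi\in F\}$ is bounded in $X_0$, hence precompact in $X_1$, for each fixed $t$, the Arzel\`a--Ascoli theorem in $C(\overline I;X_1)$ yields the claim.

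The delicate point is the borderline exponent $q=1$ in the first assertion: there the $L^1(I_h;X_2)$ translation estimate is not, by itself, strong enough to run the $L^q$--$L^\infty$ interpolation, and one must invoke Simon's refinement of the Riesz--Fr\'echet--Kolmogorov criterion, based on equi-integrability together with a low/high decomposition in the time variable. This is precisely why the result is credited to \cite{Lions} for $q>1$ and to \cite{Simon} for $q=1$; for the purposes of the present paper it suffices to cite these references rather than to reproduce the argument.
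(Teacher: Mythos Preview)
The paper does not prove this lemma at all: it is stated in the appendix as a ``useful tool'' and simply attributed to \cite{Lions} for $q>1$ and to \cite{Simon} for $q=1$, with no argument given. Your proposal, by contrast, supplies an actual proof sketch along the classical lines (Ehrling interpolation plus a Riesz--Fr\'echet--Kolmogorov/Simon criterion, and Arzel\`a--Ascoli for the $p=+\infty$ case), which is precisely the route taken in those references. The sketch is sound; the only minor inaccuracy is the claimed bound $\|\tau_h\varphi-\varphi\|_{L^q(I_h;X_2)}\le M h^{1-1/q}$, which as written misses a harmless factor $|I|^{1/q}$---but since for $q>1$ you already have the pointwise H\"older estimate $\|\varphi(t+h)-\varphi(t)\|_{X_2}\le M h^{1-1/q}$, the $L^p$ translation condition follows directly without passing through $L^q$. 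Your identification of $q=1$ as the delicate borderline, and your decision to defer it to \cite{Simon}, is exactly consistent with how the paper itself apportions the credit.
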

	
	Then, we present a fundamental lemma which is essential for the analysis in this paper (cf. \cite{KS}).
	\begin{lemma}
		\label{lem-subsequent}
		Suppose that $\mathbf{(A1)}$ is satisfied. Then the following holds:
		\begin{description}
			\item[(a)] For all $u\in H$, it holds that $J\ast u\in V$ and there exists a constant $C_{J}>0$ depending only on $n$ and $J$, such that
			\begin{align}
				\Vert J\ast u\Vert_{V}\leq C_{J}\Vert u\Vert_{H}.\notag
			\end{align}
			\item[(b)] For all $v\in H_{\Gamma}$, it holds that $K\circledast v\in V_{\Gamma}$ and there exists a constant $C_{K}>0$ depending only on $n$, $r$ and $K$, such that
			\begin{align}
				\Vert K\circledast v\Vert_{V_{\Gamma}}\leq C_{K}\Vert v\Vert_{H_{\Gamma}}.\notag
			\end{align}
		\end{description}
	\end{lemma}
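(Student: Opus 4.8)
The plan is to derive both bounds from Young's convolution inequality, handling the (tangential) gradient by differentiating under the integral sign; the extra smoothness imposed on $K$ in $\mathbf{(A1)}$ will be precisely what makes this legitimate on the curved boundary $\Gamma$.

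For part (a), I would extend $u$ by zero outside $\Omega$ to obtain $\bar u\in L^2(\mathbb{R}^d)$ with $\|\bar u\|_{L^2(\mathbb{R}^d)}=\|u\|_H$, and note that $(J\ast u)(x)=(J\ast\bar u)(x)$ for a.e.\ $x\in\Omega$. Young's inequality gives $\|J\ast\bar u\|_{L^2(\mathbb{R}^d)}\le\|J\|_{L^1(\mathbb{R}^d)}\|\bar u\|_{L^2(\mathbb{R}^d)}$. Since $J\in W^{1,1}(\mathbb{R}^d)$, the elementary identity for the derivative of a convolution (justified by testing against $C_c^\infty$ functions and applying Fubini, or by approximating $J$ in $W^{1,1}$ by smooth functions) yields $\nabla(J\ast\bar u)=(\nabla J)\ast\bar u$, whence $\|(\nabla J)\ast\bar u\|_{L^2(\mathbb{R}^d)}\le\|\nabla J\|_{L^1(\mathbb{R}^d)}\|\bar u\|_{L^2(\mathbb{R}^d)}$; this is the same identity already used in Section~2 when stating $\|\nabla(J\ast u)\|_{L^1(\Omega)}=\|(\nabla J)\ast u\|_{L^1(\Omega)}$. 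Restricting back to $\Omega$ gives $J\ast u\in V$ with
\[
\|J\ast u\|_V\le\big(\|J\|_{L^1(\mathbb{R}^d)}^2+\|\nabla J\|_{L^1(\mathbb{R}^d)}^2\big)^{1/2}\|u\|_H,
\]
so one may take $C_J=\|J\|_{W^{1,1}(\mathbb{R}^d)}$, which depends only on $d$ and $J$.

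For part (b), the $L^2(\Gamma)$ estimate comes from a weighted Cauchy--Schwarz inequality: for a.e.\ $x\in\Gamma$,
\[
|(K\circledast v)(x)|^2\le\Big(\int_\Gamma|K(x-y)|\,\mathrm{d}S_y\Big)\Big(\int_\Gamma|K(x-y)|\,|v(y)|^2\,\mathrm{d}S_y\Big)\le a^{\circledast}\int_\Gamma|K(x-y)|\,|v(y)|^2\,\mathrm{d}S_y
\]
by \eqref{2.2}; integrating in $x$, using Fubini and the bound $\sup_{y\in\Gamma}\int_\Gamma|K(x-y)|\,\mathrm{d}S_x\le a^{\circledast}$ (valid since $K$ is even), gives $\|K\circledast v\|_{H_\Gamma}\le a^{\circledast}\|v\|_{H_\Gamma}$. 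For the tangential gradient I would view $K\circledast v$ as the restriction to $\Gamma$ of the ambient function $x\mapsto\int_\Gamma K(x-y)v(y)\,\mathrm{d}S_y$, which is defined in a neighbourhood of $\Gamma$; here the assumption $K\in W^{2,r}(\mathbb{R}^d)$ with $r>1$ enters, guaranteeing (as noted in $\mathbf{(A1)}$, see \cite{KS}) that $\nabla K\in W^{1,r}(\mathbb{R}^d)$ admits traces in $L^r$ on the compact smooth hypersurface $\Gamma$ and, with a constant invariant under rigid motions, on every shifted copy $x-\Gamma$. This makes $x\mapsto\nabla K(x-\cdot)|_\Gamma$ a well-defined $L^r(\Gamma)$-valued map, legitimizes differentiation under the integral sign, and produces $\nabla_\Gamma(K\circledast v)(x)=\mathbf{P}_\Gamma(x)\big((\nabla K)(x-\cdot)\circledast v\big)(x)$ with $\mathbf{P}_\Gamma(x)=\mathbf{I}-\mathbf{n}(x)\otimes\mathbf{n}(x)$. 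The same weighted Cauchy--Schwarz plus Fubini argument, now with $|\nabla_\Gamma K(x-y)|\le|\nabla K(x-y)|$ and the constant $b^{\circledast}$ from \eqref{2.3} together with its Fubini-swapped counterpart (finite by evenness and the uniform trace bound), yields $\|\nabla_\Gamma(K\circledast v)\|_{H_\Gamma}\le C\|v\|_{H_\Gamma}$. Combining gives $K\circledast v\in V_\Gamma$ and $\|K\circledast v\|_{V_\Gamma}\le C_K\|v\|_{H_\Gamma}$ with $C_K$ depending only on $d$, $r$ and $K$.

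The $L^2$ bounds and the whole of part (a) are routine applications of Young's and Cauchy--Schwarz inequalities together with the standard formula for the derivative of a convolution. The only genuinely delicate point is the tangential-gradient identity in part (b): one must know that $K\circledast v$ is weakly differentiable along $\Gamma$ with the stated formula, which rests on controlling the trace of $\nabla K$ on the whole family of translated hypersurfaces $\{x-\Gamma:x\in\Gamma\}$ uniformly in $x$ --- this is exactly why $\mathbf{(A1)}$ demands more integrability and differentiability of $K$ than of $J$.
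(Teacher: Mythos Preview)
The paper does not give its own proof of this lemma: it simply states the result in the Appendix with the parenthetical ``(cf.\ \cite{KS})'', deferring entirely to Knopf--Signori. So there is no in-paper argument to compare against; your proposal supplies the missing details.

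Your argument is correct in substance. Part (a) is exactly the standard route: extend by zero, apply Young's inequality in $L^1\ast L^2\hookrightarrow L^2$, and use $\nabla(J\ast\bar u)=(\nabla J)\ast\bar u$, which is precisely the identity the paper invokes just after assumption $\mathbf{(A1)}$. For part (b) your weighted Cauchy--Schwarz plus Fubini computation for the $L^2(\Gamma)$ bound is clean, and you correctly isolate the only nontrivial point, namely that $\nabla_\Gamma(K\circledast v)=(\nabla_\Gamma K)\circledast v$ requires $\nabla K(x-\cdot)|_\Gamma$ to make sense uniformly in $x\in\Gamma$, which is exactly why $\mathbf{(A1)}$ imposes $K\in W^{2,r}(\mathbb{R}^d)$ with $r>1$ rather than merely $W^{1,1}$. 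One small remark: for the Fubini-swapped bound $\sup_{y\in\Gamma}\int_\Gamma|\nabla K(x-y)|\,\mathrm{d}S_x$ you appeal to evenness of $K$, but $\nabla K$ is odd, not even; what you actually need is $\sup_{y}\int_\Gamma|\nabla K(x-y)|\,\mathrm{d}S_x=\sup_{y}\int_\Gamma|\nabla K(y-x)|\,\mathrm{d}S_x$, which holds because $|\nabla K|$ is even. This is cosmetic and does not affect the validity of your argument.
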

	
	The following well-known result is crucial to the De Giorgi's iteration scheme.
	\begin{lemma}
	\label{iteration}
	Let $\{z_n\}_{n\in\mathbb{N}}\subset\mathbb{R}^+$ satisfy the relation
	\[z_{n+1}\leq C b^n z_n^{1+\epsilon},\]
	for some $C>0$, $b>1$ and $\epsilon>0$. Assume that $z_0\leq C^{-1/\epsilon}b^{-1/\epsilon^2}$. Then, we have
	\[z_n\leq z_0 b^{-\frac{n}{\epsilon}},\quad\forall\,n\geq1.\]
	In particular, $z_n\to0$ as $n\to+\infty$.
	\end{lemma}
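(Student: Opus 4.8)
The plan is to prove the quantitative bound $z_n\leq z_0 b^{-n/\epsilon}$ by induction on $n$, after first recasting the smallness hypothesis on $z_0$ in the shape that the iteration actually needs. Raising both sides of $z_0\leq C^{-1/\epsilon}b^{-1/\epsilon^2}$ to the power $\epsilon>0$ — which is legitimate since $t\mapsto t^{\epsilon}$ is nondecreasing on $[0,+\infty)$ — yields the key inequality $C z_0^{\epsilon}\leq b^{-1/\epsilon}$, and this is the single fact that will be invoked at every step. (If $z_0=0$ the statement is trivial, since then the recursion forces $z_n=0$ for all $n$; so one may assume $z_0>0$.)

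For the base case $n=1$, the recursion with $n=0$ gives $z_1\leq C b^{0}z_0^{1+\epsilon}=(C z_0^{\epsilon})\,z_0\leq b^{-1/\epsilon}z_0$, which is exactly the asserted bound at $n=1$. For the inductive step, assuming $z_n\leq z_0 b^{-n/\epsilon}$, I would substitute into the recursion and collect the powers of $b$:
\begin{align}
z_{n+1}\ \leq\ C b^{n}z_n^{1+\epsilon}\ \leq\ C b^{n}\big(z_0 b^{-n/\epsilon}\big)^{1+\epsilon}\ =\ C z_0^{1+\epsilon}\,b^{\,n-n(1+\epsilon)/\epsilon}\ =\ (C z_0^{\epsilon})\,z_0\,b^{-n/\epsilon}\ \leq\ z_0\,b^{-(n+1)/\epsilon},\notag
\end{align}
where I have used the exponent identity $n-n(1+\epsilon)/\epsilon=-n/\epsilon$ together with $C z_0^{\epsilon}\leq b^{-1/\epsilon}$. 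This closes the induction and establishes $z_n\leq z_0 b^{-n/\epsilon}$ for every $n\geq1$.

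Finally, since $b>1$ and $\epsilon>0$ we have $b^{-n/\epsilon}\to0$ as $n\to+\infty$, so $0\leq z_n\leq z_0 b^{-n/\epsilon}\to0$, giving the stated convergence. There is no real obstacle here: the only point demanding any care is the bookkeeping of the exponents of $b$ and the recognition that the hypothesis on $z_0$ is precisely calibrated so that the multiplicative factor $C z_0^{\epsilon}$ generated at each iteration is bounded by $b^{-1/\epsilon}$, which is exactly the geometric rate that propagates the bound. This is the classical De Giorgi-type fast-geometric-convergence lemma, and the argument above is the standard one.
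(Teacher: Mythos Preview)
Your proof is correct and is the standard induction argument for this classical fast-geometric-convergence lemma. Note, however, that the paper does not actually supply a proof of this statement: it is recorded in the Appendix as a ``well-known result'' crucial to the De Giorgi iteration scheme, without argument. Your write-up therefore fills in exactly what the paper leaves to the reader, and there is nothing to compare beyond observing that your approach is the expected one.
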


\section*{Declarations}	
	\noindent
\textbf{Conflict of interest.} The authors have no competing interests to declare that are relevant to the content of this article.

	\noindent
\textbf{Fundings.} H. Wu was partially supported by NNSFC Grant No. 12071084.

    \noindent
\textbf{Data availability.} Data sharing not applicable to this article as no datasets were generated or analysed during the current study.

	\noindent
\textbf{Acknowledgments.} H. Wu is a member of the Key Laboratory of Mathematics for Nonlinear Sciences (Fudan University), Ministry of Education of China. This research is partially supported by the Shanghai Center for Mathematical Sciences at Fudan University.

\end{document}